\documentclass[a4paper,10pt]{article}

\usepackage[utf8]{inputenc}
\usepackage{frcursive} 
\usepackage[english]{babel}
\usepackage{amsmath}
\usepackage {amsfonts,color}
\usepackage{stmaryrd}
\numberwithin{equation}{section}

\def\vrho{\rho}
\def\prho{{\rho'}}

\usepackage{amsmath}
\usepackage{amssymb}
\usepackage[utf8]{inputenc}
\usepackage{amsfonts}
\usepackage{amsmath}
\newcommand{\cv}{c_{\mathbf Y}} % Constant for convolution inequality
\newcommand{\ch}{c_{\mathbf{HK}}} %Constant for the alpha-stable heat type estimate
\newcommand{\cc}{\mbox{\small{\begin{cursive}c\end{cursive}}}}
\newcommand{\tr}{\textcolor{red}}

\def\R{\mathbb{R}}
\def\1{\mathbf{1}}
\def \E{\mathbb{E}}

\def \N{\mathbb{N}}
\def \I{\mathbb{I}}
\def \ba{\mathbf{a}}

\def\B{{B}}

\def\eps{\varepsilon}

\def\d{{d}}

\title{\textbf{Weak error for SDEs with additive {stable} noise and singular drift: choose the test function in the same space as the drift!}}

\author{Benjamin Jourdain\footnote{CERMICS, ENPC, Institut Polytechnique de Paris, CNRS, Marne-la-Vallée, France \& MATHRISK team-project, Inria Paris, France, benjamin.jourdain@enpc.fr}\phantom{S} and Stéphane Menozzi\footnote{Université d'Evry Val d'Essonne, Paris Saclay, LaMME, UMR CNRS 8071, 23 Boulevard de France, 91037 Evry France, stephane.menozzi@univ-evry.fr}}

\newtheorem{THM}{Theorem} 

\newtheorem{cor}[THM]{Corollary} 
\newtheorem{PROP}[THM]{Proposition}
\newtheorem{lem}[THM]{Lemma}
\newtheorem{rem}[THM]{Remark}

\begin{document}  
\maketitle
\begin{abstract}
We emphasize that for a stochastic differential equation with isotropic stable additive noise and {non Lipschitz} drift, when considering an appropriate discretization scheme and the associated weak error, it is somehow natural to consider a test function having the same spatial regularity as the drift involved. We will in particular focus on drifts belonging to Lebsegue, Hölder or Besov spaces with negative regularity index in their spatial variable. Choosing such a test function allows to improve the convergence rate previously obtained on the densities (for Lebesgue or Hölder drifts) or preserve the rate {for possibly singular generalized test functions} (for Besov spaces with negative regularity). 

\end{abstract}

\section{Introduction}
    For a fixed finite time horizon $T>0$, we are interested in  the Euler-Maruyama dicretization of the SDE
	\begin{equation}\label{SDE}
	d X_t = b(t,X_t)d t + d Z_t, \qquad X_0 = x, \qquad \forall t \in [0,T],
	\end{equation}
	where $Z_t$ is a symmetric isotropic $d$-dimensional $\alpha$-stable process, $\alpha\in (1,2] $ and the drift $b$ is somehow meant to be singular.
	Three main cases will be considered for the drift $b$ that will be {assumed} to belong to a:
	\begin{itemize}
\item[-] time-space Lebesgue space,
\item[-] a Hölder space in spatial variable  (viewed as a Besov space with positive regularity) uniformly in time (the reason why we assume uniformity in time in this second setting is just that the auxilliary results on the transition densities needed to perform our analysis are only available under this assumption and, in this already long paper, we prefer to save the space needed to extend them to $L^\vartheta$ regularity in time),
\item[-] Besov space with negative regularity in space with the Besov norm belonging to some Lebesgue space in time. Then we exclude the Brownian noise case $\alpha=2$ where the related heat kernel {estimates} are not fully available yet (see {\cite{Me:Pa:26}} where some particular case is treated).
	\end{itemize}

We will consider some related Euler schemes $(X^h_t)_{t\in[0,T]}$ with time-step $h=T/n$ for $n\in\N^*$ defined in the corresponding sections. Our starting point to analyse the weak error is the Duhamel formula satisfied by the density $y\mapsto \Gamma(0,x,t,y)$ of $X_t$ with respect to the Lebesgue measure for $t\in (0,T]$. In the first two cases, we have

        \begin{align*}
			\Gamma(0,x,t,y)
			= p_\alpha(t,y-x)-\int_{0}^{ t}\int_{\R^d}\Gamma(0,x,s,z)b(s,z)\cdot\nabla_y  p_\alpha(t-s,y-z)d zd s,
        \end{align*}
        where $z\mapsto p_\alpha(s,z)$ denotes the density of $Z_s$ when $s\in (0,T]$. The case when the drift has some negative regularity in space is similar in spirit but slightly more complicated since the drift is not a function. The density $y\mapsto \Gamma^h(0,x,t,y)$ of $X^h_t$ satisfies an analogous Duhamel formula with of course some slight modifications : the drift $b$ is replaced some regularization $b_h$ depending on $h$ and the first two occurences of $s$ in the integral are replaced by distinct time variables belonging to the same time step. The strategy initiated in \cite{BJ20} when $\alpha=2$ and $b$ is bounded to derive some weak error bound consists in estimating the contribution $\Gamma^h(0,x,t,y)-p_\alpha(t,y-x)+\int_{0}^{ t}\int_{\R^d}\Gamma^h(0,x,s,z)b(s,z)\cdot\nabla_y  p_\alpha(t-s,y-z)d zd s$ and then dealing with $$\int_{0}^{ t}\int_{\R^d}(\Gamma(0,x,s,z)-\Gamma^h(0,x,s,z))b(s,z)\cdot\nabla_y  p_\alpha(t-s,y-z)d zd s$$ by some Gr\"onwall's type argument. The spatial integral amounts to the convolution of the difference between the two densities multiplied by $b(s,\cdot)$ with the gradient of $p_\alpha(t-s,\cdot)$. The product $(\Gamma(0,x,s,\cdot)-\Gamma^h(0,x,s,\cdot))b(s,\cdot)$ is easily estimated when the difference between the two densities is measured in terms of the dual norm of the space where the drift belongs as a function of the spatial variable for fixed time variable. This explains why this choice which amounts to integrate the difference $\Gamma(0,x,t,z)-\Gamma^h(0,x,t,z)$ with respect to some test function in the same space as the drift is particularly appealing in view of the final Gronwall argument. This strategy has been successfully implemented in \cite{BJ20} when $\alpha=2$, where, since the drift coefficient is assumed to be bounded, the difference between the densities is estimated in $L^1$ norm or equivalently the difference between the corresponding distributions estimated in total variation distance. The derived order of convergence is $1/2$. Another possibility explored in \cite{jour:meno:24,fito:jour:meno:25,fito:meno:25} and inspired by \cite{MPZ21} consists in performing the Gronwall's argument on $\sup_{y\in\R^d}\frac{{|}\Gamma(0,x,r,y)-\Gamma^h(0,x,r,y){|}}{\bar p_\alpha (r,y)}$ where $\bar p_\alpha=p_\alpha$ when $\alpha\in (1,2)$ and the variance of the Gaussian density $p_2(r,\cdot)$ is multiplied by a factor greater than one and not depending on $r\in (0,T]$ to define $\bar p_2(r,\cdot)$. For $\alpha\in (1,2)$, this approach has also been successfully extended in \cite{fito:isso:meno:25} to the distributional case {when the drift belongs to a Besov space with negative regularity in space.}
        
 Let us point out that in the considered cases the convergence  rate obtained in the quoted works is strongly linked to the conditions ensuring weak/strong existence and uniqueness for the underlying SDE. Namely, in the Lebesgue setting, i.e. when $b\in L^\vartheta([0,T],L^p(\R^d))$, weak well posedness follows from the condition $0<\alpha-1-(\frac{d}p+\frac{\alpha}{\vartheta})$ (Krylov and Röckner type criterion) which also yields strong uniqueness if $\alpha=2 $ (see e.g. \cite{fito:jour:meno:25},\cite{kryl:rock:05}). In the Hölder setting, $b\in L^\infty([0,T],C^\beta(\R^d,\R^d))$ with $\beta\in (0,1)$, it is known that the SDE is strongly well posed for $\alpha=2 $ and weakly well posed as soon as $0<\alpha-1+\beta $ (for even possible super-critical regimes $\alpha\in (0,1) $ that we do not consider here), see \cite{CZZ21}. Eventually for $b\in L^\vartheta([0,T],B_{p,q}^\beta), \beta<0$ (a precise definition of Besov spaces is provided in Section \ref{SEC_BESOV} below), well posedness of the martingale problem associated with the \textit{formal} SDE\footnote{recall that $b$ is here a distribution} is ensured by the condition $0<\alpha-1-(\frac{d}p+\frac{\alpha}{\vartheta})+2\beta$, see \cite{chau:meno:22}. We mention that there is a kind of \textit{continuity} in the conditions guaranteeing  well posedness in the three cases with respect to the integrability exponents $p,\vartheta $ and the regularity parameter $\beta $ (the Lebesgue case being associated with $\beta=0 $). The factor $2$ for $\beta<0 $ is due to technical constraints (paraproduct type rules).
 
The corresponding convergence rates for the error on the densities established in the quoted papers actually corresponds to the \textit{margin} or \textit{gap} to the inequality in the former conditions. Precisely, it reads $\gamma/\alpha $ with:
 \begin{trivlist}
 \item [-]$\gamma=\alpha-1-(\frac{d}p+\frac{\alpha}{\vartheta}) $ in the Lebesgue case,
 \item [-]$\gamma=\alpha-1+\beta$ in the Hölder case,
 \item [-]$(\gamma-\varepsilon)/\alpha,\ \varepsilon>0 $ with $\gamma=\alpha-1-(\frac{d}p+\frac{\alpha}{\vartheta})+2\beta $ in the Besov case of negative regularity, where the loss of $\varepsilon $ comes here  again from a technical constraint intrinsic to the distributional setting. {Still in the distributional setting, we can mention the recent paper \cite{hao:wu:26} in which the authors obtain a similar convergence rate for the considered case  $p=\vartheta=\infty $ therein for the total variation distance between the laws but can consider more general stable driving {noises}, with non degenerate spectral measures including e.g. cylindrical ones}.\end{trivlist}

       The convergence in the Hölder case reflects how the additional regularity $\beta>0 $ improves the {rate with respect to the Lebesgue case with $\vartheta=p=\infty$.} On the other hand, for the  Lebesgue case or  drifts in Besov spaces with negative regularity,  the convergence rates deteriorate when the parameters are almost critical.
        For $b\in L^\vartheta([0,T],L^p(\R^d))$, this is not satisfactory especially when compared with the results by Lê and Ling in \cite{le:ling:25} who obtained, in the Brownian setting $\alpha=2$, for any parameters satisfying $\gamma>0$, a convergence rate of order $1/2$ (up to a logarithmic factor) for the strong error. Their approach is based on stochastic sewing lemma techniques particularly well-suited to analyze the behavior of irregular functionals of the Brownian path.  In \cite{hao:le:ling:24,hao:le:ling:26}, the authors deal with the weak error and go back to the choice of the test function in the space where the drift belongs as a function of the spatial variables for fixed time. This space is adapted to the more complicated kinetic equation with Brownian noise that they consider. {When $\vartheta=\infty$ and $p>d$,} they prove weak convergence  with order $1/2$ of a semi-discrete Euler scheme (the time-variable of the drift coefficient is not discretized). This strongly suggests that 
        the order of convergence derived with the test function approach is in general larger than the one derived with spatial supremum of the ratio. In this paper, we investigate the test function approach in the three settings presented at the beginning of the introduction. It turns out that
 \begin{itemize}
 \item in the Lebesgue and H\"older settings, we improve the orders of convergence derived respectively in \cite{jour:meno:24,fito:jour:meno:25} and in \cite{fito:meno:25} to $\frac{\alpha-1}\alpha-\frac 1\vartheta$  and $1-\frac{(1-2\beta)^+}\alpha$ (note that we suppose $\vartheta=\infty$ in the second setting),
 \item in the negative regularity setting, we preserve the order of convergence derived in \cite{fito:isso:meno:25} when estimating the pointwise difference $\Gamma-\Gamma^h$ in the supremum norm, but we are able to integrate possibly singular test functions,  {in the spirit of \cite{guyo:06}, where Guyon though considered non-degenerate Brownian diffusions with somehow smooth coefficients. }
 \item as suggested by the Lebesgue case, the important point is to choose the test function with same regularity exponent $\beta$ in space as the drift coefficient (in the definition of the Besov space). Indeed, in the particular case $\beta=0$ which corresponds to the Lebesgue spaces, we are able to take advantage of the regularization by the convolution with $\nabla p_\alpha(t-r,\cdot)$ to estimate the error in $L^\rho$ with $\rho$ possibly larger than the conjugate exponent $\frac p{p-1}$ of $p$. This enables us to deal with the important case when the drift is the sum of contributions in distinct Lebesgue spaces which, to our knowledge, has not been covered so far.

 \end{itemize}

\subsection*{About the driving noise}
We will recall here some properties of the isotropic $\alpha $-stable driving noise, with $\alpha\in (1,2] $. In particular the behavior of the densities is dramatically different in the pure jump case $\alpha\in (1,2) $, for which the tails of the marginal densities have heavy tails, and the Brownian case, in which the marginal densities are Gaussian (and thus square exponentially thin tailed). In the following, we will denote by $p_\alpha(t,\cdot) $ the density of $Z_t$ at time $t>0$ and set for some constant $c>1$,
\begin{equation}
   \forall t\in (0,T],\;\bar p_\alpha(t,x)=\begin{cases}
                                                 t^{-\frac d\alpha}\left(1+t^{-\frac 1\alpha}|x|\right)^{-(d+\alpha)}\mbox{ if }\alpha\in (1,2),\\
                                                 p_2(ct,\cdot)\mbox{ if }\alpha=2.
   \end{cases}\label{eq:barpal}
\end{equation} Note that $p_2(t,\cdot)$ stands for the centered Gaussian density with covariance matric $t I_d$.  The following controls hold.

       \begin{PROP}[Pointwise Density estimates for the driving noise]\label{prop-controls-palpha} There exists $C\ge 1,\bar c\ge 1$ s.t. for all $0\textcolor{black}{\le} s<t\le T $, $x,w \in \R^d$, and any muti-index $\ba \in \N^d $, $|\ba|\in \{0,1, 2\} $,  
        $\theta\in \{0,1\} $,
    \begin{align}
    |\partial_t^\theta\partial_x^\ba p_\alpha (t-s,w-x)|&\le C (t-s)^{-(\theta+\frac{|\ba|+d}{\alpha})}\left(1+\frac{\left| w-x\right|}{(t-s)^{\frac{1}{\alpha}}} \right)^{-(d+\alpha+|\ba|)},\ \alpha\in (1,2),\notag\\
        |\partial_t^\theta\partial_x^\ba p_2(t-s,w-x)|&\le C(t-s)^{-(\theta+\frac{|\ba|+d}{\alpha})} \bar p_2(t-s,w-x). \label{GD_BOUNDS}
    \end{align}
 In particular, for $|\ba|\in \{1,2\} $ (i.e. when spatial derivatives are considered) and $\rho\in (0,1) $ 
     \begin{align}\label{drift-smoothing-iso-noise}
        |w-y|^\rho|\partial_t^\theta\partial_{\textcolor{black}{w}}^\ba  p_\alpha(t-s,y-w)|\le C (t-s)^{-\theta+\frac{\rho-|\ba|}{\alpha}} \bar p_\alpha(t-s,y-w).
    \end{align}
    and, for $y\in \R^d$ s.t. $|y-y'|\lesssim (t-s)^{\frac{1}{\alpha}}$, 
    \begin{align}\label{drift-smoothing-iso-noise-diag}
    	|w-y|^\rho|\partial_t^\theta\partial_w^\ba p_\alpha\left(t-s,{y-w+(y'-y)}\right)|\le C (t-s)^{-\theta+\frac{\rho-|\ba|}{\alpha}} \bar p_\alpha(t-s,y-w).
    \end{align}
	\color{black}
\end{PROP}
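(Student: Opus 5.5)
The plan is to reduce everything to scaling plus a few classical facts about stable densities. By self-similarity, $p_\alpha(t,x)=t^{-d/\alpha}p_\alpha(1,t^{-1/\alpha}x)$, so for $u=t-s$ we have
\[
\partial_x^\ba p_\alpha(u,x)=u^{-\frac{d+|\ba|}{\alpha}}(\partial^\ba p_\alpha)(1,u^{-1/\alpha}x).
\]
For the time derivative I will use the Kolmogorov equation $\partial_u p_\alpha=-(-\Delta)^{\alpha/2}p_\alpha$, equivalently $\partial_u p_\alpha(u,x)=-\frac{1}{\alpha u}\big(d\,p_\alpha+x\cdot\nabla p_\alpha\big)(u,x)$, which follows by differentiating the scaling identity in $u$. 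This turns the $\theta=1$ bound into a combination of $|\ba|$ and $|\ba|+1$ spatial derivatives times $|x|$. It therefore suffices to prove, at time $1$,
\[
|\partial^\ba p_\alpha(1,z)|\le C(1+|z|)^{-(d+\alpha+|\ba|)}\quad\text{for }|\ba|\le 3 .
\]
The third order is needed because the $\theta=1$ case with $|\ba|=2$ involves $z\cdot\nabla\partial^\ba p$. The extra factor $|z|$ in that term costs one power of decay, and this is compensated exactly by the one extra derivative.

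For $\alpha=2$ the estimate is elementary: $\partial^\ba p_2(u,x)$ is a polynomial in $x/\sqrt u$ times $u^{-(d+|\ba|)/2}p_2(u,x)$. Absorbing $|x/\sqrt u|^k e^{-|x|^2/(2u)}$ into $e^{-|x|^2/(2cu)}$ with $c>1$ gives \eqref{GD_BOUNDS}, with the constant $c$ coming from \eqref{eq:barpal}.

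For $\alpha\in(1,2)$ the main obstacle is the polynomial tail of the derivatives of $p_\alpha(1,\cdot)$. Boundedness of all derivatives is immediate from Fourier inversion, since $e^{-c|\xi|^\alpha}$ decays fast. For the tail I would use the Lévy decomposition $Z_1=Z^{\le}+Z^{>}$ into jumps of size at most $r$ and larger than $r$, with $r\approx|z|/2$. Alternatively one can cite the classical subordination/Bessel representation of isotropic stable densities, $p_\alpha(1,z)=\int_0^\infty p_2(s,z)\,\eta_{\alpha/2}(s)\,ds$, where the one-sided $\alpha/2$-stable density satisfies $\eta_{\alpha/2}(s)\sim c\,s^{-1-\alpha/2}$ at infinity. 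Differentiating under the integral gives
\[
\partial^\ba p_\alpha(1,z)=\int_0^\infty\partial^\ba p_2(s,z)\,\eta_{\alpha/2}(s)\,ds .
\]
Combining the Gaussian derivative bounds $s^{-(d+|\ba|)/2}e^{-|z|^2/(cs)}$ with the tail of $\eta$, and substituting $s=|z|^2v$, gives $|z|^{-(d+|\ba|)}|z|^{-\alpha}$. The region of small $s$ is negligible because $\eta$ is exponentially small near $0$. This is the step I expect to require the most care, and the subordination route gives it cleanly.

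Finally, \eqref{drift-smoothing-iso-noise} follows by writing $|w-y|^\rho=(t-s)^{\rho/\alpha}\big(|w-y|/(t-s)^{1/\alpha}\big)^\rho$. For $\alpha<2$, one factor $(1+\cdot)^{-|\ba|}$ of the extra decay absorbs the $\rho$ power because $\rho<1\le|\ba|$. For $\alpha=2$ it is absorbed into the Gaussian, possibly after enlarging $c$. For \eqref{drift-smoothing-iso-noise-diag}, note that $|y'-y|\lesssim(t-s)^{1/\alpha}$ implies
\[
1+\frac{|y-w+(y'-y)|}{(t-s)^{1/\alpha}}\asymp 1+\frac{|y-w|}{(t-s)^{1/\alpha}},
\]
with constants uniform in the shift. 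This equivalence transfers the bound to the shifted argument in the stable case. In the Gaussian case one uses $|a+b|^2\ge\frac12|a|^2-|b|^2$ and enlarges $c$ once more.
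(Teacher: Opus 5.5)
Your argument is correct. The paper gives no proof of this proposition. It is recalled as a classical property of the isotropic stable and Gaussian kernels, and the remark that follows only stresses that spatial derivatives of $p_\alpha$ gain decay at infinity.

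Your route makes the result self-contained:
\begin{itemize}
\item Self-similarity reduces everything to $p_\alpha(1,\cdot)$.
\item The identity $\partial_u p_\alpha=-\frac{1}{\alpha u}(d\,p_\alpha+x\cdot\nabla p_\alpha)$ turns $\partial_u\partial^{\ba}p_\alpha$ into $-\frac1{\alpha u}\big((d+|\ba|)\partial^{\ba}p_\alpha+x\cdot\nabla\partial^{\ba}p_\alpha\big)$. This shows why $\theta=1$ costs exactly a factor $u^{-1}$ with no loss of spatial decay: the extra $|z|$ is paid for by the extra derivative, hence the need for order $3$.
\item The subordination formula $p_\alpha(1,z)=\int_0^\infty p_2(s,z)\eta_{\alpha/2}(s)\,ds$, combined with the global bound $\eta_{\alpha/2}(s)\lesssim s^{-1-\alpha/2}$ and the substitution $s=|z|^2v$, gives $|\partial^{\ba}p_\alpha(1,z)|\lesssim |z|^{-(d+\alpha+|\ba|)}$ at every order.
\end{itemize}
Compared with a bare citation, your proof isolates where rotational invariance is used. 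The gain of one power of decay per derivative comes from the Gaussian-mixture structure, and it fails in general for non-isotropic stable laws such as cylindrical ones.

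Your absorptions for \eqref{drift-smoothing-iso-noise} and \eqref{drift-smoothing-iso-noise-diag} are also fine. We have $|z|^\rho\le (1+|z|)^{|\ba|}$ because $\rho<1\le|\ba|$, and $1+|a+b|\ge (1+|a|)/(1+K)$ whenever $|b|\le K$.

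There is one normalization slip in the Gaussian case. The correct identity is $\partial^{\ba}p_2(u,x)=u^{-|\ba|/2}P_{\ba}(x/\sqrt u)\,p_2(u,x)$ with $P_{\ba}$ a polynomial, because $p_2(u,\cdot)$ already carries the factor $u^{-d/2}$. The $\alpha=2$ line of \eqref{GD_BOUNDS} contains the same extra $(t-s)^{-d/\alpha}$. There it is harmless, since $t-s\le T$ only weakens the bound. If you propagated it, however, the Gaussian versions of \eqref{drift-smoothing-iso-noise} and \eqref{drift-smoothing-iso-noise-diag} would come out with the wrong time exponent. With the correct identity they follow exactly as you describe.
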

\begin{rem}
Let us emphasize that, for a pure-jump isotropic stable driving noise, the derivatives of the marginal densities actually have better decay properties at infinity. This precisely the meaning of \eqref{drift-smoothing-iso-noise} in this case, which in turn allows to absorb a possibly unbounded contribution of order $\rho\in (0,1] $ yielding a time-gain in $t^{\frac \rho\alpha} $ (parabolic space-time correspondance in the $\alpha $ stable setting).
\end{rem}
    For notational simplicity we will use from now on for two quantities $A$ and $B$ the symbol $A\lesssim B $ whenever there exists a constant $C:=C(\textcolor{black}{d},b,\alpha,T)$ s.t. $A\le CB $. Namely,
    \begin{equation}\label{DEF_LESSSIM}
A\lesssim B \Longleftrightarrow \exists C:=C(\textcolor{black}{d},b,\alpha,T),\ A\le CB.
    \end{equation}
\section{Drift in a single Lebesgue space}\label{SEC_LP}

\subsection{Standing assumptions and discretization scheme}
We will here first focus on the case where $b$ in \eqref{SDE} belongs to a time-space Lebesgue space $b\in L^\vartheta([0,T],L^p(\R^d))$ under the Krylo-Röckner type condition
\begin{align}
\label{COND_KR}
\frac{d}{p}+\frac{\alpha}{\vartheta}<\alpha-1,\ p\ge 2.
\end{align}
In that case, it is well known from \cite{kryl:rock:05} that equation \eqref{SDE} admits a unique strong solution for $\alpha=2 $. Without additional regularity condition, weak well-posedness was derived in \cite{fito:jour:meno:25} under \eqref{COND_KR} for $\alpha\in (1,2) $. We recall that in the pure jump case some additional mild smoothness properties are needed to guarantee strong well-posedness, see \cite{xie:zhan:20}.
In that same reference, Euler type approximation schemes were as well introduced, see also \cite{jour:meno:24} for the Brownian case. 
Defining, for a given $T>0$, {$h:=T/n$, $n\in \N^*$ and $t_k:=kh$}, we consider the dynamics:  
\begin{equation}\label{euler-scheme}
			X_{t_{k+1}}^h = X_{t_k}^h + (Z_{t_{k+1}}-Z_{t_{k}})+ hb_h(U_k,X_{t_k}^h),
\end{equation}
where 		
		\begin{align}
			{b}_h (t,y) &= \mathbb{I}_{t\geq h,|b(t,y)|>0}\frac{\min \left\{ |b(t,y)|,Bh^{\frac{1}{\alpha}-1}\right\}}{|b(t,y)|} b(t,y), &(t,y)\in [0,T] \times \R^d, \label{cutoff}
		\end{align}
and the random variables $ (U_k)_{k\in \{0,\cdots,n-1\}}$ are independent and such that $U_{k}$ is uniformly distributed on $[t_k,t_{k+1}]$. Namely, in the above scheme, the drift is truncated in space and randomized in time.

For the analysis, it might as well be useful to consider the related continuous time extension of the above scheme which writes:
\begin{equation}\label{scheme-interpo_LP}
		d X^h_t=d Z_t+b_h(U_{\lfloor t/h\rfloor},X^h_{\tau^h_t}) d t,
\end{equation}
where $\tau_t^h=t_k $ for $t\in [t_k,t_{k+1}) $ stands for the largest discretization time smaller than $t$.

In \cite{jour:meno:24}, \cite{fito:jour:meno:25}, it was as well shown that the time marginals of the SDEs and the scheme admit at positive time densities which satisfy both a Duhamel type representation and some Aronson like estimates. Namely, denoting by $\Gamma(0,x,t,\cdot),\Gamma^h(0,x,t,\cdot)$  the respective densities at time $t$ of the SDE and the Euler scheme starting from $x$ at time 0 the following Duhamel type expansions hold.
\begin{PROP}[Duhamel representation and density estimates for the SDE and the Euler scheme]\label{prop-main-estimates_LP}
		Assume \eqref{COND_KR}. 
                The density $\Gamma(s,x,t,\cdot) $ of the unique weak solution to Equation \eqref{SDE} starting from $x$ at time $s\in [0,T)$ admits the following  Duhamel representation: for all $\textcolor{black}{t\in (s, T],\  y\in \R^d }$,
		\begin{align}
			\Gamma(s,x,t,y)
			= p_\alpha(t-s,y-x)-\int_{s}^{ t}\E_{s,x}\left[b(r,X_r)\cdot\nabla_y  p_\alpha(t-r,y-X_r)\right]d r.\label{duhamel-Diff}
		\end{align} 
Similarly, for the Euler scheme $X^h$ evolving according to \eqref{scheme-interpo_LP}, the conditional distribution of $X_t^h$  given $X_{t_k}^h=x$ with $k\in \llbracket 0,n-1\rrbracket ,\ t\in (t_k,T] $ admits a density $\Gamma^h(t_k,x,t,\cdot) $ which again enjoys a Duhamel type representation: for all $y\in \R^d $,
		\begin{align}
			\Gamma^h(t_k,x,t,y)
			= p_\alpha(t-t_k,y-x)-\int_{t_k}^{ t}\E_{t_k,x}\left[b_h(U_{\lfloor r/h\rfloor},X^h_{\tau_r^h})\cdot\nabla  p_\alpha(t-r,y-X^h_r)\right]d  r.\label{duhamel-scheme}
		\end{align} 
\end{PROP}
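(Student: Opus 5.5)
The plan is to derive both representations from Itô's formula applied to the map $r\mapsto p_\alpha(t-r,y-\cdot)$ evaluated along the process, and to use the density estimates of Proposition \ref{prop-controls-palpha} to justify the integrability and the limiting arguments. Existence of the density $\Gamma(s,x,t,\cdot)$ and the Aronson-type bounds are taken from \cite{fito:jour:meno:25,jour:meno:24}. We treat the scheme first, since its drift is bounded.

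\textbf{Euler scheme.} Fix $y$, $t>t_k$ and $\varepsilon>0$, and set $f(r,z)=p_\alpha(t+\varepsilon-r,y-z)$ for $r\in[t_k,t]$. This function is smooth and bounded, with bounded derivatives, and it solves the backward Kolmogorov equation $\partial_r f+\mathcal L^\alpha f=0$, where $\mathcal L^\alpha$ is the generator of $Z$.

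The interpolated scheme \eqref{scheme-interpo_LP} is a semimartingale whose drift $b_h(U_{\lfloor r/h\rfloor},X^h_{\tau^h_r})$ is bounded by $Bh^{\frac1\alpha-1}$. Applying Itô's formula for jump processes (or the Brownian one when $\alpha=2$), the compensated jump martingale and the generator term cancel in expectation against $\partial_r f$. What remains is
\begin{align*}
\E_{t_k,x}[f(t,X^h_t)]=f(t_k,x)-\int_{t_k}^t\E_{t_k,x}\big[b_h(U_{\lfloor r/h\rfloor},X^h_{\tau^h_r})\cdot\nabla_y p_\alpha(t+\varepsilon-r,y-X^h_r)\big]dr.
\end{align*}
The sign comes from $\nabla_z f=-\nabla_y p_\alpha$. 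Conditioning on $X^h_{\tau^h_r}$ and $U$ is legitimate because these are independent of the future noise increments.

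We then let $\varepsilon\to0$. The left-hand side converges to $\E_{t_k,x}[p_\alpha(t-\cdot,y-X^h_t)\ast\cdot]$; more precisely, it is the convolution of the law of $X^h_t$ with $p_\alpha(\varepsilon,\cdot)$. To get both existence of a density and its identification, we integrate against an arbitrary test function $\varphi\in C_c$ in $y$ and use Fubini. The right-hand side then defines a bounded (in $y$) kernel, since by \eqref{GD_BOUNDS}
\begin{align*}
\Big|\int_{t_k}^t\E[\cdots]dr\Big|\lesssim Bh^{\frac1\alpha-1}\int_{t_k}^t (t-r)^{-\frac{1+d}{\alpha}}\,dr.
\end{align*}
This bound is only integrable in $y$ and not pointwise. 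So we argue instead in $L^1(dy)$: $\int\bar p_\alpha(t-r,\cdot)dy\lesssim1$ gives the bound $(t-r)^{-1/\alpha}$, which is integrable because $\alpha>1$. Dominated convergence then yields the identity for the law tested against $\varphi$. This shows that the law of $X^h_t$ is absolutely continuous, with density given by the right-hand side of \eqref{duhamel-scheme}, for a.e. $y$ and then for every $y$ by continuity.

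\textbf{SDE.} For \eqref{SDE} with $b\in L^\vartheta L^p$ only, Itô's formula cannot be applied directly to $b(r,X_r)$. I would approximate $b$ by mollified $b_m\to b$ in $L^\vartheta L^p$ and use the Duhamel formula for the smooth-drift SDE $X^m$, which follows as above. The main obstacle is passing to the limit in $\int\E[b_m(r,X^m_r)\cdot\nabla p_\alpha(t-r,y-X^m_r)]dr$. Two ingredients are needed here.

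The first is uniform Aronson bounds $\Gamma^m(s,x,r,z)\lesssim\bar p_\alpha(r-s,z-x)$, from \cite{fito:jour:meno:25}. With these, Hölder's inequality gives
\begin{align*}
\big|\E[(b_m-b)(r,X^m_r)\cdot\nabla p_\alpha]\big|\le\|b_m-b\|_{L^p}\,\big\|\bar p_\alpha(r-s,\cdot)\,\nabla p_\alpha(t-r,y-\cdot)\big\|_{L^{p'}}.
\end{align*}
The last norm is $\lesssim (r-s)^{-d/(\alpha p)}(t-r)^{-1/\alpha-d/(\alpha p)}$. After Hölder in time, this is integrable precisely under \eqref{COND_KR}.

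The second ingredient is weak convergence $X^m\Rightarrow X$, which holds by tightness and uniqueness of the martingale problem. Combined with the uniform density bounds, it handles the term with fixed smooth $b_k$ via a $3\varepsilon$ argument. The result is \eqref{duhamel-Diff} with $\E_{s,x}[b(r,X_r)\cdot\nabla p_\alpha]$, which is well defined by the same Krylov-type estimate.
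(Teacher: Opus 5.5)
The paper gives no proof of this proposition. It imports it, together with \eqref{ARONSON_UPPER_LP}, from \cite{jour:meno:24,fito:jour:meno:25}, so your sketch has to stand on its own. Your treatment of the scheme is the standard argument and is sound: It\^o's formula for $r\mapsto p_\alpha(t+\varepsilon-r,y-X^h_r)$, then $\varepsilon\to0$ after integrating against $\varphi$, with $L^1(dy)$ domination by $(t-r)^{-1/\alpha}$. The one loose end is ``for every $y$ by continuity''. As you noticed, the sup bound $\int_{t_k}^t(t-r)^{-\frac{d+1}{\alpha}}dr$ diverges, so pointwise finiteness and continuity of the right-hand side of \eqref{duhamel-scheme} must come from the imported bound $\Gamma^h(t_k,x,r,\cdot)\lesssim\bar p_\alpha(r-t_k,\cdot-x)$. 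That bound gives $\E|\nabla p_\alpha(t-r,y-X^h_r)|\lesssim(t-r)^{-\frac1\alpha}\bar p_\alpha(t-t_k,y-x)$, which is integrable in $r$.

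For the SDE, the reason you give for mollifying does not hold, and it sends you on an unnecessary detour. It\^o's formula is applied to the smooth $f(r,z)=p_\alpha(t+\varepsilon-r,y-z)$ along the semimartingale $X_r=x+\int_s^r b(u,X_u)du+Z_r-Z_s$. The drift enters only through the pathwise Lebesgue integral $\int_s^t\nabla_z f(r,X_r)\cdot b(r,X_r)dr$, so no regularity of $b$ is needed, only $\E\int_s^t|b(r,X_r)|dr<\infty$. That follows from the Aronson bound on $\Gamma$ you already import: $\E|b(r,X_r)|\le\|\Gamma(s,x,r,\cdot)\|_{L^{p'}}\|b(r,\cdot)\|_{L^p}\lesssim(r-s)^{-\frac d{\alpha p}}\|b(r,\cdot)\|_{L^p}$, which is integrable in $r$ under \eqref{COND_KR}. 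Your scheme argument then applies verbatim. At fixed $y$, the limit $\varepsilon\to0$ is dominated, uniformly in small $\varepsilon$, by a constant depending on $t-s$ times $\|b(r,\cdot)\|_{L^p}(t-r)^{-\frac1\alpha}\big[(r-s)^{-\frac d{\alpha p}}+(t-r)^{-\frac d{\alpha p}}\big]$. This follows from $\bar p_\alpha(r-s,z-x)\bar p_\alpha(t-r,y-z)\lesssim\bar p_\alpha(t-s,y-x)\big(\bar p_\alpha(r-s,z-x)+\bar p_\alpha(t-r,y-z)\big)$ and is integrable exactly under \eqref{COND_KR}.

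Mollifying $b$ would only pay off if you were also constructing the density bounds for $X$ by approximation. As you set it up, it costs uniform-in-$m$ heat kernel bounds and a martingale-problem identification of the limit: you must first show limit points solve the singular-drift equation before invoking uniqueness. It also has concrete defects:
\begin{itemize}
\item $b_m\to b$ in $L^\vartheta$--$L^p$ fails when $\vartheta=\infty$ or $p=\infty$, and both are allowed by \eqref{COND_KR} (cf.\ $b_2$ in Section~\ref{secsomdrift}). The first case is repaired using the strict inequality; the second needs a localization in space.
\item You never pass to the limit in $\Gamma^m(s,x,t,y)$ on the left-hand side, which is needed to identify the density.
\item Your displayed estimate $\|\bar p_\alpha(r-s,\cdot-x)\nabla p_\alpha(t-r,y-\cdot)\|_{L^{p'}}\lesssim(r-s)^{-\frac d{\alpha p}}(t-r)^{-\frac1\alpha-\frac d{\alpha p}}$ is false uniformly in $(s,t)$. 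For $y=x$ and $r-s=t-r$, the left side is of order $(t-s)^{-\frac{d+1}\alpha-\frac d{\alpha p}}$. The correct version carries the factor $\bar p_\alpha(t-s,y-x)$ and the sum of singularities above. Since it needs the same integrability condition, the slip is harmless at fixed $s<t$.
\end{itemize}
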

It was shown as well that those densities enjoy the following Aronson type upper-bound. 
There exists finite constants $C:=C(T,\alpha,p,r,d)$ s.t. for all $0\le s,t_k<t\le T,\ x,y\in \R^d $,
\begin{align}\label{ARONSON_UPPER_LP}
  \Gamma(s,x,t,y)\le C\bar p_\alpha(t-s,y-x)\mbox{ and  }\Gamma^h(t_k,x,t,y)\le C\bar p_\alpha(t-t_k,y-x),
\end{align}
{with $\bar p_\alpha$ defined in \eqref{eq:barpal}.

The main result of the quoted papers was then that:
\begin{align}
\label{RES_JM_LP}
 | (\Gamma-\Gamma^h)(t_k,x,t,y)|\le Ch^{\frac \gamma \alpha}\bar p_\alpha(t-t_k,y-x), \ \gamma=\alpha-1-\frac dp-\frac{\alpha}{\vartheta},
\end{align}
up to a modification of $C$. The bound in \eqref{RES_JM_LP} in particular made the so-called margin to singularity associated with \eqref{COND_KR}. In other words, in the supremum norm for the densities, the obtained error rate corresponded to the remaining space in the strict inequality \eqref{COND_KR}. We think that \eqref{RES_JM_LP} is somehow \textit{sharp} for the approach considered (Duhamel type expansions). Nevertheless, the analysis below precisely takes advantage of an additional test function in the error analysis through appropriate duality type arguments. This in fact allows to improve the convergence rate from \eqref{RES_JM_LP}. This is precisely the purpose of Theorem \ref{thmsinglebdrift} in the next section. It can be seen as a kind of generalization of a result in \cite{hao:le:ling:24} where a semi-discretization scheme was considered (i.e. the concrete  time discretization was not considered and some boundedness in time was also assumed).

\subsection{Main results and proofs}
We denote by $\prho=\frac\vrho{\vrho-1}\in[1,+\infty]$ the conjugate exponent of $\rho\in[1,+\infty]$.\begin{THM}\label{thmsinglebdrift}
  We suppose that $b\in L^\vartheta([0,T],L^p(\R^d))$ under the Krylov and R\"ockner condition  $$\frac d p+\frac{\alpha}{\vartheta}<\alpha-1\mbox{ and }{p\ge 2},\ \alpha\in (1,2].$$
  For each $\vrho\in[p', p]$ such that $\frac d \vrho+\frac{\alpha}{\vartheta}\le \alpha-1$ 
  % \textcolor{red}{Attention: besoin de $\rho'\le p$, qui n'est pas impliqué par $\frac d\rho+\frac 2 r\le 1$ lorsque $d=1$ mais qui  est impliqué par $\rho\ge 2$}
  , there exists $C<\infty$ such that for each $h=T/n$ with $n\in\N^*$,
  $$\forall (t,x)\in (0,T]\times\R^d,\;\|\Gamma^h(0,x,t,\cdot)-\Gamma(0,x,t,\cdot)\|_{L^{\vrho'}}\le C h^{\frac{\alpha- 1}\alpha-\frac 1\vartheta}t^{-\frac d\alpha(\frac 1\vrho+\frac 1p)}.$$
  \end{THM}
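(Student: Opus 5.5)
We argue by duality: we bound $\int f(y)(\Gamma^h-\Gamma)(0,x,t,y)\,dy$ uniformly over test functions $f\in L^{\vrho}$ with $\|f\|_{L^\vrho}\le 1$, and close with a Gr\"onwall-type argument. Subtracting the two Duhamel representations \eqref{duhamel-Diff} and \eqref{duhamel-scheme} of Proposition \ref{prop-main-estimates_LP} (with $s=t_k=0$) gives, for $\Delta(r,\cdot):=\Gamma^h(0,x,r,\cdot)-\Gamma(0,x,r,\cdot)$,
\begin{align*}
\Delta(t,y)=&-\int_0^t\int_{\R^d}\Delta(r,z)\,b(r,z)\cdot\nabla_y p_\alpha(t-r,y-z)\,dz\,dr\\
&-\int_0^t\Big(\E\big[b_h(U_{\lfloor r/h\rfloor},X^h_{\tau^h_r})\cdot\nabla p_\alpha(t-r,y-X^h_r)\big]-\E\big[b(r,X^h_r)\cdot\nabla p_\alpha(t-r,y-X^h_r)\big]\Big)dr.
\end{align*}
The first term is the Gr\"onwall term and the second is the local (consistency) error $R^h(t,y)$. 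We test the identity against $f$, so each term becomes $\int f(y)\nabla p_\alpha(t-r,y-z)\,dy=:\nabla P_{t-r}f(z)$ evaluated at the relevant random point.

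\textbf{Gr\"onwall term.} We use H\"older in $z$ with exponents such that $\frac 1{\vrho'}+\frac1p+\frac1q=1$, i.e. $\frac1q=\frac1\vrho-\frac1p\ge 0$; this is where $\vrho\le p$ enters. This gives the bound $\|\Delta(r)\|_{L^{\vrho'}}\|b(r)\|_{L^p}\|\nabla P_{t-r}f\|_{L^q}$. Young's inequality together with Proposition \ref{prop-controls-palpha} gives $\|\nabla P_{t-r}f\|_{L^q}\lesssim (t-r)^{-\frac1\alpha-\frac d\alpha(\frac1\vrho-\frac1q)}=(t-r)^{-\frac1\alpha-\frac d{\alpha p}}$. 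Since $\frac d p+\frac\alpha\vartheta<\alpha-1$, H\"older in time with $b\in L^\vartheta_tL^p$ makes the kernel $(t-r)^{-\frac1\alpha-\frac d{\alpha p}}$ integrable in $L^{\vartheta'}$. A singular (fractional) Gr\"onwall lemma, run on $\sup_r r^{\frac d\alpha(\frac1\vrho+\frac1p)}\|\Delta(r)\|_{L^{\vrho'}}$, therefore reduces everything to bounding $R^h$ in the same weighted norm. The extra Beta-integral in $r$ from the weight is finite because $\frac d\alpha(\frac1\vrho+\frac1p)<1$, which follows from the conditions. The constraint $\vrho\ge p'$ keeps $\vrho'\le p$, so that $b\,\Delta$ makes sense with the norms used.

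\textbf{Local error.} We split $R^h$ into three parts.
\begin{enumerate}
\item The truncation $b-b_h$, together with the first step where $b_h=0$. This is bounded via $|b-b_h|\le |b|^{\kappa}(Bh^{\frac1\alpha-1})^{1-\kappa}$ together with the Aronson bound \eqref{ARONSON_UPPER_LP}, which gives $\|b-b_h\|$ small in $L^{p/\kappa}$-type norms. Taking $\kappa$ large enough yields $h^{\frac{\alpha-1}\alpha-\frac1\vartheta}$ up to the time singularity.
\item The time randomization $U_{\lfloor r/h\rfloor}$ versus $r$. Conditioning on $X^h_{\tau_r^h}$ and using independence of $U$, the average over $U$ reconstructs $\frac1h\int_{t_k}^{t_{k+1}}b(u,\cdot)du$. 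The difference is then a martingale-like term in which $\nabla P_{t-r}f$ is compared at times $r$ and $u$ within the same step. This gains $h$ times $\partial_t\nabla P$, interpolated with the trivial bound.
\item The spatial freezing $X^h_{\tau_r^h}$ versus $X^h_r$. We write $\E[b(u,X^h_{\tau})(\nabla P_{t-r}f(X^h_r)-\nabla P_{t-r}f(X^h_\tau))]$. Conditioning on $X^h_\tau$ and using that the one-step transition is $p_\alpha(r-\tau,\cdot)$ shifted by the drift, we obtain $\nabla P_{t-r}f$ convolved with $p_\alpha(r-\tau)$ minus identity. The increment is controlled by $(r-\tau)^{\eta/\alpha}$ times $\nabla^{1+\eta}P_{t-r}f$, contributing $h^{\eta/\alpha}(t-r)^{-\frac{1+\eta}\alpha-\frac d{\alpha p}}$ after H\"older with $b\in L^p$ and the density of $X^h_\tau$ in $L^{p'}$. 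The latter density in $L^{p'}$ costs $\tau^{-\frac d{\alpha p}}$, which is the origin of the $t^{-\frac d\alpha(\frac1\vrho+\frac1p)}$ factor.
\end{enumerate}

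\textbf{Main obstacle.} The hardest step is choosing $\eta$, with the time integrability, to reach exactly $h^{\frac{\alpha-1}\alpha-\frac1\vartheta}$. We need $\eta=\alpha-1-\frac\alpha\vartheta$, and then the time kernel $(t-r)^{-\frac{1+\eta}{\alpha}-\frac d{\alpha p}}$ is not $L^{\vartheta'}$-integrable in general. We would first try to avoid this by using the condition $\frac d\vrho+\frac\alpha\vartheta\le\alpha-1$ through a different H\"older split: put $f\in L^\vrho$ against the $L^\infty$ control of the smoothed density, rather than $b$ against $L^{p'}$. Failing that, we would split $[0,t]$ into $r<t-h$, where the smoothing estimate applies, and $r\in[t-h,t]$, where no freezing gain is used but the short interval contributes $h^{1-\frac1\vartheta}$ times kernel powers. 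For the critical endpoint we would use a sewing/telescoping over steps, summing the increments $\E[g(X^h_{t_{k+1}})-g(X^h_{t_k})]$ rather than bounding each by absolute values. The last time-step, where $t-r\le h$, must be treated separately in every one of these terms.
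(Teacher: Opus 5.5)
\textbf{There are two genuine gaps.} Your architecture matches the paper: subtracting \eqref{duhamel-Diff} and \eqref{duhamel-scheme}, testing against $f\in L^{\rho}$, and closing with a weighted singular Gr\"onwall lemma based on $\|\nabla P_{t-r}f\|_{L^q}\lesssim (t-r)^{-\frac1\alpha-\frac d{\alpha p}}$. But your local error is incomplete, and its key estimate is left unresolved.

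\textbf{A missing term.} Your $R^h$ compares $b_h(U_{\lfloor r/h\rfloor},X^h_{\tau^h_r})$ with $b(r,X^h_r)$, both multiplied by $\nabla P_{t-r}f(X^h_r)$. Items 1--3 handle the truncation, $U$ versus $r$, and $\nabla P_{t-r}f(X^h_r)$ versus $\nabla P_{t-r}f(X^h_{\tau^h_r})$ with $b$ already evaluated at $X^h_{\tau^h_r}$. None of them covers the leftover term
\[
\E\big[(b(r,\cdot)\cdot\nabla P_{t-r}f)(X^h_{\tau^h_r})\big]-\E\big[(b(r,\cdot)\cdot\nabla P_{t-r}f)(X^h_r)\big]=\int_{\R^d}\big(\Gamma^h(0,x,\tau^h_r,z)-\Gamma^h(0,x,r,z)\big)\,b(r,z)\cdot\nabla P_{t-r}f(z)\,dz .
\]
Since $b(r,\cdot)$ is only in $L^p$, moving its argument from $X^h_r$ to $X^h_{\tau^h_r}$ must be paid for by a time-regularity estimate of the density in a Lebesgue norm, at the target order $h^{\frac{\alpha-1}\alpha-\frac1\vartheta}$. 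That is a separate Duhamel-based result. In the paper the Gr\"onwall term is written at grid times, with $(\Gamma-\Gamma^h)(0,x,\tau^h_s,\cdot)$ and $b_h(s,\cdot)$. This contribution then becomes $\Delta^2_t$ in \eqref{decomperrt}, which involves $\Gamma(0,x,s,\cdot)-\Gamma(0,x,\tau^h_s,\cdot)$ and is controlled by Lemma \ref{lemerrfaibschemtt} together with \eqref{estuv} for $\frac1u=\frac1v=\frac12(\frac1{\rho'}+\frac1{p'})$. With your placement you would need the analogue of that lemma for $\Gamma^h$, derived from \eqref{duhamel-scheme}.

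\textbf{The unresolved freezing estimate (item 3).} This is where the key idea is missing. You bound $\nabla^{1+\eta}P_{t-r}f$ in $L^\infty$ with the density in $L^{p'}$. That costs $(t-r)^{-\frac{1+\eta}\alpha-\frac d{\alpha\rho}}$: since $f\in L^\rho$, the loss is $\frac d{\alpha\rho}$, not $\frac d{\alpha p}$. Whatever $\eta$ you choose and however you split at $t-h$, the region $t-r\approx h$ then gives at best $h^{\frac{\alpha-1}\alpha-\frac1\vartheta-\frac d{\alpha\rho}}$. No sewing is needed to fix this; the paper uses Lemma \ref{lemfdg}, as follows.
\begin{itemize}
\item Condition on $X^h_{\tau^h_r}=y$ and separate the drift displacement $b_h(U,y)(r-\tau^h_r)$. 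It is bounded by $h|b_h|\,\|\nabla g\|_{L^\infty}$ with $g=\nabla P_{t-r}f$. This brings in $|b|^2$, the density in $L^{(p/2)'}$ and an extra $h^{1-\frac1\vartheta}$. This is one place where $\frac d\rho+\frac\alpha\vartheta\le\alpha-1$ is needed.
\item For the pure noise increment, use a three-way H\"older split: $\Gamma^h(0,x,\tau^h_r,\cdot)\in L^{\frac{p\rho'}{p-\rho'}}$, $b\in L^p$, and the $f$-dependent factor in $L^\rho$ with the heat kernel in $L^1$. Then there is no dimensional loss in $t-r$. The price is $(\tau^h_r)^{-\frac d\alpha(\frac1\rho+\frac1p)}$, and this (not the $L^{p'}$ norm of the density) is where the $\frac1\rho$ in the theorem's time singularity comes from.
\item Combine this split with It\^o's formula, $\E[g(y+Z_r-Z_{\tau^h_r})]-g(y)=c\int_{\tau^h_r}^r\E[\Delta^{\frac\alpha2}g(y+Z_v-Z_{\tau^h_r})]\,dv$, and the bound $\|\Delta^{\frac\alpha2}g\|_{L^\rho}\lesssim\|\partial_t\nabla p_\alpha(t-r,\cdot)\|_{L^1}\|f\|_{L^\rho}\lesssim(t-r)^{-1-\frac1\alpha}$.
\end{itemize}
This yields a full factor $h\,(t-r)^{-1-\frac1\alpha}$, which integrates over $r\le t-h$ to $h^{1-\frac1\alpha}$. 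Averaging $\|b(u,\cdot)\|_{L^p}$ over one step costs $h^{-\frac1\vartheta}$, giving exactly $h^{\frac{\alpha-1}\alpha-\frac1\vartheta}$. For $r\le t/2$ the crude bound $h^{\frac1\alpha}\|\nabla g\|_{L^\infty}$ suffices.

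Your vague ``different H\"older split'' was on the right track. With the three-way split alone, the first-order bound $\E|Z_r-Z_{\tau^h_r}|\,\|\nabla^2p_\alpha(t-r,\cdot)\|_{L^1}$ already gives the rate, up to a $\ln(1/h)$ loss only when $\alpha=2$ and $\vartheta=\infty$; It\^o's formula removes it.

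\textbf{A smaller point (item 1).} Take $\kappa=2$, i.e.\ $|b-b_h|\le\frac{h^{\frac{\alpha-1}\alpha}}B|b|^2$, rather than ``$\kappa$ large''. For $\kappa>\vartheta$, $|b|^\kappa$ need not be integrable in time.
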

\begin{rem}The larger {$\vartheta$}, the larger the derived order of convergence $\frac{\alpha-1}\alpha-\frac 1\vartheta$ is. On the other hand, the larger $p$, the more flexible is the choice of the index $\rho$ and the smaller is the singularity of the estimation in the time variable $t$.  Note that the choice $\rho=p$ is always possible.
\end{rem}
We will need the following estimations of the Gaussian and stable densities  consequence of \cite[Lemma 2.5]{jour:meno:24} and \cite[Lemma 4]{fito:jour:meno:25} respectively.
\begin{PROP}\label{PROP_INT_LP_HK}
  For $u\ge 1$, $\theta\in\{0,1\}$ and $k\in\N$, there exists $C<\infty$ such that 
  \begin{align}
    \forall t\in(0,T],\;\|\partial_t^\theta\nabla^kp_\alpha(t,\cdot)\|_{L^u}+\|\partial_t^\theta\nabla^k\bar p_\alpha(t,\cdot)\|_{L^u}\le Ct^{-(\theta+\frac k\alpha +\frac d{\alpha u'})}\label{estigradplu}.\end{align}For $k\in\N$, there exists $C<\infty$ such that
  \begin{equation}
   \forall (t,y)\in(0,T]\times \R^d,\;|\nabla^kp_\alpha(t,{y})|\le Ct^{-\frac k\alpha}\bar p_\alpha(t,y)\label{majogradp2}.
  \end{equation}

  For $k\in\{0,1\}$, there exists $C<\infty$ such that \begin{align}
\mbox{for }0<s<t\le T\mbox{ and }y\in\R^d,\;|\nabla^kp_\alpha(t,y)-\nabla^kp_\alpha(s,y)|\le C\frac{(t-s)\wedge s}{s^{1+\frac k \alpha}}(1+(t/s)^{\frac d\alpha})\bar p_\alpha(t,y).\label{estidifftempsgaus}
  \end{align}
 
\end{PROP}
From \eqref{ARONSON_UPPER_LP}, we derive the following result.
\begin{cor}
   Let $u\ge 1$. There exists $C<\infty$ such that $0\le s<t\le T,\ x\in \R^d$,
\begin{align}\|\Gamma(s,x,t,\cdot)\|_{L^u}\le C (t-s)^{-\frac d{\alpha u'}}.\label{estitranslu}\end{align}
There exists $C<\infty$ such that for $h=\frac{T}{n},\ n\in \mathbb N^*$, $0\le t_k<t\le T,\ x\in \R^d$
\begin{align}\|\Gamma^h(t_k,x,t,\cdot)\|_{L^u}\le C (t-t_k)^{-\frac d{\alpha u'}}.\label{estitranslusch}\end{align}\end{cor}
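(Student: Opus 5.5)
Both bounds follow directly from the Aronson type upper bounds \eqref{ARONSON_UPPER_LP} combined with the $L^u$ estimate \eqref{estigradplu} for $\bar p_\alpha$, so the plan is a one-line domination followed by a scaling computation.

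\textbf{Step 1 (domination).} For $0\le s<t\le T$ and $x\in\R^d$, \eqref{ARONSON_UPPER_LP} gives pointwise $0\le \Gamma(s,x,t,y)\le C\bar p_\alpha(t-s,y-x)$. Taking the $L^u$ norm in $y$ and using translation invariance of Lebesgue norms yields
$$\|\Gamma(s,x,t,\cdot)\|_{L^u}\le C\|\bar p_\alpha(t-s,\cdot)\|_{L^u}.$$
The same holds for $\Gamma^h(t_k,x,t,\cdot)$ with $t-t_k$ in place of $t-s$. It is crucial here that the constant in \eqref{ARONSON_UPPER_LP} does not depend on $h$, which is how that estimate is stated.

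\textbf{Step 2 (scaling).} Apply \eqref{estigradplu} with $\theta=0$, $k=0$ to get $\|\bar p_\alpha(t-s,\cdot)\|_{L^u}\le C(t-s)^{-\frac d{\alpha u'}}$; this concludes. To be self-contained, one can also check it by hand. Write $\bar p_\alpha(r,x)=r^{-\frac d\alpha}q(r^{-\frac1\alpha}x)$, with
\begin{itemize}
\item $q(z)=(1+|z|)^{-(d+\alpha)}$ for $\alpha\in(1,2)$,
\item $q$ the centered Gaussian density with covariance $cI_d$ for $\alpha=2$.
\end{itemize}
The change of variables $z=r^{-1/\alpha}x$ gives
$$\|\bar p_\alpha(r,\cdot)\|_{L^u}=r^{-\frac d\alpha+\frac d{\alpha u}}\|q\|_{L^u}=r^{-\frac d{\alpha u'}}\|q\|_{L^u}.$$
Finally, $\|q\|_{L^u}<\infty$ for every $u\ge1$, since $q$ is bounded and integrable (for $\alpha<2$, the exponent $u(d+\alpha)>d$ ensures integrability). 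When $u=1$ we have $u'=\infty$, the exponent vanishes, and the bound is just the fact that a density has mass one.

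There is no real obstacle. The only point to keep track of is that all constants are uniform in $x$, in $s$ or $t_k$, and in $h$, which is inherited directly from \eqref{ARONSON_UPPER_LP}.
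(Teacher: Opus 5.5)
Your proof is correct and is exactly the paper's argument. The paper simply states that the corollary follows from \eqref{ARONSON_UPPER_LP}: you dominate $\Gamma$ and $\Gamma^h$ pointwise by $C\bar p_\alpha$ with an $h$-uniform constant, then use $\|\bar p_\alpha(r,\cdot)\|_{L^u}\lesssim r^{-\frac d{\alpha u'}}$ from \eqref{estigradplu}, which your scaling computation also verifies directly, including the endpoint cases $u=1$ and $u=\infty$.
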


We rely on the following decomposition of the discretization error which takes into account that $b_h$ vanishes on $[0,h)\times\R^d$ :\begin{align}
 &\Gamma^h(0,x,t,y)-\Gamma(0,x,t,y)=\Delta^1_t(y)+\Delta^2_t(y)+\Delta^3_t(y)+\Delta^4_t(y)+\Delta^5_t(y)+\Delta^6_t(y)
   \mbox{ where }\label{decomperrt}\\
 &\Delta^1_t(y)=\1_{\{t\ge 3h\}}\int_h^{\tau^h_t-h} ds  \int_{\R^d}[\Gamma(0,x,\tau^h_s,z)-\Gamma^h(0,x,\tau^h_s,z)] b_h(s,z) \cdot \nabla p_\alpha(t-s,y-z)dz,\notag
  \\ &\Delta^2_t(y)=\1_{\{t\ge 3h\}}\int_{h}^{\tau^h_t-h} ds\int_{\R^d}[\Gamma(0,x,s,z)-\Gamma(0,x,\tau^h_s,z)]b_h(s,z)\cdot\nabla p_\alpha(t-s,y-z) dz,\notag\\&\Delta^3_t(y)=\1_{\{t\ge 3h\}}\int_{h}^{\tau^h_t-h}ds\E\bigg[b_h(U_{\lfloor s/h\rfloor},X_{\tau^h_s}^h)\cdot (\nabla p_\alpha(t-s,y-X_{\tau^h_s}^h)-\nabla p_\alpha(t-s,y-X_s^h))\bigg],\notag
  \\ &\Delta^4_t(y)=\1_{\{t\ge 3h\}}\int_{h}^{\tau^h_t-h}ds\E\bigg[b_h(U_{\lfloor s/h\rfloor},X_{\tau^h_s}^h)\cdot (\nabla p_\alpha(t-U_{\lfloor s/h\rfloor},y-X_{\tau^h_s}^h)-\nabla p_\alpha(t-s,y-X_{\tau^h_s}^h))\bigg],\notag\\
 &\Delta^5_t(y)=\int_{(\tau^h_t-h)\vee 0}^tds\E\bigg[ b(s,X_s)\cdot\nabla p_\alpha(t-s,y-X_s)-b_h(U_{\lfloor s/h\rfloor},X_{\tau^h_s}^h))\cdot\nabla p_\alpha(t-s,y-X_{s}^h))\bigg],\notag\\
  &\Delta^6_t(y)=\1_{\{t\ge 2h\}}\int_{0}^{\tau^h_t-h}ds\int_{\R^d}\Gamma(0,x,s,z)(b(s,z)-b_h(s,z))\cdot\nabla p_\alpha(t-s,y-z)dz.\notag\end{align}

In view of this decomposition, we see that we will have to estimates the $L^{\vrho'}(\R^d)$ norm of terms of the form $\int_{\R^d}\gamma(z)\beta(r,z).\nabla p_\alpha(t-s,\cdot-z)dz$ with $\gamma$ typically equal to some transition density term, $r\in[0,T]$ and  $\beta$ equal to $b$ or $b_h$ and therefore such that $\beta(r,\cdot)$ belongs to $L^p(\R^d)$. According to Young's and H\"older's inequalities, \eqref{estigradplu} and $|b_h|\le|b|$, for  $u,v\ge 1$ such that $\frac 1u+\frac 1v=\frac 1{\prho}+\frac 1{p'}$,
%%%%% De S. pour S.: Détail du Young Hölder:
%%% On part de \rho ' et le v de la norme du gradient est fixé. D'où $1+1/rho'=1/e+1/p $ où e est l'exposant qui va avoir le produit, $1/e=1/u+1/v $
%%% et donc en effet $1/p'+1/\rho'=1/u+1/v $
\begin{align}
   &\left\|\int_{\R^d}\gamma(z)\beta(r,z).\nabla p_\alpha(t-s,\cdot-z)dz\right\|_{L^\prho}\notag\\
   \le& \|\gamma\|_{L^u}\|\beta(r,\cdot)\|_{L^p}\|\nabla p_\alpha(t-s,\cdot)\|_{L^v}\le \|\gamma\|_{L^u}\|b(r,\cdot)\|_{L^p}(t-s)^{-(\frac 1\alpha+\frac d{\alpha v'})}.\label{estuv}
\end{align}
We will mainly use this inequality with $(u,v)=(\prho,p')$ (typically when $s\le \frac t2$) and with $(u,v)=(\frac{p\prho}{p-\prho},1)$ (typically when $s\ge \frac t2$).

Let us first check that the estimation  
\begin{equation}
   \exists C_{(2-6)}<\infty,\;\forall t\in[0,T],\;\sum_{i=2}^6\|\Delta^i_t(\cdot)\|_{L^\prho}\le C^{\frac{\vartheta-1} \vartheta}_{(2-6)}h^{\frac{\alpha-1}\alpha-\frac 1\vartheta}t^{-\frac d\alpha(\frac 1\vrho+\frac 1p)},\label{esti2-6}
 \end{equation} that we are next going to establish is enough to conclude the proof {by} a discrete Gronwall argument. For $t\ge h$, we have $\|\Gamma(0,x,t,\cdot)-\Gamma^h(0,x,t,\cdot)\|_{L^\prho}\le \|\Delta^1_t(\cdot)\|_{L^\prho}+Ct^{-\frac d\alpha(\frac 1\vrho+\frac 1p)}h^{\frac{\alpha-1}\alpha-\frac 1\vartheta}$ and by \eqref{estuv} applied with $(u,v)=(\prho,p')$ then H\"older's inequality, we get
\begin{align}
  \|\Delta^1_t(\cdot)\|_{L^\prho}
                          &\le \int_h^t ds \|\Gamma(0,x,\tau^h_s,\cdot)-\Gamma^h(0,x,\tau^h_s,\cdot)\|_{L^\prho}\|b(s,\cdot)\|_{L^p}(t-s)^{-(\frac 1\alpha+\frac d{\alpha p})}ds\notag\\
  &\le C\|b\|_{L^\vartheta-L^p}\left(\int_h^t\|\Gamma(0,x,\tau^h_s,\cdot)-\Gamma^h(0,x,\tau^h_s,\cdot)\|^{\frac{\vartheta}{\vartheta-1}}_{L^\prho}(t-s)^{-(\frac 1\alpha+\frac{d}{\alpha p})\times\frac{\vartheta}{\vartheta-1}}ds\right)^{\frac{\vartheta-1}{\vartheta}}.\label{estidel1}
\end{align}By \eqref{estitranslu} and \eqref{estitranslusch}, the function $$f(t)=\left(t^{\frac d \alpha(\frac 1\vrho+\frac 1 p)}\|\Gamma(0,x,t,\cdot)-\Gamma^h(0,x,t,\cdot)\|_{L^{\prho}}\right)^{\frac{\vartheta}{\vartheta-1}}$$ is bounded on $(0,T]$ by some constant not depending on $h$.
 We deduce from \eqref{decomperrt}, \eqref{esti2-6}, \eqref{estidel1} and the inequality $
{s}/{\tau^h_s}<2$ valid for $s\ge h$ that 
\begin{align}
   f(t)\le C_{(2-6)}h^{(\frac{\alpha-1}\alpha-\frac 1\vartheta)\times\frac{\vartheta}{\vartheta-1}}+Ct^{\frac d\alpha(\frac 1\vrho+\frac 1p)\times\frac{\vartheta}{\vartheta-1}}\int_h^tf(\tau^h_s)s^{-\frac d \alpha(\frac 1\vrho+\frac 1p)\times\frac{\vartheta}{\vartheta-1}}(t-s)^{-(\frac 1\alpha+\frac{d}{\alpha p})\times\frac{\vartheta}{\vartheta-1}}ds.
  \label{majopregron}\end{align}
Note that since $\frac dp+\frac{\alpha}{\vartheta}<\alpha-1$ and $\frac d\vrho+\frac{\alpha}{\vartheta}\le \alpha-1$, $\frac 1\alpha+\frac d{\alpha p}+\frac 1\vartheta<1$ and $\frac d 2(\frac 1\vrho+\frac 1 p)+\frac{\alpha}{\vartheta}<\alpha-1$ so that $(\frac 1\alpha+\frac{d}{\alpha p})\times\frac{\vartheta}{\vartheta-1}<1$ and $\frac d \alpha (\frac 1\vrho+\frac 1 p)+\frac 2{\vartheta}<2\times \frac{\alpha-1}\alpha\le 1$ which implies ${\frac d \alpha (\frac 1\vrho+\frac 1 p)\times\frac{\vartheta}{\vartheta-1}<1}$. The statement of Theorem \ref{thmsinglebdrift} then follows from the next lemma, the proof of which is postponed at the end of the section.

\begin{lem}[{Gronwall type lemma}]\label{lemgron}
  Let $f:[0,T]\to\R_+$ be bounded and such that for $h=T/N$ with $N\in\N$,
  \begin{equation}
   \forall t\in[0,T],\;f(t)\le \kappa +\lambda t^{a_1}
   \int_h^{h\vee t} f(\tau^h_s)s^{-a_1}(t-s)^{-a_2}ds,\label{eq:gron}
 \end{equation}
 where $\kappa,\lambda\in [0,+\infty)$ and $a_1,a_2\in {[0,1)}$. Then $\sup_{t\in[0,T]}f(t)\le C\kappa$ for some constant $C(\lambda,a_1,a_2,T)$ not depending on $h$.
\end{lem}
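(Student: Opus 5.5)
Set $F:=\sup_{t\in[0,T]}f(t)<\infty$ and aim to bound $F$ by $C\kappa$ with $C$ independent of $h$. A single direct estimate gives only $F\le \kappa+\lambda F\sup_t t^{a_1}\int_0^t s^{-a_1}(t-s)^{-a_2}ds\le\kappa+\lambda B(1-a_1,1-a_2)T^{1-a_2}F$. This closes only if $T$ is small, so the plan is to iterate the inequality and extract a factorially decaying series, as in the proof of the singular Gronwall lemma (Henry's lemma).

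\textbf{Iteration.} Write \eqref{eq:gron} as $f\le \kappa+\lambda\mathcal K f$, where
$$\mathcal K g(t)=t^{a_1}\int_h^{h\vee t}g(\tau^h_s)s^{-a_1}(t-s)^{-a_2}ds.$$
The operator $\mathcal K$ is positive, and $\mathcal K$ applied to a constant $c$ gives at most $c\,B(1-a_1,1-a_2)\,t^{1-a_2}$. The discretization causes one complication: a bound $g(u)\le c\,u^{m}$ only yields $g(\tau^h_s)\le c(\tau^h_s)^{m}\le c\,s^{m}$, because $m\ge0$ and $\tau^h_s\le s$. This monotonicity holds in the favorable direction, so the factor $s^{-a_1}$ is never paired with $\tau^h_s$ in a way that hurts. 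By induction, if $g(u)\le c\,u^{m}$ with $m\ge 0$, then
$$\mathcal K g(t)\le c\,t^{a_1}\int_0^t s^{m-a_1}(t-s)^{-a_2}ds=c\,B(m+1-a_1,1-a_2)\,t^{m+1-a_2}.$$
Iterating $N$ times gives
$$f\le\kappa\sum_{j=0}^{N-1}\lambda^j\mathcal K^j\mathbf 1+\lambda^N\mathcal K^N f,$$
with $\lambda^j\mathcal K^j\mathbf 1(t)\le \lambda^j\prod_{i=0}^{j-1}B\bigl(i(1-a_2)+1-a_1,1-a_2\bigr)\,t^{j(1-a_2)}$. The remainder satisfies the same bound with $\kappa$ replaced by $F$.

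\textbf{Summability.} The main obstacle is to show that the products of Beta functions decay superexponentially. Writing them with Gamma functions, they telescope up to bounded factors. Indeed,
$$B\bigl(x,1-a_2\bigr)=\frac{\Gamma(x)\Gamma(1-a_2)}{\Gamma(x+1-a_2)}\le C\,x^{-(1-a_2)}$$
for $x\ge 1-a_1>0$, so the $j$-fold product is at most $C^j/(j!)^{1-a_2}$ up to constants. Hence $\sum_j(\lambda C T^{1-a_2})^j/(j!)^{1-a_2}=:C'<\infty$, and the remainder term tends to $0$ as $N\to\infty$ because $F<\infty$. This gives $F\le C'\kappa$, with $C'$ depending only on $(\lambda,a_1,a_2,T)$.

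One small point must be checked: the weight $t^{a_1}$ in front of the integral does not spoil the induction, because it is absorbed in the Beta computation above via $s^{m-a_1}$ with $m-a_1>-1$. An alternative would be to work with an exponentially weighted norm, $\sup_t e^{-Mt}f(t)$ with $M$ large. That approach, however, requires controlling $\int_0^t e^{-M(t-s)}(t/s)^{a_1}(t-s)^{-a_2}ds$ uniformly in $t$, which is less clean near $s=0$, so I would use the iteration.
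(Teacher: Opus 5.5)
Your proof is correct, but it takes a different route from the paper's. You iterate the inequality into a Neumann series $f\le\kappa\sum_{j<n}\lambda^j\mathcal K^j\mathbf 1+\lambda^n\mathcal K^n f$. (Call the iteration count $n$ rather than $N$, since $N$ already denotes the number of time steps.) You handle the discretization entirely through $(\tau^h_s)^m\le s^m$ for $m\ge 0$, and you use boundedness of $f$ only to make the remainder vanish. Your Beta products in fact telescope exactly:
\[
\prod_{i=0}^{j-1}B\bigl(i(1-a_2)+1-a_1,1-a_2\bigr)=\Gamma(1-a_2)^j\,\frac{\Gamma(1-a_1)}{\Gamma(j(1-a_2)+1-a_1)}.
\]
So $C$ is an explicit Mittag-Leffler-type series, and no asymptotic estimate of $B(x,1-a_2)$ is actually needed.

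The paper argues block by block instead. It bounds the kernel once by $\Lambda t^{1-a_2}$ with $\Lambda=\lambda\bigl(1\vee B(1-a_1,1-a_2)\bigr)$, and fixes $\theta$ with $\Lambda 2^{a_1}\theta^{1-a_2}/(1-a_2)=\frac12$. The first block $[0,\theta\wedge T]$ follows directly from this global bound. On each later window $[j\theta,(j+1)\theta\wedge T]$, $j\ge1$, it splits the integral at the grid-adapted point $(\tau^h_{j\theta}+h)\wedge t$:
\begin{itemize}
\item Before that point, $f(\tau^h_s)$ only involves values on $[0,j\theta]$, which gives the bound $\Lambda T^{1-a_2}\sup_{[0,j\theta]}f$.
\item After it, $s>j\theta$, so $(t/s)^{a_1}\le 2^{a_1}$. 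The contribution is then at most $\frac12\sup_{[j\theta,(j+1)\theta\wedge T]}f$, which is absorbed using boundedness of $f$.
\end{itemize}
Induction on $j$ yields a constant that grows geometrically in the number $\lfloor T/\theta\rfloor$ of blocks.

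The paper's argument is fully elementary, but it needs this grid-aware splitting point and the separate treatment of the first block, where $t/s$ is uncontrolled. Yours disposes of the time grid in one line and produces an explicit constant that is sharper than the paper's geometric one. The price is some Beta/Gamma-function bookkeeping.
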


Let us now prove \eqref{esti2-6} by successively estimating $\Delta^i_t(y)$ for $i\in\{6,5,4,3,2\}$. {We start with the term $\Delta^6_t(y)$ associated with the truncation of the drift}.
When $t< 2h$, then $\Delta^6_t(y)=0$. Otherwise, $\tau^h_t-h\ge h$ and we have $\Delta^6_t(y)=\Delta^{61}_t(y)+\Delta^{62}_t(y)$ where \begin{align}
\Delta^{61}_t(y)&=\int_0^h \int_{\R^d } \Gamma(0,x,s,z) b(s,z)\cdot \nabla p_\alpha(t-s,y-z) ds, \notag\\
\Delta^{62}_t(y)&= \int_h^{\tau^h_t-h} \int_{\R^d } \Gamma(0,x,s,z) (b(s,z)-b_h(s,z))\cdot \nabla p_\alpha(t-s,y-z) ds.\label{EXP_DENS_DELTA_2}
                                                                 \end{align}
                                                                                                                                                                                
                                                                                                                                                                                   By \eqref{estuv} with $(u,v)=(p',\prho)$ and \eqref{estitranslu} then H\"older's inequality and the inequalities $\frac d{\alpha p}\times\frac{\vartheta}{\vartheta-1}<1$ and $\frac d{\alpha p}<\frac{\alpha-1}\alpha$ consequence of \eqref{COND_KR}, we get that for $t\ge 2h$
                                                                                                                                             
\begin{align*}
  \|\Delta^{61}_t(\cdot)\|_{L^{\prho}}&\lesssim\int_0^h \|\Gamma(0,x,s,\cdot)\|_{L^{p'}}\|b(s,\cdot)\|_{L^p}(t-s)^{-(\frac 1\alpha+\frac d{\alpha\vrho})}ds\lesssim t^{-(\frac 1\alpha+\frac d{\alpha \vrho})}\int_0^h s^{-\frac d{\alpha p}}\|b(s,\cdot)\|_{L^p}ds\\&\lesssim t^{-(\frac 1\alpha+\frac d{\alpha\vrho})}\|b\|_{L^\vartheta-L^p}\left(\int_0^h s^{-\frac d{\alpha p}\times\frac{\vartheta}{\vartheta-1}}ds\right)^{\frac{\vartheta-1}{\vartheta}}\\
                                   &\lesssim\|b\|_{L^\vartheta-L^p}t^{-(\frac 1\alpha+\frac d{\alpha\vrho})}{h^{1-(\frac d{\alpha p}+\frac 1\vartheta)}} \le {C\|b\|_{L^\vartheta-L^p}h^{\frac{\alpha-1}{\alpha}-\frac 1\vartheta} t^{-\frac d{\alpha }(\frac 1{\vrho}+\frac 1 p)}},
\end{align*}
where we used $d/p\le 1 $ (and therefore $h^{-\frac 1\alpha} \ge h^{-\frac{d}{\alpha p}}$) for the last inequality.
           On the other hand, 
$$|b(s,z)-b_h(s,z)|\le |b(s,z)|\1_{\{|b(s,z)|\ge Bh^{-\frac{\alpha-1}\alpha}\}}\le \frac{h^{\frac {\alpha-1}\alpha}}{B} |b(s,z)|^2,$$ so that $\|b(s,\cdot)-b_h(s,\cdot)\|_{L^{p/2}}\le \frac{h^{\frac {\alpha-1}\alpha}}{B}\|b(s,\cdot)\|_{L^p}^2$.  With \eqref{estitranslu}, Young's and H\"older's inequalities, we deduce that for $u,v\ge 1$ such that $\frac 1 u+\frac 1 v=1+\frac 1\prho-\frac 2 p$,

\begin{align*}
\|\Delta^{62}_t(\cdot)\|_{L^{\vrho'}}&\lesssim h^{\frac {\alpha-1}\alpha}\int_h^{\tau^h_t-h} \|\Gamma(0,x,s,\cdot)\|_{L^u}\|b(s,\cdot)\|^{2}_{L^p}\|\nabla p_\alpha(t-s,\cdot)\|_{L^v}ds\\&\lesssim h^{\frac {\alpha-1}\alpha}\int_h^{\tau^h_t-h} s^{-\frac d{\alpha u'}}\|b(s,\cdot)\|^{2}_{L^p}(t-s)^{-(\frac 1\alpha+\frac d{\alpha v'})}ds\\&\lesssim h^{\frac {\alpha-1}\alpha}\|b\|^{2}_{L^\vartheta-L^p}\left(\int_h^{\tau^h_t-h} s^{-\frac d{\alpha u'}\times \frac {\vartheta}{\vartheta-2}}(t-s)^{-(\frac 1\alpha+\frac d{\alpha v'})\times \frac {\vartheta}{\vartheta-2}}ds\right)^{\frac{\vartheta-2}{\vartheta}}
\end{align*}
Choosing $u=(p/2)'$ and $v=\prho$ and using that, by \eqref{COND_KR} and $\alpha \in (1,2]$, $\frac {2d }{\alpha p}\times \frac r{\vartheta-2}<1$, we get for $t\ge 2h$,
\begin{align*}
  \|\Delta^{62}_t(\cdot)\|_{L^{\vrho'}}&\lesssim h^{\frac {\alpha-1}\alpha}\|b\|^{2}_{L^\vartheta-L^p}\left(\int_h^{\tau^h_t-h} s^{-\frac {2d}{\alpha p}\times \frac {\vartheta}{\vartheta-2}}(t-s)^{-(\frac 1\alpha+\frac d{\alpha\vrho})\times \frac {\vartheta}{\vartheta-2}}ds\right)^{\frac{\vartheta-2}{\vartheta}}\\
  &\lesssim h^{\frac {\alpha-1}\alpha }\|b\|^{2}_{L^\vartheta-L^p}\left(\1_{\{\frac d{\vrho}+\frac{2\alpha }\vartheta<\alpha-1\}}t^{-\frac{2d}{\alpha p}+(\frac{\alpha-1}\alpha-\frac{d}{\alpha \vrho}-\frac{2}\vartheta)}+\1_{\{\frac d{\vrho}+\frac{2\alpha }\vartheta=\alpha-1\}}t^{-\frac{2d}{\alpha p}}(\ln(T/h))^{\frac{\vartheta-2}\vartheta}\right.\\
  &\phantom{\lesssim h^{\frac {\alpha-1}\alpha }\|b\|^{2}_{L^\vartheta-L^p}(}\left.+\1_{\{\frac d{\vrho}+\frac{2\alpha }\vartheta>\alpha-1\}}t^{-\frac{2d}{\alpha p}}h^{\frac {\alpha-1}\alpha-(\frac d{\alpha\vrho}+\frac {2}\vartheta)}\right).
\end{align*}
Since $\frac {\alpha-1}\alpha-(\frac d{\alpha\rho}+\frac 1\vartheta)\ge 0$ and $\frac 2p\le (\frac 1\vrho+\frac 1p)$, we deduce that $\|\Delta^{62}_t(\cdot)\|_{L^\prho}\lesssim h^{\frac{\alpha-1}\alpha-\frac 1\vartheta}t^{-\frac d\alpha(\frac 1\rho+\frac 1p)}$ but in the case $\vartheta=\infty$ and $\frac d{\rho}=\alpha-1$ when we have some additional factor $\ln(T/h)$. To get rid of this  logarithmic factor, it is enough to choose $\frac 1{u'}=\frac{1-\tilde \lambda}\vrho+\frac{1+\tilde \lambda}p$ and $\frac 1{v'}=\frac{\tilde \lambda}\vrho+\frac{1-\tilde \lambda}{p}$ for some $\tilde \lambda\in[0,1)$, which is possible when $p>2$ since then $\frac{1-\tilde \lambda}\vrho+\frac{1+\tilde \lambda}p$ goes to $\frac 2p<1$ as $\tilde \lambda\to 1-$. Note that we cannot have $p=2$ and $\frac d{\vrho}=\alpha-1$, since, by \eqref{COND_KR}, the first equality implies $\alpha>1+d/2$ which is only feasible if $d=1$ and $\alpha>3/2 $, so that $\rho=1/(\alpha-1)<2$ by the second equality, which contradicts the condition $\rho\ge p'=2$ that we suppose.

With the above estimations of $\|\Delta^{61}_t(\cdot)\|_{L^{\prho}}$, we conclude that
$$\forall t\in[0,T],\;\|\Delta^{6}_t(\cdot)\|_{L^{\prho}}\lesssim h^{\frac{\alpha -1}{\alpha}-\frac 1\vartheta}t^{-\frac d\alpha(\frac 1\rho+\frac 1p)}.$$

Let us turn to $\Delta^{5}_t(y) $, error associated with the last time steps. Using that $b_h$ vanishes on $[0,h)\times\R^d$, we get that $\Delta^5_t(y)=\Delta^{51}_t(y)+\Delta^{52}_t(y)+\Delta^{53}_t(y)$ with
\begin{align*}
   \Delta^{51}_t(y)&=\1_{\{t<2h\}}\int_{0}^{t}ds\int_{\R^d}\Gamma(0,x,s,z)b(s,z)\cdot\nabla p_\alpha(t-s,y-z)dz,\\\Delta^{52}_t(y)&=\1_{\{t\ge 2h\}}\int_{\tau^h_t-h}^tds\int_{\R^d}\Gamma(0,x,s,z)b(s,z)\cdot\nabla p_\alpha(t-s,y-z)dz,\\\Delta^{53}_t(y)&=-\frac{\1_{\{t>h\}}} h\int_{(\tau^h_t-h)\vee h}^t ds\int_{\tau^h_s}^{\tau^h_s+h}dr\E\left[b_h(r,X^h_{\tau_s^h})\cdot\nabla p_\alpha(t-s,y-X^h_s)\right].
\end{align*}
By \eqref{estuv} with $(u,v)=(\prho,p')$ and \eqref{estitranslu}, we have using that $\frac d{\alpha\vrho}\times\frac{\vartheta}{\vartheta-1}\le \frac{\alpha-1}\alpha<1$ since $\frac d\vrho+\frac{\alpha} r\le \alpha-1$ and $(\frac 1\alpha+\frac d{\alpha p})\frac{\vartheta}{\vartheta-1}<1$ by \eqref{COND_KR}
\begin{align*}
   \|\Delta^{51}_t(\cdot)\|_{L^\prho}&\lesssim \1_{\{t<2h\}}\int_0^{t}\|\Gamma(0,x,s,\cdot)\|_{L^\prho}\|b(s,\cdot)\|_{L^p}(t-s)^{-(\frac 1\alpha+\frac d{\alpha p})}ds\\&\lesssim \1_{\{t<2h\}}\|b\|_{L^\vartheta-L^p}\left(\int_0^{t}s^{-\frac d{\alpha \vrho}\times\frac{\vartheta}{\vartheta-1}}(t-s)^{-(\frac 1\alpha +\frac d{\alpha p})\frac{\vartheta}{\vartheta-1}}ds\right)^{\frac{\vartheta-1}{\vartheta}}\\&\lesssim\1_{\{t<2h\}}\|b\|_{L^\vartheta-L^p}t^{\frac {\alpha-1}\alpha-\frac 1\vartheta-\frac d \alpha(\frac 1\vrho+\frac 1 p)}\lesssim \|b\|_{L^\vartheta-L^p}h^{\frac{\alpha-1}\alpha-\frac 1\vartheta}t^{-\frac d \alpha(\frac 1\vrho+\frac 1 p)}.
\end{align*}

For $t\in[h,T]$ and $s\in[h,t]$, we have using \eqref{ARONSON_UPPER_LP} and \eqref{majogradp2}
 \begin{align*}
  \E\left[\left|b_h(r,X^h_{\tau_s^h})\cdot\nabla p_\alpha(t-s,y-X^h_s))\right|\right]&=\int_{\R^d\times\R^d}\Gamma^h(0,x,\tau^h_s,z)|b_h(r,z)|\Gamma^h(\tau^h_s,z,s,w)|\nabla p_\alpha(t-s,y-w)|dwdz\notag\\&\le
C\int_{\R^d\times\R^d}\Gamma^h(0,x,\tau^h_s,z)|b_h(r,z)|\bar{p}_\alpha(s-\tau^h_s,w-z)\frac{\bar{p}_\alpha(t-s,y-w)}{(t-s)^{\frac 1\alpha}}dwdz\notag\\&=
C\int_{\R^d}\Gamma^h(0,x,\tau^h_s,z)|b_h(r,z)|\frac{\bar{p}_\alpha(t-\tau^h_s,y-z)}{(t-s)^{\frac 1\alpha}}dz.\notag
\end{align*}
With Young's and H\"older's inequalities, {i.e. reasoning like in the derivation of \eqref{estuv} with $(u,v)=(\frac{p\prho}{p-\prho},1)$}, and \eqref{estitranslusch}, we deduce that
\begin{align*}
   \left\|\E\left[\left|b_h(r,X^h_{\tau_s^h})\cdot\nabla p_\alpha(t-s,\cdot-X^h_s))\right|\right]\right\|_{L^\prho}&\lesssim \|\Gamma^h(0,x,\tau^h_s,\cdot)\|_{L^{\frac{p\prho}{p-\prho}}}\|b(r,\cdot)\|_{L^p}\|\bar{p}_\alpha(t-\tau^h_s,\cdot)\|_{L^1}(t-s)^{-\frac 1\alpha}\\&\lesssim (\tau^h_s)^{-\frac d\alpha(\frac 1\rho+\frac 1 p)}\|b(r,\cdot)\|_{L^p}(t-s)^{-\frac 1\alpha}.
\end{align*}
Therefore, we have  using that $(\tau^h_t-h)\vee h> \frac t3$ 
\begin{align*}
  \|\Delta^{53}_t(\cdot)\|_{L^\prho}&\lesssim \frac{\1_{\{t>h\}}} h\int_{(\tau^h_t-h)\vee h}^t (\tau^h_s)^{-\frac d\alpha(\frac 1{\vrho}+\frac 1p)}(t-s)^{-\frac 1 \alpha}ds\int_{\tau^h_s}^{\tau^h_s+h} \|b(r,\cdot)\|_{L^p}dr\le \frac C h t^{-\frac d\alpha(\frac 1{\vrho}+\frac 1p)}h^{\frac{\alpha-1}\alpha}\|b\|_{L^\vartheta-L^p}h^{1-\frac 1\vartheta}\\
  &\lesssim  h^{\frac{\alpha-1}\alpha-\frac 1\vartheta}t^{-\frac d\alpha(\frac 1{\vrho}+\frac 1p)}\|b\|_{L^\vartheta-L^p}.
\end{align*}On the other hand, by \eqref{estuv} with $(u,v)=(\frac{p\prho}{p-\prho},1)$ and \eqref{estitranslu}, we get using that $\tau^h_t-h>t/3 $ when $t\ge 2h$ 
\begin{align*}
  \|\Delta^{52}_t(\cdot)\|_{L^{\prho}}&\lesssim \1_{\{t\ge 2h\}}\int_{\tau^h_t-h}^ts^{-\frac d\alpha(\frac 1\rho+\frac 1p)}\|b(s,\cdot)\|_{L^p}(t-s)^{-\frac 1\alpha}ds\lesssim t^{-\frac d\alpha(\frac 1\rho+\frac 1p)}\|b\|_{L^\vartheta-L^p}\left(\int_{\tau^h_t-h}^t(t-s)^{-\frac 1\alpha\times\frac{\vartheta}{\vartheta-1}}\right)^{\frac{\vartheta-1}{\vartheta}}\\
                                      &\lesssim \|b\|_{L^\vartheta-L^p}h^{\frac{\alpha-1}\alpha-\frac 1\vartheta} t^{-\frac d\alpha (\frac 1\rho+\frac 1p)}.\end{align*}With the above estimations of $\|\Delta^{51}_t(\cdot)\|_{L^\prho}$ and $\|\Delta^{53}_t(\cdot)\|_{L^\prho}$, we conclude that 
                                      $$\|\Delta^{5}_t(\cdot)\|_{L^\prho}\lesssim \|b\|_{L^\vartheta-L^p}h^{\frac{\alpha-1}\alpha-\frac 1\vartheta}t^{-\frac d\alpha(\frac 1{\vrho}+\frac 1p)}.$$

For $i\in\{2,3,4\}$, since $\Delta^i_t(y)=0$ when $(t,y)\in[0,3h)\times\R^d$, it is enough to estimate $\|\Delta^i_t(\cdot)\|_{L^\prho}$ when $t\ge 3h$. We therefore suppose from now on that $t\ge 3h$, which implies that $h\le \tau^h_{t/2}\le \tau^h_t-h$ and $\tau^h_{t/2}>t/4$.  
By \eqref{estidifftempsgaus}, for $r\in[0,\tau^h_t-h)$ and $s\in[\tau^h_r,\tau^h_r+h)$ so that $\frac{t-r\wedge s}{t- r\vee s}\le 2$, we have
\begin{align*}
  &|\nabla p_\alpha(t-r,\cdot)-\nabla p_\alpha(t-s,\cdot)|\le C h(t-r\vee s)^{-(1+\frac 1\alpha)}\bar p_\alpha(t-r\wedge s,\cdot)\\&\mbox{ and }\|\nabla p_\alpha(t-r,\cdot)-\nabla p_\alpha(t-s,\cdot)\|_{L^1}\le C  h(t-r\vee s)^{-(1+\frac 1\alpha)}.
\end{align*}
Therefore, using \eqref{estitranslusch}, $(t-r\vee s)^{-(1+1/\alpha)}\le 2^{1+1/\alpha}(t-r)^{-(1+1/\alpha)}$ and {$r\le 2\tau_r^h $}
 for $r\in[h,\tau^h_t-h)$ and $s\in[\tau^h_r,\tau^h_r+h]$, we get, {reasoning like in the derivation of \eqref{estuv} with $(u,v)=(\frac{p\prho}{p-\prho},1)$ and using \eqref{estitranslu}}, we get for the term $\Delta^{4}_t(\cdot)$ involving the time sensitivity of the stable heat kernel,
\begin{align*}
  \|\Delta^{4}_t(\cdot)\|_{L^\prho}&\le\frac 1h\int_{h}^{\tau^h_t-h}dr\int_{\tau^h_r}^{\tau^h_r+h}ds\|\Gamma^h(0,x,\tau^h_r,\cdot)\|_{L^{\frac{p\prho}{p-\prho}}}\|b(r,\cdot)\|_{L^p}\|\nabla p_\alpha(t-r,\cdot)-\nabla p_\alpha( t-s,\cdot)\|_{L^1}\\
                                   &\lesssim h \int_{h}^{\tau^h_t-h}r^{-\frac d\alpha (\frac 1\rho+\frac 1 p)}\|b(r,\cdot)\|_{L^p}(t-r)^{-(1+\frac 1\alpha)}dr\\&\lesssim h\left(t^{-(1+\frac 1\alpha)}\int_{h}^{\tau^h_{t/2}}r^{-\frac d\alpha (\frac 1\rho+\frac 1 p)}\|b(r,\cdot)\|_{L^p}dr+ t^{-\frac d\alpha(\frac 1{\vrho}+\frac 1p)}\int_{\tau^h_{t/2}}^{\tau^h_t-h}\|b(r,\cdot)\|_{L^p}(t-r)^{-(1+\frac 1\alpha)}dr\right)\\&\lesssim h\|b\|_{L^\vartheta-L^p}\left(t^{-(1+\frac 1\alpha)}\left(\int_{h}^{\tau^h_{t/2}}r^{-\frac d\alpha (\frac 1\rho+\frac 1 p)\frac{\vartheta}{\vartheta-1}}dr\right)^{\frac{\vartheta-1}{\vartheta}}+ t^{-\frac d\alpha(\frac 1{\vrho}+\frac 1p)}\left(\int_{\tau^h_{t/2}}^{\tau^h_t-h}(t-r)^{-(1+\frac 1\alpha)\times \frac{\vartheta}{\vartheta-1}}dr\right)^{\frac{\vartheta-1}{\vartheta}}\right)\\&\lesssim h\|b\|_{L^\vartheta-L^p}\left(t^{-(\frac 1\alpha+\frac 1\vartheta+\frac d\alpha(\frac 1\rho+\frac 1 p))}+ t^{-\frac d\alpha(\frac 1{\vrho}+\frac 1p)}h^{-(\frac 1\alpha+\frac 1\vartheta)}\right)\lesssim \|b\|_{L^\vartheta-L^p}h^{\frac {\alpha-1}\alpha-\frac 1\vartheta}t^{-\frac d\alpha(\frac 1{\vrho}+\frac 1p)}.
\end{align*}

The estimation of the term $\Delta^3_t(\cdot)$ {associated with the local transition of the Euler scheme on one time step} relies on the next lemma {inspired from \cite[Lemma 6.4]{hao:le:ling:24} or \cite[Lemma 4.5]{hao:le:ling:26}} and the proof of which is postponed.

\begin{lem}\label{lemfdg}For $t\in [h,T]$,
   \begin{align}
   \E[|f(X^h_{\tau^h_t})(g(X^h_t)-g(X^h_{\tau^h_t}))|]\le C h^{\frac 1\alpha}(\tau^h_t)^{-\frac d{\alpha p}}\|f\|_{L^p}\|\nabla g\|_{L^\infty}.\label{efdg1}
\end{align}
\begin{align}
   |\E[f(X^h_{\tau^h_t})(g(X^h_t)-g(X^h_{\tau^h_t}))]|\le C\|f\|_{L^p}\left(h^{1-\frac 1\vartheta}(\tau^h_t)^{-\frac{2d}{\alpha p}}\|b\|_{L^\vartheta-L^p}\|\nabla g\|_{L^\infty}+h(\tau^h_t)^{-\frac d\alpha(\frac 1{\vrho}+\frac 1 p)}\|\Delta^{\frac \alpha 2} g\|_{L^{\vrho}}\right)
 ,\label{efdg2}
\end{align}
where $\Delta^{\frac\alpha 2}:=-(-\Delta)^{\frac \alpha 2} $ denotes the generator of the driving process $Z$.
                                                                       \end{lem}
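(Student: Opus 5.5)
Both bounds come from conditioning on $\mathcal F_{\tau^h_t}$ and using the one-step structure of the scheme. On $[\tau^h_t,t]$ we have
$$X^h_t = X^h_{\tau^h_t}+(Z_t-Z_{\tau^h_t})+(t-\tau^h_t)\,b_h(U_{\lfloor t/h\rfloor},X^h_{\tau^h_t}),$$
where the noise increment is independent of $\mathcal F_{\tau^h_t}$ and $U_{\lfloor t/h\rfloor}$ is independent of everything else. Write $z=X^h_{\tau^h_t}$ and $\delta=t-\tau^h_t\le h$.

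\textbf{Proof of (\ref{efdg1}).} Bound the increment by the mean value theorem:
$$|g(X^h_t)-g(z)|\le\|\nabla g\|_{L^\infty}\bigl(|Z_t-Z_{\tau^h_t}|+\delta|b_h(U,z)|\bigr).$$
The noise part has conditional expectation $\lesssim h^{1/\alpha}$, since $\alpha>1$ gives a finite first moment. The cutoff (\ref{cutoff}) gives $\delta|b_h|\le Bh^{1/\alpha}$. Hence
$$\E\bigl[|f(z)(g(X^h_t)-g(z))|\bigr]\lesssim h^{\frac1\alpha}\|\nabla g\|_{L^\infty}\E|f(X^h_{\tau^h_t})|.$$
Finally,
$$\E|f(X^h_{\tau^h_t})|=\int\Gamma^h(0,x,\tau^h_t,z)|f(z)|\,dz\le\|f\|_{L^p}\|\Gamma^h(0,x,\tau^h_t,\cdot)\|_{L^{p'}}\lesssim \|f\|_{L^p}(\tau^h_t)^{-\frac d{\alpha p}},$$
by (\ref{estitranslusch}) with $u=p'$, so that $\frac d{\alpha u'}=\frac d{\alpha p}$.

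\textbf{Proof of (\ref{efdg2}): splitting the increment.} Decompose
$$g(X^h_t)-g(z)=\bigl[g(X^h_t)-g(z+Z_t-Z_{\tau^h_t})\bigr]+\bigl[g(z+Z_t-Z_{\tau^h_t})-g(z)\bigr].$$

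\emph{Drift part.} The first bracket is bounded by $\delta\|\nabla g\|_{L^\infty}|b_h(U,z)|\le h\|\nabla g\|_{L^\infty}|b(U,z)|$. Its contribution is therefore at most
$$h\|\nabla g\|_{L^\infty}\,\frac1h\int_{\tau^h_t}^{\tau^h_t+h}\int\Gamma^h(0,x,\tau^h_t,z)|f(z)||b(r,z)|\,dz\,dr.$$
For the inner integral, apply H\"older with $f,b(r,\cdot)\in L^p$ and $\Gamma^h\in L^{(p/2)'}$. By (\ref{estitranslusch}) this gives the factor $(\tau^h_t)^{-\frac{2d}{\alpha p}}$; here $p\ge2$ is needed so that $(p/2)'$ makes sense. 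Then H\"older in time gives $\int_{\tau^h_t}^{\tau^h_t+h}\|b(r,\cdot)\|_{L^p}dr\le h^{1-\frac1\vartheta}\|b\|_{L^\vartheta-L^p}$. Together these yield the first term $h^{1-\frac1\vartheta}(\tau^h_t)^{-\frac{2d}{\alpha p}}\|b\|_{L^\vartheta-L^p}\|\nabla g\|_{L^\infty}$.

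\emph{Noise part.} By independence, the conditional expectation of the second bracket is
$$P_\delta g(z)-g(z)=\int_0^\delta P_s\Delta^{\frac\alpha2}g(z)\,ds,$$
where $P_s$ is the semigroup of $Z$. This is the key point: we keep the signed expectation rather than taking absolute values, which gains a full factor $h$ instead of $h^{1/\alpha}$. The contribution is then
$$\int_0^\delta\!\int\Gamma^h(0,x,\tau^h_t,z)f(z)\,P_s\Delta^{\frac\alpha2}g(z)\,dz\,ds.$$
Apply H\"older with exponents $p$, $\rho$ and $\frac{p\rho'}{p-\rho'}$, the conjugate of $\frac1p+\frac1\rho$. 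This is legitimate because $\rho\ge p'$ gives $\frac1p+\frac1\rho\le1$. The $L^\rho$ contraction of $P_s$ controls the $g$ factor by $\|\Delta^{\frac\alpha2}g\|_{L^\rho}$. By (\ref{estitranslusch}), $\|\Gamma^h\|_{L^{p\rho'/(p-\rho')}}\lesssim(\tau^h_t)^{-\frac d\alpha(\frac1\rho+\frac1p)}$. Integrating over $s\in[0,\delta]$ gives the factor $h$, hence the second term of (\ref{efdg2}).

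\textbf{Main obstacle.} The delicate step is justifying the semigroup identity $P_\delta g-g=\int_0^\delta P_s\Delta^{\frac\alpha2}g\,ds$ for $g$ that is only Lipschitz with $\Delta^{\frac\alpha2}g\in L^\rho$. I would handle it by mollifying $g$, for which the identity is classical, and passing to the limit. For the mollified functions, $\Delta^{\frac\alpha2}g_\varepsilon=(\Delta^{\frac\alpha2}g)*\eta_\varepsilon$ converges in $L^\rho$, and $g_\varepsilon\to g$ locally uniformly. This suffices because $f\,\Gamma^h(0,x,\tau^h_t,\cdot)\in L^1$.
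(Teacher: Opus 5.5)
Your proof is correct and follows the paper's argument essentially line by line. Both condition on the one-step transition and treat a drift part, bounded using $|b_h|\le|b|$ and H\"older with $\Gamma^h(0,x,\tau^h_t,\cdot)\in L^{(p/2)'}$, separately from a signed noise part, controlled by the generator and H\"older with exponents $p,\rho,\frac{p\rho'}{p-\rho'}$. The only addition is your mollification argument for the identity $P_\delta g-g=\int_0^\delta P_s\Delta^{\frac\alpha2}g\,ds$, which the paper simply takes from It\^o's formula; it is harmless, and in fact unneeded in the paper's application, where $g=\nabla p_\alpha(t-s,\cdot)\star\varphi$ is already smooth.
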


We have $\Delta^3_t(y)=\Delta^{31}_t(y)+\Delta^{32}_t(y)$ where
\begin{align*}
   \Delta^{31}_t(y)&=\frac 1h\int_{h}^{\tau^h_{t/2}} ds\int_{\tau^h_s}^{\tau^h_s+h}dr\E\left[b_h(r,X^h_{\tau_s^h})\cdot(\nabla p_\alpha(t-s,y-X^h_{\tau^h_s})-\nabla p_\alpha(t-s,y-X^h_s))\right],\\
  \Delta^{32}_t(y)&                 =\frac 1 h\int_{\tau^h_{t/2}}^{\tau^h_t-h} ds\int_{\tau^h_s}^{\tau^h_s+h}dr\E\left[b_h(r,X^h_{\tau_s^h})\cdot(\nabla p_\alpha(t-s,y-X^h_{\tau^h_s})-\nabla p_\alpha(t-s,y-X^h_s))\right].\end{align*}
Let $\varphi\in L^\vrho(\R^d)$ be {an arbitrary} test function. By \eqref{efdg1} applied with $f=b_h(r,\cdot)$ and $g=\nabla p_\alpha(t-s,\cdot)\star\varphi$ and \eqref{estigradplu}, 
we have
\begin{align*}
   &\left|\int_{\R^d}\E\left[b_h(r,X^h_{\tau_s^h})\cdot(\nabla p_\alpha(t-s,y-X^h_s)-\nabla p_\alpha(t-s,y-X^h_{\tau^h_s}))\right]\varphi(y)dy\right|\\&\hspace{1cm}=\left|\int_{\R^d}\E\left[b_h(r,X^h_{\tau_s^h})\cdot(\nabla p_\alpha(t-s,\cdot)\star\varphi(X^h_{\tau^h_s})-\nabla p_\alpha(t-s,\cdot)\star\varphi(X^h_{s}))\right]\right|\\&\hspace{1cm}\lesssim h^{\frac 1\alpha}(\tau^h_s)^{-\frac d{\alpha p}}\|b(r,\cdot)\|_{L^p}\|{\nabla^2} p_\alpha(t-s,\cdot)\|_{L^\prho}\|\varphi\|_{L^\vrho}\lesssim h^{\frac 1\alpha }(\tau^h_s)^{-\frac d{\alpha p}}\|b(r,\cdot)\|_{L^p}(t-s)^{-({\frac{2}{\alpha }}+\frac d{\alpha \vrho})}\|\varphi\|_{L^\vrho}.
\end{align*}
Since $\varphi$ is arbitrary, we deduce that
\begin{align}
   \left\|\E\left[b_h(r,X^h_{\tau_s^h})\cdot(\nabla p_\alpha(t-s,\cdot-X^h_{\tau^h_s})-\nabla p_\alpha(t-s,\cdot-X^h_s))\right]\right\|_{L^\prho}\lesssim h^{\frac 1\alpha}(\tau^h_s)^{-\frac d{\alpha p}}\|b(r,\cdot)\|_{L^p}(t-s)^{-({\frac{2}\alpha}+\frac d{2\vrho})}.\label{est31}
\end{align}
On the other hand, by \eqref{efdg2} applied with $f=b_h(r,\cdot)$ and $g=\nabla p_\alpha(t-s,\cdot)\star\varphi$ and \eqref{estigradplu},
                                we have for $(r,s)\in [0,T]\times [h,t]$
                                \begin{align*}&\left|\E\left[b_h(r,X^h_{\tau_s^h})\cdot(\nabla p_\alpha(t-s,\cdot)\star\varphi(X^h_{\tau^h_s})-\nabla p_\alpha(t-s,\cdot)\star\varphi(X^h_s))\right]\right|\\&\hspace{1.5cm}\lesssim \|b(r,\cdot)\|_{L^p}\left(h^{1-\frac 1\vartheta}(\tau^h_s)^{-\frac{2d}{\alpha p}}\|b\|_{L^\vartheta-L^p}\|\nabla^2p_\alpha(t-s,\cdot)\star\varphi \|_{L^\infty}+h(\tau^h_s)^{-\frac d\alpha (\frac 1{\vrho}+\frac 1 p)}\|\Delta^{{\frac \alpha 2}}\nabla p_\alpha(t-s,\cdot)\star\varphi\|_{L^{\vrho}}\right)\\&\hspace{1.5cm}\lesssim \|b(r,\cdot)\|_{L^p}\|\varphi\|_{L^{\vrho}}\left(h^{1-\frac 1\vartheta}(\tau^h_s)^{-\frac{2 d}{\alpha p}}\|b\|_{L^\vartheta-L^p}\|\nabla^2p_\alpha(t-s,\cdot) \|_{L^{\prho}}+h(\tau^h_s)^{-\frac d\alpha(\frac 1{\vrho}+\frac 1 p)}\|\partial_t\nabla p_\alpha(t-s,\cdot)\|_{L^1}\right)\\&\hspace{1.5cm}\lesssim \|b(r,\cdot)\|_{L^p}\|\varphi\|_{L^p}\left(h^{1-\frac 1\vartheta}(\tau^h_s)^{-\frac{2d}{\alpha p}}\|b\|_{L^\vartheta-L^p}(t-s)^{-(\frac2\alpha+\frac d{\alpha \vrho})}+h(\tau^h_s)^{-\frac d\alpha(\frac 1{\vrho}+\frac 1 p)}(t-s)^{-(1+\frac 1\alpha)}\right)                  ,\end{align*}
                                                                                 so that
\begin{align}
   &\left\|\E\left[b_h(r,X^h_{\tau_s^h})\cdot(\nabla p_\alpha(t-s,\cdot-X^h_{\tau^h_s})-\nabla p_\alpha(t-s,\cdot-X^h_s))\right]\right\|_{L^\prho}\notag\\&\hspace{3cm}\lesssim \|b(r,\cdot)\|_{L^p}\left(h^{1-\frac 1\vartheta}(\tau^h_s)^{-\frac{2d}{\alpha p}}\|b\|_{L^\vartheta-L^p}(t-s)^{-(\frac{2}{\alpha}+\frac d{\alpha \vrho})}+h(\tau^h_s)^{-\frac d\alpha(\frac 1{\vrho}+\frac 1 p)}(t-s)^{-(1+\frac 1\alpha)}\right).\label{est32}
\end{align}
                                                                                 By \eqref{est31} and Minkowski's inequality,
\begin{align}
          \|\Delta^{31}_t(\cdot)\|_{L^\prho}
          &\lesssim  h^{-1+\frac 1\alpha}\int_{r=h}^{\tau^h_{t/2}}(\tau^h_r)^{-\frac d{\alpha p}}\|b(r,\cdot)\|_{L^p}\int_{s=\tau^h_r}^{\tau^h_r+h}(t-s)^{-(\frac 2\alpha+\frac{d}{\alpha\vrho})}dsdr\notag\\
                                             &\lesssim  h^{\frac 1\alpha}(t-\tau^h_{t/2})^{-(\frac 2\alpha+\frac{d}{\alpha \vrho})}\int_{h}^{\tau^h_{t/2}+h}r^{-\frac d{\alpha p}}\|b(r,\cdot)\|_{L^p}dr\notag\\&\lesssim  h^{\frac 1\alpha }t^{-(\frac 2\alpha+\frac{d}{\alpha\vrho})}\|b\|_{L^\vartheta-L^p}\left(\int_{h}^{\tau^h_{t/2}+h}r^{-\frac d{\alpha p}\times\frac{\vartheta}{\vartheta-1}}dr\right)^{\frac{\vartheta-1}{\vartheta}}\lesssim  h^{\frac 1\alpha}t^{{1-(\frac 2\alpha}+\frac 1\vartheta+\frac{d}\alpha(\frac 1{\vrho}+\frac 1{p}))}\|b\|_{L^\vartheta-L^p}\notag\\
                                    &\lesssim h^{\frac{\alpha-1}\alpha-\frac 1\vartheta}t^{-\frac{d}\alpha(\frac 1{\vrho}+\frac 1{p})}\|b\|_{L^\vartheta-L^p}       \label{estid31}.\end{align}
                                           By \eqref{est32} and the inequality $\int_{\tau^h_s}^{\tau^h_s+h}\|b(r,\cdot)\|_{L^p}dr\le\|b\|_{L^\vartheta-L^p}h^{1-\frac 1\vartheta}$ deduced from H\"older's inequality, we have
\begin{align*}
  \|\Delta^{32}_t(\cdot)\|_{L^\prho}\lesssim& \|b\|_{L^\vartheta-L^p}h^{-\frac 1 \vartheta}t^{-\frac{2d}{\alpha p}}\int_{s=\tau^h_{t/2}}^{\tau^h_t-h}(t-s)^{-(\frac 2\alpha+\frac d{\alpha\vrho})}\int_{r=\tau^h_s}^{\tau^h_s+h}\|b(r,\cdot)\|_{L^p}drds\notag\\&+ t^{-\frac d\alpha(\frac 1{\vrho}+\frac 1 p)}\int_{s=\tau^h_{t/2}}^{\tau^h_t-h}(t-s)^{-(1+\frac 1\alpha)}\int_{r=\tau^h_s}^{\tau^h_s+h}\|b(r,\cdot)\|_{L^p}drds\notag\\
  \lesssim& \|b\|^2_{L^\vartheta-L^p}h^{1-\frac 2 \vartheta}t^{-\frac{2d}{\alpha p}}\int_{\tau^h_{t/2}}^{\tau^h_t-h}(t-s)^{-(\frac 2\alpha+\frac d{\alpha\vrho})}ds+C\|b\|_{L^\vartheta-L^p}h^{1-\frac 1 \vartheta}t^{-\frac d\alpha(\frac 1{\vrho}+\frac 1 p)}\int_{\tau^h_{t/2}}^{\tau^h_t-h}(t-s)^{-(1+\frac 1\alpha)}ds\notag\\
  \lesssim & \|b\|^2_{L^\vartheta-L^p}h^{2-\frac 2\alpha-\frac 2 r-\frac d{\alpha\vrho}}t^{-\frac{2d}{\alpha p}}+C\|b\|_{L^\vartheta-L^p}h^{\frac {\alpha-1}\alpha-\frac 1 \vartheta}t^{-\frac d\alpha(\frac 1{\vrho}+\frac 1 p)}\notag\\
   \lesssim & \|b\|^2_{L^\vartheta-L^p}h^{\frac{\alpha-1}\alpha-\frac 1 r+(1-\frac 1\alpha-\frac 1\vartheta-\frac d{\alpha\vrho})}t^{-\frac{2d}{\alpha p}}+C\|b\|_{L^\vartheta-L^p}h^{\frac {\alpha-1}\alpha-\frac 1 \vartheta}t^{-\frac d\alpha(\frac 1{\vrho}+\frac 1 p)}\notag\\
  \lesssim &\|b\|_{L^\vartheta-L^p}(1+\|b\|_{L^\vartheta-L^p})h^{\frac{\alpha-1}\alpha -\frac 1 \vartheta}t^{-\frac d\alpha(\frac 1{\vrho}+\frac 1 p)}\label{estid32},
\end{align*}
where we used $-\frac 1\rho\le-\frac 1 p$ and $\frac d{\alpha \vrho}+\frac 1 r\le 1- \frac 1\alpha$ for the last inequality.
                   With \eqref{estid31}, we conclude that 
\begin{align}
   \forall t\in[0,T],\;\|\Delta^3_t(\cdot)\|_{L^{\prho}}\lesssim h^{\frac{\alpha-1}\alpha-\frac 1\vartheta}t^{-\frac d\alpha(\frac 1{\vrho}+\frac 1p)}\|b\|_{L^\vartheta-L^p}(1+\|b\|_{L^\vartheta-L^p}).
\end{align}

The estimation of $\|\Delta^2_t(\cdot)\|_{L^\prho}$ relies on that of
$\left\|\Gamma(0,x,t,\cdot)-\Gamma(0,x,\tau^h_t,\cdot)\right\|_{L^{\prho}}$ for $h\le t\le T$ given in the next lemma, the proof of which is postponed.\begin{lem}[Forward time sensitivity in Lebesgue norm for the density of the SDE]\label{lemerrfaibschemtt}
  For $\vrho\ge p'$ such that $\frac d\vrho\le 1$ and $t\in[h,T]$,
  \begin{align}
   \left\|\Gamma(0,x,t,\cdot)-\Gamma(0,x,\tau^h_t,\cdot)\right\|_{L^{\prho}}\le Ch^{\frac {\alpha-1}\alpha-\frac 1\vartheta}t^{-(\frac{\alpha-1}\alpha-\frac 1\vartheta+\frac d{\alpha \vrho})}(1+\|b\|_{L^\vartheta-L^p}).
    \label{errfaibschemtt}
  \end{align}
\end{lem}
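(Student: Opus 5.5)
We plan to write the time increment of the density through the Duhamel representation \eqref{duhamel-Diff} at times $t$ and $\tau^h_t$, both started from $(0,x)$. Set $\delta:=t-\tau^h_t\in[0,h)$. If $t<2h$ the bound is trivial: by \eqref{estitranslu} each density has $L^\prho$ norm $\lesssim t^{-\frac d{\alpha\vrho}}$ (up to the harmless factor $(\tau^h_t/t)$, which is at least $1/2$ for $t\ge h$), and $t\le 2h$ gives $t^{-\frac d{\alpha\vrho}}\lesssim h^{\frac{\alpha-1}\alpha-\frac1\vartheta}t^{-(\frac{\alpha-1}\alpha-\frac1\vartheta+\frac d{\alpha\vrho})}$. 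So we assume $t\ge 2h$, so that $\tau^h_t\ge t/2$ and $t-\tau^h_t\le h\le t/2$. Subtracting the two Duhamel formulas yields three pieces:
\begin{align*}
\Gamma(0,x,t,\cdot)-\Gamma(0,x,\tau^h_t,\cdot)=&\,[p_\alpha(t,\cdot-x)-p_\alpha(\tau^h_t,\cdot-x)]-\int_{\tau^h_t-\delta}^{t}\!\!\int\Gamma(0,x,s,z)b(s,z)\cdot\nabla p_\alpha(t-s,\cdot-z)dzds\\
&+\int_{\tau^h_t-\delta}^{\tau^h_t}\!\!\int\Gamma(0,x,s,z)b(s,z)\cdot\nabla p_\alpha(\tau^h_t-s,\cdot-z)dzds\\
&-\int_0^{\tau^h_t-\delta}\!\!\int\Gamma(0,x,s,z)b(s,z)\cdot[\nabla p_\alpha(t-s,\cdot-z)-\nabla p_\alpha(\tau^h_t-s,\cdot-z)]dzds.
\end{align*}
The cutoff $\tau^h_t-\delta$ is chosen so that in the last integral $\tau^h_t-s\ge\delta$ and hence $(t-s)/(\tau^h_t-s)\le2$, which makes \eqref{estidifftempsgaus} applicable with the difference $\delta$.

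\textbf{Free term.} By \eqref{estidifftempsgaus} with $k=0$, we have $|p_\alpha(t,\cdot)-p_\alpha(\tau^h_t,\cdot)|\lesssim h t^{-1}\bar p_\alpha(t,\cdot)$, whose $L^\prho$ norm is $\lesssim ht^{-1-\frac d{\alpha\vrho}}$. Since $h\le t$ and $1\ge \frac{\alpha-1}\alpha-\frac1\vartheta$, this is dominated by the target bound.

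\textbf{Short-time pieces.} These are handled like $\Delta^{52}$, using \eqref{estuv} with $(u,v)=(\frac{p\prho}{p-\prho},1)$, \eqref{estitranslu}, $s\ge t/4$, and Hölder in time. This gives
$$t^{-\frac d\alpha(\frac1\vrho+\frac1p)}\|b\|_{L^\vartheta-L^p}\Big(\int_{\tau^h_t-\delta}^t (t-s)^{-\frac1\alpha\frac{\vartheta}{\vartheta-1}}ds\Big)^{\frac{\vartheta-1}\vartheta}\lesssim h^{\frac{\alpha-1}\alpha-\frac1\vartheta}t^{-\frac d\alpha(\frac1\vrho+\frac1p)},$$
and similarly for the piece ending at $\tau^h_t$. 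The factor $t^{-\frac d{\alpha p}}$ is absorbed into $t^{-(\frac{\alpha-1}\alpha-\frac1\vartheta)}$ because $\frac d{\alpha p}+\frac1\vartheta<\frac{\alpha-1}\alpha$ by \eqref{COND_KR}.

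\textbf{Main term (the expected obstacle).} Here \eqref{estidifftempsgaus} with $k=1$ gives pointwise $|\nabla p_\alpha(t-s)-\nabla p_\alpha(\tau^h_t-s)|\lesssim \frac{\delta\wedge(\tau^h_t-s)}{(\tau^h_t-s)^{1+1/\alpha}}\bar p_\alpha(t-s,\cdot)$. We interpolate, bounding $\delta\wedge(\tau^h_t-s)\le h^{\eta}(\tau^h_t-s)^{1-\eta}$ with $\eta=\frac{\alpha-1}\alpha-\frac1\vartheta$. We then split at $s=t/2$:
\begin{itemize}
\item[(a)] For $s\le t/2$, apply \eqref{estuv} with $(u,v)=(\prho,p')$ and $\|\Gamma(0,x,s)\|_{L^\prho}\lesssim s^{-\frac d{\alpha\vrho}}$. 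This gives $h^\eta t^{-\eta-\frac1\alpha-\frac d{\alpha p}}\int_0^{t/2}s^{-\frac d{\alpha\vrho}}\|b(s)\|_{L^p}ds\lesssim h^\eta t^{-\eta-\frac1\alpha-\frac d{\alpha p}+1-\frac1\vartheta-\frac d{\alpha\vrho}}$. The time integrability follows from $\frac d{\alpha\vrho}\frac\vartheta{\vartheta-1}<1$, which holds because $\frac d\vrho\le1$ and the KR condition gives $\frac1\vartheta<\frac{\alpha-1}\alpha$. The resulting exponent is fine because $1-\frac1\alpha-\frac1\vartheta-\frac d{\alpha p}>0$.
\item[(b)] For $s\ge t/2$, apply $(u,v)=(\frac{p\prho}{p-\prho},1)$. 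This gives $h^\eta t^{-\frac d\alpha(\frac1\vrho+\frac1p)}\|b\|\big(\int(\tau^h_t-s)^{-(\eta+\frac1\alpha)\frac{\vartheta}{\vartheta-1}}ds\big)^{\frac{\vartheta-1}\vartheta}$. The exponent $(\eta+\frac1\alpha)\frac\vartheta{\vartheta-1}=1$ is borderline and would produce a $\log(t/h)$.
\end{itemize}
To avoid the logarithm in (b), we take instead a slightly smaller interpolation index $\eta'<\eta$ there. The exponent of $(\tau^h_t-s)$ then becomes integrable, while $h^{\eta'}$ is still recovered as $h^\eta\cdot(\tau^h_t-s)^{\eta-\eta'}$ evaluated near the cutoff $\tau^h_t-s\ge\delta$. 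Concretely, we split (b) further at $\tau^h_t-s=h$: for $\tau^h_t-s\le h$ use the full factor $\delta\le h$, and for $\tau^h_t-s\ge h$ use $\delta\le h$ with the integral $\int_h^{t}u^{-(1+\frac1\alpha)\frac{\vartheta}{\vartheta-1}}du$, which yields exactly $h^{\eta}$ without logs since the exponent exceeds $1$.

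Collecting all terms gives $h^\eta t^{-(\eta+\frac d{\alpha\vrho})}(1+\|b\|_{L^\vartheta-L^p})$, which is \eqref{errfaibschemtt}.
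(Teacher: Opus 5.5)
Your argument is correct and is essentially the paper's proof. It uses the same Duhamel difference, treats the free term with \eqref{estidifftempsgaus}, and treats the short-time pieces with \eqref{estuv} for $(u,v)=(\frac{p\prho}{p-\prho},1)$; your triangle-inequality bound for $t<2h$ is a valid shortcut for the paper's Duhamel treatment of that case.

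The main term needs tidying. The interpolation detour is unnecessary, and your ``$\eta'<\eta$'' remark would actually lose rate, giving only $h^{\eta'}$. The paper instead cuts at $\tau^h_t-h$ and uses the plain factor $h(t-s)^{-(1+\frac1\alpha)}$; since $(1+\frac1\alpha)\frac{\vartheta}{\vartheta-1}>1$, this gives $h\cdot h^{-\frac1\alpha-\frac1\vartheta}$ with no logarithm. With your cutoff $\tau^h_t-\delta$ the same works, provided you keep the factor $\delta$ (not $h$) on $\{\delta\le\tau^h_t-s\le h\}$, which yields $\delta^{\frac{\alpha-1}\alpha-\frac1\vartheta}\le h^{\frac{\alpha-1}\alpha-\frac1\vartheta}$.
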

Let $u$ be such that $\frac 1 u=\frac 12\left(\frac{1}{\rho'}+\frac{1}{p'}\right)$. Then since $\rho\in[p',p]$, $\frac 1 {u'}=\frac 12\left(\frac{1}{\rho}+\frac{1}{p}\right)\le \frac 1{p'}$ and since $\frac d\rho\le 1$, $\frac d{u'}\le 1$. Using \eqref{estuv} with $v=u$ and $|b_h|\le |b|$ then \eqref{errfaibschemtt} and H\"older's inequality in time, we obtain
\begin{align*}
  \|\Delta^2_t(\cdot)\|_{L^{\prho}}&\lesssim \int_h^{\tau^h_t-h}\|\Gamma(0,x,s,\cdot)-\Gamma(0,x,\tau^h_s,\cdot)\|_{L^{u}}\|b(s,\cdot)\|_{L^p}(t-s)^{-(\frac 1\alpha+\frac{d}{2 \alpha}(\frac 1\rho+\frac 1p))}ds\\
                                      &\lesssim  h^{\frac{\alpha-1}\alpha-\frac 1\vartheta}(1+\|b\|_{L^\vartheta-L^p})\int_h^{\tau^h_t-h}s^{-(\frac{\alpha-1}\alpha-\frac 1\vartheta+\frac d{2 \alpha }(\frac 1\vrho+\frac 1 p))}\|b(s,\cdot)\|_{L^p}(t-s)^{-(\frac 1\alpha+\frac{d}{2\alpha }(\frac 1\rho+\frac 1p))}ds
  \\&\lesssim  h^{\frac{\alpha-1}\alpha-\frac 1\vartheta}(1+\|b\|_{L^\vartheta-L^p})\|b\|_{L^\vartheta-L^p}\left(\int_h^{\tau^h_t-h}s^{-(\frac{\alpha-1}\alpha-\frac 1\vartheta+\frac d{2\alpha}(\frac 1\vrho+\frac 1 p)) \frac{\vartheta}{\vartheta-1}}(t-s)^{-(\frac 1\alpha +\frac{d}{2\alpha }(\frac 1\rho+\frac 1p))\frac{\vartheta}{\vartheta-1} }ds\right)^{\frac{\vartheta-1}{\vartheta}}
  \\& \lesssim  h^{\frac{\alpha-1}\alpha-\frac 1\vartheta}t^{-\frac d\alpha(\frac 1\rho+\frac 1p)}(1+\|b\|_{L^\vartheta-L^p})\|b\|_{L^\vartheta-L^p},
\end{align*}
where we used that \eqref{COND_KR} and $\frac d\vrho+\frac{\alpha}{\vartheta}\le \alpha-1$ imply that 
$(\frac 1\alpha+\frac{d}{2\alpha }(\frac 1\rho+\frac 1p))\frac{\vartheta}{\vartheta-1}<1$.
% \textcolor{red}{Attention besoin de $\frac d{2\vrho}<\frac 12$ pour singularité intégrable en $0$. Perte d'un facteur $\ln(T/h)$ lorsque $r=\infty$ et $\frac d{2\vrho}=\frac 12$. Pour se débarrasser de ce facteur, on choisit $\frac 1 u=\frac 1v=\frac 12\left(\frac{1}{\rho'}+\frac{1}{p'}\right)$. Alors $\frac 1{u'}=\frac 1{v'}=\frac 12\left(\frac{1}{\rho}+\frac{1}{p}\right)$ et on a $\left(\int_h^ts^{-(\frac 12-\frac 1\vartheta+\frac d{4}(\frac 1\vrho+\frac 1p))\times \frac{\vartheta}{\vartheta-1}}(t-s)^{-(\frac 12+\frac d{4}(\frac 1\vrho+\frac 1p))\times\frac{\vartheta}{\vartheta-1} }ds\right)^{\frac{\vartheta-1}{\vartheta}}$ OK car $\frac dp+\frac 2\vartheta<1$ et $\frac d\rho+\frac 2r\le 1\Rightarrow (\frac 12+\frac d{4}(\frac 1\vrho+\frac 1p))\times\frac{\vartheta}{\vartheta-1}<1$. }

\noindent {\bf Proof of Lemma \ref{lemfdg}.}
We have 
\begin{align}
   \E[|f(X^h_{\tau^h_t})(g(X^h_t)-g(X^h_{\tau^h_t}))|]= \int_{\R^d}\Gamma^h(0,x,\tau^h_t,y)|f(y)|\E[|g(y+Z_t-Z_{\tau^h_t}+b_h(U_{\lfloor t/h\rfloor},y)(t-\tau^h_t))-g(y)|]dy.\label{efg}
\end{align}
Since $|b_h|\le Bh^{-{\frac {\alpha-1}\alpha}}$, we have
\begin{align*}
                                                                                      \sup_{y\in\R^d}\E[|g(y+Z_t-Z_{\tau^h_t}+b_h(U_{\lfloor t/h\rfloor},y)(t-\tau^h_t))-g(y)|]&\le \|\nabla g\|_{L^\infty}\sup_{y\in\R^d}\E[|Z_t-Z_{\tau^h_t}+b_h(U_{\lfloor t/h\rfloor},y)(t-\tau^h_t) |]\\&\lesssim h^{\frac 1\alpha}\|\nabla g\|_{L^\infty},
\end{align*}
and we deduce with H\"older's inequality and \eqref{estitranslusch} that
\begin{align*}
   \E[|f(X^h_{\tau^h_t})(g(X^h_t)-g(X^h_{\tau^h_t}))|]|&\lesssim h^{\frac 1\alpha}\|\nabla g\|_{L^\infty}\int_{\R^d}\Gamma^h(0,x,\tau^h_t,y)|f(y)|dy\lesssim h^{\frac 1\alpha}\|\nabla g\|_{L^\infty}\|f\|_{L^p}\|\Gamma^h(0,x,\tau^h_t,\cdot)\|_{L^{p'}}\\&\lesssim h^{\frac 1\alpha}(\tau^h_t)^{-\frac d{\alpha p}}\|f\|_{L^p}\|\nabla g\|_{L^\infty}.
\end{align*}
On the other hand, since
\begin{align*}
  \left|g(y+Z_t-Z_{\tau^h_t}+b_h(U_{\lfloor t/h\rfloor},y)(t-\tau^h_t))-g(y+Z_t-Z_{\tau^h_t})\right|\le \|\nabla g\|_{L^\infty}|b_h(U_{\lfloor t/h\rfloor},y)|h,\end{align*}
and $|b_h|\le|b|$, we have using H\"older's inequality and \eqref{estitranslusch}
\begin{align*}
   &\E[|f(X^h_{\tau^h_t})(g(X^h_{\tau^h_t}+Z_t-Z_{\tau^h_t}+b_h(U_{\lfloor t/h\rfloor},X^h_{\tau^h_t})(t-\tau^h_t))-g(X^h_{\tau^h_t}+Z_t-Z_{\tau^h_t}))|]\\&\hspace{1cm}\le \|\nabla g\|_{L^\infty}\int_{\tau^h_t}^{\tau^h_t+h}\int_{\R^d}\Gamma^h(0,x,\tau^h_t,y)|f(y)||b(s,y)|dyds\\&\hspace{1cm}\le \|\nabla g\|_{L^\infty}\|f\|_{L^p}\|\Gamma^h(0,x,\tau^h_t,\cdot)\|_{L^{(p/2)'}}\int_{\tau^h_t}^{\tau^h_t+h}\|b(s,\cdot)\|_{L^p}ds\lesssim \|\nabla g\|_{L^\infty}\|f\|_{L^p}\|b\|_{L^\vartheta-L^p}h^{1-\frac 1\vartheta}(\tau^h_t)^{-\frac{2d}{\alpha p}}.
\end{align*}
Setting ${\cal I}=\E[f(X^h_{\tau^h_t})(g(X^h_{\tau^h_t}+Z_t-Z_{\tau^h_t})-g(X^h_{\tau^h_t}))]=\int_{\R^d}\Gamma^h(0,x,\tau^h_t,y)f(y)\E[g(y+Z_t-Z_{\tau^h_t})-g(y)]dy$, we deduce that
\begin{align}
  |\E[f(X^h_{\tau^h_t})(g(X^h_t)-g(X^h_{\tau^h_t}))]|&\le |{\cal I}|+C\|\nabla g\|_{L^\infty}\|f\|_{L^p}\|b\|_{L^\vartheta-L^p}h^{1-\frac 1\vartheta}(\tau^h_t)^{-\frac{2d}{\alpha p}}.\label{decompefg2}
\end{align}
By Itô's formula, there exists $c_\alpha>0$ (for $\alpha=2 $, $c_\alpha=2$) s.t.
\begin{align*}
   c_\alpha{\cal I}&=\int_{\tau^h_t}^t\int_{\R^d}\Gamma^h(0,x,\tau^h_t,y)f(y)\E[\Delta^{\frac \alpha2} g(y+Z_s-Z_{\tau^h_t})
  ]dyds\\
  &=\int_{\tau^h_t}^t\int_{\R^d}\Gamma^h(0,x,\tau^h_t,y)f(y)p_\alpha(s-\tau^h_t,\cdot)\star\Delta^{\frac \alpha2} g(y)
  ]dyds,\end{align*}
 where in the above computations, we have denoted with a slight abuse of notation $\Delta^{\frac{\alpha}2}:=-(-\Delta)^{\frac \alpha2} $, the generator of the driving isotropic $\alpha $-stable process. 
  
Since, {from the Hölder inequality}, 
$$\int_{\R^d}\Gamma^h(0,x,\tau^h_t,y)|f(y)p_\alpha(s-\tau^h_t,\cdot)\star\Delta^{\frac \alpha 2} g(y)|dy\le \|f\|_{L^p}\|\Delta^{\frac \alpha 2} g\|_{L^{\vrho}}\|\Gamma^h(0,x,\tau^h_t,\cdot)\|_{L^{\frac{p\prho}{p-\prho}}},$$ 
we deduce with \eqref{estitranslusch} that
\begin{align*}
  |{\cal I}|\le \frac{t-\tau^h_t}{c_\alpha}\|f\|_{L^p}\|\Delta^{\frac \alpha 2} g\|_{L^{\vrho}}(\tau^h_t)^{-\frac d \alpha (\frac 1{\vrho}+\frac 1p)}.\end{align*}
Plugging this estimation in \eqref{decompefg2}, we conclude that
\begin{align*}
   |\E[f(X^h_{\tau^h_t})(g(X^h_t)-g(X^h_{\tau^h_t}))]|
                                                      &\lesssim \|f\|_{L^p}\left(h^{1-\frac 1\vartheta}(\tau^h_t)^{-\frac{2 d}{\alpha p}}\|b\|_{L^\vartheta-L^p}\|\nabla g\|_{L^\infty}+h(\tau^h_t)^{-\frac d \alpha(\frac 1{\vrho}+\frac 1p)}\|\Delta^{\frac \alpha 2} g\|_{L^{\vrho}}\right).
\end{align*}
\hfill$\Box$

\noindent {\bf Proof of Lemma \ref{lemerrfaibschemtt}.}
Let $t\in[h,T]$ so that $\tau^h_t\ge h$. By the Duhamel representation \eqref{duhamel-Diff} of the density of the diffusion, we have
\begin{align*}
  \Gamma(0,x,t,y)-\Gamma(0,x,\tau^h_t,y)={\cal E}^1(y)+{\cal E}^2(y)+{\cal E}^3(y)+{\cal E}^4(y)\end{align*}
where
\begin{align*}
  {\cal E}^1(y)&=\1_{\{t\ge 2h\}}\int_0^{\tau^h_t-h}\int_{\R^d}\Gamma(0,x,s,z)b(s,z)\cdot(\nabla p_\alpha(\tau^h_t-s,y-z)-\nabla p_\alpha(t-s,y-z))dzds\\{\cal E}^2(y)&=\int_{(\tau^h_t-h)\vee 0}^{\tau^h_t}\int_{\R^d}\Gamma(0,x,s,z)b(s,z)\cdot\nabla p_\alpha(\tau^h_t-s,y-z)dzds\\
  {\cal E}^3(y)&=-\int_{(\tau^h_t-h)\vee 0}^t\int_{\R^d}\Gamma(0,x,s,z)b(s,z)\cdot\nabla p_\alpha(t-s,y-z)dzds\\
  {\cal E}^4(y)&=p_\alpha({t,}y-x)-p_\alpha(\tau^h_t,y-x)
\end{align*}
Let us first suppose that $t\ge 2h$ so that $\tau^h_t-h\ge h$ and $\tau^h_t-h>t/3$.  On the one hand, by \eqref{estuv} with $(u,v)=(\frac{p\prho}{p-\prho},1)$ and \eqref{estitranslu},\begin{align*}
                                                                                    \|{\cal E}^3(\cdot)\|_{L^\prho}&\lesssim \int_{\tau^h_t-h}^t\|\Gamma(0,x,s,\cdot)\|_{L^{\frac{p\prho}{p-\prho}}}\|b(s,\cdot)\|_{L^p}(t-s)^{-\frac 1\alpha}ds\lesssim t^{-\frac d\alpha(\frac 1{\vrho}+\frac 1p)}\int_{\tau^h_t-h}^t\|b(s,\cdot)\|_{L^p}(t-s)^{-\frac 1\alpha}ds\\&\lesssim t^{-\frac d\alpha(\frac 1{\vrho}+\frac 1p)}\|b\|_{L^\vartheta-L^p}\left(\int_{\tau^h_t-h}^t(t-s)^{-\frac 1\alpha\times\frac{\vartheta}{\vartheta-1}}ds\right)^{\frac{\vartheta-1}{\vartheta}}\lesssim h^{\frac{\alpha-1}\alpha-\frac 1\vartheta}t^{-\frac d\alpha(\frac 1{\vrho}+\frac 1p)}\|b\|_{L^\vartheta-L^p}. \end{align*}In the same way, $\|{\cal E}^2(\cdot)\|_{L^\prho}\lesssim h^{\frac{\alpha-1}{\alpha}-\frac 1\vartheta}t^{-\frac d\alpha(\frac 1{\vrho}+\frac 1p)}\|b\|_{L^\vartheta-L^p}$. For $s\in[0,\tau^h_t-h]$, by \eqref{estidifftempsgaus} and since $t-s<2(\tau^h_t-s)$, we have\begin{align*}|\nabla p_\alpha(t-s,\cdot)-\nabla p_\alpha(\tau^h_t-s,\cdot)|\lesssim \frac{t-\tau^h_t}{(\tau^h_t-s)^{1+\frac 1\alpha}}\left(1+\frac{(t-s)^{\frac{d}{\alpha}}}{(\tau^h_t-s)^{\frac{d}{\alpha}}}\right)\bar{p}_\alpha(t-s,\cdot)\lesssim  h(t-s)^{-(1+\frac 1\alpha)}\bar{p}_\alpha(t-s,\cdot).
                          \end{align*}
With H\"older's and Young's inequalities, \eqref{estigradplu} and \eqref{estitranslu}, we deduce that
\begin{align*}
  \|{\cal E}^1(\cdot)\|_{L^\prho}&\lesssim h\int_0^{\tau^h_t-h}\|\Gamma(0,x,s,\cdot)\|_{L^{\frac{p\prho}{p-\prho}}}\|b(s,\cdot)\|_{L^p}(t-s)^{-(1+\frac 1\alpha)}\|\bar{p}_\alpha(t-s,\cdot)\|_{L^1}ds \\&\lesssim h\int_0^{\tau^h_t-h}s^{-\frac d\alpha(\frac 1\vrho+\frac 1p)}\|b(s,\cdot)\|_{L^p}(t-s)^{-(1+\frac 1\alpha)}ds
  \\&\lesssim h\|b\|_{L^\vartheta-L^p}\left(t^{-(1+\frac 1\alpha)\times\frac{\vartheta}{\vartheta-1}}\int_0^{t/3}s^{-\frac d\alpha(\frac 1{\vrho}+\frac 1p)\frac{\vartheta}{\vartheta-1}}ds+t^{-\frac d\alpha(\frac 1{\vrho}+\frac 1p)\frac{\vartheta}{\vartheta-1}}\int_{t/3}^{\tau^h_t-h}(t-s)^{-(1+\frac 1\alpha)\times\frac{\vartheta}{\vartheta-1}}ds\right)^{\frac{\vartheta-1}{\vartheta}}\\
  &\lesssim h\|b\|_{L^\vartheta-L^p}\left(t^{-(\frac 1\alpha+\frac 1\vartheta+\frac d\alpha(\frac 1{\vrho}+\frac 1p))}+t^{-\frac d\alpha(\frac 1{\vrho}+\frac 1p)}h^{-(\frac 1\alpha+\frac 1\vartheta)}\right)\lesssim h^{\frac {\alpha-1}\alpha-\frac 1\vartheta}t^{-\frac d\alpha(\frac 1{\vrho}+\frac 1p)}\|b\|_{L^\vartheta-L^p},
\end{align*}
where we used that \eqref{COND_KR} and $\frac d\vrho\le 1$ imply that $\frac d\alpha(\frac 1\vrho+\frac 1p)\frac{\vartheta}{\vartheta-1}<1$.

Let now $t\in[h,2h)$. Then ${\cal E}^1$ vanishes and by \eqref{estuv} with $(u,v)=(\frac{p\prho}{p-\prho},1)$ and \eqref{estitranslu},
\begin{align*}
                                                                                    \|{\cal E}^3(\cdot)\|_{L^\prho}&\lesssim \int_{0}^ts^{-\frac d\alpha(\frac 1{\vrho}+\frac 1p)}\|b(s,\cdot)\|_{L^p}(t-s)^{-\frac 1\alpha}ds\lesssim \|b\|_{L^\vartheta-L^p}\left(\int_{0}^ts^{-\frac d\alpha(\frac 1{\vrho}+\frac 1p)\frac{\vartheta}{\vartheta-1}}(t-s)^{-\frac 1\alpha\times\frac{\vartheta}{\vartheta-1}}ds\right)^{\frac{\vartheta-1}{\vartheta}}\\&\lesssim\|b\|_{L^\vartheta-L^p}t^{\frac{\alpha-1}\alpha-\frac 1\vartheta-\frac d \alpha(\frac 1\vrho+\frac 1p)}\lesssim \|b\|_{L^\vartheta-L^p}h^{\frac{\alpha-1}{\alpha}-\frac 1\vartheta}t^{-\frac d \alpha(\frac 1\vrho+\frac 1p)}. \end{align*}
                                                                                  In the same way, $\|{\cal E}^{{2}}(\cdot)\|_{L^\prho}\lesssim \|b\|_{L^\vartheta-L^p}h^{\frac{\alpha-1}\alpha-\frac 1\vartheta}t^{-\frac d \alpha(\frac 1\vrho+\frac 1p)}$. We conclude that
                                                                                  $$\forall t\in[h,T],\;\|{\cal E}^1(\cdot)\|_{L^\prho}+\|{\cal E}^2(\cdot)\|_{L^\prho}+\|{\cal E}^3(\cdot)\|_{L^\prho}\lesssim \|b\|_{L^\vartheta-L^p}h^{\frac{\alpha-1}\alpha-\frac 1\vartheta}t^{-\frac d \alpha(\frac 1\vrho+\frac 1p)}.$$

For $t\in[h,T]$, by \eqref{estidifftempsgaus} and the inequality $t<2\tau^h_t$,                    \begin{align*}|p_\alpha(t,\cdot)-p_\alpha(\tau^h_t,\cdot)|\lesssim  {h(\tau^h_t)^{-1}}\bar{p}_\alpha(t,\cdot)\lesssim  h^{\frac{\alpha-1}\alpha}t^{-\frac {\alpha-1}\alpha}\bar{p}_\alpha(t,\cdot).
\end{align*}
With \eqref{estigradplu}, we deduce that $\|{\cal E}^4(\cdot)\|_{L^\prho}\lesssim  h^{\frac{\alpha-1}\alpha}t^{-(\frac{\alpha-1}\alpha+\frac d{\alpha\vrho})}\lesssim  h^{\frac {\alpha-1}\alpha-\frac 1\vartheta}t^{-(\frac{\alpha-1}\alpha-\frac 1\vartheta+\frac d{\alpha\vrho})}$. We conclude by remarking that the Krylov and R\"ockner condition \eqref{COND_KR} ensures that $\frac{\alpha-1}\alpha-\frac 1\vartheta>\frac d{\alpha p}$.
\hfill$\Box$

\noindent {\bf Proof of Lemma \ref{lemgron}.}
Let $\Lambda=\lambda \times\left(1\vee\int_0^1s^{-a_1}(1-s)^{-a_2}ds\right)$. We have 
\begin{align*}
  \forall t\in[0,T],\;f(t)\le \kappa+\Lambda t^{1-a_2}\sup_{s\in[h,t]}f(s).
\end{align*}
Let $\theta$ be such that ${\frac{\Lambda 2^{a_1}\theta^{1-a_2}}{1-a_2}=\frac 12}$. Since $\Lambda\theta^{1-a_2}\le\frac 12$, we deduce that $\sup_{t\in[0,\theta\wedge T]}f(t)\le 2\kappa$. This thus gives the claim if $T\le\theta$. When $\theta<T$, we conclude by checking by induction on $j$ that $$\forall j\in\{0,\cdots,\lfloor T/\theta\rfloor\},\;\sup_{t\in [0,(j+1)\theta\wedge T]}f(t)\le 2\kappa\frac{{\widetilde \Lambda}^{j+1}-1}{\widetilde \Lambda-1},$$
where $\widetilde \Lambda=1\vee\left(2\Lambda T^{1-a_2}\right)$ and 
with convention that the last ratio is equal to $j+1$ when $\widetilde \Lambda=1$.
The property holds for $j=0$ and supposing that it holds at rank $j-1$ with $1\le j\le \lfloor T/\theta\rfloor$, we deduce from \eqref{eq:gron} and  $\lambda\le \Lambda$ that for $t\in[j\theta,(j+1)\theta\wedge T]$,
\begin{align*}
  f(t)&\le \kappa+\lambda \sup_{s\in[0,j\theta]}f(s) t^{a_1}\int_0^{(\tau^h_{j\theta}+h)\wedge t}s^{-a_1}(t-s)^{-a_2}ds+\Lambda \sup_{s\in[j\theta,(j+1)\theta]}(t/s)^{a_1}\int_{(\tau^h_{j\theta}+h)\wedge t}^tf(\tau^h_s)(t-s)^{-a_2}ds\\&\le \kappa+\Lambda
T^{1-a_2}\sup_{s\in[0,j\theta]}f(s)+\frac{\Lambda 2^{a_1}\theta^{1-a_2}}{1-a_2}\sup_{s\in[j\theta,(j+1)\theta\wedge T]}f(s)\\&
  = \kappa+\Lambda T^{1-a_2}\sup_{s\in[0,j\theta]}f(s)+\frac 12\sup_{s\in[j\theta,(j+1)\theta\wedge T]}f(s),
\end{align*}
where we used the definition of $\theta$ for the equality. We deduce that \begin{align*}\sup_{s\in[j\theta,(j+1)\theta\wedge T]}f(s)&\le 2 \kappa+2\Lambda T^{1-a_2}\sup_{s\in[0,j\theta]}f(s)\\\sup_{s\in[0,(j+1)\theta\wedge T]}f(s)&\le 2 \kappa+\widetilde \Lambda \sup_{s\in[0,j\theta]}f(s)\le 2 \kappa\left(1+\widetilde \Lambda\frac{{\widetilde \Lambda}^{j}-1}{\widetilde \Lambda-1}\right)
\end{align*} and conclude that the induction property holds at rank $j+1$.\hfill$\Box$
\subsection{Drift sum of contributions in distinct Lebesgue spaces}\label{secsomdrift}
We suppose that $b=\sum_{i=1}^I b_i$ with $I\in \N $, $b_i\in L^{\vartheta_i}([0,T],L^{p_i}(\R^d))$ for some couple $(p_i,\vartheta_i)$ satisfying the Krylov and R\"ockner condition  $$\frac d {p_i}+\frac {\alpha} {\vartheta_i}<\alpha -1\mbox{ and }p_i\ge 2.$$
Note for instance that if $\exists \delta>0,\;\forall (t,x)\in[0,T]\times \R^d$, $|b(t,x)|=|x|^{-\delta}$, then $b$ does not belong to any Lebesgue space because the integrability at the origin and at infinity in space lead to complementary conditions on $\delta$. Nevertheless, when $\delta<(\alpha-1)\wedge \frac d 2$ then $b$ satisfies the assumptions made in the current section with $I=2$ since  $b_1(t,x)=\1_{\{|x|\le 1\}}b(t,x)$ belongs to $L^\infty([0,T],L^{p_1}(\R^d))$ for some $p_1>\frac d{\alpha-1}\vee 2$ while $b_2(t,x)=\1_{\{|x|>1\}}b(t,x)$ belongs to $L^\infty([0,T],L^\infty(\R^d))$.

Like in \cite{fito:jour:meno:25}, it can be checked that \eqref{SDE} admits a unique weak solution (strong when $\alpha=2$) with a transition density $\Gamma$ which enjoys the Duhamel representation \eqref{duhamel-Diff}. It can also be checked that the Euler scheme \eqref{scheme-interpo_LP} admits a transition density satisfying the Duhamel formula \eqref{duhamel-scheme} and the heat kernel estimates \eqref{ARONSON_UPPER_LP} still hold.
\begin{THM}
{Let $I\in \N $ be fixed}.  We suppose that $b=\sum_{i=1}^I b_i$ with $b_i\in L^{\vartheta_i}([0,T],L^{p_i}(\R^d))$ for some couple $(p_i,\vartheta_i)$ satisfying the Krylov and R\"ockner condition  $$\frac d {p_i}+\frac {\alpha} {\vartheta_i}<\alpha -1\mbox{ and }p_i\ge 2.$$
We also assume that $\frac d {p_\wedge}+\frac \alpha {\vartheta_\wedge}\le \alpha-1$ where $p_\wedge=\min_{1\le i\le I}p_i$ and $\vartheta_\wedge=\min_{1\le i\le I}\vartheta_i$. 

Then $p_\wedge\in\left\{\vrho\in[p_\wedge', p_\wedge]:\frac d \vrho+\frac \alpha {\vartheta_\wedge}\le \alpha-1\right\}$ and for any $\rho$ in this set, there exists $C<\infty$ such that for each $h=T/n$ with $n\in\N^*$,
  $$\forall (t,x)\in (0,T]\times\R^d,\;\|\Gamma^h(0,x,t,\cdot)-\Gamma(0,x,t,\cdot)\|_{L^{\vrho'}}\le C h^{\frac{\alpha-1}\alpha-\frac 1{\vartheta_\wedge}}t^{-\frac d\alpha(\frac 1\vrho+\frac 1{p_\wedge})}.$$
\end{THM}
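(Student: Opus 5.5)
The plan is to rerun the proof of Theorem \ref{thmsinglebdrift} term by term, using linearity in $b$ of every term of the decomposition \eqref{decomperrt}. Each occurrence of $b$ or $b_h$ paired with a density factor is split into its components $b_i$. Each piece is then estimated in $L^\prho$ with the exponents $(p_i,\vartheta_i)$, and the resulting bounds are converted to the common form $h^{\frac{\alpha-1}\alpha-\frac1{\vartheta_\wedge}}t^{-\frac d\alpha(\frac1\rho+\frac1{p_\wedge})}$.

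\textbf{Membership claim.} The statement $p_\wedge\in[p_\wedge',p_\wedge]$ holds because $p_\wedge\ge2$. The condition $\frac d{p_\wedge}+\frac\alpha{\vartheta_\wedge}\le\alpha-1$ is exactly the assumption.

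\textbf{Conversion step.} This is the main bookkeeping point. For a fixed $i$ we have $p_i\ge p_\wedge$ and $\vartheta_i\ge\vartheta_\wedge$, and $h,t\le T$. Hence $h^{\frac{\alpha-1}\alpha-\frac1{\vartheta_i}}\lesssim h^{\frac{\alpha-1}\alpha-\frac1{\vartheta_\wedge}}$ and $t^{-\frac d\alpha(\frac1\rho+\frac1{p_i})}\lesssim t^{-\frac d\alpha(\frac1\rho+\frac1{p_\wedge})}$. We also need $\rho\in[p_i',p_i]$ and $\frac d\rho+\frac\alpha{\vartheta_i}\le\alpha-1$. These follow since $\rho\le p_\wedge\le p_i$, $\rho\ge p_\wedge'\ge p_i'$, and $\vartheta_i\ge\vartheta_\wedge$. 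So every single-space estimate proved for $\Delta^2,\dots,\Delta^6$ applies to each $b_i$ with the fixed $\rho$, and summing over the finitely many $i$ gives \eqref{esti2-6} with $(p_\wedge,\vartheta_\wedge)$.

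\textbf{Nonlinear and cross terms.} Some terms involve $b$ nonlinearly, and these must be checked separately:
\begin{enumerate}
\item[(a)] The truncation $b_h$ is not additive. I would either truncate componentwise, $b_h=\sum_i b_{i,h}$ with each $b_{i,h}$ cut at level $Bh^{\frac1\alpha-1}$, so that $|b_h|\le I\,Bh^{\frac1\alpha-1}$ and the one-step bounds survive. Or I would keep the global truncation and use $|b-b_h|\le|b|\1_{\{|b|\ge Bh^{-(\alpha-1)/\alpha}\}}\le\sum_i I\,|b_i|\1_{\{|b_i|\ge Bh^{-(\alpha-1)/\alpha}/I\}}$, which reduces $\Delta^{62}$ to the single-space computation for each $i$.
\item[(b)] The quadratic terms produce products $\|b_i\|\,\|b_j\|$ with mixed exponents. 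These are the second part of Lemma \ref{lemfdg} used in $\Delta^{32}$, and $\Delta^2$ through Lemma \ref{lemerrfaibschemtt}. For them I would apply H\"older in space with $\frac1{p_i}+\frac1{p_j}$ in place of $\frac2p$, and in time with $\frac1{\vartheta_i}+\frac1{\vartheta_j}$. The worst exponents are controlled by $(p_\wedge,\vartheta_\wedge)$, so the same integrability conditions hold.
\end{enumerate}
Lemma \ref{lemerrfaibschemtt} itself is linear in $b$ apart from $\mathcal E^4$, so it holds for each $i$ and sums.

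\textbf{Gronwall step.} For $\Delta^1$, \eqref{estidel1} becomes a sum over $i$ of H\"older-in-time bounds with exponents $\frac{\vartheta_i}{\vartheta_i-1}$. To apply Lemma \ref{lemgron} with a single function $f$, I would bound each $\|b_i(s)\|_{L^{p_i}}(t-s)^{-(\frac1\alpha+\frac d{\alpha p_i})}$ term by the one with exponents $(p_\wedge,\vartheta_\wedge)$. This uses $(t-s)^{-\frac d{\alpha p_i}}\lesssim(t-s)^{-\frac d{\alpha p_\wedge}}$ and $\|b_i\|_{L^{\vartheta_i}}\lesssim\|b_i\|_{L^{\vartheta_\wedge}}$ on $[0,T]$. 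Then $f$ is defined with $\vartheta_\wedge$ and $p_\wedge$ exactly as before, and Lemma \ref{lemgron} concludes.

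The main obstacle I expect is the mixed quadratic terms in (b): there the two drift factors carry different space and time exponents, and one must check that no singularity becomes non-integrable. The cleanest route is to reduce everything to $(p_\wedge,\vartheta_\wedge)$ via the monotonicity above before estimating, but this needs care in $\Delta^{62}$, where the conjugate exponent $(p/2)'$ appears.
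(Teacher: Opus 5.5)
Your treatment of the terms that are linear in $b$ is fine, and so is your second option for $\Delta^{62}$. The inequality $|b|\1_{\{|b|\ge M\}}\le I\sum_i|b_i|\1_{\{|b_i|\ge M/I\}}$ is correct, and it even avoids the cross products $|b_i||b_j|$ that the paper carries.

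The genuine gap is the Gronwall step. After you replace each $(p_i,\vartheta_i)$ by $(p_\wedge,\vartheta_\wedge)$, Lemma \ref{lemgron} must be applied with $a_2=(\frac1\alpha+\frac d{\alpha p_\wedge})\frac{\vartheta_\wedge}{\vartheta_\wedge-1}$. But $a_2<1$ is equivalent to $\frac d{p_\wedge}+\frac\alpha{\vartheta_\wedge}<\alpha-1$, and the theorem only assumes the non-strict inequality. Equality is exactly the new situation the theorem is meant to cover, namely when $p_\wedge$ and $\vartheta_\wedge$ are attained by different components (see the remark after the statement). For example, $\alpha=2$, $d=1$, $(p_1,\vartheta_1)=(2,\infty)$, $(p_2,\vartheta_2)=(4,4)$, $\rho=2$ gives $a_2=1$. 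The kernel $(t-s)^{-1}$ is then not integrable, Lemma \ref{lemgron} does not apply, and the argument does not close with an $h$-independent constant.

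The same objection applies to your plan in (b) to ``reduce everything to $(p_\wedge,\vartheta_\wedge)$ before estimating''. For instance, the condition $(\frac1\alpha+\frac d{2\alpha}(\frac1\rho+\frac1p))\frac{\vartheta}{\vartheta-1}<1$ used for $\Delta^2$ becomes an equality for $(p,\vartheta)=(p_\wedge,\vartheta_\wedge)$, $\rho=p_\wedge$ in the borderline case. Since $(p_\wedge,\vartheta_\wedge)$ only satisfies the non-strict condition, every time integral must be estimated per component (or per pair $(i,j)$), using the strict Krylov--R\"ockner condition of each $(p_i,\vartheta_i)$.

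The missing idea is how to do this and still obtain a single Gronwall inequality.
\begin{itemize}
\item First apply H\"older in time to $\|b_i(s,\cdot)\|_{L^{p_i}}$ with its own exponent $\vartheta_i$.
\item Then split $(t-s)^{-(\frac1\alpha+\frac d{\alpha p_i})}=(t-s)^{-a}(t-s)^{a-\frac1\alpha-\frac d{\alpha p_i}}$.
\item Apply a second H\"older inequality with exponent $\vartheta_\wedge'/\vartheta_i'\ge1$. This bounds $\big(\int_h^t\|(\Gamma-\Gamma^h)(0,x,\tau^h_s,\cdot)\|_{L^{\rho'}}^{\vartheta_i'}(t-s)^{-(\frac1\alpha+\frac d{\alpha p_i})\vartheta_i'}ds\big)^{1/\vartheta_i'}$ by a constant times $\big(\int_h^t\|(\Gamma-\Gamma^h)(0,x,\tau^h_s,\cdot)\|_{L^{\rho'}}^{\vartheta_\wedge'}(t-s)^{-a\vartheta_\wedge'}ds\big)^{1/\vartheta_\wedge'}$.
\item Choose $a\in\big(\max_i(\frac d{\alpha p_i}+\frac1{\vartheta_i})+\frac1\alpha-\frac1{\vartheta_\wedge},\,1-\frac1{\vartheta_\wedge}\big)$.
\end{itemize}
This interval is nonempty precisely because each pair satisfies the strict condition. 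The lower bound makes the leftover power of $t-s$ integrable. The upper bound gives $a\vartheta_\wedge'<1$, so Lemma \ref{lemgron} applies with $a_2=a\vartheta_\wedge'$. The other exponent, $a_1=\frac d\alpha(\frac1\rho+\frac1{p_\wedge})\vartheta_\wedge'<1$, does follow from the non-strict hypotheses.

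Two minor points. On $[0,T]$ the correct inequality is $\|b_i\|_{L^{\vartheta_\wedge}}\lesssim\|b_i\|_{L^{\vartheta_i}}$, not the reverse. Your first option in (a), truncating componentwise, changes the scheme, so only your second option proves the stated result.
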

\begin{rem}
   Of course, $\max_{1\le i\le I}\left(\frac d {p_i}+\frac \alpha {\vartheta_i}\right)\le \frac d {p_\wedge}+\frac \alpha {\vartheta_\wedge}$ with strict inequality unless $(p_\wedge,\vartheta_\wedge)=(p_i,\vartheta_i)$ for some  $i\in\{1,\cdots,I\}$. So it is possible that $\max_{1\le i\le I}\left(\frac d {p_i}+\frac \alpha {\vartheta_i}\right)<\alpha-1$ while $\frac d {p_\wedge}+\frac \alpha {\vartheta_\wedge}=\alpha-1$.
\end{rem}
Since $p_i\ge 2$ for each $i\in\{1,\cdots,I\}$, we have $p_\wedge \ge 2$  so that $p'_\wedge \le 2$ and the interval $[p_\wedge', p_\wedge]$ is non empty and contains $p_\wedge$. Since $\frac d {p_\wedge}+\frac \alpha {\vartheta_\wedge}\le \alpha-1$, we deduce that $p_\wedge\in\left\{\vrho\in[p_\wedge', p_\wedge]:\frac d \vrho+\frac \alpha {\vartheta_\wedge}\le \alpha-1\right\}$. Let us now consider $\rho$ in this set.
We still rely on the decomposition \eqref{decomperrt} of the discretization error. Note that the estimations of $\|\Delta^j_t(\cdot)\|_{L^\prho}$ for $j\in\{2,\cdots,5\}$ and $\|\Delta^{61}_t(\cdot)\|_{L^\prho}$ in the proof of Theorem \ref{thmsinglebdrift} only rely on the inequality $|b_h|\le |b|$ and the fact that $\|b\|_{L^\vartheta-L^p}<\infty$ with $(p,r)$ satisfying \eqref{COND_KR} and $\frac{d}{\rho}+\frac{\alpha}{\vartheta}\le \alpha-1$. Since we now have $|b_h|\le |b|\le \sum_{i=1}^I|b_i|$ with $b_i\in L^{\vartheta_i}-L^{p_i}$ for $(p_i,\vartheta_i)$ satisfying \eqref{COND_KR} and $\frac{d}{\rho}+\frac \alpha{\vartheta_i}\le \alpha-1$, we get in the same way that
\begin{equation}
   \sum_{j=2}^5\|\Delta^j_t(\cdot)\|_{L^\prho}+\|\Delta^{61}_t(\cdot)\|_{L^\prho}\le C\sum_{i=1}^Ih^{\frac{\alpha-1}\alpha-\frac 1{\vartheta_i}}t^{-\frac d\alpha(\frac 1\rho+\frac 1{p_i})}.\label{estiprelim}
\end{equation}
To deal with the {truncation error} $\|\Delta^{62}_t(\cdot)\|_{L^\prho}$, we first remark that
\begin{align*}
   |b(s,z)-b_h(s,z)|\le |b(s,z)|\1_{\{|b(s,z)|\ge Bh^{-\frac {\alpha-1}\alpha}\}}\le \sum_{i=1}^I|b_i(s,z)|\sum_{j=1}^I\1_{\{|b_j(s,z)|\ge Bh^{-\frac{\alpha-1}\alpha}/I\}}\le \frac{Ih^{\frac{\alpha-1}\alpha}} B \sum_{i,j=1}^I|b_i(s,z)||b_j(s,z)|.
\end{align*}
As a consequence, using that the assumptions ensure that $\frac d \alpha(\frac 1 {p_i}+\frac 1{p_j})\frac{\vartheta_i\vartheta_j}{\vartheta_i\vartheta_j-\vartheta_i-\vartheta_j}<1$ and $\frac{\alpha-1}\alpha-\frac d{\alpha\rho}-\frac 1{\vartheta_i\wedge \vartheta_j}\ge 0$ for $i,j\in\{1,\cdots,I\}$, we get that for $t\ge 2h$
\begin{align*}& \|\Delta^{62}_t(\cdot)\|_{L^{\vrho'}}\lesssim h^{\frac{\alpha-1}\alpha}\sum_{i,j=1}^I\int_h^{\tau^h_t-h}\|\Gamma(0,x,s,\cdot)\|_{L^{\left(\frac{p_ip_j}{p_i+p_j}\right)'}}\|b_i(s,\cdot)\|_{L^{p_i}}\|b_j(s,\cdot)\|_{L^{p_j}}\|\nabla p_\alpha(t-s,\cdot)\|_{L^{\rho'}}ds\\&\lesssim h^{\frac{\alpha-1}\alpha}\sum_{i,j=1}^I\int_h^{\tau^h_t-h}(t-s)^{-\frac d \alpha (\frac 1 {p_i}+\frac 1{p_j})}\|b_i(s,\cdot)\|_{L^{p_i}}\|b_j(s,\cdot)\|_{L^{p_j}}(t-s)^{-(\frac 1\alpha+\frac d{\alpha\rho})}ds\\&\lesssim h^{\frac{\alpha-1}\alpha}\sum_{i,j=1}^I\|b_i\|_{L^{\vartheta_i}-L^{p_i}}\|b_j\|_{L^{\vartheta_j}-L^{p_j}}\left(\int_h^{\tau^h_t-h}(t-s)^{-\frac d \alpha(\frac 1 {p_i}+\frac 1{p_j})\frac{\vartheta_i\vartheta_j}{\vartheta_i\vartheta_j-\vartheta_i-\vartheta_j}}(t-s)^{-(\frac 1\alpha+\frac d{\alpha\rho})\frac{\vartheta_i\vartheta_j}{\vartheta_i\vartheta_j-\vartheta_i-\vartheta_j}}ds\right)^{1-(\frac 1{\vartheta_i}+\frac 1{\vartheta_j})}\\
                                                                                           &\lesssim h^{\frac{\alpha-1}\alpha}\sum_{i,j=1}^I\|b_i\|_{L^{\vartheta_i}-L^{p_i}}\|b_j\|_{L^{\vartheta_j}-L^{p_j}}\bigg(\1_{\{\frac d{\alpha\vrho}+\frac 1{\vartheta_i}+\frac 1{\vartheta_j}<\frac{\alpha-1}\alpha\}}t^{-\frac{d}\alpha(\frac 1{p_i}+\frac 1{p_j})+(\frac{\alpha-1}\alpha-\frac{d}{\alpha\vrho}-\frac 1{\vartheta_i}-\frac 1{\vartheta_j})}\\&\hspace{2cm}+\1_{\{\frac d{\alpha\vrho}+\frac 1{\vartheta_i}+\frac 1{\vartheta_j}=\frac{\alpha-1}\alpha\}}t^{-\frac d \alpha(\frac 1 {p_i}+\frac 1{p_j})}(\ln(T/h))^{1-(\frac 1{\vartheta_i}+\frac 1{\vartheta_j})}+\1_{\{\frac d{\alpha\vrho}+\frac 1{\vartheta_i}+\frac 1{\vartheta_j}>\frac{\alpha-1}\alpha\}}t^{-\frac d \alpha(\frac 1 {p_i}+\frac 1{p_j})}h^{\frac{\alpha-1}\alpha-(\frac d{\alpha\vrho}+\frac1{\vartheta_i}+\frac 1{\vartheta_j})}\bigg)
 \\ &\lesssim \sum_{i,j=1}^Ih^{\frac{\alpha-1}{\alpha}-\frac 1{\vartheta_i\vee \vartheta_j}}(\ln(T/h))^{\1_{\{\vartheta_i=\vartheta_j=\infty,\rho=d/(\alpha-1)\}}}t^{-\frac d \alpha(\frac 1 {p_i}+\frac 1{p_j})}\|b_i\|_{L^{\vartheta_i}-L^{p_i}}\|b_j\|_{L^{\vartheta_j}-L^{p_j}}.                                                                           \end{align*}
    To get rid of the logarithmic factor when $\vartheta_i=\vartheta_j=\infty$ and $\rho=d/(\alpha-1)$, we replace from the first inequality $\|\Gamma(0,x,s,\cdot)\|_{L^{\left(\frac{p_ipj}{p_i+p_j}\right)'}}\|\nabla p_2(t-s,\cdot)\|_{L^{\rho'}}$ by  $\|\Gamma(0,x,s,\cdot)\|_{L^{u}}\|\nabla p_2(t-s,\cdot)\|_{L^{v}}$ with $\frac 1{u'}=\frac{1-\lambda}\vrho+\frac{1+\lambda}2(\frac 1{p_i}+\frac 1{p_j})$ and $\frac 1{v'}=\frac \lambda\vrho+\frac{1-\lambda}2(\frac 1{p_i}+\frac 1{p_j})$ for some $\lambda\in[0,1)$, which is possible when $p_i\wedge p_j>2$ since then $\frac{1-\lambda}\vrho+\frac{1+\lambda}2(\frac 1{p_i}+\frac 1{p_j})$ goes to $\frac 1{p_i}+\frac 1{p_j}<1$ as $\lambda\to 1-$. Note that we cannot have $p_i\wedge p_j=2$ and $\vrho=d/(\alpha-1)$, since then $p_\wedge= 2$ and  $d=1$, $\alpha>\frac 32$ so that $\rho<2 $, which contradicts the condition $\rho\ge p_\wedge'= 2$ that we suppose.

With \eqref{estiprelim} and since $\rho\le p_\wedge$, we deduce that\begin{equation}
   \exists C_{(2-6)}<\infty,\;\forall t\in[0,T],\;\sum_{i=2}^6\|\Delta^i_t(\cdot)\|_{L^\prho}\le C^{\frac{\vartheta_\wedge-1}{\vartheta_\wedge}}_{(2-6)}h^{\frac{\alpha-1}\alpha-\frac 1{\vartheta_\wedge
}}t^{-\frac d \alpha(\frac 1\vrho +\frac 1 {p_\wedge
})}.\label{esti2-6bis}
 \end{equation}
Let $a\in\left(\max_{1\le i\le I}\left(\frac d{\alpha p_i}+\frac 1{\vartheta_i}\right)+\frac 1\alpha-\frac 1{\vartheta_\wedge},1-\frac 1{\vartheta_\wedge}\right)$ where the left boundary is smaller than the right one according to the Krylov and R\"ockner condition \eqref{COND_KR} satisfied by each couple $(p_i,\vartheta_i)$.
Reasoning like in the above derivation of \eqref{estidel1}, we get for $t\ge h$
\begin{align*}
              &\|\Delta^1_t(\cdot)\|_{L^\prho}\le C\sum_{i=1}^I\|b_i\|_{L^{\vartheta_i}-L^{p_i}}\left(\int_h^t\|\Gamma(0,x,\tau^h_s,\cdot)-\Gamma^h(0,x,\tau^h_s,\cdot)\|^{\frac {\vartheta_i}{{\vartheta_i}-1}}_{L^\prho}(t-s)^{-(\frac 1\alpha+\frac{d}{\alpha{p_i}})\times\frac {\vartheta_i}{{\vartheta_i}-1}}ds\right)^{\frac{{\vartheta_i}-1}{\vartheta_i}}\\
&\le C\sum_{i=1}^I\|b_i\|_{L^{\vartheta_i}-L^{p_i}}\left(\int_h^t\|\Gamma(0,x,\tau^h_s,\cdot)-\Gamma^h(0,x,\tau^h_s,\cdot)\|^{\frac {\vartheta_\wedge}{{\vartheta_\wedge}-1}}_{L^\prho}(t-s)^{-\frac {a\vartheta_\wedge}{{\vartheta_\wedge}-1}}ds\right)^{\frac{{\vartheta_\wedge}-1}{\vartheta_\wedge}}\left(\int_h^t(t-s)^{(a-\frac 1\alpha-\frac d{\alpha p_i})\frac{\vartheta_i\vartheta_\wedge}{\vartheta_i-\vartheta_\wedge}}ds\right)^{\frac{\vartheta_i-\vartheta_\wedge}{\vartheta_i\vartheta_\wedge}}\\             
&\le C\sum_{i=1}^I\|b_i\|_{L^{\vartheta_i}-L^{p_i}}T^{a+\frac 1{\vartheta_\wedge}-(\frac 1\alpha+\frac d{\alpha p_i}+\frac 1{\vartheta_i})}\left(\int_h^t\|\Gamma(0,x,\tau^h_s,\cdot)-\Gamma^h(0,x,\tau^h_s,\cdot)\|^{\frac {\vartheta_\wedge}{{\vartheta_\wedge}-1}}_{L^\prho}(t-s)^{-\frac {a\vartheta_\wedge}{{\vartheta_\wedge}-1}}ds\right)^{\frac{{\vartheta_\wedge}-1}{\vartheta_\wedge}}\\&\le C\left(\int_h^t\|\Gamma(0,x,\tau^h_s,\cdot)-\Gamma^h(0,x,\tau^h_s,\cdot)\|^{\frac {\vartheta_\wedge}{{\vartheta_\wedge}-1}}_{L^\prho}(t-s)^{-\frac {a\vartheta_\wedge}{{\vartheta_\wedge}-1}}ds\right)^{\frac{{\vartheta_\wedge}-1}{\vartheta_\wedge}}.
\end{align*}
We deduce that the function $f(t)=\left(t^{\frac d \alpha(\frac 1\vrho +\frac 1 {p_\wedge
    })}\|\Gamma(0,x,t,\cdot)-\Gamma^h(0,x,t,\cdot)\|_{L^{\rho'}}\right)^{\frac{\vartheta_\wedge}{{\vartheta_\wedge}-1}}$ which is bounded on $[0,T]$ by \eqref{ARONSON_UPPER_LP} satisfies
$$\forall t\in[0,T],\;f(t)\le C_{(2-6)}h^{(\frac{\alpha-1}\alpha-\frac 1{\vartheta_\wedge})\times \frac{\vartheta_\wedge}{\vartheta_\wedge-1}}+Ct^{\frac d\alpha(\frac 1\vrho+\frac 1{p_\wedge})\times\frac{\vartheta_\wedge}{\vartheta_\wedge-1}}\int_{h}^{h\vee t}f(\tau^h_s)s^{-\frac d \alpha(\frac 1\vrho+\frac 1{p_\wedge})\times\frac{\vartheta_\wedge}{\vartheta_\wedge-1}}(t-s)^{-\frac{a\vartheta_\wedge}{\vartheta_\wedge-1}}ds.$$
Since  $\frac d{p_\wedge}+\frac \alpha{\vartheta_\wedge}\le \alpha-1$ with strict inequality when $\vartheta_\wedge=\infty$ and $\frac d{\rho}+\frac \alpha{\vartheta_\wedge}\le \alpha-1$, we have  $\frac d \alpha(\frac 1 {p_\wedge}+\frac 1\rho)+\frac 2{\vartheta_\wedge}\le 2\times\frac{\alpha-1}\alpha\le 1$ where the first inequality is strict when $\vartheta_\wedge=\infty$ so that $\frac d\alpha(\frac 1\vrho+\frac 1{p_\wedge})\times\frac{\vartheta_\wedge}{\vartheta_\wedge-1}<1$. Since $\frac{a\vartheta_\wedge}{\vartheta_\wedge -1}<1$, we conclude with
Lemma \ref{lemgron}.

\section{Definitions and useful tools about Besov spaces}\label{SEC_BESOV}
In this paragraph, we set definitions/notations and remind technical preliminaries - reviewed or directly established in \cite{chau:jabi:meno:22-1} - that will be used throughout the {remaining of the} present paper.
 
 From here on, we denote  by $B^\beta_{\ell,m}, {(\ell,m,\beta)\in[1,+\infty]^2\times\R }$  the Besov space with regularity index $\beta$ and integrability \textcolor{black}{parameters} $ \ell,m$ (\textcolor{black}{see e.g.  \cite{athr:butk:mytn:20}, \cite{chau:meno:22}, \cite{lema:02}} for some related applications and the dedicated monograph \cite{trie:83} by Triebel). 
 We use the thermic characterization  through the isotropic stable heat kernel for its definition (see e.g. Section 2.6.4 in \cite{trie:83}). 
\textcolor{black}{Namely, denoting by ${\mathcal S'}( \R^d) $ the dual space of the Schwartz class ${\mathcal S}( \R^d) $ and by ${\cal F},{\cal F}^{-1}$ the Fourier and inverse Fourier transforms},
  \[
 B^\beta_{\ell,m}=\left\{f\in\textcolor{black}{\mathcal S'}( \R^d)\,:\,\| f\|_{B^\beta_{\ell,m}}:=\|\mathcal F^{-1}(\phi)\star f\|_{\textcolor{black}{L^\ell}}+\mathcal T_{\ell,m}^\beta(f)<\infty\right\},
 \]
 \begin{align}
\mathcal T_{\ell,m}^\beta(f):=&\left\{
\begin{aligned}
&
\left(\int_0^T\,\frac{dv}{v}v^{(n-\beta/\textcolor{black}{\alpha})m}\|\partial^n_v p_\alpha(v,\cdot)*f\|^m_{L^\ell}\right)^{\frac 1m}\,\text{for}\,1\le m<\infty,\\
&\sup_{v\in(0,T]}\left\{v^{(n-\beta/\textcolor{black}{\alpha})}\|\partial^n_v p_\alpha(v,\cdot)*f\|_{L^\ell}\right\}\,\text{for}\,m=\infty,
\end{aligned}
\right. 
\label{HEAT_CAR}
\end{align}
  $n$ being any non-negative integer (strictly) greater than $\beta/\textcolor{black}{\alpha}$, the function $\phi$ being % a $\mathcal C^\infty_0$-function 
  { an infinitely differentiable function with compact support} such that $\phi(0)\neq 0$, and \textcolor{black}{$p_\alpha(v,\cdot)$ denoting the density function at time $v$ of the $d$-dimensional isotropic stable process}, which is here as well precisely the one of the driving noise in \eqref{SDE}. We can refer e.g. to the discussion in Section 2.6.4. and the general characterization in Section 2.5.1 of \cite{trie:83} for this characterization of Besov spaces. Note that we choose $T$ as the upper bound of the integral for the thermic part of the norm while this bound is usually set to $1$. Our choice which is more convenient for the computations below leads to an equivalent norm. It will be in particular useful to derive some heat kernel estimates on the density of the driving noise itself in Section \ref{SEC_TEC}.

We now list some properties that we will thoroughly use in the analysis.\\\noindent$\bullet$ {\it Embeddings between Lebesgue and $B^0_{\ell,m}$-spaces (\cite[Prop. 2.1]{Sawano-18})}:
\begin{equation}
\forall 1 \le \ell \le \infty,\qquad B^0_{\ell,1}\hookrightarrow L^\ell \hookrightarrow B_{\ell,\infty}^0.\label{EMBEDDING}\tag{$\mathbf E_1$}
\end{equation}

\noindent$\bullet$ {\it Product rule:} for all ${\beta} \in \R$, $(\ell,m)\in [1,+\infty]^2$ and $\rho>\max\Big({\beta},-{\beta}\Big)$, $\forall (f,g)\in B_{\infty,\infty}^\rho \times B_{\ell,m}^{\beta} $,
	\begin{equation}\label{PR}
		\|f \cdot g\|_{ \B_{\ell,m}^{\beta}} \le \|f \|_{\B_{\infty,\infty}^\rho} \|g\|_{ \B_{\ell,m}^{\beta}}.\tag{${\mathbf P}$}
	\end{equation}
	See Theorem 4.37 in \cite{Sawano-18} for a proof.\\
\noindent$\bullet$ {\it Duality inequality:} for all ${\beta} \in \R$, $(\ell,m)\in [1,+\infty]^2$, with $m'$ and $\ell'$ respective conjugates of $m$ and $\ell$, and $(f,g)\in B_{\ell,m}^{\beta} \times B_{\ell',m'}^{-{\beta}}$,
	\begin{equation}\label{dual-ineq}
		\textcolor{black}{|\langle f,g\rangle_{B_{\ell,m}^{{\beta}},B_{\ell',m'}^{-{\beta}}}|:=}\left| \int f(y)g(y) \mathrm{d}y \right| \leq \Vert f \Vert_{B_{\ell,m}^{\beta}} \Vert g \Vert_{B_{\ell',m'}^{-{\beta}}},\tag{${\mathbf D}$}
	\end{equation}
	\textcolor{black}{where the duality pairing is from now on denoted in integral form for notational convenience}.
	We refer to Proposition 6.6 in \cite{lema:02} for a proof.\\

\noindent$\bullet$ {\it Young inequality (\cite[Theorem 3]{Burenkov-90} or \cite{Sawano-18}):} for all ${\beta} \in \R$, $(\ell,m)\in [1,+\infty]^2$, for any $\delta\in \R$ and for $(\ell_1,\ell_2)\in [1,\infty]^2$ and $(m_1,m_2)\in (0,\infty]^2$ such that 
	$$1+\frac{1}{\ell} = \frac{1}{\ell_1} + \frac{1}{\ell_2} \qquad  \text{and}\qquad  \frac{1}{m}\leq \frac{1}{m_1}+\frac{1}{m_2},$$
	there exists $\cv$ a universal constant depending only on $d$ such that, for $f\in B_{\ell_1,m_1}^{{\beta} - \delta}$ and $g\in B_{\ell_2,m_2}^\delta$,
	\begin{equation}
		\label{YOUNG} \Vert f\star g \Vert_{B_{\ell,m}^{\beta}} \leq \cv \Vert f \Vert_{B_{\ell_1,m_1}^{{\beta} - \delta}} \Vert g \Vert_{B_{\ell_2,m_2}^\delta}.\tag{$\mathbf Y $}
	\end{equation}

\noindent $\bullet$ \emph{Besov norm of heat kernel}. For parameters $\beta\in \R^*, \ell,m\in [1,+\infty] $, $\theta\in \{0,1\} $ and a multi-index $\mathbf a$ {with length $|\mathbf a|=\sum_{i=1}^da_i\le 3$}
, there exists $\ch:=C(\alpha,\textcolor{black}{\ell},m,\beta,d)$ s.t. for all $s\in (0,T]$,
\begin{equation}\label{SING_STABLE_HK}
\big\| \partial_s^\theta\partial^{\mathbf a} p_{\alpha}(s,\cdot) \big\|_{
B^{\beta}_{\ell,m}}\le \frac{\ch}{(s\wedge 1)^{{\big(\theta+\frac {|\mathbf a|}\alpha+[\frac \beta\alpha+\frac d{\alpha}(1-\frac 1\ell)]\big)_+}}}(1+\I_{m<\infty,\theta+\frac{|\mathbf a|}\alpha +[\frac \beta\alpha+\frac d{\alpha}(1-\frac 1\ell)]=0}|\ln(s)|). \tag{$\mathbf{HK} $}\end{equation}
The inequality \eqref{SING_STABLE_HK} will be crucial for the analysis below and is proved in Section \ref{SEC_TEC} when $\beta<0$ and in \cite[Lemma 12]{chau:meno:22} when $\beta> 0$.\\
$\bullet$ Smooth approximation of the interaction kernel and associated uniform-control properties:
\begin{PROP}\cite[Proposition 2]{chau:jabi:meno:22-1}\label{PROP_APPROX}
Let  $b\in L^\vartheta((t,T],B_{p,q}^\beta)$ and {\color{black} $\beta\in (-1,0] $}, $1\le p,q \le \infty $.
There exists a sequence of \textcolor{black}{time-space} smooth bounded functions $(b^\varepsilon)_{\varepsilon >0} $ s.t.
\begin{equation*}%\label{SMOOTH_APP_GEN}
\|b-b^\varepsilon\|_{L^{\bar \vartheta}((t,T],B_{p,q}^{\tilde \beta})} \underset{\varepsilon \rightarrow 0}{\longrightarrow} 0,\quad \forall \tilde \beta<\beta,
\end{equation*}
with $\bar \vartheta=\vartheta $ if $\vartheta<+\infty $ and for any $\bar \vartheta<+\infty $ if $\vartheta=+\infty$. Moreover, there exists $ \cc\ge 1$, $\displaystyle \sup_{\varepsilon>0}|b^\varepsilon|_{L^{\bar \vartheta}((t,T],B_{p,q}^{\beta})}\le \textcolor{black}{\cc} |b|_{L^{\bar \vartheta}((t,T],B_{p,q}^{\beta})}$.

If $p,q,\vartheta<+\infty$ it then also holds, see e.g. \cite{lema:02}, that 
\begin{equation*}%\label{SMOOTH_APPR_FINITE}
\|b-b^\varepsilon\|_{L^{ \vartheta}((t,T],B_{p,q}^{\beta})} \underset{\varepsilon \rightarrow 0}{\longrightarrow} 0.
\end{equation*}
\end{PROP}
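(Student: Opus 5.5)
This is a cited result (\cite[Proposition 2]{chau:jabi:meno:22-1}), but here is how I would prove it. The construction is a time-space mollification. In space I would regularize with the stable heat kernel itself. Extend $b$ by $0$ outside $(t,T]$ and set
\[
b^\varepsilon(s,\cdot):=\int_{\R}\eta_\varepsilon(s-u)\, p_\alpha(\varepsilon,\cdot)\star b(u,\cdot)\,du ,
\]
with $\eta_\varepsilon$ a standard mollifier in time. For each fixed $u$, the function $p_\alpha(\varepsilon,\cdot)\star b(u,\cdot)$ is smooth and bounded. This follows from \eqref{YOUNG} and \eqref{SING_STABLE_HK}: derivatives of $p_\alpha(\varepsilon,\cdot)$ lie in $B^{\gamma}_{\ell',\infty}$ for every $\gamma$, and convolving them with $b(u,\cdot)\in B^\beta_{p,q}$ lands in $B^{0}_{\infty,1}\hookrightarrow L^\infty$ by \eqref{EMBEDDING}. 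The bounds are polynomial in $\varepsilon^{-1}$, and the time mollification then gives smoothness in $s$.

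\textbf{Uniform bound.} The key estimate is
\[
\|p_\alpha(\varepsilon,\cdot)\star f\|_{B^\beta_{p,q}}\le \cc\,\|f\|_{B^\beta_{p,q}},
\]
uniformly in $\varepsilon$. I would read this off the thermic characterization \eqref{HEAT_CAR}. Convolution with $p_\alpha(\varepsilon,\cdot)$ commutes with $\mathcal F^{-1}(\phi)\star$ and with $\partial_v^n p_\alpha(v,\cdot)\star$, and it is an $L^p$ contraction since $\|p_\alpha(\varepsilon,\cdot)\|_{L^1}=1$. Hence each term of the norm is not increased. Minkowski's inequality in time, together with $\|\eta_\varepsilon\|_{L^1}=1$, then gives the $L^{\bar\vartheta}((t,T],B^\beta_{p,q})$ bound for $b^\varepsilon$ (with $\cc$ absorbing the equivalence of norms).

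\textbf{Convergence.} Write $b-b^\varepsilon=(b-\eta_\varepsilon\star_t b)+\eta_\varepsilon\star_t\big(b-p_\alpha(\varepsilon,\cdot)\star b\big)$.

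The first piece tends to $0$ in $L^{\bar\vartheta}(B^{\beta}_{p,q})$ whenever $\bar\vartheta<\infty$, by continuity of translations in Bochner spaces. This is exactly why $\bar\vartheta<\infty$ must be taken when $\vartheta=\infty$: translations are not continuous in $L^\infty$ in time.

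For the second piece I would use the semigroup identity
\[
f-p_\alpha(\varepsilon,\cdot)\star f=-\int_0^\varepsilon \partial_v p_\alpha(v,\cdot)\star f\,dv .
\]
Combined with \eqref{YOUNG} and \eqref{SING_STABLE_HK} with $\theta=1$, this gives
\[
\|f-p_\alpha(\varepsilon,\cdot)\star f\|_{B^{\tilde\beta}_{p,q}}\lesssim \int_0^\varepsilon v^{-1+\frac{\beta-\tilde\beta}{\alpha}}dv\,\|f\|_{B^\beta_{p,q}}\lesssim\varepsilon^{\frac{\beta-\tilde\beta}\alpha}\|f\|_{B^\beta_{p,q}} .
\]
Integrating in time yields convergence in $L^{\bar\vartheta}(B^{\tilde\beta}_{p,q})$ for every $\tilde\beta<\beta$.

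\textbf{Finite exponents and main obstacle.} When $p,q,\vartheta<\infty$, the smooth functions are dense in $B^\beta_{p,q}$. A $3\varepsilon$-argument then upgrades the convergence to the same index $\beta$: approximate $b$ by a smooth $g$, for which the $\varepsilon$-rate holds trivially in $B^\beta$, and control the remainder with the uniform bound above.

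I expect the main obstacle to be the uniform Besov bound when $\beta\le0$, in particular checking carefully that the low-frequency part $\mathcal F^{-1}(\phi)\star$ behaves under the heat-kernel smoothing. If this turns out to be delicate, I would fall back on the Littlewood--Paley characterization.
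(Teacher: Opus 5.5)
The paper does not prove this proposition; it quotes it from \cite[Proposition 2]{chau:jabi:meno:22-1}, so there is no in-paper argument to compare with. Your time--space mollification, with the stable kernel as spatial mollifier, is a correct and self-contained proof, and it accounts for the shape of the statement. The spatial piece is where regularity is lost: \eqref{YOUNG} and \eqref{SING_STABLE_HK}, with $\theta=1$ and the negative index $\tilde\beta-\beta$, give the rate $\varepsilon^{(\beta-\tilde\beta)/\alpha}$. If $\beta-\tilde\beta\ge\alpha$ the integrand is at worst logarithmic, which still gives convergence. The temporal piece is where $\bar\vartheta<\infty$ is needed. The uniform bound you single out as the main obstacle is in fact immediate, with constant $1$ for the norm built on \eqref{HEAT_CAR}. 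As you observe, $p_\alpha(\varepsilon,\cdot)\star$ commutes with $\mathcal F^{-1}(\phi)\star$ and with $\partial_v^np_\alpha(v,\cdot)\star$, and it contracts $L^p$. So the low-frequency term is handled exactly like the thermic one, and no Littlewood--Paley fallback is needed.

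A few points should be tightened.

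(i) For boundedness of $b^\varepsilon$, \eqref{YOUNG} with target $B^0_{\infty,1}$ needs $1\le\frac1q+\frac1{m_2}$. So put the kernel in $B^{-\beta}_{p',1}$, not in a space with third index $\infty$ (unless $q=1$). This is harmless, since derivatives of $p_\alpha(\varepsilon,\cdot)$ of any order lie in every such space. However, \eqref{SING_STABLE_HK} is only stated for $|\mathbf a|\le 3$, so quote the standard smoothness of the stable density for higher orders.

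(ii) Continuity of translations in time is a statement about the Bochner space. It needs $s\mapsto b(s,\cdot)$ to be strongly measurable with values in $B^\beta_{p,q}$. This is a genuine hypothesis when $p$ or $q$ is infinite, because the space is then non-separable. You only need the temporal piece in $B^{\tilde\beta}_{p,q}$, so claim convergence there rather than in $B^\beta_{p,q}$.
\begin{itemize}
\item For $p<\infty$, weak measurability then suffices. It passes through the separable space $B^{\beta'}_{p,1}$, using $B^\beta_{p,q}\hookrightarrow B^{\beta'}_{p,1}\hookrightarrow B^{\tilde\beta}_{p,q}$ with $\tilde\beta<\beta'<\beta$.
\item For $p=\infty$ it does not suffice. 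Let $\omega_k(s)$ be the $k$-th binary digit of $(s-t)/(T-t)$, let $\chi$ be a fixed bump and $L$ large, and set $b(s,x)=\sum_{k\ge1}\omega_k(s)\chi(x-Lke_1)$. This $b$ is bounded and jointly measurable. Yet $\|b(s,\cdot)-(\eta_\varepsilon\star_t b)(s,\cdot)\|_{B^{\tilde\beta}_{\infty,\infty}}$ stays bounded below uniformly in $s$ and $\varepsilon$, because the fine digits average to $\frac12$.
\end{itemize}
So state explicitly that $L^\vartheta((t,T],B^\beta_{p,q})$ is read as a Bochner space.

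(iii) In the finite-exponent case, name the dense class. For example, $C^\infty_c((t,T)\times\R^d)$ is dense in that Bochner space when $p,q,\vartheta<\infty$. Then use linearity of $b\mapsto b^\varepsilon$ together with the uniform bound. Alternatively, apply dominated convergence in time to the strong continuity of $f\mapsto p_\alpha(\varepsilon,\cdot)\star f$ on $B^\beta_{p,q}$, which itself follows from density and the uniform bound.
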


\section{Hölder case}\label{SEC_HOLDER}
We first restrict in this section to the case $\beta\in (0,1) $ assuming that $b$ in \eqref{SDE} belongs to $L^\infty\big([0,T],C^\beta(\R^d)\big) $.

{Starting from a given point $X_0=x\in \R^d$}, the Euler scheme is then defined for $k\ge 0$ {and $t_k := kh $, where $h=T/n$ with $n\in\N^*$ stands for the time step}, as follows:
	\begin{equation}\label{euler-scheme_H}
		X_{t_{k+1}}^h = X_{t_k}^h + hb(U_k,X_{t_k}^h)+ (Z_{t_{k+1}}-Z_{t_{k}}),\ k\in \N,
	\end{equation}
	the $(U_k)_{k\in \N} $ being independent random variables, also independent  from the driving noise, uniformly distributed  on the time interval $[t_k,t_{k+1}]$.
        
	With the previous notations, the continuous time dynamics of the scheme writes: 
	\begin{equation}\label{scheme-interpo}
		d X^h_t=b(U_{\lfloor t/h\rfloor},X^h_{\tau^h_t}) d t+ d Z_t,
	\end{equation}
{where again $\tau_s^h := h \lfloor \frac{s}{h}\rfloor$}.

Under the current assumptions it is known that both the diffusion and its related Euler scheme have densities, respectively denoted by $ \Gamma $ and $\Gamma^h $, in positive time. In particular, it has been proved in \cite{fito:meno:25} that the following Duhamel representation holds for the densities.
 \begin{PROP}[Duhamel representations for the densities of the SDE and the Euler scheme]%\label{prop-main-estimates-D}
		The density $\Gamma(s,x,t,\cdot) $ of the unique weak solution to Equation \eqref{SDE} starting from $x$ at time $s\in [0,T)$ admits the Duhamel representation \eqref{duhamel-Diff}.
        
Similarly, for $k\in \llbracket 0,n-1\rrbracket ,\ t\in (t_k,T] $, the density of $X_t^h$ admits, conditionally to $X_{t_k}^h=x$, a transition density $\Gamma^h(t_k,x,t,\cdot) $, which again enjoys a Duhamel type representation: for all $y\in \R^d $,
		\begin{align}
			\Gamma^h(t_k,x,t,y)
			= p_\alpha(t-t_k,y-x)-\int_{t_k}^{ t}\E_{t_k,x}\left[b(U_{\lfloor r/h\rfloor},X^h_{\tau_r^h})\cdot\nabla_y  p_\alpha(t-r,y-X^h_r)\right]dr.\label{duhamel-schemeb}
		\end{align} 
\end{PROP}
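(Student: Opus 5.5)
The plan is to prove both representations by duality: test against an arbitrary $\varphi\in C^\infty_c(\R^d)$, apply It\^o's formula to the semigroup of $Z$ applied to $\varphi$, and recover the densities by Fubini. Since $b$ is bounded, this never requires knowing in advance that the densities exist. Set $P_v\varphi:=p_\alpha(v,\cdot)\star\varphi$ and $u(r,z):=P_{t-r}\varphi(z)$ for $r\in[s,t]$. Since $Z$ is symmetric, $\Delta^{\frac\alpha2}$ is self-adjoint and $\partial_v p_\alpha=\Delta^{\frac\alpha2}p_\alpha$. Hence $u$ is smooth and bounded, with bounded derivatives of all orders, and solves the backward equation
\[
\partial_r u+\Delta^{\frac\alpha2}u=0 \quad\text{on } [s,t)\times\R^d,\qquad u(t,\cdot)=\varphi .
\]
I work with $\varphi$ rather than directly with $z\mapsto p_\alpha(t-r,y-z)$ so that nothing is singular at $r=t$.

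For the SDE, let $X$ be the unique weak solution started from $x$ at time $s$. Since $b\in L^\infty([0,T],C^\beta)$, $X$ is a semimartingale, a Brownian one with bounded drift when $\alpha=2$, and one whose jump part is driven by the Poisson random measure of $Z$ when $\alpha\in(1,2)$. It\^o's formula for $u(r,X_r)$ on $[s,t]$ then gives
\[
\varphi(X_t)=P_{t-s}\varphi(x)+\int_s^t b(r,X_r)\cdot\nabla u(r,X_r)\,dr+M_t ,
\]
because the terms $\partial_r u+\Delta^{\frac\alpha2}u$ cancel. The process $M$ is a stochastic integral against $dW$ or against the compensated Poisson measure, with integrand built from the bounded function $\nabla u$ or from the bounded increments $u(r,X_{r-}+z)-u(r,X_{r-})$, which are $O(|z|\wedge1)$. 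By the standard moment condition on the L\'evy measure, $M$ is a true martingale. Taking expectations and writing $\nabla u(r,z)=\int_{\R^d}\nabla_z p_\alpha(t-r,y-z)\varphi(y)\,dy=-\int_{\R^d}\nabla_y p_\alpha(t-r,y-z)\varphi(y)\,dy$ yields
\[
\E_{s,x}[\varphi(X_t)]=\int_{\R^d}\varphi(y)\Big(p_\alpha(t-s,y-x)-\int_s^t\E_{s,x}\big[b(r,X_r)\cdot\nabla_y p_\alpha(t-r,y-X_r)\big]dr\Big)dy .
\]

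The step I expect to need the most care is exchanging the $dr$, $dy$ and expectation integrals. I would check absolute integrability using $\|b\|_\infty$ together with \eqref{estigradplu} for $u=1$ and $k=1$:
\[
\int_s^t\E\int_{\R^d}|b(r,X_r)|\,|\nabla p_\alpha(t-r,y-X_r)|\,|\varphi(y)|\,dy\,dr\le\|b\|_\infty\|\varphi\|_{L^\infty}\int_s^t C(t-r)^{-\frac1\alpha}dr<\infty ,
\]
which is finite because $\alpha>1$. The same bound shows that the bracket above lies in $L^1(dy)$. Since $\varphi\in C_c^\infty$ is arbitrary, the law of $X_t$ is absolutely continuous and its density $\Gamma(s,x,t,\cdot)$ coincides a.e. with the bracket, which is \eqref{duhamel-Diff}.

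For the Euler scheme I would argue in the same way, conditionally on $X^h_{t_k}=x$ and on the independent family $(U_j)_j$. On each interval $[t_j,t_{j+1})$ the drift $b(U_j,X^h_{t_j})$ is constant in time and bounded by $\|b\|_\infty$, so $X^h$ is again a semimartingale with the same martingale part as $Z$. It\^o's formula for $u(r,X^h_r)$ on $[t_k,t]$ gives
\[
\varphi(X^h_t)=P_{t-t_k}\varphi(x)+\int_{t_k}^t b(U_{\lfloor r/h\rfloor},X^h_{\tau^h_r})\cdot\nabla u(r,X^h_r)\,dr+\widetilde M_t ,
\]
where $\widetilde M$ is again a true martingale. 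Taking expectations, first conditionally on the $U_j$ and then integrating them out, and applying the same Fubini bound, gives \eqref{duhamel-schemeb}. The only difference from the SDE case is that the drift is now evaluated at the frozen pair $(U_{\lfloor r/h\rfloor},X^h_{\tau^h_r})$, while $\nabla p_\alpha$ is evaluated at the current position $X^h_r$.
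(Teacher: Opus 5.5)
Your argument is correct, but it is not what the paper does. The paper gives no proof of this proposition: it quotes it from \cite{fito:meno:25}, and takes the a priori bounds \eqref{ARONS_DIFF}--\eqref{holder-space-gammah} from the same reference.

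\textbf{Comparison with the paper.} Your duality route is self-contained. Testing against $P_{t-r}\varphi$ with $\varphi\in C^\infty_c$ keeps everything smooth up to $r=t$, the stochastic integrals are true martingales, and the existence of $\Gamma$ and $\Gamma^h$ comes out for free. The argument uses only Itô's formula, $\|b\|_{L^\infty}<\infty$ and $\|\nabla p_\alpha(v,\cdot)\|_{L^1}\lesssim v^{-\frac1\alpha}$. It needs neither weak uniqueness nor the H\"older regularity of $b$.

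The price is that the boundedness of $b$ is essential to your Fubini bound. The argument therefore does not carry over as is to the Lebesgue or distributional drifts of the other sections. It also gives no quantitative control on the densities, which the paper needs later anyway.

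\textbf{The point you should tighten: a.e. versus every $y$.} You obtain the identity only for a.e. $y$, whereas \eqref{duhamel-Diff} and \eqref{duhamel-schemeb} are stated for every $y$. Tonelli only yields finiteness of $\int_s^t|\E_{s,x}[b(r,X_r)\cdot\nabla_y p_\alpha(t-r,y-X_r)]|\,dr$ for a.e. $y$. Without information on the law of $X_r$, the best pointwise bound is $\sup_z|\nabla p_\alpha(t-r,z)|\lesssim (t-r)^{-\frac{d+1}{\alpha}}$, which is not integrable at $r=t$.

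So you need an $L^\infty$ bound on the density of $X_r$ (resp. $X^h_r$) for $r$ close to $t$. There are two ways to get it.
\begin{itemize}
\item With \eqref{ARONS_DIFF}--\eqref{ARONS_SCHEME} and \eqref{majogradp2}, the integrand is bounded by $C(t-r)^{-\frac1\alpha}\bar p_\alpha(t-s,y-x)$ (with $s$ replaced by $t_k$ for the scheme). The right-hand side is then finite and continuous in $y$. It defines the version of the density for which the formula holds everywhere.
\item To stay self-contained, bootstrap your a.e. identity through Young's inequality, splitting $[s,t]$ at $\frac{s+t}{2}$. This raises the integrability of $\Gamma(s,x,r,\cdot)$ step by step, each step gaining less than $\frac{\alpha-1}{d}$ in $1-\frac1q$. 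In finitely many steps you reach $\|\Gamma(s,x,r,\cdot)\|_{L^\infty}\lesssim (r-s)^{-\frac d\alpha}$, which suffices.
\end{itemize}
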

We first recall some important properties on the densities in the considered case. Most of them come from \cite{fito:meno:25}, but some need to be refined.

\begin{PROP}[A priori controls on the densities in the H\"older case]
\label{PROP_DENS_HOLDER}

Set $\gamma:=\alpha-1+\beta$. 
\begin{itemize}
\item Aronson type bounds.\\
There exists $C:=C(\textcolor{black}{d},b,\alpha,T)<\infty$ s.t. for all $0\le s<t\le T,\ x,y,\in \R^d$,
\begin{align}
			\label{ARONS_DIFF}
			\Gamma(s,x,t,y)\le C\bar p_\alpha(t-s,y-x).
			\end{align}
{Similarly, there exists $C<\infty$ such that for $h=\frac{T}{n},\ n\in \mathbb N^*$, $0\le t_k<t\le T$, $x,y\in\R^d$,}
                 		\begin{align}\label{ARONS_SCHEME}
			&\Gamma^h(t_k,x,t,y)\le C \bar p_\alpha (t-t_k,y-x).
		\end{align}

\item Forward regularity of the density of the diffusion. 
For $\varepsilon \in (0,\textcolor{black}{\gamma\wedge 1}] $, there exists $C_\varepsilon :=C_\varepsilon(\textcolor{black}{d},b,\alpha,T)$ s.t. for all $0\le s<t\le T,\ x,y,w\in \R^d$ s.t.  $|y-w|\le (t-s)^{\frac 1\alpha} $,
\begin{align}
			\label{holder-space-gamma}
			|\Gamma(s,x,t,y)-\Gamma(s,x,t,w)|\le \textcolor{black}{C_\varepsilon}\left(\frac{|y-w|}{(t-s)^{\frac1\alpha}} \right)^{\textcolor{black}{\gamma_\varepsilon}}\bar p_\alpha(t-s,w-x),\ \gamma_\varepsilon:=(\gamma\wedge 1)-\varepsilon.
\end{align}
\item Forward regularity of the density of the scheme.
		For all $0< t_j<t_k\le T $, $x,y,w\in \R^d $, $|y-w|\le (t_k-t_j)^{\frac 1\alpha} $,
		\begin{align}
			\label{holder-space-gammah}
			|\Gamma^h(t_j,x,t_k,y)-\Gamma^h(t_j,x,t_k,w)|\le \textcolor{black}{C_\varepsilon} \left(\frac{|y-w|\textcolor{black}{+h^{\frac 1\alpha}}}{(t_k-t_j)^{\frac1 \alpha}} \right)^{\textcolor{black}{\gamma_\varepsilon}}\bar p_\alpha(t_k-t_j,w-x).
			\end{align}
If additionally $\alpha-1>\beta $ it holds that,
		for all $0< t_j<t_k\le T $, $x,y,w\in \R^d $, $|y-w|\le (t_k-t_j)^{\frac 1\alpha} $,
		\begin{align}
			\label{holder-space-gamma_SPEC}
			|\Gamma^h(t_j,x,t_k,y)-\Gamma^h(t_j,x,t_k,w)|\le C \left(\frac{|y-w|}{(t_k-t_j)^{\frac1 \alpha}} \right)^{\beta}\bar p_\alpha(t_k-t_j,w-x).
			\end{align}
\item Besov norms involving the drift and the density of the diffusion.
There exists a constant $C\ge 1$ s.t.	 for 
 $s\in [h,t] $, $2h\le t\le T $,	and $\Psi\in B_{\infty,\infty}^\beta $,	
		\begin{align}
			\label{BESOV_NORM_1}
			\|\Gamma(0,x,s,\cdot)\Psi\|_{B_{1,\infty}^{\beta}}\le C(1+s^{-\frac \beta\alpha})\|\Psi\|_{B_{\infty,\infty}^\beta},
			\end{align}
\item Besov norms involving the drift and the density of the {Euler scheme}.
Assume that $\alpha-1>\beta $. There exists a constant $C\ge 1$ s.t.	 for  
$s\in [h,t] $, $2h\le t\le T $,	and $\Psi\in B_{\infty,\infty}^\beta $,	
		\begin{align}
			\label{BESOV_NORM_2}
			\|\Gamma^h(0,x,\tau_s^h,\cdot)\Psi\|_{B_{1,\infty}^{\beta}}\le C(1+s^{-\frac \beta\alpha})\|\Psi\|_{B_{\infty,\infty}^\beta}.
			\end{align}
\end{itemize}
\end{PROP}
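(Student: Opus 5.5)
The statement collects the Aronson bounds \eqref{ARONS_DIFF}, \eqref{ARONS_SCHEME} and the forward H\"older controls \eqref{holder-space-gamma}, \eqref{holder-space-gammah}, which I would simply import from \cite{fito:meno:25}. The work therefore concerns three items: the refined bound \eqref{holder-space-gamma_SPEC} without the $h^{\frac1\alpha}$ term, and the two Besov product bounds \eqref{BESOV_NORM_1}, \eqref{BESOV_NORM_2}. The latter will follow from the pointwise H\"older controls through the thermic characterization \eqref{HEAT_CAR}.

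\textbf{Step 1: proof of \eqref{holder-space-gamma_SPEC}.} I would start from the Duhamel representation \eqref{duhamel-schemeb} between $t_j$ and $t_k$. The free term is controlled by Proposition \ref{prop-controls-palpha} and the mean value theorem. Since $|y-w|\le (t_k-t_j)^{\frac1\alpha}$, it is bounded by $\big(\frac{|y-w|}{(t_k-t_j)^{1/\alpha}}\big)\bar p_\alpha$, hence by the same quantity with exponent $\beta$. For the integral term I would use only the boundedness of $b$ (no cancellation), which is why no $h^{\frac1\alpha}$ appears. I would split the time integral at $t_k-r=|y-w|^\alpha$:
\begin{itemize}
\item For $t_k-r\le|y-w|^\alpha$, I bound both gradients separately. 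Using \eqref{majogradp2} and the Aronson bound for the law of $X^h_r$ given $X^h_{t_j}$, I get $\int (t_k-r)^{-\frac1\alpha}dr\lesssim |y-w|^{\alpha-1}$ times $\bar p_\alpha(t_k-t_j,w-x)$. The space shift is absorbed via \eqref{drift-smoothing-iso-noise-diag}.
\item For $t_k-r>|y-w|^\alpha$, the mean value theorem with $\nabla^2p_\alpha$ gives $|y-w|\int_{|y-w|^\alpha}(t_k-r)^{-\frac2\alpha}dr\lesssim|y-w|^{\alpha-1}$.
\end{itemize}
Since $\alpha-1>\beta$, we have $|y-w|^{\alpha-1}\lesssim |y-w|^{\beta}(t_k-t_j)^{\frac{\alpha-1-\beta}\alpha}$, which gives the claim up to a constant.

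\textbf{Step 2: proof of \eqref{BESOV_NORM_1}.} I use \eqref{HEAT_CAR} with $\ell=1$, $m=\infty$, $n=1$. The low-frequency part is bounded by $\|\Gamma\Psi\|_{L^1}\le\|\Psi\|_{L^\infty}$. For the thermic part, since $\int\partial_vp_\alpha(v,\cdot)=0$,
$$\partial_vp_\alpha(v,\cdot)\star(\Gamma\Psi)(y)=\int\partial_vp_\alpha(v,y-z)\big[\Gamma(z)(\Psi(z)-\Psi(y))+\Psi(y)(\Gamma(z)-\Gamma(y))\big]dz,$$
where $\Gamma(z)$ stands for $\Gamma(0,x,s,z)$. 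The first piece is bounded by $\|\Psi\|_{C^\beta}|z-y|^\beta|\partial_vp_\alpha(v,y-z)|\lesssim v^{\frac\beta\alpha-1}\bar p_\alpha(v,y-z)$ by \eqref{drift-smoothing-iso-noise}. After integrating in $y$ and $z$ with \eqref{ARONS_DIFF}, this gives $v^{\frac\beta\alpha-1}$.

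For the second piece I choose $\varepsilon$ so small that $\gamma_\varepsilon\ge\beta$, which is possible since $\beta<1$ and $\alpha>1$. I then split according to the distance $|z-y|$:
\begin{itemize}
\item If $|z-y|\le s^{\frac1\alpha}$, \eqref{holder-space-gamma} gives $|\Gamma(z)-\Gamma(y)|\lesssim (|z-y|s^{-\frac1\alpha})^{\beta}\bar p_\alpha(s,y-x)$.
\item If $|z-y|>s^{\frac1\alpha}$, I bound $|\Gamma(z)-\Gamma(y)|\le\Gamma(z)+\Gamma(y)$ and insert the factor $(|z-y|s^{-\frac1\alpha})^\beta\ge1$.
\end{itemize}
In both cases \eqref{drift-smoothing-iso-noise} and Aronson yield $v^{\frac\beta\alpha-1}s^{-\frac\beta\alpha}\|\Psi\|_{L^\infty}$ after integration. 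Multiplying by $v^{1-\frac\beta\alpha}$ and taking the supremum over $v\in(0,T]$ gives $C(1+s^{-\frac\beta\alpha})\|\Psi\|_{B^\beta_{\infty,\infty}}$.

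\textbf{Step 3: proof of \eqref{BESOV_NORM_2}.} This is identical to Step 2, with $\Gamma^h(0,x,\tau^h_s,\cdot)$, \eqref{ARONS_SCHEME} and the new bound \eqref{holder-space-gamma_SPEC} in place of \eqref{holder-space-gamma}. This is exactly where the restriction $\alpha-1>\beta$ enters: the $h^{\frac1\alpha}$ term in \eqref{holder-space-gammah} would not yield a clean factor. I also use $s\le 2\tau^h_s$ for $s\ge h$ to replace $(\tau^h_s)^{-\frac\beta\alpha}$ by $s^{-\frac\beta\alpha}$.

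I expect the main obstacle to be the integrability in $y$ and $z$ in the pure-jump case $\alpha<2$. There the cross terms carry $|z-y|^\beta$ against heavy-tailed kernels, and the shifted arguments in Step 1 must be handled through \eqref{drift-smoothing-iso-noise-diag}. Both are harmless because $\beta<1<\alpha$, but the bookkeeping of which point, $y$ or $w$, carries $\bar p_\alpha$ needs care.
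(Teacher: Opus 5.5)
Your proposal is correct and follows the paper's proof. You import the Aronson and forward H\"older bounds from \cite{fito:meno:25}, get \eqref{holder-space-gamma_SPEC} from the Duhamel formula using only $\|b\|_{L^\infty}$, and derive \eqref{BESOV_NORM_1} and \eqref{BESOV_NORM_2} from the thermic characterization with the same cancellation $\Gamma(z)(\Psi(z)-\Psi(y))+\Psi(y)(\Gamma(z)-\Gamma(y))$ and the same H\"older moduli. The one variation is in \eqref{holder-space-gamma_SPEC}: instead of splitting at $t_k-r=|y-w|^\alpha$, the paper conditions on $X^h_{\tau^h_s}$ and uses a direct $\beta$-H\"older interpolation of $\nabla p_\alpha$, whose singularity $(t-\tau^h_s)^{-\frac{1+\beta}{\alpha}}$ is integrable exactly when $\beta<\alpha-1$.

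There are two harmless slips. For $\alpha=2$ your large-time integral yields a factor $\ln\big((t_k-t_j)/|y-w|^2\big)$, which is absorbed since $\beta<1$. When $\alpha<2$, the bound $|z-y|^\beta|\partial_v p_\alpha(v,y-z)|\lesssim v^{\frac\beta\alpha-1}\bar p_\alpha(v,y-z)$ holds only after integrating in $z$, because $\partial_v p_\alpha$ has no extra tail decay.
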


The inequalities \eqref{ARONS_DIFF}-\eqref{holder-space-gammah} were established in \cite{fito:meno:25}. 
The inequalities \eqref{holder-space-gamma_SPEC}, \eqref{BESOV_NORM_1} and \eqref{BESOV_NORM_2} will be proved in Section \ref{SEC_TEC} below.

\begin{rem}
{Pay attention that the control \eqref{BESOV_NORM_2} for the scheme is valid under the condition $\alpha-1>\beta $, whereas the corresponding one \eqref{BESOV_NORM_1} for the diffusion remains valid in any case. This additional constraint is due to the fact that, for the current analysis, we precisely need \eqref{holder-space-gamma_SPEC} to handle the Hölder continuity at any scale. Let us mention that the additional time step in the r.h.s. of \eqref{holder-space-gammah}  is somehow the price to pay to handle the difference of the time arguments in the Duhamel representation \eqref{duhamel-schemeb} of the density of the scheme.}
\end{rem}

The main result of this section is the following theorem.

\begin{THM}[Weak error in the Hölder case]\label{THM_HOLDER}
Let $b\in L^\infty([0,T],C^\beta(\R^d)) $ with $\beta\in (0,{1)}$. 
Then, there exists a finite constant $C_{\beta}$ s.t. for all {$\varphi\in C^\beta(\R^d)=B_{\infty,\infty}^\beta(\R^d)$}, $x\in \R^d, t\in [2h,T]$,
\begin{align}
&\left|{\E_{0,x}\left[\varphi(X_t^h)-\varphi(X_t)\right]}\right|\le 
C_{\beta}h^{1-\frac{(1- 2\beta)}{\alpha}}(1+t^{-\frac{\beta}{\alpha}})\|\varphi\|_{B_{\infty,\infty}^\beta},{\mbox{ when }\beta\in (0,1/2),}\label{BD_W_ERR_HOLDER_1}\\
&\left|{\E_{0,x}\left[\varphi(X_t^h)-\varphi(X_t)\right]}\right|\le 
C_{\beta}h(1+|\ln(h)|\I_{\beta= \frac 12})(1+t^{-\frac{1}{2\alpha}})\|\varphi\|_{B_{\infty,\infty}^\beta}{\mbox{ when }\beta\in [1/2 ,1)}.
\label{BD_W_ERR_HOLDER_2}\end{align}
\end{THM}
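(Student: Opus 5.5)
We follow the Lebesgue-case strategy, now working in duality with the test function. Set $E_t(\varphi):=\E_{0,x}[\varphi(X^h_t)-\varphi(X_t)]=\int(\Gamma^h-\Gamma)(0,x,t,y)\varphi(y)dy$. We integrate the Duhamel representations \eqref{duhamel-Diff} and \eqref{duhamel-schemeb} against $\varphi$. Since $\int\nabla p_\alpha(t-r,y-z)\varphi(y)dy=-\nabla P_{t-r}\varphi(z)$ with $P_u\varphi=p_\alpha(u,\cdot)\star\varphi$, the error becomes a time integral of expectations of $b\cdot\nabla P_{t-r}\varphi$ along the two processes. We split it as in \eqref{decomperrt} (no truncation, so the analogue of $\Delta^6$ disappears) into the following pieces:
\begin{itemize}
\item a Gronwall term $\int_h^{\tau^h_t-h}\int(\Gamma-\Gamma^h)(0,x,\tau^h_s,z)\,b(s,z)\cdot\nabla P_{t-s}\varphi(z)\,dz\,ds$;
\item a forward time-sensitivity term for $\Gamma$;
\item the one-step transition term $\E[b(U,X^h_{\tau_s^h})\cdot(\nabla P_{t-s}\varphi(X^h_s)-\nabla P_{t-s}\varphi(X^h_{\tau^h_s}))]$;
\item the time-randomization term involving $\nabla P_{t-U}\varphi-\nabla P_{t-s}\varphi$;
\item the last two time steps.
\end{itemize}

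The natural quantity for Gronwall is the dual norm $\|\Gamma^h(0,x,s,\cdot)-\Gamma(0,x,s,\cdot)\|_{B^{-\beta}_{1,1}}$. By \eqref{dual-ineq} it is equivalent to controlling $\sup_{\|\varphi\|_{B^\beta_{\infty,\infty}}\le1}|E_s(\varphi)|$. The Gronwall term is then $E_{\tau^h_s}(\psi_s)$ with $\psi_s=b(s,\cdot)\cdot\nabla P_{t-s}\varphi$. By \eqref{PR} and \eqref{YOUNG}–\eqref{SING_STABLE_HK},
$$\|\psi_s\|_{B^\beta_{\infty,\infty}}\lesssim\|b(s)\|_{C^\beta}\|\nabla P_{t-s}\varphi\|_{B^{\beta+\epsilon}_{\infty,\infty}}\lesssim (t-s)^{-\frac{1+\epsilon}{\alpha}}\|\varphi\|_{B^\beta_{\infty,\infty}}.$$
This is integrable since $\alpha>1$. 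With $f(t)=\sup_\varphi|E_t(\varphi)|/(\|\varphi\|(1+t^{-\beta/\alpha}))$ and a version of Lemma \ref{lemgron} with $a_2=(1+\epsilon)/\alpha<1$, the proof reduces to bounding every other term by $h^{1-\frac{(1-2\beta)^+}{\alpha}}(1+t^{-\beta/\alpha})\|\varphi\|$ (with the logarithm when $\beta=1/2$).

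We then estimate the non-Gronwall terms in order.
\begin{itemize}
\item \emph{Last two steps.} We bound $|\nabla P_{t-s}\varphi|\lesssim (t-s)^{\frac{\beta-1}{\alpha}}\|\varphi\|$ and integrate over an interval of length $2h$. This gives $h^{1+\frac{\beta-1}{\alpha}}$, which is at least as good as the target rate.
\item \emph{Time-randomization term.} Here $|\partial_u\nabla P_u\varphi|\lesssim u^{-1+\frac{\beta-1}\alpha}$. One factor $h$ is gained, and the $s$-integral near $t$ gives $h\cdot h^{\frac{\beta-1}\alpha}$.
\item \emph{Forward time-sensitivity term.} We proceed as in Lemma \ref{lemerrfaibschemtt}, but pair $\Gamma(0,x,s,\cdot)-\Gamma(0,x,\tau^h_s,\cdot)$ with $\psi_s$ in duality. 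We write this difference via Duhamel, or via Kolmogorov's forward equation as $\int_{\tau_s^h}^s\E[\mathcal L_r\psi_s(X_r)]dr$ with $\mathcal L_r=\Delta^{\alpha/2}+b(r)\cdot\nabla$. We use \eqref{BESOV_NORM_1} to put $\Gamma b$ in $B^\beta_{1,\infty}$, and let $\Delta^{\alpha/2}$ and $\nabla$ fall on $\nabla P_{t-s}\varphi$ measured in $B^{-\beta}_{\infty,1}$. This produces $h(t-s)^{-1-\frac{1-2\beta}{\alpha}}$ from \eqref{SING_STABLE_HK}: $\beta$ regularity comes from $\varphi$ and $\beta$ from $b\Gamma$, whence the $2\beta$. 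Integrating up to $\tau^h_t-h$ yields $h\cdot h^{-\frac{1-2\beta}\alpha}$ when $\beta<1/2$, $h|\ln h|$ when $\beta=1/2$, and $h$ when $\beta>1/2$.
\item \emph{One-step transition term.} We treat it as in Lemma \ref{lemfdg}. Conditionally on $X^h_{\tau^h_s}=z$, we expand $g(z+Z_{s}-Z_{\tau^h_s}+(s-\tau_s^h)b)-g(z)$ for $g=\nabla P_{t-s}\varphi$ by Itô's formula, obtaining a $\Delta^{\alpha/2}g$ part and a $b\cdot\nabla g$ part, each of size $h$. We then integrate against $\Gamma^h(0,x,\tau^h_s,\cdot)b(U,\cdot)$, which by \eqref{BESOV_NORM_2} lies in $B^\beta_{1,\infty}$ with norm $\lesssim(1+s^{-\beta/\alpha})$. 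Duality with $g$-derivatives in $B^{-\beta}_{\infty,1}$ again gives $h(t-s)^{-1-\frac{1-2\beta}\alpha}$, hence the same rates.
\end{itemize}
The prefactor $(1+t^{-\beta/\alpha})$ comes from \eqref{BESOV_NORM_1}–\eqref{BESOV_NORM_2} when $\beta<1/2$. For $\beta\ge1/2$ we apply the estimates with $\beta$ replaced by $1/2$ (since $C^\beta\subset C^{1/2}$), which yields $t^{-\frac1{2\alpha}}$.

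The main obstacle is the one-step term, in particular the use of \eqref{BESOV_NORM_2}, which requires $\alpha-1>\beta$. When $\alpha-1\le\beta$ I would first try the weaker estimate \eqref{holder-space-gammah}. Its extra $h^{1/\alpha}$ costs only a contribution of order $h^{1+\frac{\beta-1}{\alpha}}$, which is compatible with the target rate. Failing that, I would reduce $\beta$ to some $\beta'<\alpha-1$, accepting that a finer argument is needed at the scale $|y-w|\le h^{1/\alpha}$. A second delicate point is that the Gronwall step requires $\psi_s\in B^\beta_{\infty,\infty}$ uniformly. The product rule \eqref{PR} needs strictly more regularity on one factor, and this is exactly where the small loss $\epsilon$ in the singular exponent $(t-s)^{-\frac{1+\epsilon}{\alpha}}$ enters; it is harmless.
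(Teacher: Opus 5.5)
Your overall architecture (Gronwall on the dual quantity, the bound $\|\psi_s\|_{B^\beta_{\infty,\infty}}\lesssim (t-s)^{-\frac{1+\epsilon}{\alpha}}$, It\^o plus \eqref{BESOV_NORM_2} for the noise part of the one-step term) matches the paper. There is, however, a genuine gap: two of the terms you dismiss as harmless are only bounded at the old rate.

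\textbf{Last steps and time randomization.} For these two terms you use $\|b\|_{L^\infty}\|\nabla P^\alpha_{t-s}\varphi\|_{L^\infty}\lesssim (t-s)^{-\frac{1-\beta}{\alpha}}\|\varphi\|_{B^\beta_{\infty,\infty}}$, which gives $h^{1-\frac{1-\beta}{\alpha}}=h^{\frac{\alpha-1+\beta}{\alpha}}$. Since $1-\beta>(1-2\beta)^+$ for every $\beta\in(0,1)$, this is strictly worse than the target $h^{1-\frac{(1-2\beta)^+}{\alpha}}$; for $\alpha=2$, $\beta=\frac14$ you get $h^{5/8}$ instead of $h^{3/4}$. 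It is exactly the pointwise rate of \cite{fito:meno:25}, so the improvement the theorem asserts is lost.

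A sup-norm bound only uses the $\beta$-regularity of $\varphi$. The second $\beta$ must come from $\Gamma b$, so these terms also need duality, as in the paper:
\begin{itemize}
\item The diffusion part of the last steps is bounded by $\|\Gamma(0,x,s,\cdot)b(s,\cdot)\|_{B^{\beta\wedge\frac12}_{1,\infty}}\|\nabla p_\alpha(t-s,\cdot)\|_{B^{-(\beta\wedge\frac12)-\beta}_{1,1}}\|\varphi\|_{B^\beta_{\infty,\infty}}\lesssim(1+s^{-\frac{\beta\wedge 1/2}{\alpha}})(t-s)^{-\frac{(1-2\beta)^+}{\alpha}}\|\varphi\|_{B^\beta_{\infty,\infty}}$, with a logarithm at $\beta=\frac12$.
\item The time-randomization term uses \eqref{BESOV_NORM_2} together with $\|\nabla p_\alpha(t-r,\cdot)-\nabla p_\alpha(t-s,\cdot)\|_{B^{-2\beta}_{1,1}}\lesssim h\,(t-r\vee s)^{-1-\frac{1-2\beta}{\alpha}}$.
\end{itemize}
For the scheme's part of the last steps, $\Gamma^h$ sits at a non-grid time. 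You must first condition on $X^h_{\tau^h_s}$ as in \eqref{CONDITIONING_FOR_EULER_SCHEME}. This creates the composition $z\mapsto\nabla P^\alpha_{t-\tau^h_s}\varphi(z+b(r,z)(s-\tau^h_s))$, which is not controlled in the dual Besov norm because $b(r,\cdot)$ is only H\"older. The paper mollifies $b$ into $b_m$ with $m=h^{-\frac1{1-\beta}}$, so that $\varphi(\cdot+b_m(r,\cdot)(s-\tau^h_s))$ stays uniformly $\beta$-H\"older, and handles $b-b_m$ by a cancellation argument. The drift part of your one-step term contains the same composition, and your duality step passes over it. Removing the shift by a Lipschitz bound as in Lemma~\ref{lemfdg} costs $h\int_h^t(t-s)^{-\frac{2-\beta}{\alpha}}ds$, which is of the right order when $\alpha-1>\beta$ but not in general.

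\textbf{The regime $\alpha-1\le\beta$.} Your plan repeats the miscount: a contribution of order $h^{1+\frac{\beta-1}{\alpha}}$ is not compatible with the target. Moreover, \eqref{holder-space-gammah} cannot give $\Gamma^h(0,x,\tau^h_s,\cdot)\Psi\in B^\beta_{1,\infty}$. Its right-hand side does not vanish as $|y-w|\to0$, so $v^{1-\frac\beta\alpha}\|\partial_vp_\alpha(v,\cdot)\star(\Gamma^h\Psi)\|_{L^1}$ is unbounded as $v\to0$. Lowering $\beta$ to some $\beta'<\alpha-1$ lowers the rate.

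The missing device is the known pointwise error \eqref{BD_THM_FITO_MENO_SPA} of \cite{fito:meno:25}. Use it to replace $\Gamma^h(0,x,\tau^h_s,\cdot)$ by $\Gamma(0,x,\tau^h_s,\cdot)$ in the one-step, time-randomization and last-step terms. The replacement error is $h^{\frac{\alpha-1+\beta}{\alpha}}$ times a quantity that is itself small, e.g. $\int_h^{\tau^h_t-h}(s-\tau^h_s)(t-s)^{-1-\frac{1-\beta}{\alpha}}ds\lesssim h^{\frac{\alpha-1+\beta}{\alpha}}$, hence negligible. The main parts then only need \eqref{BESOV_NORM_1}, which holds without restriction.

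\textbf{Forward time-sensitivity.} The Kolmogorov route does not work as you describe it. Since $\psi_s=b(s,\cdot)\cdot\nabla P^\alpha_{t-s}\varphi$ contains the merely H\"older $b(s,\cdot)$, neither $\Delta^{\frac\alpha2}\psi_s$ nor $\nabla\psi_s$ can be moved onto $\nabla P^\alpha_{t-s}\varphi$. Take the Duhamel route instead: bound $\|\Gamma(0,x,s,\cdot)-\Gamma(0,x,\tau^h_s,\cdot)\|_{B^{-\beta}_{1,1}}$ as in Proposition~\ref{PROP_FWD_TIME_REG}, then pair it with $\|\psi_s\|_{B^\beta_{\infty,\infty}}\lesssim(t-s)^{-\frac1\alpha}$.
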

{
\begin{rem}
We carefully mention that we are not able to exploit the additional regularity when $ \beta\in (1/2,1) $ in the above theorem and we somehow could have restricted to $\beta\in (0,1/2^+) $, since when $b\in L^\infty([0,T],C^\beta(\R^d)) $ for $\beta\in ( \frac 12,1) $ it also holds that $b\in L^\infty([0,T],C^{\frac 12^+}(\R^d)) $. The case $\beta=\frac 12 $ induces an additional logarithmic correction and as soon as $\beta>\frac 12 $ we have the  {optimal} convergence rate of order 1. Observe as well that, up to the logarithm, there is a continuity in $\beta $ for the rates in \eqref{BD_W_ERR_HOLDER_1} and \eqref{BD_W_ERR_HOLDER_2}.
\end{rem}
}
\color{black}

\subsection{Proof of Theorem \ref{THM_HOLDER}}

\subsubsection{Decomposition of the error}
% Convention
%% Delta_5_B Gronwall
%% Delta_2_B Reg temps
%% Delta_3_B sensi HK en temps
%%

Note carefully that for $x\in \R^d,t\in (2h,T] $ and $\varphi\in C^\beta=B_{\infty,\infty}^\beta $ we decompose the error as follows:
    \begin{align}
{\E_{0,x}\left[\varphi(X_t^h)-\varphi(X_t)\right]}=&\langle (\Gamma^h-\Gamma)(0,x,\cdot,\cdot),\varphi\rangle_{B_{1,1}^{-\beta}, B_{\infty,\infty}^{\beta}}\notag\\	 
=   &\int_{}\big(\Gamma^h(0,x,t,y)-\Gamma(0,x,t,y)\big) \varphi(y) d y\notag\\
=      &    \int_h^{\tau_t^h-h} \E_{0,x} \bigg[ b(s,X_{\tau_s^h})\cdot \nabla P_{t-s}^\alpha \varphi(X_{\tau_s^h})
        -b(s,X_{\tau_s^h}^h)\cdot \nabla  P_{t-s}^\alpha \varphi(X_{\tau_s^h}^h) \bigg] d s\notag\\%Gronwall
&   + \int_h^{\tau_t^h-h} \E_{0,x} \left[ b(s,X_s)\cdot \nabla P_{t-s}^\alpha \varphi(X_s)-b(s,X_{\tau_s^h})\cdot \nabla  P_{t-s}^\alpha \varphi (X_{\tau_s^h})
        \right] d s\notag\\% Sensi temps dens
                &  + \int_h^{\tau_t^h-h} \E_{0,x} \left[b(U_{\lfloor s/h\rfloor},X_{\tau_s^h}^h)\cdot \left( \nabla P_{t-s}^\alpha \varphi(X_{\tau_s^h}^h)-\nabla  P_{t-s}^\alpha \varphi(X_s^h)\right)\right]d s\notag\\%Sensi temps Euler
		&  +\int_h^{\tau_t^h-h} \E_{0,x} \left[b(U_{\lfloor s/h\rfloor},X_{\tau_s^h}^h)\cdot \left( \nabla P_{t-U_{\lfloor s/h\rfloor}}^\alpha\varphi(X_{\tau_s^h}^h)-\nabla  P_{t-s}^\alpha \varphi(X_{\tau_s^h}^h)\right)\right]d s\notag\\%Sensi temps HK
	&+\int_{\tau_t^h-h}^t \E_{0,x} \left[ b(s,X_s)\cdot \nabla P_{t-s}^\alpha \varphi(X_s)-b(U_{\lfloor s/h\rfloor},X_{\tau_s^h}^h)\cdot\nabla P_{ t-s}^\alpha\varphi (X_s^h)  \right] d s\notag\\% Last time step
  &+\int_0^h \E_{0,x} \left[ b(s,X_s)\cdot \nabla P_{t-s}^\alpha \varphi(X_s)-b(U_0,{x})\cdot\nabla P_{t-s}^\alpha \varphi(t-s,X_s^h) \right] d s\notag\\% First time step
        =:&\Delta^1 + \Delta^2  + \Delta^3 + \Delta^4+\Delta^5 +\Delta^6,\label{DECOUP_ERR}
    \end{align}
    where we emphasize {the partial cancellation between $\Delta^4$ and $\Delta^1$ coming from}:
	\begin{align}
	&\int_h^{\tau_t^h-h} \E_{0,x} \left[b(U_{\lfloor s/h\rfloor},X_{\tau_s^h}^h)\cdot \nabla  P_{t-U_{[\lfloor s/h\rfloor]}}^\alpha \varphi(X_{\tau_s^h}^h)\right]d s\nonumber\\
	&\qquad \qquad =\sum_{i=1}^{\lfloor t/h\rfloor-1}\frac{1}{h}\int_{t_i}^{t_{i+1}} \int_{t_i}^{t_{i+1}}
	\E_{0,x} \left[b(r,X_{t_i}^h)\cdot \nabla  P_{t-r}^\alpha \varphi(X_{t_i}^h)\right] d s d r\nonumber\\
	&\qquad \qquad =\int_h^{\tau_t^h{-h}} \E_{0,x} \left[b(s,X_{\tau_s^h}^h)\cdot \nabla  P_{t-s}^\alpha \varphi (X_{\tau_s^h}^h)\right]d s.\label{convenient-fubini}
	\end{align}
Note that  the first time step contribution $\Delta^6$ and the cutoff contribution $\Delta^6_t(y)$ in \eqref{decomperrt} are of different nature. Also note that  we assume that $t\ge 2h$ to slightly simplify the argument but could deal with the case $t\in [0,2h)$ like we did in the proof of Theorem \ref{thmsinglebdrift} when the drift belongs to Lebesgue spaces.

To proceed, we will rely on the following proposition which quantifies the gain associated with the smoothness of the considered test function {and is a direct consequence of \eqref{SING_STABLE_HK}.}
\begin{PROP}[Smoothing effect of the heat semigroup]
Let $\varphi \in B_{\infty,\infty}^\beta$, $\beta\in (0,1) $.  {Then, for $\mathbf a \in \N^{d}$ such that $1\le |\mathbf a |\le 3$,}  there exists $c\ge 0$ s.t. for $t>0$,
\begin{align}
\label{GAIN_TEST_FUNC}
\|\partial^{\mathbf a} P^\alpha_t \varphi\|_{L^\infty}\le ct^{-\frac{|\mathbf a|}\alpha+\frac \beta\alpha}\|\varphi\|_{B_{\infty,\infty}^\beta}.
\end{align}
\end{PROP}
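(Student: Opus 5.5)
We prove \eqref{GAIN_TEST_FUNC} by a duality argument that turns the derivative on the semigroup into a Besov norm of the heat kernel, which \eqref{SING_STABLE_HK} controls. Fix $\mathbf a$ with $1\le|\mathbf a|\le 3$. Since $p_\alpha(t,\cdot)$ is smooth and integrable, as are its derivatives, we may differentiate under the convolution:
\[
\partial^{\mathbf a}P^\alpha_t\varphi(x)=\int_{\R^d}\partial^{\mathbf a}_x p_\alpha(t,x-y)\varphi(y)\,dy=(-1)^{|\mathbf a|}\int_{\R^d}\big(\partial^{\mathbf a}p_\alpha(t,\cdot)\big)(y-x)\varphi(y)\,dy .
\]
For fixed $x$, set $\psi_x:=(-1)^{|\mathbf a|}\partial^{\mathbf a}p_\alpha(t,\cdot-x)$. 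The duality inequality \eqref{dual-ineq} with $(\ell,m)=(\infty,\infty)$, hence $(\ell',m')=(1,1)$, gives
\[
|\partial^{\mathbf a}P^\alpha_t\varphi(x)|\le \|\varphi\|_{B^\beta_{\infty,\infty}}\,\|\psi_x\|_{B^{-\beta}_{1,1}} .
\]
The thermic norm \eqref{HEAT_CAR} is translation invariant, since convolution commutes with translations and $L^1$ norms are invariant. Hence $\|\psi_x\|_{B^{-\beta}_{1,1}}=\|\partial^{\mathbf a}p_\alpha(t,\cdot)\|_{B^{-\beta}_{1,1}}$, which does not depend on $x$. Taking the supremum over $x$ gives a uniform bound.

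We then apply \eqref{SING_STABLE_HK} with $\theta=0$, regularity index $-\beta\in\R^*$ (the case $\beta<0$, proved in Section \ref{SEC_TEC}), $\ell=1$ and $m=1$. With $\ell=1$ the term $\frac d\alpha(1-\frac1\ell)$ vanishes, so the exponent is
\[
\Big(\frac{|\mathbf a|}\alpha-\frac\beta\alpha\Big)_+=\frac{|\mathbf a|-\beta}{\alpha}>0,
\]
because $|\mathbf a|\ge1>\beta$. In particular the logarithmic correction in \eqref{SING_STABLE_HK} does not appear, since the exponent is nonzero. We get
\[
\|\partial^{\mathbf a}p_\alpha(t,\cdot)\|_{B^{-\beta}_{1,1}}\le \ch\,(t\wedge 1)^{-\frac{|\mathbf a|-\beta}\alpha}.
\]
For $t\le T$ we have $(t\wedge1)^{-\frac{|\mathbf a|-\beta}{\alpha}}\le C_T\,t^{-\frac{|\mathbf a|-\beta}{\alpha}}$, because on $[1,T]$ both sides are bounded above and below by constants. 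This yields \eqref{GAIN_TEST_FUNC} for $t\in(0,T]$, the only range used in the error analysis.

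The remaining points are formal. First, the duality pairing must coincide with the integral. This holds because $\partial^{\mathbf a}p_\alpha(t,\cdot)$ is smooth and integrable and $\varphi$ is bounded and continuous, being in $B^\beta_{\infty,\infty}=C^\beta$ with $\beta\in(0,1)$. Second, \eqref{SING_STABLE_HK} must be available for $|\mathbf a|\le3$, which the paper states. We see no real obstacle: the argument reduces entirely to \eqref{dual-ineq} and \eqref{SING_STABLE_HK}.
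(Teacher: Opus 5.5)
Your argument is correct and is essentially the paper's: the paper obtains \eqref{GAIN_TEST_FUNC} as a direct consequence of \eqref{SING_STABLE_HK} applied to $\|\partial^{\mathbf a}p_\alpha(t,\cdot)\|_{B^{-\beta}_{1,1}}$. It pairs this norm with $\|\varphi\|_{B^{\beta}_{\infty,\infty}}$ via \eqref{EMBEDDING} and \eqref{YOUNG} (as in its proof of \eqref{BD_INFTY_SC_BESOV}), which is equivalent to your pointwise use of \eqref{dual-ineq} plus translation invariance.

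Like the paper's argument, yours covers $t\in(0,T]$, which is where \eqref{SING_STABLE_HK} is stated. The statement reads ``for $t>0$'', so for $t\ge T$ you should add the bound $\|\partial^{\mathbf a}P^\alpha_t\varphi\|_{L^\infty}\le\|\partial^{\mathbf a}p_\alpha(t,\cdot)\|_{L^1}\|\varphi\|_{L^\infty}\lesssim t^{-\frac{|\mathbf a|}{\alpha}}\|\varphi\|_{B^\beta_{\infty,\infty}}\le T^{-\frac\beta\alpha}t^{-\frac{|\mathbf a|-\beta}{\alpha}}\|\varphi\|_{B^\beta_{\infty,\infty}}$, which follows from the scaling of $p_\alpha$ and $B^\beta_{\infty,\infty}\hookrightarrow L^\infty$ for $\beta>0$.
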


\subsubsection{Analysis of the various contributions in \eqref{DECOUP_ERR} when $\alpha-1>\beta $.}
We will here first assume, in order to benefit from the control \eqref{BESOV_NORM_2} that $\beta<\alpha-1 $, which is e.g. always satisfied in the Brownian case. The remaining cases, which impose to exploit the former pointwise control for the error on the densities established in \cite{fito:meno:25} will be developed in Section \ref{MORE_GEN_FOR_PROOF} below. We also specify that in the proof below, we will assume $\beta\in (0,1)$. This will in particular emphasize that we were not able to exploit the additional regularity beyond the threshold $1/2$.

Let us first consider the first time-step contribution. Write from \eqref{GAIN_TEST_FUNC}:
\begin{align}
|\Delta^6|\le h\|b\|_{L^\infty}\|\varphi\|_{B_{\infty,\infty}^{\beta}} t^{-\frac{1-\beta}{\alpha}}{\le h^{\frac{\alpha\wedge (\alpha-1+2\beta)}{\alpha}} \|b\|_{L^\infty}\|\varphi\|_{B_{\infty,\infty}^{\beta}}t^{-\frac{\beta\wedge \frac 12}\alpha}} \label{CTR_DELTA_5}.
\end{align}

{For the last time-step contribution $\Delta^5$, we use the equality
\begin{align}\label{CONDITIONING_FOR_EULER_SCHEME}
\E[\nabla P_{t-s}^\alpha\varphi(X_s^h)|X_{\tau_s^h}^h,U_{\lfloor s/h\rfloor}]=\nabla P_{t-{\tau_s^h}}^\alpha\varphi(X_{\tau_s^h}^h+b(U_{\lfloor s/h\rfloor},X_{\tau_s^h}^h)(s-\tau_s^h)),
\end{align}
consequence of the independence of the increments of the driving noise and the harmonicity of the stable heat kernel,}
to write:
\begin{align}
|\Delta^5|\le& \int_{\tau_t^h-h}^t \Big|\int_{\R^d} \Gamma (0,x,s,z) b(s,z)\cdot \nabla P_{t-s}^\alpha \varphi(z) dz\Big|ds\notag\\
&+  \frac{1}h\int_{\tau_t^h-h}^t\int_{\tau_s^h}^{\tau_s^h+h} \Big|\int_{\R^d } \Gamma^h (0,x,\tau_s^h,z)b(r,z)\cdot\nabla P_{ t-\tau_s^h}^\alpha\varphi (z+b(r,z)(s-\tau_s^h)) dz  \Big|dr d s\notag\\
=:&\Delta^{51}+\Delta^{52}.\label{SPLIT_DELTA_6}
\end{align}

For $\Delta^{51} $,  
by \eqref{dual-ineq},
\begin{align}
|\Delta^{51}|\le& \int_{\tau_t^h-h}^t  \|\Gamma (0,x,s,\cdot) b(s,\cdot)\|_{B_{1,\infty}^{{\beta\wedge \frac 12}}} \|\nabla P_{t-s}^\alpha \varphi\|_{ B_{\infty,1}^{-{(\beta\wedge \frac 12)}}}ds\notag\\
\le&\int_{\tau_t^h-h}^t \|\Gamma (0,x,s,\cdot) b(s,\cdot)\|_{ B_{1,\infty}^{{\beta\wedge \frac 12}}} \|\nabla p_\alpha(t-s,\cdot)\|_{ B_{1,1}^{{-(\beta\wedge \frac 12)-\beta}}} \|\varphi\|_{ B_{\infty,\infty}^{{\beta}}}ds\notag\\
\lesssim& \int_{\tau_t^h-h}^t (1+s^{-\frac{{\beta\wedge \frac 12}}\alpha})(t-s)^{-\frac{(1-2\beta)\vee 0}\alpha}(1+|\ln(t-s)|\I_{{\beta= \frac 12}}) \|\varphi\|_{ B_{\infty,\infty}^{\beta}} ds \notag\\ 
\lesssim& h^{\frac{\alpha\wedge (\alpha-1+2\beta) }\alpha}(1+|\ln(h)|\I_{{\beta=\frac 12}})(1+t^{-\frac {{\beta\wedge \frac 12}}\alpha})\|\varphi\|_{B_{\infty,\infty}^\beta},\label{CTR_DELTA_61}
\end{align}
using as well \eqref{YOUNG} for the second inequality, \eqref{SING_STABLE_HK} and 
\eqref{BESOV_NORM_1} for the last but one inequality.
Note here that we have performed a dichotomy in function of $\beta$. If $\beta<\frac 12 $ then the total gain w.r.t. the control is the full exponent $\beta $, in the other case $\beta\ge \frac 12 $ (bigger H\"older exponent), we get to order 1 up to an additional logarithmic factor {for $\beta=\frac 12 $}. The point is precisely that we have to keep a non positive exponent for the {non} homogeneous Besov norm of the gradient of the heat kernel (see equation \eqref{SING_STABLE_HK}
}
).
{To deal with $\Delta^{52}$, we introduce a regularization $b_m:=b\star\psi_{\frac 1m}$, $m\in \N^*$, of $b$ by spatial convolution with a family $(\psi_{\varepsilon}(\cdot)=\varepsilon^{-d}\psi(\cdot/\varepsilon))_{\varepsilon> 0} $ of mollifyers.  Then
\begin{align}
  &\exists C<\infty,\;\forall m\in\N^*,\;\|b_m-b\|_{L^\infty-L^\infty}\le C m^{-\beta}\label{eq:bmbinf}\\
  &\exists C<\infty,\;\;ds\mbox{ a.e },\;\forall m\in\N^*,\;\sup_{y\neq y'\in\R^d}\frac{|b_m(s,y)-b_m(s,y')|}{|y-y'|}\le C m^{1-\beta}.\label{eq:bmlip}
\end{align}}

We write:
\begin{align*}
\Delta^{52}
=&
\frac{1}h\int_{\tau_t^h-h}^t\int_{\tau_s^h}^{\tau_s^h+h} \Big|\int_{\R^d } \Gamma^h (0,x,\tau_s^h,z)b(r,z)\cdot\nabla P_{ t-\tau_s^h}^\alpha\varphi (z+b_m(r,z)(s-\tau_s^h)) dz  \Big|dr \Big]d s\\
&+\frac{1}h\int_{\tau_t^h-h}^t\int_{\tau_s^h}^{\tau_s^h+h} \Big|\int_{\R^d } \Gamma^h (0,x,\tau_s^h,z)b(r,z)\\
&\cdot[\nabla P_{ t-\tau_s^h}^\alpha\varphi (z+b(r,z)(s-\tau_s^h))-\nabla P_{ t-\tau_s^h}^\alpha\varphi (z+b_m(r,z)(s-\tau_s^h))] dz  \Big|dr \Big]d s\\
=:&\Delta^{521}_m+\Delta^{522}_m.
\end{align*}
The previous regularization is performed in order to {ensure that $z\mapsto \varphi(z+b_m(r,z)(s-\tau_s^h)) $ is $\beta $-H\"older continuous according to \eqref{eq:bmlip}}.
Note first that, still by duality of Besov spaces, from the arguments used for $\Delta^{51} $ we obtain:
\begin{align}
|\Delta^{521}_m|\le& \frac 1h\int_{\tau_t^h-h}^t  \int_{\tau_s^h}^{\tau_s^h+h} \|\Gamma^h (0,x,\tau_s^h,\cdot) b(r,\cdot)\|_{  B_{1,\infty}^{{\beta\wedge \frac 12}}} \|\nabla P_{t-\tau_s^h}^\alpha \varphi(\cdot+b_{m}(r,\cdot)(s-\tau_s^h))\|_{ B_{\infty,1}^{-{(\beta\wedge \frac 12)}}} dr ds\notag\\
\le&\frac 1h\int_{\tau_t^h-h}^t \int_{\tau_s^h}^{\tau_s^h+h}\|\Gamma^h(0,x,\tau_s^h,\cdot) b(r,\cdot)\|_{ B_{1,\infty}^{{\beta\wedge \frac 12}}} \|\nabla p^\alpha(t-\tau_s^h,\cdot)\|_{  B_{1,1}^{{-{(\beta\wedge \frac 12)}-\beta}}} \|\varphi(\cdot+b_{m}(r,\cdot)(s-\tau_s^h))\|_{ B_{\infty,\infty}^{{\beta}}} dr ds\notag\\
\lesssim& \int_{\tau_t^h-h}^t (1+(\tau_s^h)^{-\frac{{\beta\wedge \frac 12}}\alpha})(t-\tau_s^h)^{-\frac{(1-2\beta)\vee 0}\alpha} (1+|\ln(t-s)|\I_{{\beta= \frac 12}})\|\varphi\|_{ B_{\infty,\infty}^{{\beta}}}(1+ (m^{1-\beta}h)^{{\beta}}) ds\notag\\
\lesssim & h^{\frac{\alpha \wedge(\alpha-1+2\beta) }\alpha}(1+|\ln(h)|\I_{{\beta= \frac 12}})(1+t^{-\frac{{\beta\wedge \frac 12}}\alpha})(1+(m^{1-\beta}h)^{{\beta} })\|\varphi\|_{B_{\infty,\infty}^\beta},\label{CTR_DELTA_61M}
\end{align}
{using \eqref{BESOV_NORM_2} for the last but one inequality}.
Observe then that, 
\begin{align*}
|\Delta^{522}_m|\le& \|b\|_{L^\infty}\int_{\tau_t^h-h}^{t}\int_{\R^d}\bar p_\alpha(\tau_s^h,z-x) \Big|\int_{\R^d}\int_0^1   D^2p_\alpha (t-\tau_s^h,z+[(1-\lambda)b_m+\lambda b](r,z)(s-\tau_s^h)-y)\\
&\times (b-b_m)(r,z)(s-\tau_s^h)\varphi(y)   d\lambda  dy \Big|dz ds.
\end{align*}
{Using now a cancellation argument in the inner integral w.r.t. to $dy$, i.e. observe that
\begin{align*}
\int_{\R^d}  D^2p_\alpha (t-\tau_s^h&,z+[(1-\lambda)b_m+\lambda b](r,z)(s-\tau_s^h)-y)dy\\
=&\int_{\R^d}  D_w^2p_\alpha (t-\tau_s^h,w-y)dy\Big|_{w=z+[(1-\lambda)b_m+\lambda b](r,z)(s-\tau_s^h)}\\
=& D_w^2\int_{\R^d}  p_\alpha (t-\tau_s^h,w-y)dy\Big|_{w=z+[(1-\lambda)b_m+\lambda b](r,z)(s-\tau_s^h)}=0,
\end{align*}
we get} {also using \eqref{eq:bmbinf}, the H\"older continuity of $\varphi$, \eqref{drift-smoothing-iso-noise} and \eqref{SING_STABLE_HK},}
\begin{align*}
|\Delta^{522}_m|\lesssim& \int_{\tau_t^h-h}^{t}\int_{\R^d}\bar p_\alpha(\tau_s^h,z-x) \Big| \int_{\R^d}\int_0^1   D^2p_\alpha (t-\tau_s^h,z+[(1-\lambda)b_m+\lambda b](r,z)(s-\tau_s^h)-y)\\
&\times [\varphi(y)-\varphi(z+[(1-\lambda)b_m+\lambda b](r,z)(s-\tau_s^h))]  d\lambda  dy \Big| |(b-b_m)(r,z)|(s-\tau_s^h) dz ds\\
&\lesssim m^{-\beta} \|\varphi\|_{B_{\infty,\infty}^\beta} \int_{\tau_t^h-h}^{t} (s-\tau_s^h) (t-\tau_s^h)^{-\frac 2\alpha+\frac{\beta}{\alpha}}ds\\
&\lesssim m^{-\beta} \|\varphi\|_{B_{\infty,\infty}^\beta} \int_{\tau_t^h-h}^{t}  (t-\tau_s^h)^{1-\frac 2\alpha+\frac{\beta}{\alpha}}ds.
\end{align*}
Since $\alpha-2+\beta>-\alpha \iff 2(\alpha-1)+\beta>0$, we eventually get:
\begin{align*}
|\Delta^{522}_m|
&\lesssim m^{-\beta} \|\varphi\|_{B_{\infty,\infty}^\beta} h^{\frac{2(\alpha-1)+\beta}{\alpha}}.
\end{align*}

Choosing thus $h m^{1-\beta}\simeq 1 \iff m=h^{-\frac{1}{1-\beta}}  $, in order to have a bounded contribution in $m$ in \eqref{CTR_DELTA_61M}, yields:
\begin{align*}
|\Delta^{522}_m|\lesssim&  h^{\frac{2(\alpha-1)+\beta}{\alpha}}h^{\frac {\beta}{1-\beta}}  \|\varphi\|_{B_{\infty,\infty}^\beta}
\lesssim h^{\frac{\alpha \wedge(\alpha-1+2\beta)}\alpha}\|\varphi\|_{B_{\infty,\infty}^\beta}.
\end{align*}
Indeed, {since $\alpha>1>1-\beta $, for $h\le 1 $},
$$h^{\frac {\beta}{1-\beta}}\le h^{\frac{\beta}{\alpha}} \Rightarrow h^{\frac{2(\alpha-1)+\beta}{\alpha}}h^{\frac {\beta}{1-\beta}}\le h^{\frac{2(\alpha-1)+2\beta}{\alpha}}\le h^{\frac{\alpha-1+2\beta}{\alpha}}{\le  h^{\frac{\alpha \wedge(\alpha-1+2\beta)}\alpha}}.
$$
This eventually yields, together with \eqref{CTR_DELTA_61M}, $|\Delta^{52}|\lesssim h^{\frac{\alpha\wedge (\alpha-1+2\beta) }{\alpha}}(1+|\ln(h)|\I_{{\beta= \frac 12}})(1+t^{-\frac{{\beta\wedge \frac 12}}\alpha})\|\varphi\|_{B_{\infty,\infty}^\beta} $. {Exploiting as well \eqref{CTR_DELTA_61}} gives:
\begin{align}
|\Delta^5|\le h^{\frac{\alpha\wedge (\alpha-1+2\beta)}{\alpha}}(1+|\ln(h)|\I_{{\beta= \frac 12}}) (1+t^{-\frac{{\beta\wedge \frac 12}}{\alpha}})\|\varphi\|_{B_{\infty,\infty}^{\beta}}\label{CTR_DELTA_6}.
\end{align}

Let us turn to $\Delta^3$. Write from \eqref{CONDITIONING_FOR_EULER_SCHEME}:
\begin{align}
\Delta^3:=&\int_h^{\tau_t^h-h} \E_{0,x} \left[b(U_{\lfloor s/h\rfloor},X_{\tau_s^h}^h)\cdot \left( \nabla P_{t-s}^\alpha \varphi(X_{\tau_s^h}^h)-\nabla  P_{t-s}^\alpha \varphi(X_s^h)\right)\right]d s\notag\\
=&\int_h^{\tau_t^h-h} \E_{0,x} \left[b(U_{\lfloor s/h\rfloor},X_{\tau_s^h}^h)\cdot \left( \nabla P_{t-s}^\alpha \varphi(X_{\tau_s^h}^h)-\nabla  P_{t-\tau_s^h}^\alpha \varphi(X_{\tau_s^h}^h+b(U_{\lfloor s/h\rfloor},X_{\tau_s^h}^h)(s-\tau_s^h))\right)\right]d s\notag\\
=&\int_h^{\tau_t^h-h} \E_{0,x} \left[b(U_{\lfloor s/h\rfloor},X_{\tau_s^h}^h)\cdot \left( \nabla P_{t-s}^\alpha \varphi(X_{\tau_s^h}^h)-\nabla  P_{t-\tau_s^h}^\alpha \varphi(X_{\tau_s^h}^h)\right)\right]d s\notag\\
&-\int_h^{\tau_t^h-h} \E_{0,x} \left[b(U_{\lfloor s/h\rfloor},X_{\tau_s^h}^h){\cdot}(\nabla  P_{t-\tau_s^h}^\alpha \varphi(X_{\tau_s^h}^h+b(U_{\lfloor s/h\rfloor},X_{\tau_s^h}^h)(s-\tau_s^h))-\nabla  P_{t-\tau_s^h}^\alpha \varphi(X_{\tau_s^h}^h))\right]ds\notag\\
=:&\Delta^{31}-\Delta^{32}.\label{DECOUP_Delta_3}
\end{align}
{In the above equation we have somehow split the time and space sensitivities for the gradient of the heat semigroup applied to the test function $\varphi $}.

Write first {from the duality inequality \eqref{dual-ineq}}: 

\begin{align}
|\Delta^{31}|\le&\int_h^{\tau_t^h-h} \frac 1h \int_{\tau_s^h}^{\tau_s^h+h} \Big|\int_{\R^d} \Gamma^h(0,x,\tau_s^h,z)b(r,z) \cdot \left( \nabla P_{t-s}^\alpha \varphi(z)-\nabla  P_{t-\tau_s^h}^\alpha \varphi(z)\right) dz \Big|dr ds,\notag\\
\lesssim& \int_h^{\tau_t^h-h} \frac 1h \int_{\tau_s^h}^{\tau_s^h+h} \|\Gamma^h(0,x,\tau_s^h,\cdot)b(r,\cdot)\|_{ B_{1,\infty}^{{\beta}}}\|\nabla P_{t-s}^\alpha \varphi-\nabla  P_{t-\tau_s^h}^\alpha \varphi\|_{ B_{\infty,1}^{-{\beta}}}dr ds\notag\\
\lesssim & \int_h^{\tau_t^h-h}  (1+(\tau_s^h)^{-\frac{{\beta}}\alpha}) \|\nabla p_{\alpha}(t-s,\cdot)-\nabla p_{\alpha}(t-\tau_s^h,\cdot)\|_{ B_{1,1}^{-2\beta}}ds \|\varphi\|_{B_{\infty,\infty}^\beta}\notag\\
\lesssim & h\int_h^{\tau_t^h-h}  (1+(\tau_s^h)^{-\frac{{\beta}}\alpha}) \int_0^1\|\partial_v\nabla p_{\alpha}(t-v,\cdot)|_{v_\lambda=\tau_s^h+\lambda (s-\tau_s^h)}\|_{ B_{1,1}^{-2\beta}}d\lambda ds \|\varphi\|_{B_{\infty,\infty}^\beta}\notag\\
\lesssim & h\int_h^{\tau_t^h-h}  (1+(\tau_s^h)^{-\frac{{\beta}}\alpha}) (t-\tau_s^h)^{-(1+\frac 1\alpha
-\frac{2\beta}\alpha)}ds \|\varphi\|_{B_{\infty,\infty}^\beta},\label{BD_FOR_DELTA_31_A_CITER}
\end{align}
using {\eqref{BESOV_NORM_2} and \eqref{YOUNG} for the third inequality} and \eqref{SING_STABLE_HK}, noting that {$\alpha+1-2\beta>0$}, for the last inequality.

Pay attention that, if $\beta>\frac 12\iff 1-2\beta<0 $ then the previous time integral {is not singular} and:
$$|\Delta^{31}|\lesssim h(1+t^{-\frac{1-\beta}{\alpha}})\|\varphi\|_{B_{\infty,\infty}^\beta}\le {h(1+t^{-\frac{1}{2\alpha}})\|\varphi\|_{B_{\infty,\infty}^\beta}}.$$
If now $\beta\le \frac 12 $,
\begin{align*}
\int_h^{\tau_t^h-h}  (1+(\tau_s^h)^{-\frac{\beta}\alpha}) (t-\tau_s^h)^{-(1+\frac 1\alpha-\frac{2\beta}\alpha)}ds&\le t^{-(1+\frac 1\alpha-\frac{2\beta}\alpha)}\int_{h}^{\frac t2}(1+s^{-\frac{\beta}\alpha})ds+(1+t^{-\frac \beta\alpha})\int_{\frac t2}^{\tau_t^h-h}(t-\tau_s^h)^{-(1+\frac 1\alpha-\frac{2\beta}\alpha)}ds\\
&\lesssim t^{-\frac{1-\beta}{\alpha}} +(1+t^{-\frac \beta\alpha})\big(|\ln(h)|\I_{\beta=\frac 12}+ h^{-\frac{1-2\beta}{\alpha}}\I_{\beta<\frac 12}\big)\\
&\lesssim(1+t^{-\frac \beta\alpha})\big(|\ln(h)|\I_{\beta=\frac 12}+ h^{-\frac{1-2\beta}{\alpha}}\I_{\beta<\frac 12}\big),
\end{align*}
for $h$ small enough.
Hence,
\begin{align}
|\Delta^{31}|&\lesssim \big[h(1+t^{-{\frac{1}{2\alpha}}})\I_{\beta>\frac 12}+(1+t^{-\frac \beta\alpha})\big(h|\ln(h)|\I_{\beta=\frac 12}+ h^{\frac{\alpha-1+2\beta}{\alpha}}\I_{\beta<\frac 12}\big)
\big]\|\varphi\|_{B_{\infty,\infty}^\beta}\notag\\
&\lesssim h^{\frac{\alpha\wedge (\alpha-1+ 2\beta)}{\alpha}}(1+t^{-\frac{{\beta\wedge \frac 12}}{\alpha}})(1+|\ln(h)|\I_{\beta=\frac 12}){\|\varphi\|_{B_{\infty,\infty}^\beta}}.
\label{CTR_Delta_31}
\end{align}

Write then, 

\begin{align*}
\Delta^{32}&=\int_h^{\tau_t^h-h} \E_{0,x} \left[b(U_{\lfloor s/h\rfloor},X_{\tau_s^h}^h){\cdot}(\nabla  P_{t-\tau_s^h}^\alpha \varphi(X_{\tau_s^h}^h+b(U_{\lfloor s/h\rfloor},X_{\tau_s^h}^h)(s-\tau_s^h))-\nabla  P_{t-\tau_s^h}^\alpha \varphi(X_{\tau_s^h}^h))\right]ds\\
&=\int_h^{\tau_t^h-h} \frac 1h \int_{\tau_s^h}^{\tau_s^h+h} \int_{\R^d} \Gamma^h(0,x,{\tau^h_s},z)b(r,z){\cdot}\Big(\nabla  P_{t-\tau_s^h}^\alpha \varphi(z+b(r,z)(s-\tau_s^h))-\nabla  P_{t-\tau_s^h}^\alpha \varphi(z)\Big)dz dr ds\\
&=\int_h^{\tau_t^h-h} \frac 1h \int_{\tau_s^h}^{\tau_s^h+h} \int_{\R^d} \Gamma^h(0,x,{\tau^h_s},z)\int_0^1 d\lambda \Big\langle D^2 P_{t-\tau_s^h}^\alpha \varphi(z+\lambda b(r,z)(s-\tau_s^h))b(r,z),b(r,z)\Big\rangle (s-\tau_s^h) dz dr ds\\
&=\sum_{i,j=1}^d\int_h^{\tau_t^h-h} \frac 1h \int_{\tau_s^h}^{\tau_s^h+h} \int_0^1 d\lambda \int_{\R^d}\Gamma^h(0,x,{\tau^h_s},z)b_ib_j(r,z) D_{ij}^2 P_{t-\tau_s^h}^\alpha \varphi(z+\lambda b(r,z)(s-\tau_s^h)) (s-\tau_s^h) dz dr ds.
\end{align*}
Taking now a mollifier $b_m$ of $b$ as above for the term $\Delta^5 $, we derive:
\begin{align}
|\Delta^{32}|\lesssim& \sum_{i,j=1}^d\int_h^{\tau_t^h-h} \frac 1h \int_{\tau_s^h}^{\tau_s^h+h} \int_0^1\Big|\int_{\R^d}\Gamma^h(0,x,{\tau^h_s},z)b_ib_j(r,z) D_{ij}^2 P_{t-\tau_s^h}^\alpha \varphi(z+\lambda b_m(r,z)(s-\tau_s^h)) dz \Big|dr (s-\tau_s^h)  dsd\lambda\notag\\
&+\sum_{i,j=1}^d\int_h^{\tau_t^h-h} \frac 1h \int_{\tau_s^h}^{\tau_s^h+h} \int_0^1\Big|\int_{\R^d}\Gamma^h(0,x,{\tau^h_s},z)b_ib_j(r,z) \Big(D_{ij}^2 P_{t-\tau_s^h}^\alpha \varphi(z+\lambda b(r,z)(s-\tau_s^h)) \notag\\
&\quad\quad - D_{ij}^2 P_{t-\tau_s^h}^\alpha \varphi(z+\lambda b_m(r,z)(s-\tau_s^h)) \Big)  dz\Big| dr (s-\tau_s^h) dsd\lambda=:\Delta^{321}_m+\Delta^{322}_m.\label{DECOUP_Delta_32}
\end{align}
{We deal with $\Delta^{321}_m $ similarly to the term $\Delta^{521}_m $ above using \eqref{dual-ineq}, $b_ib_j\in L^\infty([0,T],B^\beta_{\infty,\infty})$, \eqref{BESOV_NORM_2}, \eqref{YOUNG} and \eqref{SING_STABLE_HK}. This gives,}
\begin{align}
|\Delta^{321}_m|\lesssim& h\sum_{i,j=1}^d\int_h^{\tau_t^h-h} \frac 1h \int_{\tau_s^h}^{\tau_s^h+h} \int_0^1\|\Gamma^h(0,x,{\tau^h_s},\cdot)b_ib_j(r,\cdot)\|_{B_{1,\infty}^{\beta}} \|D_{ij}^2 P_{t-\tau_s^h}^\alpha \varphi(\cdot+\lambda b_m(r,\cdot)(s-\tau_s^h))\|_{ B_{\infty,1}^{-\beta}} dr  dsd\lambda\notag\\
\lesssim&h\sum_{i,j=1}^d\int_h^{\tau_t^h-h} \frac 1h \int_{\tau_s^h}^{\tau_s^h+h} \int_0^1 (1+s^{-\frac \beta\alpha})\|D_{ij}^2 p_\alpha(t-\tau_s^h,\cdot)\|_{B_{1,1}^{-2\beta}}\| \varphi(\cdot+\lambda b_m(r,\cdot)(s-\tau_s^h))\|_{B_{\infty,\infty}^{\beta}} dr ds d\lambda\notag\\
\lesssim&h\|\varphi\|_{B_{\infty,\infty}^\beta}\sum_{i,j=1}^d\int_h^{\tau_t^h-h} \frac 1h \int_{\tau_s^h}^{\tau_s^h+h} (1+s^{-\frac \beta\alpha})(t-\tau_s^h)^{-\frac{2(1-\beta)}{\alpha}}(1+(m^{1-\beta}h)^\beta)  dr ds.\notag
\end{align}
The behavior of this term now depends on the indices. If $2(1-\beta)<\alpha \iff 1<\beta+\frac \alpha 2$, which is always satisfied for $\alpha\in (1,2] $ provided $\beta>\frac 12 $, the previous integral is non singular and:
$$ |\Delta^{321}_m|\lesssim h t^{\frac{\alpha-2+\beta}{\alpha}}(1+(m^{1-\beta}h)^\beta)\|\varphi\|_{B_{\infty,\infty}^\beta}\lesssim {h t^{-\frac{\beta\wedge \frac 12}{\alpha}}(1+(m^{1-\beta}h)^\beta)\|\varphi\|_{B_{\infty,\infty}^\beta}},$$
{where for the last inequality we observe that, if $\beta<\frac 12 $, since $\alpha-2+2\beta>0,\;t^{\frac{\alpha-2+\beta}{\alpha}}= t^{\frac{\alpha-2+2\beta}{\alpha}}t^{-\frac{\beta}{\alpha}} \lesssim t^{-\frac{\beta}{\alpha}}$, and {if $\beta\ge \frac 12 $, we have $\alpha-2+\beta>-\frac 12$.}}

If now {$2(1-\beta)\ge \alpha \iff \beta\le 1-\frac \alpha 2 $, which in particular gives that in this case ${\beta<\frac 12 }$}, we have
:
\begin{align*}
|\Delta^{321}_m|\lesssim &\Big(h^{2\frac{\alpha-1+\beta}{\alpha}}\I_{2(1-\beta)> \alpha} +h|\ln(h)|\I_{2(1-\beta)=\alpha}\Big)t^{-\frac{\beta}{\alpha}}(1+(m^{1-\beta}h)^\beta)\|\varphi\|_{B_{\infty,\infty}^\beta}.
\end{align*}
In any case, we derive:
\begin{align}\label{Delta_321_m}
|\Delta^{321}_m|&\lesssim 
(h^{\frac{\alpha \wedge (\alpha-1+2\beta)}{\alpha}}\I_{2(1-\beta)\neq\alpha}+h|\ln(h)|\I_{2(1-\beta)=\alpha})t^{-\frac{{\beta\wedge \frac 12}}{\alpha}} \|\varphi\|_{B_{\infty,\infty}^\beta}\notag\\
&\lesssim h^{\frac{\alpha \wedge (\alpha-1+2\beta)}{\alpha}}t^{-\frac{{\beta\wedge \frac 12}}{\alpha}} \|\varphi\|_{B_{\infty,\infty}^\beta},
\end{align}
provided $m^{1-\beta}h\lesssim 1$ as above, noting as well for the last inequality that if we have $1-\beta=\frac\alpha2 $, necessarily $\beta<\frac 12 $, since $\alpha>1 $. Turning to the other contribution, we get {using \eqref{ARONS_SCHEME} and a cancellation argument similar to the one used for $\Delta_m^{522} $:
\begin{align*}
|\Delta^{322}_m|\lesssim& \int_{h}^{\tau_t^h-h}\int_{\tau_s^h}^{\tau_s^h+h}\int_{\R^d}\bar p_\alpha(\tau_s^h,z-x) \int_0^1 \Big|\int_0^1 \int_{\R^d}  D^3p_\alpha (t-\tau_s^h,z+\lambda[(1-\mu)b_m+\mu  b](r,z)(s-\tau_s^h)-y)\\
&\times\lambda (b-b_m)(r,z)(s-\tau_s^h)\varphi(y)    dy d\mu\Big|{d\lambda}dz dr ds\\
\lesssim& \int_h^{\tau_t^h-h}\int_{\tau_s^h}^{\tau_s^h+h}\int_{\R^d}\bar p_\alpha(\tau_s^h,z-x)\int_0^1 \Big| \int_0^1 \int_{\R^d}  D^3p_\alpha (t-\tau_s^h,z+\lambda[(1-\mu)b_m+\mu b](r,z)(s-\tau_s^h)-y)\\
&\times [\varphi(y)-\varphi(z+\lambda[(1-\mu)b_m+\mu b](r,z)(s-\tau_s^h))]    dy d\mu\Big| \lambda |(b-b_m)(r,z)|(s-\tau_s^h) dz dr ds\\
&\lesssim m^{-\beta} \|\varphi\|_{B_{\infty,\infty}^\beta} \int_h^{\tau_t^h-h} h^2 (t-\tau_s^h)^{-\frac 3\alpha+\frac{\beta}{\alpha}}ds\le m^{-\beta}h^{\frac{3(\alpha-1)+\beta}{\alpha}}\|\varphi\|_{B_{\infty,\infty}^\beta}
\lesssim h^{\frac{3(\alpha-1)+\beta}{\alpha}+\frac{\beta}{1-\beta}} \|\varphi\|_{B_{\infty,\infty}^\beta}\\
&\lesssim h^{\frac{\alpha \wedge (\alpha-1+2\beta)}{\alpha}} \|\varphi\|_{B_{\infty,\infty}^\beta}, 
\end{align*}
{choosing again $m=h^{-\frac{1}{1-\beta}} $ for the last but one inequality}. Together with \eqref{Delta_321_m} and \eqref{DECOUP_Delta_32} this yields:
\begin{align*}
|\Delta^{32}|\lesssim h^{\frac{\alpha \wedge (\alpha-1+2\beta)}{\alpha}}(1+t^{-\frac{{\beta\wedge \frac 12}}{\alpha}}) \|\varphi\|_{B_{\infty,\infty}^\beta}.
\end{align*}
From the above equation and \eqref{CTR_Delta_31}, we eventually derive:
\begin{align}\label{CTR_DELTA_3}
|\Delta^3|\lesssim h^{\frac{\alpha\wedge (\alpha-1+ 2\beta)}{\alpha}}(1+t^{-\frac{{\beta\wedge \frac 12}
  }{\alpha}})(1+|\ln(h)|\I_{\beta=\frac 12})\|\varphi\|_{B_{\infty,\infty}^\beta}.
\end{align}
Let us now turn to the term $\Delta^4 $ which can somehow be handled as the previous contribution $\Delta^{31} $, {with the very same arguments, namely \eqref{dual-ineq}, \eqref{BESOV_NORM_2}, \eqref{SING_STABLE_HK}. Write:}
\begin{align}
|\Delta^4|=&\left|\int_h^{\tau_t^h-h} \E_{0,x} \left[b(U_{\lfloor s/h\rfloor},X_{\tau_s^h}^h)\cdot \left( \nabla P_{t-U_{\lfloor s/h\rfloor}}^\alpha\varphi(X_{\tau_s^h}^h)-\nabla  P_{t-s}^\alpha \varphi(X_{\tau_s^h}^h)\right)\right]d s\right|\notag\\
=&\left|\int_h^{\tau_t^h-h} \frac{1}{h}\int_{\tau_s^h}^{\tau_s^h+h} \int_{\R^d}\Gamma^h(0,x,\tau_s^h,z) b(r,z) \left( \nabla P_{t-r}^\alpha\varphi(z)-\nabla  P_{t-s}^\alpha \varphi(z)\right) dz dr ds\right|\notag\\
\lesssim& \int_h^{\tau_t^h-h} \frac{1}{h}\int_{\tau_s^h}^{\tau_s^h+h} \|\Gamma^h(0,x,\tau_s^h,\cdot) b(r,\cdot)\|_{B_{1,\infty}^\beta}\|(\nabla P_{t-r}^\alpha-\nabla  P_{t-s}^\alpha )\varphi\|_{B_{\infty,1}^{-\beta}} drds \notag\\
\lesssim& \int_h^{\tau_t^h-h} \frac{1}{h}\int_{\tau_s^h}^{\tau_s^h+h}{(1+(\tau_s^h)^{-\frac{\beta}\alpha})} \|\nabla p_\alpha(t-r,\cdot)-\nabla p_\alpha(t-s,\cdot)\|_{B_{1,1}^{-2\beta}}dr ds \|\varphi\|_{B_{\infty,\infty}^\beta}\notag\\
\lesssim& \int_h^{\tau_t^h-h} \frac{1}{h}\int_{\tau_s^h}^{\tau_s^h+h} (1+(\tau_s^h)^{-\frac{\beta}\alpha}) \int_{0}^1\|\partial_v\nabla p_\alpha (t-v,\cdot) |_{v=r\wedge s+\lambda(r\vee s-r\wedge s)}\|_{B_{1,1}^{-2\beta}}|r-s|d\lambda dr ds\|\varphi\|_{B_{\infty,\infty}^\beta}\notag\\
\lesssim& h \int_h^{\tau_t^h-h} \frac{1}{h}\int_{\tau_s^h}^{\tau_s^h+h} (1+(\tau_s^h)^{-\frac{\beta}\alpha}) (t-r\vee s)^{-\big(1+\frac 1\alpha-\frac {2\beta}\alpha\big)} dr ds\|\varphi\|_{B_{\infty,\infty}^\beta}\notag\\
\lesssim & 
h^{\frac{\alpha\wedge (\alpha-1+ 2\beta)}{\alpha}}(1+t^{-\frac{{\beta\wedge \frac 12}}{\alpha}})(1+|\ln(h)|\I_{\beta=\frac 12}){\|\varphi\|_{B_{\infty,\infty}^\beta}}, 
\label{CTR_DELTA_4}
\end{align}
noting that we can invoke \eqref{SING_STABLE_HK} since $\alpha+1-2\beta>0 $ and reproducing a similar discussion as for the term $\Delta^{31} $ for the last {inequality}.

Let us now turn on the time sensitivity:
\begin{align}
|\Delta^2|=&\left|\int_h^{\tau_t^h-h} \E_{0,x} \left[ b(s,X_s)\cdot \nabla P_{t-s}^\alpha \varphi(X_s)-b(s,X_{\tau_s^h})\cdot \nabla  P_{t-s}^\alpha \varphi (X_{\tau_s^h}) 
        \right] d s \right|\notag\\
        \le & \int_h^{\tau_t^h-h} \Big|\int_{\R^d} [\Gamma(0,x,s,z)-\Gamma(0,x,\tau_s^h,z)]  b(s,z)\cdot \nabla P_{t-s}^\alpha \varphi(z) dz\Big| ds\notag\\
        \le &\int_h^{\tau_t^h-h} \|\Gamma(0,x,s,\cdot)-\Gamma(0,x,\tau_s^h,\cdot)\|_{B_{1,1}^{-\beta}}\|b(s,\cdot) \nabla P_{t-s}^\alpha \varphi\|_{B_{\infty,\infty}^\beta} ds,\label{PREAL_BD_Delta_2}
\end{align}
using once again a duality argument {(see \eqref{dual-ineq})}. Note that it is for this term that the somehow \textit{natural} duality argument appears since it precisely isolates the drift and somehow gives the weaker norm to be investigated to derive a larger convergence rate w.r.t the former work \cite{fito:meno:25}.

Now, on the one hand it readily holds that
\begin{align}
\label{TEST_FUNC_Delta_2}
\|b(s,\cdot) \nabla P_{t-s}^\alpha \varphi\|_{B_{\infty,\infty}^\beta}&\le \|b(s,\cdot)\|_{B_{\infty,\infty}^\beta}\|\nabla P_{t-s}^\alpha \varphi\|_{L^\infty}+\|b(s,\cdot)\|_{L^\infty}\|\nabla P_{t-s}^\alpha \varphi\|_{B_{\infty,\infty}^\beta}\notag\\
&\lesssim \|b(s,\cdot)\|_{B_{\infty,\infty}^\beta} (t-s)^{-\frac{1-\beta}{\alpha}}\|\varphi\|_{B_{\infty,\infty}^\beta}+\|b(s,\cdot)\|_{L^\infty}\|\nabla p^\alpha(t-s,\cdot)\|_{B_{1,1}^{0}}\|\varphi\|_{B_{\infty,\infty}^\beta}\notag\\
&\lesssim \|b(s,\cdot)\|_{B_{\infty,\infty}^\beta} (t-s)^{-\frac{1}{\alpha}}\|\varphi\|_{B_{\infty,\infty}^\beta},
\end{align}
using the Young inequality \eqref{YOUNG} {and \eqref{GAIN_TEST_FUNC} for the last but one} inequality.

On the other hand, we have the following regularity result whose proof is postponed to Section \ref{SEC_PREUVE_TEC}:
\begin{PROP}[Time regularity in the forward variable]\label{PROP_FWD_TIME_REG}
Under the current assumptions {and, in particular, $\alpha-1>\beta$,} it holds that for $s\in [h,t]$:
\begin{align}
\label{NORM_NEG_HK}
\|\Gamma(0,x,s,\cdot)-\Gamma(0,x,\tau_s^h,\cdot)\|_{B_{1,1}^{-\beta}}\lesssim  h^{\frac{\alpha\wedge (\alpha-1+ 2\beta)}{\alpha}}(1+s^{-\frac{\alpha-(\beta\vee (1-\beta))}{\alpha}}+{s^{-\frac{1}{2\alpha}}\I_{\beta>\alpha-\frac 12}})(1+|\ln(h)|\I_{{\beta= \frac 12}}).
\end{align}
\end{PROP}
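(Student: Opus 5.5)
We follow the proof of Lemma \ref{lemerrfaibschemtt}, replacing the $L^{\prho}$ norm by the $B_{1,1}^{-\beta}$ norm. The Besov tools (duality \eqref{dual-ineq}, Young \eqref{YOUNG}, the heat kernel bounds \eqref{SING_STABLE_HK} and the product bound \eqref{BESOV_NORM_1}) then take over the role of the Lebesgue--Young inequalities.

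\textbf{Step 1: Duhamel splitting.} We write, by \eqref{duhamel-Diff},
$$\Gamma(0,x,s,\cdot)-\Gamma(0,x,\tau^h_s,\cdot)={\cal E}^1+{\cal E}^2+{\cal E}^3+{\cal E}^4$$
exactly as in the proof of Lemma \ref{lemerrfaibschemtt}, with $t$ replaced by $s$. Here ${\cal E}^4=p_\alpha(s,\cdot-x)-p_\alpha(\tau^h_s,\cdot-x)$. The terms ${\cal E}^2,{\cal E}^3$ are the contributions of the last time step. The term ${\cal E}^1$ collects the integral over $[0,\tau^h_s-h]$ of $\Gamma(0,x,r,\cdot)b(r,\cdot)$ convolved with $\nabla p_\alpha(\tau^h_s-r,\cdot)-\nabla p_\alpha(s-r,\cdot)$.

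\textbf{Step 2: the free term ${\cal E}^4$.} We use $\|{\cal E}^4\|_{B^{-\beta}_{1,1}}\le h\sup_{v\in[\tau^h_s,s]}\|\partial_vp_\alpha(v,\cdot)\|_{B^{-\beta}_{1,1}}$. By \eqref{SING_STABLE_HK} with $\ell=1$, the exponent is $(1-\beta/\alpha)_+$, so the bound is $h s^{-(1-\beta/\alpha)}$. Interpolating this with the trivial bound $s^{-0}$ gives $h^{\frac{\alpha\wedge(\alpha-1+2\beta)}{\alpha}}s^{-\cdots}$. This is one source of the factor $s^{-\frac{\alpha-(\beta\vee(1-\beta))}\alpha}$.

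\textbf{Step 3: the drift terms.} For each integrand we use duality against a test function $\psi\in B^\beta_{\infty,\infty}$, which is equivalent to the $B^{-\beta}_{1,1}$ norm up to equivalence. We write
$$\int\Gamma(0,x,r,z)b(r,z)\cdot\nabla P_{u}^\alpha\psi(z)\,dz.$$
For $\beta<1/2$ we bound this by $\|\Gamma b\|_{B^{\beta}_{1,\infty}}\|\nabla p_\alpha(u,\cdot)\|_{B^{-2\beta}_{1,1}}\|\psi\|_{B^\beta_{\infty,\infty}}$, using \eqref{BESOV_NORM_1} and \eqref{YOUNG}. For $\beta\ge 1/2$ we use $\beta\wedge\frac12$ in place of $\beta$, as in \eqref{CTR_DELTA_61}, which produces the logarithm at $\beta=\frac12$.

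\begin{itemize}
\item[] For ${\cal E}^2,{\cal E}^3$ this gives $\int_{\tau^h_s-h}^s(1+r^{-\beta/\alpha})(s-r)^{-\frac{(1-2\beta)_+}\alpha}dr\lesssim h^{\frac{\alpha\wedge(\alpha-1+2\beta)}\alpha}(1+s^{-\beta/\alpha})$.
\item[] For ${\cal E}^1$ we use $\|\nabla p_\alpha(\tau^h_s-r)-\nabla p_\alpha(s-r)\|_{B^{-2\beta}_{1,1}}\lesssim h(s-r)^{-(1+\frac{1-2\beta}\alpha)}$. We then split $\int_0^{\tau^h_s-h}$ at $s/2$, exactly as for $\Delta^{31}$ in \eqref{BD_FOR_DELTA_31_A_CITER}. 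This gives $h^{\frac{\alpha\wedge(\alpha-1+2\beta)}\alpha}(1+|\ln h|\I_{\beta=\frac12})$ times powers of $s$.
\end{itemize}

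\textbf{Step 4 and main obstacle.} The delicate points are twofold. The first is checking that the singularity near $r=0$ remains integrable. There \eqref{BESOV_NORM_1} gives $r^{-\beta/\alpha}$, while the heat kernel gives $(s-r)^{-1-\frac{1-2\beta}\alpha}$ on the region $r<s/2$. This produces the extra term $s^{-\frac1{2\alpha}}\I_{\beta>\alpha-\frac12}$: when $\beta>\frac12$ we cap the regularity at $\frac12$, and the resulting time exponent must be compared with $\alpha-1$. The second point is the short-time regime $s\in[h,2h)$, where ${\cal E}^1$ vanishes. There we bound everything crudely by powers of $s\simeq h$ and convert them into the stated rate. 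We expect the bookkeeping of these exponents, in particular the case distinctions in $\beta$ relative to $\frac12$ and to $\alpha-\frac12$, to be the main technical burden.
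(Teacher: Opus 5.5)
Your plan is the paper's proof. It uses the same Duhamel splitting into three pieces: the free term; the main drift term carrying the time increment of $\nabla p_\alpha$, bounded in $B^{-2\beta}_{1,1}$ and split at $s/2$ as for $\Delta^{31}$ in \eqref{BD_FOR_DELTA_31_A_CITER}; and the last-step terms. All are handled with \eqref{BESOV_NORM_1}, \eqref{YOUNG}, \eqref{SING_STABLE_HK} and the regularity capped at $\beta\wedge\frac12$. Testing against $\psi\in B^\beta_{\infty,\infty}$ is just \eqref{YOUNG} read through the duality pairing. Two details need correcting, however.

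In Step 2, interpolating $hs^{-\frac{\alpha-\beta}{\alpha}}$ with the $O(1)$ bound yields, for $\beta<\frac12$, the power $s^{-\frac{(\alpha-1+2\beta)(\alpha-\beta)}{\alpha^2}}$. This is more singular than the claimed $s^{-\frac{\alpha-1+\beta}{\alpha}}$ by a factor $s^{-\frac{\beta(1-2\beta)}{\alpha^2}}$, so it does not give the statement. Use instead $h=h^{\frac{\alpha-1+2\beta}{\alpha}}h^{\frac{1-2\beta}{\alpha}}\le h^{\frac{\alpha-1+2\beta}{\alpha}}(\tau^h_s)^{\frac{1-2\beta}{\alpha}}$, as the paper does.

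In Step 4, the term $s^{-\frac1{2\alpha}}\I_{\beta>\alpha-\frac12}$ does not come from integrability at $r=0$ (the factor $r^{-\beta/\alpha}$ is always integrable). Nor does it come from a comparison with $\alpha-1$. For $\beta>\frac12$, every drift contribution is $\lesssim h(1+s^{-\frac1{2\alpha}})$:
the last-step terms through the capped factor $r^{-\frac1{2\alpha}}$ with $r\simeq s$, and the main term because it is $\lesssim h(1+s^{-\frac{1-\beta}{\alpha}})$ with $\frac{1-\beta}{\alpha}<\frac1{2\alpha}$.
The free term, by contrast, is $hs^{-\frac{\alpha-\beta}{\alpha}}$. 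The drift bound is the more singular of the two exactly when $\frac12>\alpha-\beta$, which is where the indicator comes from.

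Your separate treatment of $s\in[h,2h)$, which the paper's proof skips by starting from $s\ge 2h$, does go through. There the right-hand side is of order $h^{\frac\beta\alpha}$, plus $h^{1-\frac1{2\alpha}}$ when $\beta>\alpha-\frac12$. The free term is $\lesssim h\cdot h^{-\frac{\alpha-\beta}{\alpha}}=h^{\frac\beta\alpha}$. The drift terms are $\lesssim h^{1-\frac{(\beta\wedge\frac12)+(1-2\beta)_+}{\alpha}}$, times $|\ln h|$ when $\beta=\frac12$. Both fit within the stated bound in every case.
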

From \eqref{NORM_NEG_HK} and \eqref{TEST_FUNC_Delta_2} we readily get in \eqref{PREAL_BD_Delta_2}:
\begin{align}
|\Delta^2| \lesssim &h^{\frac{\alpha \wedge (\alpha-1+2\beta)}{\alpha}} (1+|\ln(h)|\I_{{\beta= \frac 12}})\int_{h}^{\tau_t^h-h} (1+s^{-\frac{\alpha-(\beta\vee (1-\beta))}{\alpha}}+{s^{-\frac{1}{2\alpha}}\I_{\beta>\alpha-\frac 12}})(t-s)^{-\frac 1\alpha}ds\|\varphi\|_{B_{\infty,\infty}^\beta}\notag\\
\lesssim &h^{\frac{\alpha \wedge (\alpha-1+2\beta)}{\alpha}} (1+|\ln(h)|\I_{{\beta= \frac 12}})(1+t^{-\frac{1-(\beta\vee (1-\beta))}{\alpha}}+{t^{\frac{\alpha-1-\frac 12}\alpha}\I_{\beta>\alpha-\frac 12}})\|\varphi\|_{B_{\infty,\infty}^\beta}\notag\\
\lesssim &h^{\frac{\alpha \wedge (\alpha-1+2\beta)}{\alpha}} (1+|\ln(h)|\I_{{\beta= \frac 12}})(1+t^{-\frac{\beta\wedge (1-\beta)}{\alpha}}+{t^{-\frac{1}{2\alpha}}\I_{\beta>\alpha-\frac 12}})\|\varphi\|_{B_{\infty,\infty}^\beta}\notag\\
\lesssim &h^{\frac{\alpha \wedge (\alpha-1+2\beta)}{\alpha}} (1+|\ln(h)|\I_{{\beta= \frac 12}})(1+t^{-\frac{{\beta\wedge \frac 12}}{\alpha}})\|\varphi\|_{B_{\infty,\infty}^\beta}.\label{CTR_DELTA_2}
\end{align}

It thus remains to handle the Gronwall type term $\Delta^1 $. Namely,
\begin{align}
\Delta^1&=\int_h^{\tau_t^h-h} \E_{0,x} \bigg[ b(s,X_{\tau_s^h})\cdot \nabla P_{t-s}^\alpha \varphi(X_{\tau_s^h})
        -b(s,X_{\tau_s^h}^h)\cdot \nabla  P_{t-s}^\alpha \varphi(X_{\tau_s^h}^h) \bigg] d s\notag\\
        &=\int_h^{\tau_t^h-h} \int_{\R^d} \Big[\Gamma(0,x,\tau_s^h,z)-\Gamma^h(0,x,\tau_s^h,\cdot)\Big] b(s,z)\cdot \nabla  P_{t-s}^\alpha \varphi(z)dz ds,\notag\\
|\Delta^1|&\lesssim \int_h^{\tau_t^h-h} \|\Gamma(0,x,\tau_s^h,\cdot)-\Gamma^h(0,x,\tau_s^h,\cdot)\|_{B_{1,1}^{-\beta}}     \|b(s,\cdot)\nabla  P_{t-s}^\alpha \varphi\|_{B_{\infty,\infty}^\beta} ds\notag\\
&\lesssim \|b\|_{L^\infty-B_{\infty,\infty}^\beta}\|\varphi\|_{B_{\infty,\infty}^\beta}\int_h^{\tau_t^h-h} \|\Gamma(0,x,\tau_s^h,\cdot)-\Gamma^h(0,x,\tau_s^h,\cdot)\|_{B_{1,1}^{-\beta}}   (t-s)^{-\frac{1}{\alpha}}   ds,\label{CTR_DELTA_1}
\end{align}
using again \eqref{TEST_FUNC_Delta_2} for the last inequality. Combining \eqref{CTR_DELTA_5}, \eqref{CTR_DELTA_6}, \eqref{CTR_DELTA_3}, \eqref{CTR_DELTA_4}, \eqref{CTR_DELTA_2}, we get that\begin{equation*}
   \sum_{i=2}^6|\Delta^i|\le C_{(2-6)}h^{\frac{\alpha\wedge (\alpha-1+2\beta)}{\alpha}}(1+|\ln(h)|\I_{{\beta=\frac 12}})t^{-\frac{{\beta\wedge \frac 12}}{\alpha}}\|\varphi\|_{B_{\infty,\infty}^\beta}.
 \end{equation*} We deduce that the function $f(t){:=}t^{\frac{{\beta\wedge \frac 12} }\alpha}\|\Gamma(0,x,t,\cdot)-\Gamma^h(0,x,t,\cdot)\|_{B_{1,1}^{-\beta}}$ which is bounded on $[0,T]$ according {to the Duhamel representations \eqref{duhamel-Diff} and \eqref{duhamel-schemeb}}, \eqref{ARONS_DIFF}, \eqref{ARONS_SCHEME} and \eqref{SING_STABLE_HK}% \footnote{\tr{B to S: j'ai l'impression d'obtenir $\sup_{t\in[0,T]}t^{-\frac{\beta}{\alpha}}\|(\Gamma-\Gamma^h)(0,x,t,\cdot)\|_{B_{1,1}^{-\beta}}<\infty$.}\textcolor{black}{De S. à B.: J'ai modifié \eqref{SING_STABLE_HK}, où l'exposant apparaît maintenant avec une partie positive. En fait il ne pouvait être fait appel à l'estimée précédente car on avait pas la condition de positivité justement. Mais en tout cas c'est bien borné. Cela suffit de justifier comme cela où il faudrait refaire un passage de justif?}}
 also satisfies
$$\forall t\in[0,T],\;f(t)\le C_{(2-6)}h^{\frac{\alpha\wedge (\alpha-1+2\beta)}{\alpha}}(1+|\ln(h)|\I_{{\beta=\frac 12}})+Ct^{\frac{{\beta\wedge \frac12}}{\alpha}}\int_{h}^{h\vee t}f(\tau^h_s)s^{-\frac{ {\beta\wedge \frac 12}}\alpha}(t-s)^{-\frac 1\alpha}ds.$$
Since  $\frac \beta\alpha<\frac 1\alpha<1$, we conclude with
Lemma \ref{lemgron}, {in the current case  $\alpha-1>\beta $}, which is the condition we used in order to have the control of the $ \beta$-Hölder modulus of the density of the Euler scheme in the forward variable.

\subsubsection{Modification of the proof when $\alpha-1\le \beta $.}
\label{MORE_GEN_FOR_PROOF}
We here need to specify how to handle the terms $ \Delta^3,\Delta^4,\Delta^5$ in \eqref{DECOUP_ERR} when $\alpha-1\le \beta $ {and \eqref{BESOV_NORM_2} is not available}.
We will discuss in detail $\Delta^3$, the other contributions can be handled following the same lines. Recall from \eqref{DECOUP_Delta_3} that $\Delta^3=\Delta^{31}\textcolor{red}{-}\Delta^{32} $, where:
\begin{align*}
\Delta^{31}=&\int_h^{\tau_t^h-h} \frac 1h \int_{\tau_s^h}^{\tau_s^h+h} \int_{\R^d} \Gamma^h(0,x,\tau_s^h,z)b(r,z) \cdot \left( \nabla P_{t-s}^\alpha \varphi(z)-\nabla  P_{t-\tau_s^h}^\alpha \varphi(z)\right) dz dr ds,\notag\\
\Delta^{32}=&\sum_{i,j=1}^d\int_h^{\tau_t^h-h} \frac 1h \int_{\tau_s^h}^{\tau_s^h+h} \int_0^1 d\lambda \int_{\R^d}\Gamma^h(0,x,s,z)b_ib_j(r,z) D_{ij}^2 P_{t-\tau_s^h}^\alpha \varphi(z+\lambda b(r,z)(s-\tau_s^h)) (s-\tau_s^h) dz dr ds.
\end{align*}
The difficulty is here that the previous duality results used in the above approach cannot be directly used since we do not control the $ \beta$-Hölder modulus of the density of the Euler scheme in the forward variable. However such a control holds for the {diffusion} and the main result from \cite{fito:meno:25} actually gives a pointwise control of the error. Namely, from Theorem 1 of the quoted reference it holds that
 there exists a constant $C:=C(\textcolor{black}{d},b,\alpha,T)<\infty$ s.t. for  all $h=T/n$ with $n\in\N^*$, and all $t\in(0,T]$, $x,y\in \R^d $,
		\begin{align}
			|\Gamma^h(0,x,t,y)-\Gamma(0,x,t,y)| &\leq C\big(1+t^{-\frac \beta\alpha}\big)  h^{\frac{\gamma}{\alpha}} \bar p_\alpha(t,y-x),\ \gamma=\alpha-1+\beta.\label{BD_THM_FITO_MENO_SPA}
        \end{align}
The idea is then to exploit this bound to \textit{replace} $\Gamma^h $ by $\Gamma$ in the above expression, up to an additional error term, which will be shown to be negligible w.r.t. to the final sought order. Write first:

\begin{align*}
\Delta^{31}=&\int_h^{\tau_t^h-h} \frac 1h \int_{\tau_s^h}^{\tau_s^h+h} \int_{\R^d} (\Gamma^h-\Gamma)(0,x,\tau_s^h,z)b(r,z) \cdot \left( \nabla P_{t-s}^\alpha \varphi(z)-\nabla  P_{t-\tau_s^h}^\alpha \varphi(z)\right) dz dr ds,\notag\\
&+\int_h^{\tau_t^h-h} \frac 1h \int_{\tau_s^h}^{\tau_s^h+h} \int_{\R^d} \Gamma(0,x,\tau_s^h,z)b(r,z) \cdot \left( \nabla P_{t-s}^\alpha \varphi(z)-\nabla  P_{t-\tau_s^h}^\alpha \varphi(z)\right) dz dr ds:=\bar \Delta^{311}+\bar \Delta^{312}.
\end{align*}
The term $\bar \Delta^{312} $ can be analyzed as the former $\Delta^{31} $ in the previous section exploiting \eqref{BESOV_NORM_1} for the diffusion, which is also valid under the current condition. This again yields:
\begin{align}\label{MAIN_OTHER_COND_D_312}
|\bar \Delta^{312}|\lesssim &h^{\frac{\alpha\wedge (\alpha-1+ 2\beta)}{\alpha}}(1+t^{-\frac{\beta\wedge (1-\beta)}{\alpha}})(1+|\ln(h)|\I_{\beta=\frac 12})%\notag\\
{\|\varphi\|_{B_{\infty,\infty}^\beta}}\lesssim %&
h^{\frac{\alpha\wedge (\alpha-1+ 2\beta)}{\alpha}}(1+t^{-\frac{{\beta\wedge \frac 12}}{\alpha}})(1+|\ln(h)|\I_{\beta=\frac 12}){\|\varphi\|_{B_{\infty,\infty}^\beta}}.
\end{align}

For the other term we obtain from \eqref{BD_THM_FITO_MENO_SPA}:
\begin{align*}
|\bar \Delta^{311}|\lesssim h^{\frac{\alpha-1+\beta}{\alpha}} \int_h^{\tau_t^h-h} \int_{\R^d} \bar p(\tau_s^h,z-x)(1+(\tau_s^h)^{-\frac \beta \alpha})|\nabla P_{t-s}^\alpha \varphi(z)-\nabla  P_{t-\tau_s^h}^\alpha \varphi(z)|dz ds.
\end{align*}
Observe now that, from \eqref{EMBEDDING} and \eqref{YOUNG}
\begin{align*}
|\nabla P_{t-s}^\alpha \varphi(z)-\nabla  P_{t-\tau_s^h}^\alpha \varphi(z)|&{\le \|\nabla P_{t-s}^\alpha \varphi-\nabla  P_{t-\tau_s^h}^\alpha \varphi\|_{B_{\infty,1}^0} } \le\|\nabla p_\alpha(t-s,\cdot)-\nabla p_\alpha(t-\tau_s^h,\cdot)\|_{B_{1,1}^{-\beta}}\|\varphi\|_{B_{\infty,\infty}^\beta}\\
&\lesssim \frac{(s-\tau_s^h)}{(t-s)^{1+\frac{1-\beta}{\alpha}}}\|\varphi\|_{B_{\infty,\infty}^\beta},
\end{align*}
{where the last inequality again follows from \eqref{SING_STABLE_HK}}.
Hence,
\begin{align*}
|\bar \Delta^{311}|\lesssim &h^{\frac{\alpha-1+\beta}{\alpha}} \int_h^{\tau_t^h-h} (1+(\tau_s^h)^{-\frac \beta \alpha}) \frac{(s-\tau_s^h)}{(t-s)^{1+\frac{1-\beta}{\alpha}}}ds\|\varphi\|_{B_{\infty,\infty}^\beta} \lesssim h^{2\frac{\alpha-1+\beta}{\alpha}}(1+t^{-\frac \beta\alpha})\|\varphi\|_{B_{\infty,\infty}^\beta}\\
\lesssim&h^{\frac{\alpha\wedge (\alpha-1+2\beta}{\alpha})}(1+t^{-\frac{{\beta\wedge \frac 12}}\alpha})\|\varphi\|_{B_{\infty,\infty}^\beta}.
\end{align*}
{Indeed, the control {follows from $\alpha-1>0$} if $\beta\le \frac 12 $ and for $\beta>\frac 12 $, 
\begin{align*}
h^{2\frac{\alpha-1+\beta}{\alpha}}(1+t^{-\frac \beta\alpha})\lesssim h h^{\frac{\alpha-2+2\beta}{\alpha}}(1+t^{-\frac \beta\alpha})\lesssim h t^{\frac{\alpha-2+\beta}{\alpha}}{\lesssim h t^{-\frac{1}{2\alpha}},}
\end{align*}
{since $\alpha-2+\beta>-\frac 12$}}. 

This contribution is therefore negligible compared to \eqref{MAIN_OTHER_COND_D_312}, whose upper bound then remains valid for $|\Delta^{31}|$.

Similarly, we decompose:
\begin{align*}
&\Delta^{32}\\
=&\sum_{i,j=1}^d\int_h^{\tau_t^h-h} \frac 1h \int_{\tau_s^h}^{\tau_s^h+h} \int_0^1 d\lambda \int_{\R^d}\Gamma^h(0,x,{\tau^h_s},z)b_ib_j(r,z) D_{ij}^2 P_{t-\tau_s^h}^\alpha \varphi(z+\lambda b(r,z)(s-\tau_s^h)) (s-\tau_s^h) dz dr ds\\
=&\sum_{i,j=1}^d\int_h^{\tau_t^h-h} \frac 1h \int_{\tau_s^h}^{\tau_s^h+h} \int_0^1 d\lambda \int_{\R^d}(\Gamma^h-\Gamma)(0,x,{\tau^h_s},z)b_ib_j(r,z) D_{ij}^2 P_{t-\tau_s^h}^\alpha \varphi(z+\lambda b(r,z)(s-\tau_s^h)) (s-\tau_s^h) dz dr ds\\
&+\sum_{i,j=1}^d\int_h^{\tau_t^h-h} \frac 1h \int_{\tau_s^h}^{\tau_s^h+h} \int_0^1 d\lambda \int_{\R^d}\Gamma(0,x,{\tau^h_s},z)b_ib_j(r,z) D_{ij}^2 P_{t-\tau_s^h}^\alpha \varphi(z+\lambda b(r,z)(s-\tau_s^h)) (s-\tau_s^h) dz dr ds\\
=:&\bar \Delta^{321}+\bar \Delta^{322}.
\end{align*}
The term $\bar \Delta^{322} $ can again be handled as the former $\Delta^{32} $ using \eqref{BESOV_NORM_1} for the diffusion. This yields: 
\begin{align}\label{BAR_Delta_322}
|\bar \Delta^{322}|\lesssim h^{\frac{\alpha \wedge (\alpha-1+2\beta)}{\alpha}}(1+t^{-\frac{{\beta\wedge \frac 12}}{\alpha}}) \|\varphi\|_{B_{\infty,\infty}^\beta}.
\end{align}
On the other hand, from \eqref{BD_THM_FITO_MENO_SPA} {and \eqref{GAIN_TEST_FUNC}}:
\begin{align*}
|\bar \Delta^{321}|&\lesssim h^{\frac{\alpha-1+\beta}{\alpha}}\int_h^{\tau_t^h-h}\int_{\R^d}\bar p_\alpha(\tau_s^h,z-x)(1+(\tau_s^h)^{-\frac\beta\alpha})(t-\tau_s^h)^{-\frac{2-\beta}{\alpha}}(s-\tau_s^h) dzds\|\varphi\|_{B_{\infty,\infty}^\beta}\\
&{\lesssim h^{\frac{2\alpha-1+\beta}{\alpha}}\int_h^{\tau_t^h-h}(1+(\tau_s^h)^{-\frac\beta\alpha})(t-\tau_s^h)^{-\frac{2-\beta}{\alpha}}ds\|\varphi\|_{B_{\infty,\infty}^\beta}.}
\end{align*}
If $2-\beta<\alpha $, then the above time singularity is integrable and:
\begin{align*}
|\bar \Delta^{321}|\lesssim h^{\frac{2\alpha-1+\beta}{\alpha}}t^{\frac{\alpha-2}{\alpha}}\|\varphi\|_{B_{\infty,\infty}^\beta}\le 
%\begin{cases}
h t^{\frac{{2\alpha-3+\beta}}{\alpha}} \|\varphi\|_{B_{\infty,\infty}^\beta}\le h t^{\frac{\alpha-1}\alpha}\|\varphi\|_{B_{\infty,\infty}^\beta},
%\end{cases}
\end{align*}
using the inequality $2-\beta<\alpha $ and the fact that $t$ is {smaller than $T$}.
If now $2-\beta\ge\alpha $,
\begin{align*}
|\bar \Delta^{321}|\lesssim& h^{\frac{2\alpha-1+\beta}{\alpha}}h^{1-\frac{2-\beta}{\alpha}}(1+|\ln(h)|\I_{2-\beta=\alpha})t^{-\frac{\beta}{\alpha}}\|\varphi\|_{B_{\infty,\infty}^\beta}\le h^{\frac{3(\alpha-1)+2\beta}{\alpha}}(1+|\ln(h)|\I_{2-\beta=\alpha})t^{-\frac{\beta}{\alpha}}\|\varphi\|_{B_{\infty,\infty}^\beta}\\
&%h^{\frac{\alpha-1+2\beta}{\alpha}}t^{-\frac{\beta}{\alpha}}\|\varphi\|_{B_{\infty,\infty}^\beta}
\lesssim h^{\frac{\alpha\wedge (\alpha-1+2\beta)}{\alpha}}t^{-\frac{{\beta\wedge \frac12}}{\alpha}}\|\varphi\|_{B_{\infty,\infty}^\beta},
%\begin{cases}
%\end{cases}
\end{align*}
{observing that this contribution is even smaller than the one which appeared for $\bar \Delta^{311}$}.

In all cases, the contribution is comparable to the one in \eqref{BAR_Delta_322} which gives that, this bound is precisely valid for $\Delta^{32} $ in the current setting. The contribution $\Delta^4$ could be analyzed in a similar way. 

Let us turn to $\Delta^5 $ {(last time steps contribution)} and more specifically the {term} $\Delta^{52} $ which involves the density of the scheme. Write:
\begin{align*}
\Delta^{52}:=&\frac{1}h\int_{\tau_t^h-h}^t\int_{\tau_t^h}^{\tau_t^h+h} \Big|\int_{\R^d } \Gamma^h (0,x,\tau_s^h,z)b(r,z)\cdot\nabla P_{ t-\tau_s^h}^\alpha\varphi (z+b(r,z)(s-\tau_s^h)) dz  \Big|dr d s\\
\le &\frac{1}h\int_{\tau_t^h-h}^t\int_{\tau_t^h}^{\tau_t^h+h} \Big|\int_{\R^d } (\Gamma^h-\Gamma) (0,x,\tau_s^h,z)b(r,z)\cdot\nabla P_{ t-\tau_s^h}^\alpha\varphi (z+b(r,z)(s-\tau_s^h)) dz  \Big|dr d s\\
&+\frac{1}h\int_{\tau_t^h-h}^t\int_{\tau_t^h}^{\tau_t^h+h} \Big|\int_{\R^d } \Gamma (0,x,\tau_s^h,z)b(r,z)\cdot\nabla P_{ t-\tau_s^h}^\alpha\varphi (z+b(r,z)(s-\tau_s^h)) dz  \Big|dr d s=:\bar \Delta^{521}+\bar \Delta^{522}.
\end{align*}
Again,  $\bar \Delta^{522} $ can again be handled as the former $\Delta^{52} $ using \eqref{BESOV_NORM_1} for the diffusion. This would lead to:
$$|\bar \Delta^{522}|\lesssim h^{\frac{\alpha\wedge (\alpha-1+2\beta) }{\alpha}}(1+|\ln (h)|\I_{{\beta= \frac 12}})(1+t^{-\frac{{\beta\wedge \frac 12}}\alpha})\|\varphi\|_{B_{\infty,\infty}^\beta}. $$
Now, \eqref{BD_THM_FITO_MENO_SPA} and \eqref{GAIN_TEST_FUNC} yield% \footnote{\textcolor{red}{De S. à B.: attention un terme avait été oublié précédemment. Cela ne change rien heureusement!}}
:
\begin{align*}
\bar \Delta^{521}\lesssim& h^{\frac{\alpha-1+\beta}\alpha}\int_{\tau_t^h-h}^t \int_{\R^d } {(1+(\tau_s^h)^{-\frac \beta\alpha})}\bar p_\alpha (\tau_s^h,z-x)(t-\tau_s^h)^{-\frac{1-\beta}\alpha} \|\varphi\|_{B_{\infty,\infty}^{\beta}} dz   d s\lesssim h^{2\frac{\alpha-1+\beta}\alpha}{t^{-\frac \beta\alpha}}\|\varphi\|_{B_{\infty,\infty}^{\beta}}\\
\lesssim& h^{\frac{\alpha\wedge (\alpha-1+2\beta) }{\alpha}}{t^{-\frac{\beta\wedge \frac 12}{\alpha}}}\|\varphi\|_{B_{\infty,\infty}^\beta},
\end{align*}
{repeating the previous discussion for the term $\bar \Delta^{311} $}. 
Once again, the convergence rate remains the same.

This concludes the presentation of the modification needed in the current setting {$ \alpha-1\le \beta$ to prove} the convergence rate of Theorem \ref{THM_HOLDER}. 

\subsection {Proof of the technical results for the Hölder case}\label{SEC_TEC}
We gather in this section the proof of various technical results needed for the analysis.
\subsubsection{Proof of the identity \eqref{SING_STABLE_HK}}
Let us start with the non thermic part from the definition in \eqref{HEAT_CAR}. Namely, we have to handle for $\phi\in C_0^\infty(\R^d,\R^d) $ s.t. $\phi(0)\neq 0 $ and setting $\psi=\mathcal F^{-1}(\phi)\in \mathcal S(\R^d) $, {for $u>0$}:
\begin{align}
% \|\mathcal F^{-1}\big(\phi\mathcal F(\partial_u^\theta \partial^{\mathbf a}p_{\alpha}({u},\cdot))\big)\|_{\textcolor{black}{L^\ell}}=&
                                                                                                                                       \|\mathcal F^{-1}(\phi)\star\partial_u^\theta \partial^{\mathbf a}p_{\alpha}({u},\cdot)\|_{L^\ell}&=\|\psi\star(-\Delta)^{\frac{ \alpha \theta}{2}} \partial^{\mathbf a}p_{\alpha}({u},\cdot)\|_{L^\ell}\notag
=\| \partial^{\mathbf a}(-\Delta)^{\frac {\alpha \theta}2} \psi\star p_{\alpha}({u},\cdot)\|_{L^\ell}\\&\le \| \partial^{\mathbf a}(-\Delta)^{\frac {\alpha \theta}2} \psi\|_{L^\ell }\|p_{\alpha}({u},\cdot)\|_{L^1} \le C_\psi.\label{CTR_NON_THERM}
\end{align}
Let us now turn to the thermic part. Namely, assuming $m<\infty$, for $\gamma<0 $ s.t. $\theta+\frac{|\mathbf a|}\alpha+\frac \gamma\alpha+\frac d\alpha(1-\frac 1\ell)\ge  0 $ and $u\in (0,T]$ we write:
\begin{align}
\mathcal T_{\ell,m}^\gamma(\partial_u^\theta \partial^{\mathbf a}p_{\alpha}(u,\cdot))=&
\left(\int_0^T\,\frac{dv}{v}v^{(n-\gamma/\textcolor{black}{\alpha})m}\|\partial^n_v p_\alpha(v,\cdot)*\partial_u^\theta \partial^{\mathbf a}p_{\alpha}(u,\cdot)\|^m_{L^\ell}\right)^{\frac 1m}\notag\\
\le &\left(\int_0^{u}\,\frac{dv}{v}v^{(n-\gamma/\textcolor{black}{\alpha})m}\|\partial^n_v  p_\alpha(v,\cdot)*\partial_u^\theta \partial^{\mathbf a}p_{\alpha}(u,\cdot)\|^m_{L^\ell}\right)^{\frac 1m}\notag\\
&+\left(\int_{u}^T\,\frac{dv}{v}v^{(n-\gamma/\textcolor{black}{\alpha})m}\|\partial^n_v  p_\alpha(v,\cdot)*\partial_u^\theta \partial^{\mathbf a}p_{\alpha}(u,\cdot)\|^m_{L^\ell}\right)^{\frac 1m}\notag\\
=:&\mathcal T_{\ell,m,1}^\gamma(\partial_u^\theta \partial^{\mathbf a}p_{\alpha}(u,\cdot))+\mathcal T_{\ell,m,2}^\gamma(\partial_u^\theta \partial^{\mathbf a}p_{\alpha}(u,\cdot)).\label{THERMIC_PART_HK}
\end{align}
For the lower cut, we directly exploit that the time singularity in the variable $v $ is integrable for a negative exponent and the usual bound {(see \eqref{estigradplu} in Proposition \ref{PROP_INT_LP_HK})}
\begin{align}\label{BD_HK_LEBESGUE}
\|\partial_u^\theta\partial^{\mathbf a}p_\alpha(u,\cdot)\|_{L^\ell}\le \frac{C}{u^{\theta+\frac{|\mathbf a|}{\alpha}+\frac d\alpha(1-\frac 1\ell)}},
\end{align}
to derive
\begin{align*}
\mathcal T_{\ell,m,1}^\gamma(\partial_u^\theta \partial^{\mathbf a}p_{\alpha}(u,\cdot))
\le& \left(\int_0^{u}\,\frac{dv}{v}v^{(n-\gamma/\textcolor{black}{\alpha})m} \|\partial_v^n{ p}_\alpha(v,\cdot)\|_{L^1}^m\right)^{\frac 1m}
\frac{C}{u^{\theta+\frac{|\mathbf a|}{\alpha}+\frac d\alpha(1-\frac 1\ell)}}\\
\le & \frac{C}{u^{\theta+\frac{|\mathbf a|}{\alpha}+\frac \gamma\alpha+\frac d\alpha(1-\frac 1\ell)}}.
\end{align*}
For the upper cut, and for $1\le m<\infty $,  we have assumed that $\theta+\frac \gamma \alpha+\frac{|\mathbf a|}\alpha+\frac{d}\alpha(1-\frac 1\ell) \ge 0$ in order to keep a negative exponent in the thermic variable {$v$} which is smaller than $-1$, so that the lower cut dominates. 
{From the semigroup property of $p_\alpha $ and \eqref{estigradplu}, we write}:
\begin{align*}
\mathcal T_{\ell,m,2}^\gamma(\partial_u^\theta \partial^{\mathbf a}p_{\alpha}(u,\cdot))
  \le& \left(\int_{u}^T\,\frac{dv}{v}v^{(n-\gamma/\textcolor{black}{\alpha})m} {C}(u+v)^{-m(n+\theta+\frac{|\mathbf a|}\alpha+\frac d\alpha(1-\frac 1\ell))}\right)^{\frac 1m}\\
  \le & {C\left(\int_{u}^T\,v^{-1-m(\theta+\frac \gamma\alpha+\frac{|\mathbf a|}\alpha+\frac d\alpha(1-\frac 1\ell))}dv\right)^{\frac 1m}}\\
\le & \frac{C}{u^{\theta+\frac \gamma\alpha+\frac{|\mathbf a|}{\alpha}+\frac d\alpha(1-\frac 1\ell)}}(1+\I_{\theta+\frac \gamma \alpha+\frac{|\mathbf a|}\alpha+\frac{d}\alpha(1-\frac 1\ell)=0} |\ln(u)|).
\end{align*}

Let us now suppose that $m=\infty$. Write then with obvious notations:
\begin{align}
\mathcal T_{\ell,\infty}^\gamma(\partial_u^\theta \partial^{\mathbf a}p_{\alpha}(u,\cdot))=&\sup_{v\in [0,T]}
v^{(n-\gamma/\textcolor{black}{\alpha})}\|\partial^n_v  p_\alpha(v,\cdot)\star\partial_u^\theta \partial^{\mathbf a}p_{\alpha}(u,\cdot)\|_{L^\ell}\notag\\
\le &\sup_{v\in [0,u]}v^{(n-\gamma/\textcolor{black}{\alpha})}\|\partial^n_v  p_\alpha(v,\cdot)\star\partial_u^\theta \partial^{\mathbf a}p_{\alpha}(u,\cdot)\|_{L^\ell}\notag\\
&+\sup_{v\in [u,T]}v^{(n-\gamma/\textcolor{black}{\alpha})}\|\partial^n_v  p_\alpha(v,\cdot)\star\partial_u^\theta \partial^{\mathbf a}p_{\alpha}(u,\cdot)\|_{L^\ell}\notag\\
=:&\mathcal T_{\ell,\infty,1}^\gamma(\partial_u^\theta \partial^{\mathbf a}p_{\alpha}(u,\cdot))+\mathcal T_{\ell,\infty,2}^\gamma(\partial_u^\theta \partial^{\mathbf a}p_{\alpha}(u,\cdot)).\label{THERMIC_PART_HK_infty}
\end{align}
From \eqref{BD_HK_LEBESGUE} one readily gets {(recall that $\gamma<0 $)}:
\begin{align*}
\mathcal T_{\ell,\infty,1}^\gamma(\partial_u^\theta \partial^{\mathbf a} p_{\alpha}(u,\cdot))
  &\le \sup_{v\in [0,u]} v^{n-\frac \gamma\alpha}\|\partial_v^n p_\alpha(v,\cdot)\|_{L^{\tr{1}}}u^{-(\theta+\frac{|\mathbf a|}{\alpha})+\frac d\alpha(1-\frac 1\ell)}\\
&\lesssim u^{-(\theta+\frac \gamma\alpha+\frac{|\mathbf a|}{\alpha})+\frac d\alpha(1-\frac 1\ell)}.
\end{align*}
Similarly,
\begin{align*}
\mathcal T_{\ell,\infty,2}^\gamma(\partial_u^\theta \partial^{\mathbf a}p_{\alpha}(u,\cdot))&\le \sup_{v\in [u,T]} v^{n-\frac \gamma\alpha}(u+v)^{-(n+\theta+\frac{|\mathbf a|}{\alpha})+\frac d\alpha(1-\frac 1\ell)}\\
&\lesssim \sup_{v\in [u,T]}v^{-[(\theta+\frac \gamma\alpha+\frac{|\mathbf a|}{\alpha})+\frac d\alpha(1-\frac 1\ell)]}\\
&\lesssim u^{-[(\theta+\frac \gamma\alpha+\frac{|\mathbf a|}{\alpha})+\frac d\alpha(1-\frac 1\ell)]}.
\end{align*}
{where we used  $(\theta+\frac \gamma\alpha+\frac{|\mathbf a|}{\alpha})+\frac d\alpha(1-\frac 1\ell)\ge 0 $ for the last inequality}.
From the above computations it is clear that, whenever $(\theta+\frac \gamma\alpha+\frac{|\mathbf a|}{\alpha})+\frac d\alpha(1-\frac 1\ell)< 0 $, the thermic part of the Besov norm can be bounded by a constant.  The control \eqref{SING_STABLE_HK} is proved. \quad $\square $

{The above proof emphasizes that, when the singularity induced by the differentiation and the integrability indexes are large enough, then the negative regularity will \textit{tame} those singularities, otherwise the corresponding norm is bounded by a constant.}
\subsubsection{Proof of \eqref{holder-space-gamma_SPEC}, \eqref{BESOV_NORM_1} and \eqref{BESOV_NORM_2}}
We start with \eqref{holder-space-gamma_SPEC} (H\"older modulus for the density of the scheme).
From the Duhamel representation \eqref{duhamel-schemeb}, for all $0{<t_k<t}\le T $, $x,y,w\in \R^d $, $|y-w|\le {(t-t_k)^{\frac 1\alpha}}$ we get:
\begin{align*}
			&|\Gamma^h(t_k,x,t,y)-\Gamma^h(t_k,x,t,w)|\\
			\le& |p_\alpha(t-t_k,y-x)-p_\alpha(t-t_k,w-x)|\\
			&+\int_{t_k}^{ t}\Big|\E_{t_k,x}\left[b(U_{\lfloor s/h\rfloor},X^h_{\tau_s^h})\cdot\Big(\nabla_y  p_\alpha(t-s,y-X^h_s)-\nabla_w p_\alpha(t-s,w-X_s^h)\Big)\right]\Big|ds\\
			\lesssim & \Big(\frac{|y-w|}{(t-t_k)^{\frac 1\alpha}}\Big)^{\beta}{\bar p_\alpha}(t-t_k,y-x) \\
			& +\int_{t_k}^t\frac{1}{h}\int_{\tau_s^h}^{\tau_s^h+h}\Big|\int_{\R^d} \Gamma^h(t_k,x,\tau_s^h,z)\\
			&\times b(r,z)\cdot \Big(\nabla p_\alpha(t-\tau_s^h,y-(z+b(r,z)(s-\tau_s^h)))-\nabla_wp_\alpha(t-\tau_s^h,w-(z+b(r,z)(s-\tau_s^h)))\Big)    dz\Big|dr ds,
\end{align*}
using {\eqref{drift-smoothing-iso-noise-diag}, the inequality $\frac{|y-w|}{(t-t_k)^{\frac 1\alpha}}\le \Big(\frac{|y-w|}{(t-t_k)^{\frac 1\alpha}}\Big)^{\beta}\le 1$} and \eqref{CONDITIONING_FOR_EULER_SCHEME}. From % Proposition \ref{PROP_DENS_HOLDER}, eq. 
\eqref{ARONS_SCHEME}, {\eqref{drift-smoothing-iso-noise} and the last inequality,} we now get:
\begin{align*}
|\Gamma^h(t_k,x,t,y)-\Gamma^h(t_k,x,t,w)|
			&\lesssim  \Big(\frac{|y-w|}{(t-t_k)^{\frac 1\alpha}}\Big)^{\beta}\bar p_\alpha(t-t_k,y-x) \\	
			&+	\int_{t_k}^t\int_{\R^d}\bar p_\alpha(\tau_s^h,z-x)\frac{|w-y|^\beta}{(t-\tau_s^h)^{\frac{1+\beta}\alpha}}\Big( \bar p_\alpha(t-\tau_s^h,y-z)+p_\alpha(t-\tau_s^h,w-z)\Big)dz,
\end{align*}
observing that since $t-\tau_s^h\ge s-\tau_s^h $, the contribution of the local drift transition $b(r,z)(s-\tau_s^h) $ is negligible. We eventually derive, since we have assumed that $\frac{1+\beta}{\alpha}<1\iff \beta<\alpha-1 $ that:
\begin{align*}
|\Gamma^h(t_k,x,t,y)-\Gamma^h(t_k,x,t,w)|
			& \lesssim \Big(\frac{|y-w|}{(t-t_k)^{\frac 1\alpha}}\Big)^{\beta}(\bar p_\alpha(t-t_k,y-x)+\bar p_\alpha(t-t_k,w-x))\\&\lesssim \Big(\frac{|y-w|}{(t-t_k)^{\frac 1\alpha}}\Big)^{\beta}\bar p_\alpha(t-t_k,w-x),
\end{align*}
using that the diagonal regime holds for the last inequality {since $|y-w|\le (t-t_k)^{\frac 1\alpha}$}. This is precisely the claim.

Let us now turn to \eqref{BESOV_NORM_2}. Write{, setting $\psi=\mathcal F^{-1}(\phi)$ like in the above derivation of \eqref{CTR_NON_THERM},}:
\begin{align*}
\|\Gamma^h(0,\tau_s^h,x,\cdot) \Psi \|_{B_{1,\infty}^\beta}
&=\|\psi\star(\Gamma^h(0,\tau_s^h,x,\cdot) \Psi)\|_{L^1}+\sup_{v\in [0,T]}v^{1-\frac\beta \alpha}\|\partial_v p_\alpha \star\big(\Gamma^h(0,\tau_s^h,x,\cdot) \Psi \big)\|_{L^1}\\
&\lesssim \|\Psi\|_{L^\infty}+\mathcal T_{1,\infty}^\beta\Big(\Gamma^h(0,\tau_s^h,x,\cdot) \Big).
\end{align*}
{Focus now on thermic part. We have:
\begin{align*}
{\mathcal T_{1,\infty}^\beta\Big(\Gamma^h(0,\tau_s^h,x,\cdot) \Big)}=&\sup_{v\in [0,T]}v^{1-\frac\beta \alpha}\int_{\R^d} \Big |\int_{\R^d} \partial_v p_\alpha(v,z-y)\big(\Gamma^h(0,\tau_s^h,x,y) \Psi(y)-\Gamma^h(0,\tau_s^h,x,z) \Psi(z)\big)dy\Big| dz\\
\lesssim& \sup_{v\in [0,T]}v^{1-\frac\beta \alpha}\int_{\R^d} \int_{\R^d} v^{-1}\bar p_\alpha(v,z-y)|z-y|^\beta\|\Psi\|_{B_{\infty,\infty}^\beta}\Big(\bar p_\alpha(\tau_s^h,y-x)\\
&\quad\quad+(\tau_s^h)^{-\frac\beta\alpha}(\bar p_\alpha(\tau_s^h,y-x)+\bar p_\alpha(\tau_s^h,z-x)) \Big)dy dz\\
\lesssim& \sup_{v\in [0,T]}v^{1-\frac\beta \alpha} v^{-1+\frac\beta \alpha}\|\Psi\|_{B_{\infty,\infty}^\beta}(1+(\tau_s^h)^{-\frac \beta\alpha})\lesssim \|\Psi\|_{B_{\infty,\infty}^\beta}(1+s^{-\frac \beta\alpha}),
\end{align*}
where we used a usual cancellation argument, {\eqref{drift-smoothing-iso-noise}}, \eqref{ARONS_SCHEME} and \eqref{holder-space-gamma_SPEC}. This gives the statement for the Euler scheme. For the diffusion, the previous computations can be reproduced, without the additional condition $\alpha-1>\beta $ since the $\beta $-modulus of continuity in the forward variable follows from \eqref{holder-space-gamma}. 
The claim \eqref{BESOV_NORM_1} is proved.
\subsubsection{Proof of Proposition \ref{PROP_FWD_TIME_REG} {(time regularity in the forward variable for the diffusion)}}
\label{SEC_PREUVE_TEC}
We again start, for $s\ge 2h$, from the Duhamel representation \eqref{duhamel-Diff}:
\begin{align}
&\Gamma(0,x,\tau_s^h,z)-\Gamma(0,x,s,z)\notag\\
=&p_\alpha(\tau_s^h,z-x)-p_\alpha(s,z-x)+\int_{0}^{\tau_s^h-h}\Gamma(0,x,r,w) b(r,w)\cdot \left[\nabla p_{\alpha}(s-r,z-w)-\nabla p_{\alpha}(\tau_s^h-r,z-w)\right] dw  dr\notag\\
&+\Big(\int_{\tau_s^h-h}^s \Gamma(0,x,r,w) b(r,w)\cdot \nabla p_{\alpha}(s-r,z-w)dw  dr\notag-\int_{\tau_s^h-h}^{\tau_s^h} \Gamma(0,x,r,w) b(r,w)\cdot \nabla p_{\alpha}(\tau_s^h-r,z-w)dw  dr\Big)\notag\\
=&(\Delta^{1}+\Delta^{2}+\Delta^{3})(0,x,s,\tau_s^h,z).\label{DECOUP_X_PROP}
\end{align}
Write first for the \textit{main} term of the expansion:
\begin{align*}
\Delta^{1}(0,x,s,\tau_s^h,z)=\int_0^1 \partial_{r_\lambda} p_\alpha(r_\lambda,z-x)|_{r_\lambda=\tau_s^h+\lambda (s-\tau_s^h)} d\lambda (s-\tau_s^h),
\end{align*}
from which one derives using \eqref{SING_STABLE_HK}:%\footnote{\textcolor{red}{Pay attention that may be everything can be cast in the setting of non homogeneous Besov spaces, because of short time and handling of signularities}}:
\begin{align}\label{CTR_DELTA_1_PROP_6}
\|\Delta^{1}(0,x,s,\tau_s^h,\cdot)\|_{%\dot 
B_{1,1}^{-\beta}}\lesssim & \int_{0}^1 \|\  \partial_{r_\lambda} p_\alpha(r_\lambda,\cdot-x)|_{r_\lambda=\tau_s^h+\lambda (s-\tau_s^h)}\ \|_{%\dot 
B_{1,1}^{-\beta}}d\lambda (s-\tau_s^h)\lesssim \frac{h}{(\tau_s^h)^{1-\frac \beta\alpha}}\notag\\
% \lesssim &\Big(\frac{h}{(\tau_s^h)^{1-\frac \beta\alpha}}\I_{\beta\ge \frac 12}+\frac{h^{\frac{\alpha-1+2\beta}{\alpha}}}{(\tau_s^h)^{1-\big(\frac{\beta}{\alpha}+\frac 1\alpha-\frac{2\beta}\alpha\big) }}\I_{\beta< \frac 12}\Big)\notag\\
\lesssim &\Big(\frac{h}{(\tau_s^h)^{1-\frac \beta\alpha}}\I_{\beta\ge \frac 12}+\frac{h^{\frac{\alpha-1+2\beta}{\alpha}}}{(\tau_s^h)^{\frac{\alpha-1+\beta}{\alpha}}}\I_{\beta< \frac 12}\Big)\notag\\
% \lesssim &h^{\frac{\alpha\wedge (\alpha-1+{2}\beta)}{\alpha}}\Big(\frac{\I_{\beta\ge \frac 12}}{(\tau_s^h)^{\frac{\alpha- \beta}\alpha}}+\frac{\I_{\beta< \frac 12}}{(\tau_s^h)^{\frac{\alpha-(1-\beta)}{\alpha}}}\Big)\notag\\
\lesssim &h^{\frac{\alpha\wedge (\alpha-1+{2}\beta)}{\alpha}}(\tau_s^h)^{-\frac{\alpha- (\beta \vee (1-\beta))}\alpha}.
\end{align}
Let us now consider ${\Delta^3}(0,x,s,\tau_s^h,z) $ for which we derive,
%\footnote{\textcolor{red}{On va essayer de jouer sur la troisieme patte du Besov pour tuer les singularites du noyau de la chaleur. Toutes? Toutes!}}:
\begin{align}
\|\Delta^{3}(0,x,s,\tau_s^h,\cdot)\|_{B_{1,1}^{-\beta}}\lesssim &\int_{\tau_s^h-h}^s  \|\Gamma(0,x,r,\cdot) b(r,\cdot)\|_{B_{1,\infty}^{{\beta\wedge \frac 12}}} \| \nabla p_{\alpha}(s-r,z-\cdot)\|_{B_{1,1}^{{-\beta-(\beta\wedge \frac 12)}}} dr\notag\\
&+\int_{\tau_s^h-h}^{\tau_s^h}  \|\Gamma(0,x,r,\cdot) b(r,\cdot)\|_{B_{1,\infty}^{{\beta\wedge \frac 12}}} \| \nabla p_{\alpha}(\tau_s^h-r,z-\cdot)\|_{B_{1,1}^{-{\beta-(\beta\wedge \frac 12)}}} dr\notag\\
\lesssim& \int_{\tau_s^h-h}^s (1+r^{-\frac{{\beta\wedge \frac 12}}\alpha}) (s-r)^{-{\frac{1-(2\beta\wedge 1)}\alpha}}(1+|\ln(s-r)|\I_{{\beta =\frac 12}})dr\notag\\
& +\int_{\tau_s^h-h}^{\tau_s^h} (1+r^{-\frac {{\beta\wedge \frac 12}}\alpha}) (\tau_s^h-r)^{-{\frac{1-(2\beta\wedge 1)}\alpha}}(1+|\ln(\tau_s^h-r)|\I_{{\beta= \frac 12}})dr\notag\\
\lesssim & (1+(\tau_s^h)^{-\frac {{\beta \wedge \frac 12}}\alpha})h^{ \frac{{\alpha\wedge(\alpha-1+2\beta)}}{\alpha}}(1+|\ln(h)|\I_{{\beta = \frac 12}}),
\label{CTR_DELTA_3_PROP_6}
\end{align}
using {first}  \eqref{YOUNG} and {then} {\eqref{SING_STABLE_HK}}  and \eqref{BESOV_NORM_1} for the last {but one} inequality. The idea is again to exploit at most the a priori smoothness of the drift coefficient and the associated heat kernel in order to decrease the time singularity of the proxy stable heat kernel. We get similarly to the computations performed {for the term $\Delta^{31}$ in \eqref{BD_FOR_DELTA_31_A_CITER} above}:
\begin{align*}
\|\Delta^{2}(0,x,s,\tau_s^h,\cdot)\|_{B_{1,1}^{-\beta}}\lesssim &\int_{0}^{\tau_s^h-h}  \|\Gamma(0,x,r,\cdot) b(r,\cdot)\|_{B_{1,\infty}^\beta} \| \nabla p_{\alpha}(s-r,z-\cdot)-\nabla p_{\alpha}(\tau_s^h-r,z-\cdot)\|_{B_{1,1}^{-2\beta}} dr\\
\lesssim& \int_{0}^{\tau_s^h-h} (1+r^{-\frac \beta\alpha}) (\tau_s^h-r)^{-(1+\frac{1}\alpha-\frac{2\beta}{\alpha})}(s-\tau_s^h) dr.
\end{align*}
For this term we perform the same previous discussion {as} for \eqref{CTR_Delta_31}. This leads to:
\begin{align}
\|\Delta^{2}(0,x,s,\tau_s^h,\cdot)\|_{B_{1,1}^{-\beta}}\lesssim &h^{\frac{\alpha\wedge (\alpha-1+ 2\beta)}{\alpha}}(1+s^{-\frac{{\beta\wedge \frac 12}}{\alpha}})(1+|\ln(h)|\I_{\beta\tr{=}\frac 12}).\label{CTR_DELTA_2_PROP_6}
\end{align}
From \eqref{CTR_DELTA_2_PROP_6}, \eqref{CTR_DELTA_3_PROP_6}, \eqref{CTR_DELTA_1_PROP_6}, we eventually derive:
\begin{align*}
  \|\Gamma(0,x,s,\cdot)-\Gamma(0,x,\tau_s^h,\cdot)\|_{B_{1,1}^{-\beta}}
  % lesssim &h^{\frac{\alpha\wedge (\alpha-1+ 2\beta)}{\alpha}}(1%+s^{-\frac{\alpha-(\beta\vee (1-\beta))}{\alpha}}
% +s^{-\frac{\beta\wedge (1-\beta)}{\alpha}}+{s^{-\frac{\beta\wedge \frac 12}\alpha}})(1+|\ln(h)|\I_{{\beta= \frac 12}})\\
%  %\label{FINAL}
\lesssim &h^{\frac{\alpha\wedge (\alpha-1+ 2\beta)}{\alpha}}(1+s^{-\frac{\alpha-(\beta\vee (1-\beta))}{\alpha}}+{s^{-\frac{\beta\wedge \frac 12}{\alpha}}})(1+|\ln(h)|\I_{{\beta= \frac 12}})\\\lesssim &h^{\frac{\alpha\wedge (\alpha-1+ 2\beta)}{\alpha}}(1+s^{-\frac{\alpha-(\beta\vee (1-\beta))}{\alpha}}+{s^{-\frac 1{2\alpha}}\I_{{\beta> \alpha-\frac 12}}})(1+|\ln(h)|\I_{{\beta= \frac 12}}),
\end{align*}
{
%where for the last inequality we use that since $\alpha>1 $, when $\beta>1/2 $, one can always take $\beta $ sufficiently close to $1/2 $  s.t. $\frac 12<\alpha-(\beta \vee (1-\beta)) $
since $\alpha-(\beta\vee (1-\beta))=\alpha-1+\beta>\beta$ when $\beta\le 1/2$ while, when $\beta>1/2$, $\alpha-(\beta\vee (1-\beta))<1/2\iff \beta>\alpha-1/2$.}
%\footnote{\textcolor{red}{De S. à B.: c'est ici que l'on utilise la restriction $\beta \in (0,1/2^+)$ pour ne pas changer l'estimée précédente, i.e. dire que les singularités temporelles dominantes restent inchangées. Sinon il faut garder les deux conditions}}.

\section{Besov drift with negative regularity index in $(-1/2,0) $ and stable non-Brownian noise}

We are in this section specifically interested in the case where $b\in L^\vartheta([0,T],B_{p,q}^\beta) $ where:
\begin{equation}
\label{serrin}
	 \beta \in \left( \frac{1-\alpha+\frac{d}{p}+\frac{\alpha}{\vartheta}}{2} ,0\right),
\end{equation}
{i.e. the regularity index is negative and the drift is therefore a distribution. Remark in particular that, for given integrability parameters $p,\vartheta $, this imposes a constraint on $\alpha $. Namely,
$$\frac{1-\alpha+\frac{d}{p}+\frac{\alpha}{\vartheta}}{2}<0 \iff \alpha \in \left( \frac{1+\frac{d}{p}}{1-\frac{1}{\vartheta}},2 \right). $$}
{We exclude here the Brownian case $\alpha=2 $ mainly because the  heat kernel estimates required for the analysis are not, yet, available. They have been established for $p=\vartheta=+\infty $ in \cite{Me:Pa:26} and the approach therein should extend to arbitrary integrability parameters}.

	To introduce the scheme associated with the formal previous SDE \eqref{SDE}, one first needs to recall that the precise meaning to be given to the SDE, following \cite{chau:meno:22} in the pure-jump setting, inspired by \cite{dela:diel:16} in the Brownian setting,  is:
	\begin{equation}\label{DYN_DIFF}
	X_t=x+\int_0^t {\mathfrak b}(s,X_s,ds)+Z_t,
	\end{equation}
	where for all $(s,z)\in [0,T]\times \R^d,h>0$,
	\begin{equation}
\label{DEF_DRIFT}
\mathfrak{b}(s,z,h) := \int_s^{s+h} \int_{\R^d} b(u,y)p_\alpha(u-s,z-y) dy du=\int_s^{s+h} P_{u-s}^\alpha b(u,z)   du,
	\end{equation}
$p_\alpha(v,\cdot)$ denoting the density of the $\alpha $-stable driving noise $(Z_v)_{v\geq0}$ at time $v$ and $P^\alpha $ the associated semi-group.  The integral in \eqref{DYN_DIFF} is a nonlinear Young integral obtained by passing to the limit  in a suitable procedure aimed at reconstructing the drift (see again \cite{chau:meno:22}). The resulting drift in \eqref{DYN_DIFF} is, \textit{per se}, a Dirichlet process (as it had already been indicated in the literature, see e.g. \cite{athr:butk:mytn:20} and references therein). % Importantly, the dynamics in \eqref{DYN_DIFF} also naturally provides a corresponding approximation scheme to be analyzed.
Note that, in order to give a precise meaning to the integral appearing in \eqref{DYN_DIFF}, {we need to strengthen the condition on $\beta $ to: 
\begin{equation*}
\label{COND_FOR_DIFF}
	%\alpha \in \left( \frac{1+\frac{d}{p}}{1-\frac{1}{\vartheta}},2 \right)\qquad 
	\beta \in \left( \frac{1-\alpha+\frac{2d}{p}+\frac{2\alpha}{\vartheta}}{2} ,0\right).
\end{equation*}
Anyhow the condition \eqref{serrin} above}  is actually enough to ensure {well-posedness of} a corresponding generalized martingale problem (see \cite{chau:meno:22}).
 Interestingly, {the} more stringent condition on $\beta $ does not appear {elsewhere} in the present work since we only consider the time marginals of the process. {It is not needed to define the} Euler scheme $X^h $, starting from $X_0^h=x$ and evolving on the time grid as
\begin{equation}\label{euler-scheme-besov_GRID}
		 X_{t_{i+1}}^h = X_{t_i}^h+  \mathfrak{b}(t_i,{X}_{t_i}^h,h)+Z_{t_{i+1}}-Z_{t_i}.
	\end{equation}
	We have precisely used the quantity $\mathfrak{b}(t_i,X_{t_i}^h,h)$ defined in \eqref{DEF_DRIFT} {with a time argument corresponding to the chosen time step} as an approximation of the nonlinear Young integral $\int_{t_i}^{t_{i+1}} {\mathfrak b}(s,X^h_s,ds)$, 
%plugged the expression \eqref{DEF_DRIFT}, 
which served to define the limit dynamics \eqref{DYN_DIFF} for the SDE.\\

	Set now for $(s,z)\in {\{[0,T]\backslash \{kh:k\in \llbracket 0,n\rrbracket\}\}}\times \R^d$,
	\begin{equation}\label{DEF_BH_SCHEME}
{\mathfrak b}_h(s,z):=P_{s-\tau_s^h}^\alpha b(s,z). 
	\end{equation}
Observe from that definition and using \eqref{DEF_DRIFT} that, on any time step, the drift also writes as
\begin{equation}
\label{EXPR_DRIFT}
 \mathfrak{b}(t_i,{X}_{t_i}^h,h)=\int_{t_i}^{t_{i+1}} \mathfrak b_h(u,X_{t_i}^h)du=\E[\mathfrak{b}_h(U_{i},{X}_{t_i}^h)|{X}_{t_i}^h]h,
 \end{equation}
where  the	$(U_k)_{k\in {\llbracket 0,n-1\rrbracket}} $ are independent random variables, %defined on some probability space $(\tilde \Omega, \tilde {\mathcal A}, \tilde P ) $ 
independent as well from the driving noise, such that % $U_k\overset{({\rm law})}={\mathcal U}([{t_k,t_{k+1}}]) $, i.e. 
$U_k $ is uniform on the time interval $[t_k,t_{k+1}]$.\\

Concretely, the above time and spatial expectations  \eqref{EXPR_DRIFT} and \eqref{DEF_BH_SCHEME}, need to be approximated in order to  implement this discretization. Such computations are anyhow case-dependent. Let us recall that a usual trick to spare one of these approximations  consists in considering some randomization in  time. This would lead to consider $\tilde {\mathfrak{b}}(t_i,{X}_{t_i}^h,h):=\mathfrak{b}_h(U_{i},{X}_{{t_i}}^h) {h}$. This \textit{trick} was  used successfully for Lebesgue drifts (see \cite{BJ20,jour:meno:24,fito:jour:meno:25} and {Section \ref{SEC_LP}}) and also allowed in the spatial Hölder setting to achieve the somehow expected convergence rates without any requirements on the time regularity (see \cite{fito:meno:25} as well as the previous Section \ref{SEC_HOLDER}). Anyhow, in the current singular setting it seems difficult to benefit from such an effect in the sense that without any additional time integration we do not have controls on the approximate drift norm. 
%This can be  seen e.g. in \eqref{CTR_PONCTUEL_BH} below or in the proof of the sensitivity analysis involving the local transitions (see proof of control \eqref{besov-estimate-gammah-stable-PERTURB_DRIFT}).\\

The representation \eqref{EXPR_DRIFT} naturally suggests to extend the dynamics of the scheme in continuous time as follows 
	\begin{equation}\label{euler-scheme-besov}
		 X_{t}^h = X_{\tau_t^h}^h+  \mathfrak{b}(\tau_t^h,{X}_{\tau_t^h}^h,t-\tau_t^h)+Z_{t}-Z_{\tau_t^h}=X_{\tau_t^h}^h+ \int_{\tau_t^h}^t \mathfrak{b}_h(s,{X}_{\tau_t^h}^h)ds+Z_t-Z_{\tau_t^h},
	\end{equation}
which gives an  extension in integral form which is  similar to the dynamics of Euler schemes involving non-singular drifts, i.e. it is an Itô type process and the approximate drift appears through a usual time integral. 

Under Condition \eqref{serrin}, both the diffusion and the scheme enjoy in positive time densities w.r.t. the Lebesgue measure, that will be as above denoted by $\Gamma $ and $\Gamma^h $ respectively. We gather below some of their useful properties for the analysis of the error we are interested in.

\begin{PROP}[Heat kernel estimates for the densities]\label{prop-HK}\label{THE_PROP} Set $\gamma=\alpha-1+2\beta-\frac{d}{p}-\frac{\alpha}{\vartheta} $ and assume \eqref{serrin}, which implies $\gamma>0$. Then the following assertions hold.
	\begin{itemize}
		\item Heat kernel and \textcolor{black}{forward sensitivity bounds} for the density of the Euler scheme: \textcolor{black}{there exists $C{=C(d,b,\alpha)}$ and for all $\rho \in (-\beta,\gamma-\beta)$ there exists $C_\rho{=C_\rho(d,b,\alpha)}$} such that for all $(x,y,y')\in (\R^d)^3$, $t>0$, % s.t. $|y-y'|\le t^{\frac 1\alpha} $,
		\begin{align}
		\Gamma^h (0,x,t,y) &\leq C \bar p_{\alpha}(t,y-x),\label{aronson-gammah}\\
		|\Gamma^h (0,x,t,y')-\Gamma^h (0,x,t,y)|&\le {\textcolor{black}{C_\rho}\frac{|y-y'|^\rho}{t^{\frac \rho\alpha}} (\bar p_\alpha(t,y-x)+\bar p_\alpha(t,y'-x))}.\label{holder-forward-gammah}
		\end{align}
%		Consequently, in terms of Besov spaces 
%		\begin{equation}\label{ineq-density-scheme}
%			% \Gamma^h (0,x,t,y) \leq C p_\alpha(t,y-x).
%						\left\Vert \frac{\Gamma^h (0,x,t,\cdot)}{\bar{p}_{\alpha}(t,\cdot-x)} \right\Vert_{B_{\infty,\infty}^\rho} \leq C(1+t^{-\frac \rho\alpha}).
%		\end{equation}
		\item Heat kernel and forward sensitivity bounds for the density of the SDE: \textcolor{black}{there exists $C{=C(d,b,\alpha)}$ and for all $\rho \in (-\beta,\gamma-\beta)$ there exists $C_\rho{=C(d,b,\alpha)}$} such that for all $(x,y,y')\in (\R^d)^3$, $t\in (0,T]$,  s.t. ,
				\begin{align}
					\Gamma (0,x,t,y) &\leq C \bar p_\alpha(t,y-x),\label{aronson-gamma}\\
				|\Gamma (0,x,t,y')-\Gamma (0,x,t,y)|&\le \textcolor{black}{C_\rho}\frac{|y-y'|^\rho}{t^{\frac \rho\alpha}}\left(\bar p_\alpha(t,y-x)+\bar p_\alpha(t,y'-x)\right).\label{holder-forward-gamma}
			\end{align}
%Consequently, in terms of Besov spaces,
%		\begin{equation}\label{ineq-density-diff}
%			\left\Vert \frac{\Gamma (0,x,t,\cdot)}{\bar{p}_{\alpha}(t,\cdot-x)} \right\Vert_{B_{\infty,\infty}^\rho} \leq C(1+t^{-\frac \rho\alpha}).
%		\end{equation}
%Moreover, it holds that for all $\eps>0,t'\in (t,T]$ such that  $|t-t'|\le t/2$, 
%		\begin{equation}\label{holder-time-gamma}
%			\left\Vert \frac{\Gamma (0,x,t,\cdot)-\Gamma (0,x,t',\cdot)}{\bar{p}_{\alpha}(t',\cdot-x)} \right\Vert_{B_{\infty,\infty}^\rho} \leq C \frac{(t'-t)^\frac{\gamma-\eps}{\alpha}}{t^\frac{\gamma-\eps+\rho}{\alpha}} .
%		\end{equation}
		
In the same spirit, it holds that \textcolor{black}{for all $\eps>0$ meant to be small}%\footnote{\textcolor{black}{From S. : not true as is, it must hold that $\rho+\gamma-\varepsilon<-\beta+\gamma $}}
, $\rho\in(-\beta,-\beta+\varepsilon/2)$,  $t'\in (t,T]$ such that  $|t-t'|\le t/2$, 
		\begin{equation}\label{holder-time-gamma_PP_QP_NO_NORM}
			\left\Vert \Gamma (0,x,t,\cdot)-\Gamma (0,x,t',\cdot) \right\Vert_{B_{p',q'}^\rho} \leq \textcolor{black}{C_\varepsilon} \frac{(t'-t)^\frac{\gamma-\eps}{\alpha}}{t^\frac{\gamma-\eps+\rho+\frac dp}{\alpha}} .
		\end{equation}

%		\begin{equation}\label{holder-time-gamma}
%			\forall 0\le s<t<t'\le T,\    |t-t'|\le (t-s) ,\   |\Gamma(s,x,t,y)-\Gamma (s,x,t',y)| \le C \frac{(t'-t)^\frac{\gamma}{\alpha}}{(t-s)^\frac{\gamma}{\alpha}} \bar p_\alpha (t'-s,y-x),
%		\end{equation}
	\end{itemize}	
\end{PROP}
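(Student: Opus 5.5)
The pointwise Aronson and H\"older bounds \eqref{aronson-gammah}, \eqref{holder-forward-gammah}, \eqref{aronson-gamma} and \eqref{holder-forward-gamma} I would import from \cite{fito:isso:meno:25} and \cite{chau:meno:22}. There they are obtained by a circular argument on the Duhamel representations of $\Gamma$ and $\Gamma^h$, where the drift is paired through the duality inequality \eqref{dual-ineq}. For the scheme, the relevant Duhamel representation is the one induced by \eqref{euler-scheme-besov}, with $\mathfrak b_h$ in place of $b$. I would only check that the constants do not depend on $h$, which follows from $\|\mathfrak b_h(s,\cdot)\|_{B^\beta_{p,q}}\lesssim \|b(s,\cdot)\|_{B^\beta_{p,q}}$ (the semigroup $P^\alpha$ is a contraction on Besov spaces). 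The genuinely new estimate is \eqref{holder-time-gamma_PP_QP_NO_NORM}, so the plan concentrates on it.

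I would start from the Duhamel representation of $\Gamma$ with $b$ replaced by the mollification $b^\varepsilon$ of Proposition \ref{PROP_APPROX}, and pass to the limit at the end using the uniform bounds. I would split $\Gamma(0,x,t',\cdot)-\Gamma(0,x,t,\cdot)$ as in \eqref{DECOUP_X_PROP}:
\begin{itemize}
\item a free term $p_\alpha(t',\cdot-x)-p_\alpha(t,\cdot-x)$;
\item a term integrated over $[0,t-(t'-t)]$ that contains $\nabla p_\alpha(t'-r,\cdot)-\nabla p_\alpha(t-r,\cdot)$;
\item two boundary terms integrated over $[t-(t'-t),t']$.
\end{itemize}
The free term is bounded by writing it as $\int_t^{t'}\partial_v p_\alpha(v,\cdot-x)\,dv$ and applying \eqref{SING_STABLE_HK} with $\ell=p'$ and regularity $\rho$. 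This gives $(t'-t)\,t^{-1-\frac{\rho}{\alpha}-\frac{d}{\alpha p}}$, which is at most the claimed bound since $(t'-t)/t\le 1/2$.

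For the integral terms, the inner pairing is in space: $z\mapsto \int \Gamma(0,x,r,w)b(r,w)\,\partial_w K(z-w)\,dw$ with $K$ a (time difference of a) gradient heat kernel. I would estimate its $B^\rho_{p',q'}$ norm by Young's inequality \eqref{YOUNG}. The product $\Gamma(0,x,r,\cdot)b(r,\cdot)$ goes in $B^{\beta}_{1,\infty}$ (or $B^{\beta}_{\ell,\cdot}$ with a suitable $\ell$), using the product rule \eqref{PR} together with the H\"older bound \eqref{holder-forward-gamma} at an index $\rho_1\in(-\beta,\gamma-\beta)$. The kernel goes in $B^{\rho-\beta}_{\ell_2,\cdot}$ with $1+\frac1{p'}=\frac1\ell+\frac1{\ell_2}$, and its norm is controlled by \eqref{SING_STABLE_HK}. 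The resulting weight $r^{-\rho_1/\alpha}$ is integrable because $\rho_1<\gamma-\beta<\alpha$. For the $[0,t-(t'-t)]$ piece, I would write the kernel difference as $\int_{t}^{t'}\partial_v\nabla p_\alpha(v-r,\cdot)\,dv$. Instead of taking a full time derivative, I would interpolate it: bound it by $(t'-t)^{\theta}(t-r)^{-\theta}$ times the gradient bound, with $\theta=\frac{\gamma-\varepsilon}{\alpha}$, using the difference bound \eqref{estidifftempsgaus} in Besov form. H\"older's inequality in time with $b\in L^\vartheta$ then produces a singularity $(t-r)^{-[\theta+\frac{1+\rho-2\beta}{\alpha}+\frac{d}{\alpha p}]\frac{\vartheta}{\vartheta-1}}$. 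This singularity is integrable exactly when $\rho+\gamma-\varepsilon<\alpha-1+\beta-\frac dp-\frac\alpha\vartheta+\varepsilon$, i.e.\ precisely under $\rho<-\beta+\varepsilon$, which is where the constraint $\rho\in(-\beta,-\beta+\varepsilon/2)$ comes from. The boundary terms are handled the same way without the time difference, each giving a factor $(t'-t)^{1-\frac1\vartheta-\frac{1+\rho-2\beta}{\alpha}}$, which again dominates $(t'-t)^{\frac{\gamma-\varepsilon}\alpha}$ in that range of $\rho$.

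The main obstacle will be the exponent bookkeeping in \eqref{SING_STABLE_HK}. The regularity budget $2\beta$ is consumed twice, once by the product rule (which needs $\rho_1>-\beta$) and once by the duality with $b\in B^\beta_{p,q}$. One must also track the $\frac dp$ loss produced by passing from $L^1$ to $L^{p'}$ in Young's inequality, and still land on the combined exponent $\frac{\gamma-\varepsilon+\rho+\frac dp}{\alpha}$ with a time singularity that is integrable. If the direct Young splitting is too lossy, I would first try placing $\Gamma b$ in $B^{\beta}_{\ell,\infty}$ with $\frac1\ell=\frac1p+\frac1{p'}\cdot 0$. Failing that, I would use the thermic characterization \eqref{HEAT_CAR} directly on the difference and split the thermic integral at $v=t'-t$, using time regularity below the cut and spatial regularity above it.
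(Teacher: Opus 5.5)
Your three-piece splitting of $\Gamma(0,x,t',\cdot)-\Gamma(0,x,t,\cdot)$ matches the paper, and so does your import of the pointwise bounds. The gap is in how you estimate the spatial convolutions $(\Gamma(0,x,r,\cdot)b(r,\cdot))\star\nabla p_\alpha$.

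You place $\Gamma(0,x,r,\cdot)b(r,\cdot)$ in $B^\beta_{1,\infty}$ ``using the product rule \eqref{PR}''. But \eqref{PR} only admits a multiplier in $B^{\rho_1}_{\infty,\infty}$ and keeps the integrability index of the distribution. It therefore yields $\Gamma b\in B^\beta_{p,q}$, not $B^\beta_{1,\infty}$. From there \eqref{YOUNG} cannot reach an $L^{p'}$-based space: $1+\frac1{p'}=\frac1p+\frac1{\ell_2}$ forces $\frac1{\ell_2}=2-\frac2p>1$ as soon as $p>2$, in particular for $p=\infty$, because convolution only raises integrability. Your fallback $\frac1\ell=\frac1p$ is exactly this dead end. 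Even for $p\le 2$ it is too lossy, since $\|\Gamma(0,x,r,\cdot)\|_{B^{\rho_1}_{\infty,\infty}}$ is of order $r^{-\frac{d+\rho_1}{\alpha}}$, which is not integrable at $r=0$ when $d\ge 2$.

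What is missing is a product estimate that exploits the spatial integrability of $\Gamma$. One such estimate is the paraproduct bound $\|fg\|_{B^\beta_{1,q}}\lesssim\|f\|_{B^{\rho_1}_{p',\infty}}\|g\|_{B^\beta_{p,q}}$ for $\rho_1+\beta>0$. It follows from the Bony decomposition and is not among the tools of Section \ref{SEC_BESOV}. You would combine it with $\|\Gamma(0,x,r,\cdot)\|_{B^{\rho_1}_{p',\infty}}\lesssim r^{-\frac{\rho_1+d/p}{\alpha}}$, which follows from \eqref{aronson-gamma} and \eqref{holder-forward-gamma} as in the proof of \eqref{PREAL_BD_FOR_GRNW_BESOV}.

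With this repair your route closes, but the exponents need to be redone. The kernel in $B^{\rho-\beta}_{p',q'}$ costs $(t-r)^{-\frac{1+\rho-\beta}{\alpha}-\frac{d}{\alpha p}}$, not an exponent with $1+\rho-2\beta$. The weight is $r^{-\frac{\rho_1+d/p}{\alpha}}$, and it is integrable after H\"older in time because $\rho_1<\gamma-\beta$. The pieces near $t$ then give $t^{-\frac{\rho_1+d/p}{\alpha}}(t'-t)^{\frac{\gamma-\beta-\rho}{\alpha}}$. The piece on $[0,t-(t'-t)]$ gives no more than this after a first-order Taylor expansion in time and a split at $r=t/2$, so no interpolation is needed. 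Converting this to $(t'-t)^{\frac{\gamma-\varepsilon}{\alpha}}t^{-\frac{\gamma-\varepsilon+\rho+d/p}{\alpha}}$ requires $\rho\le-\beta+\varepsilon$, which is compatible with the stated range. Note that your displayed integrability condition, as written, actually gives $\rho<-\beta+2\varepsilon$.

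The paper never multiplies $b$ by $\Gamma$. For each fixed $y$ it pairs $b(s,\cdot)$ with $z\mapsto\Gamma(0,x,s,z)\nabla_y p_\alpha(t'-s,y-z)$ via \eqref{dual-ineq}. It then uses the pointwise-in-$y$ bounds \eqref{besov-estimate-gamma-and-stable} and \eqref{besov-estimate-gamma-and-stable_NO_NORM_Holder_GLOBAL}, together with their time-difference analogues adapted from \cite{fito:isso:meno:25}. These bounds carry a factor $\bar p_\alpha(t,x-y)$ and a $\tilde\rho$-H\"older modulus in $y$. The $B^\rho_{p',q'}$ norm is computed from \eqref{HEAT_CAR} with a cut at $v=t$: $L^1$ bounds are used above the cut, and cancellation plus the modulus below it. The choice $\tilde\rho=-\beta+\varepsilon/2>\rho$ is where the constraint $\rho<-\beta+\varepsilon/2$ comes from.

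Your repaired convolution argument is shorter and uses only global norms of $\Gamma$, at the price of a product estimate outside the paper's toolbox. The paper's argument stays within \eqref{dual-ineq} and the thermic characterization, and reuses the machinery already built for the scheme.
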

The bounds \eqref{aronson-gamma}, \eqref{holder-forward-gamma} follow from \cite{Fit23}. %, \eqref{holder-time-gamma} is proved in the Appendix of \cite{fito:isso:meno:25}
%%%% De S a B. la version normalisee ne sert pas.
 Equation \eqref{holder-time-gamma_PP_QP_NO_NORM} is proved in Section \ref{THE_FINAL_EST}.

The Duhamel representation \eqref{duhamel-Diff} still holds for the density of the diffusion with the quantity: 
{
$$\E_{s,x}\left[b(r,X_r)\cdot\nabla_y  p_\alpha(t-r,y-X_r)\right]$$ defined as the product of the distribution $b(r,\cdot)$ by the smooth $\nabla_y  p_\alpha(t-r,y-\cdot)$ acting on the density $\Gamma(s,x,r,\cdot)$.} This was established in \cite{Fit23}, see as well \cite{fito:isso:meno:25}. We write:
{
\begin{align}
\label{DUHAMEL_DIFF_BESOV}
			\Gamma(s,x,t,y)
			&= p_\alpha(t-s,y-x)-\int_{s}^{ t} \langle \Gamma(s,x,r,\cdot),b(r,\cdot)\cdot\nabla_y  p_\alpha(t-r,y-\cdot) \rangle_{B_{p',q'}^{-\beta},B_{p,q}^\beta} d r\notag\\
			&= p_\alpha(t-s,y-x)-\int_{s}^{ t}\int_{\R^d}\Gamma(s,x,r,z) b(r,z)\cdot\nabla_y  p_\alpha(t-r,y-z) dz d r.
\end{align}
}

We resume the error analysis considering a test function  $\varphi \in B_{p,q}^{\beta} $, $\beta\in (\frac{1-\alpha-\frac dp-\frac{\alpha}{\vartheta} }{2},0) $ for a drift $b$ which belongs to the same function space in its spatial variable (recall indeed that $b\in L^\vartheta([0,T],B_{p,q}^\beta) $). Again, this is one of the aim of this paper to properly emphasize that the analysis of the error for the densities on a function space which can  be put in duality with the one the drift belongs to allows to improve the convergence rate or consider a very singular test function w.r.t. the series of papers \cite{jour:meno:24}, \cite{fito:jour:meno:25}, \cite{fito:isso:meno:25} in which a more precise pointwise error on the densities was investigated.
%, yielding to a lower convergence rate. 
Precisely, we have the following theorem.

\begin{THM}[Weak error in the Besov case]\label{THM_BESOV}
Assume {$b\in L^\vartheta([0,T],B_{p,q}^\beta)$ under \eqref{serrin}}. {Consider the weak  error for a (distributional) test function $\varphi\in B_{p,q}^\beta $:
	\begin{equation}
		\label{def-weak-error_BESOV}\mathcal{E}(\varphi,t,x,h):= \E_{0,x}\left[\varphi(X_t^h)-\varphi(X_t)\right]=\langle (\Gamma^h-\Gamma)(0,x,t,\cdot),\varphi \rangle_{B_{p',q'}^{-\beta},B_{p,q}^\beta},
	\end{equation}
	where $\langle \cdot,\cdot\rangle_{B_{p',q'}^{-\beta},B_{p,q}^\beta} $ stands for the duality pairing.}

Then, for $\varepsilon>0 $, there exists a constant $C_\varepsilon \ge 1$ s.t. for all {$\varphi\in B_{p,q}^\beta $,} $x\in \R^d, t\in [2h,T]$,
\begin{align}
|\mathcal{E}(\varphi,t,x,h)|\le %Ch^{\frac{\alpha\wedge (\alpha-1+ 2\beta)}{\alpha}}(1+t^{-\frac{\beta\wedge (1-\beta)}{\alpha}})(1+|\ln(h)|\I_{\beta=\frac 12}).
C_\varepsilon t^{-\frac{\frac dp-\beta}{\alpha}}h^{\frac{\gamma-\varepsilon}{\alpha}}\|\varphi\|_{B_{p,q}^\beta}\mbox{ where }\gamma=\alpha-1+2\beta-\frac{d}{p}-\frac{\alpha}{\vartheta}.
\label{BD_W_ERR_BESOV}
\end{align}
%with $\varepsilon>0 $ and $C:=C(\varepsilon)$ if $\beta\ge \frac 12 $ and $C=C(1-2\beta) $ if $\beta<\frac 12 $.
\end{THM}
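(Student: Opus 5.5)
We follow the same scheme as in the Hölder case, with the duality now taken between $B^{-\beta}_{p',q'}$ and $B^\beta_{p,q}$. For $t\ge 2h$ we write $\mathcal E(\varphi,t,x,h)=\langle(\Gamma^h-\Gamma)(0,x,t,\cdot),\varphi\rangle$. Using the Duhamel representation \eqref{DUHAMEL_DIFF_BESOV} for $\Gamma$ and its analogue for $\Gamma^h$ (drift $\mathfrak b_h(s,X^h_{\tau^h_s})$, obtained from \eqref{euler-scheme-besov}), we decompose as in \eqref{DECOUP_ERR}. This gives a Gronwall term $\Delta^1$ involving $(\Gamma-\Gamma^h)(0,x,\tau^h_s,\cdot)$, a forward time-regularity term $\Delta^2$ for $\Gamma$, local one-step terms $\Delta^3,\Delta^4$ for the scheme, last-step and first-step terms $\Delta^5,\Delta^6$, and a new term coming from the replacement of $b(s,\cdot)$ by $\mathfrak b_h(s,\cdot)=P^\alpha_{s-\tau^h_s}b(s,\cdot)$.

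Each product $\Gamma(\cdot)\,b(s,\cdot)\cdot\nabla P^\alpha_{t-s}\varphi$ is paired by \eqref{dual-ineq}. We put $b(s,\cdot)\in B^\beta_{p,q}$ on one side. On the other side we put $\Gamma(0,x,s,\cdot)\nabla P^\alpha_{t-s}\varphi\in B^{-\beta}_{p',q'}$ (or a slightly smoother space). The product rule \eqref{PR} then requires the density in $B^{\rho}_{\infty,\infty}$ with $\rho>-\beta$. This is supplied by \eqref{aronson-gamma}, \eqref{holder-forward-gamma} and their scheme counterparts \eqref{aronson-gammah}, \eqref{holder-forward-gammah}, which cost $t^{-\rho/\alpha}$ with $\rho\in(-\beta,-\beta+\varepsilon/2)$. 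The factor $\nabla P^\alpha_{t-s}\varphi$ is controlled through \eqref{YOUNG} and \eqref{SING_STABLE_HK}. For instance, $\|\nabla p_\alpha(t-s,\cdot)\|_{B^{-\beta-\rho'}_{1,1}}$ placed against $\varphi\in B^\beta_{p,q}$ yields an $L^\infty$-type/Besov bound with singularity $(t-s)^{-(1-2\beta+d/p+\varepsilon)/\alpha}$. This singularity is integrable precisely because $\gamma>0$, and after Hölder in time (exponent $\vartheta'$) it gives $(t-s)^{-(1-2\beta+d/p)/\alpha-1/\vartheta}$, integrable by \eqref{serrin}.

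The terms $\Delta^2,\dots,\Delta^6$ are then each bounded by $h^{(\gamma-\varepsilon)/\alpha}t^{-(d/p-\beta)/\alpha}\|\varphi\|_{B^\beta_{p,q}}$.
\begin{itemize}
\item For $\Delta^2$ we use \eqref{holder-time-gamma_PP_QP_NO_NORM}, which provides the time increment $\|\Gamma(0,x,s,\cdot)-\Gamma(0,x,\tau^h_s,\cdot)\|_{B^\rho_{p',q'}}\lesssim h^{(\gamma-\varepsilon)/\alpha}s^{-(\gamma-\varepsilon+\rho+d/p)/\alpha}$. This is paired against $b\,\nabla P^\alpha_{t-s}\varphi$ through \eqref{PR}, and the resulting time convolution is computed.
\item For $\Delta^4$ and $\Delta^5$ we use the $\partial_t$ bounds of \eqref{SING_STABLE_HK} together with the short integration windows.
\item For $\Delta^3$ we condition as in \eqref{CONDITIONING_FOR_EULER_SCHEME}. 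The spatial shift $\int_{\tau_s^h}^s\mathfrak b_h$ is small: $\|P^\alpha_{u}b\|_{L^\infty}\lesssim u^{(\beta-d/p)/\alpha}$, so the shift has size $h^{1+(\beta-d/p)/\alpha}$. Its Hölder dependence in space is handled as in the Hölder case, by mollification and a cancellation argument.
\item The drift-replacement term $\langle\Gamma,(b-\mathfrak b_h)\nabla P\varphi\rangle$ is controlled via $\|b-P^\alpha_u b\|_{B^{\beta-\delta}_{p,q}}\lesssim u^{\delta/\alpha}\|b\|_{B^\beta_{p,q}}$, with $u\le h$, taking $\delta$ close to $\gamma$.
\end{itemize}

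For $\Delta^1$ we bound it by $\int_h^{\tau^h_t-h}\|(\Gamma-\Gamma^h)(0,x,\tau^h_s,\cdot)\|_{B^{-\beta}_{p',q'}}\|b(s,\cdot)\nabla P^\alpha_{t-s}\,\cdot\,\|ds$. Here the key point is that the operator $\psi\mapsto b(s)\cdot\nabla P^\alpha_{t-s}\psi$ maps $B^\beta_{p,q}$ into $B^\beta_{p,q}$ with norm $\lesssim\|b(s)\|_{B^\beta_{p,q}}(t-s)^{-(1-2\beta+d/p+\varepsilon)/\alpha}$ (product rule plus \eqref{YOUNG}). By duality in $\varphi$ this closes an inequality for $f(t)=t^{(d/p-\beta)/\alpha}\sup_{\|\varphi\|\le1}|\mathcal E(\varphi,t,x,h)|$. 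After Hölder in time, Lemma \ref{lemgron} applies with exponents $<1$, again by \eqref{serrin}.

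The main obstacle is this product estimate: $b(s)\nabla P\varphi$ is a product of two distributions of regularity $\beta<0$ after smoothing. \eqref{PR} requires $\nabla P^\alpha_{t-s}\varphi\in B^{-\beta+\epsilon}_{\infty,\infty}$, which costs the $d/p$ and the extra $-\beta$ (the origin of the factor $2\beta$). I would first try to obtain it by placing $\varphi$ in $B^{\beta-d/p}_{\infty,q}$ via embedding and then using \eqref{SING_STABLE_HK}, accepting the $\varepsilon$ loss.
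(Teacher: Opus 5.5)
Your overall architecture is the paper's: the same splitting, duality between $B^{-\beta}_{p',q'}$ and $B^\beta_{p,q}$, the $\Delta^2$ term via \eqref{holder-time-gamma_PP_QP_NO_NORM} and \eqref{PR}, and the Gronwall kernel $(t-s)^{-(1-2\beta+d/p+\varepsilon)/\alpha}$. The gap is in how you bound the products $\Gamma(0,x,s,\cdot)\nabla P^\alpha_{t-s}\varphi$, and their scheme analogues, in positive-regularity spaces $B^{\sigma}_{p',q'}$ with $\sigma\ge-\beta$.

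\emph{The product rule as you use it.} As written, the density is the $B^\rho_{\infty,\infty}$ factor in \eqref{PR}. Then $\nabla P^\alpha_{t-s}\varphi$ would have to lie in an $L^{p'}$-based Besov space, which is false in general for $p>2$ (e.g.\ $p=\infty$). The only way to use \eqref{PR} here is
\[
\|\Gamma(0,x,s,\cdot)\nabla P^\alpha_{t-s}\varphi\|_{B^\sigma_{p',q'}}\lesssim\|\Gamma(0,x,s,\cdot)\|_{B^\sigma_{p',q'}}\|\nabla P^\alpha_{t-s}\varphi\|_{B^{\sigma+\varepsilon}_{\infty,\infty}}\lesssim s^{-\frac{\sigma+d/p}{\alpha}}(t-s)^{-\frac{1-\beta+\sigma+\varepsilon+d/p}{\alpha}}\|\varphi\|_{B^\beta_{p,q}}.
\]
This charges the full regularity $\sigma$ on both factors. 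For $\sigma=-\beta$ that is affordable: one checks that the one-step and last-step terms still give $h^{(\gamma-\varepsilon)/\alpha}t^{-(d/p-\beta)/\alpha}$, up to relabelling $\varepsilon$.

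\emph{Where it fails.} It is not affordable for the drift-replacement term, the paper's $\Delta^6$ in \eqref{DECOUP_ERR_BESOV}. There you need $\sigma=-\beta+\delta$ with $\delta$ close to $\gamma$ to extract $h^{\delta/\alpha}$ from $\|b-\mathfrak b_h\|_{B^{\beta-\delta}_{p,q}}$. Hölder in time with $\delta=\gamma-2\varepsilon$ then gives
\[
h^{\frac{\gamma-2\varepsilon}{\alpha}}\,t^{-\frac{\frac dp-\beta+\gamma-3\varepsilon}{\alpha}}\|b\|_{L^\vartheta-B^\beta_{p,q}}\|\varphi\|_{B^\beta_{p,q}}.
\]
The extra factor $t^{-(\gamma-3\varepsilon)/\alpha}$ is of order $h^{-\gamma/\alpha}$ when $t\sim h$. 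No choice of $\delta$, even one depending on $t$, removes it: small $\delta$ loses the rate at $t\sim 1$, and $\delta\approx\gamma$ loses it at $t\sim\sqrt h$. So your claim that all non-Gronwall terms are $\lesssim h^{(\gamma-\varepsilon)/\alpha}t^{-(d/p-\beta)/\alpha}$ fails. The Gronwall step would then only give the weaker singularity $t^{-(d/p-\beta+\gamma)/\alpha}$.

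\emph{The missing ingredient.} You need the Leibniz-type bounds of Proposition \ref{PROP_SENSI_WITH_SHIFTED_INDEX}, and \eqref{QUI_SERT_TOUT_LE_TEMPS_EN_BESOV_DRIFTED_SCHEME}, \eqref{QUI_SERT_TOUT_LE_TEMPS_EN_BESOV_DRIFTED_SCHEME_TIME_SENSI} for the scheme. They are proved directly from the thermic characterization. In the lower cut, with $g=\nabla P^\alpha_{t-s}\varphi$, one writes $\Gamma(y)g(y)-\Gamma(z)g(z)=(\Gamma(y)-\Gamma(z))g(y)+\Gamma(z)(g(y)-g(z))$. This has three effects:
\begin{itemize}
\item the Hölder exponent $\rho>\sigma$ is charged on one factor at a time, either as $s^{-\rho/\alpha}\|g\|_{L^\infty}$ via \eqref{holder-forward-gamma} or as $\|g\|_{B^\rho_{\infty,\infty}}$;
\item the $L^{p'}$ integrability always comes from $\Gamma\lesssim\bar p_\alpha$;
\item the $dv$-integral returns $t^{(\rho-\sigma)/\alpha}$.
\end{itemize}
With $\sigma=-\beta+\gamma-\varepsilon$ and $\rho=\sigma+\varepsilon/2$, this gives $h^{(\gamma-\varepsilon)/\alpha}t^{(\varepsilon-d/p)/\alpha}$ for the drift-replacement term.

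\emph{Smaller points.}
\begin{itemize}
\item The Besov scheme has no time randomization, so there is no $\Delta^4$-type term of \eqref{DECOUP_ERR}.
\item The last-step scheme term contains no time difference, so $\partial_t$ bounds do not apply. It must be handled by duality after the conditioning \eqref{condeul}; using $\|\mathfrak b_h\|_{L^\infty}$ there loses $d/(p\alpha)$ in the rate.
\item No mollification is needed for the spatial shift: \eqref{CTR_PONCTUEL_BH_INT_HOLDER} gives the Hölder modulus of $\int_{\tau^h_s}^s\mathfrak b_h(r,\cdot)dr$ directly. The Hölder-case cancellation relied on $\varphi\in C^\beta$, which is unavailable for distributional $\varphi$.
\item Lemma \ref{lemgron} requires $f$ to be bounded, which is \eqref{PREAL_BD_FOR_GRNW_BESOV}.
\end{itemize}
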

%\begin{rem}[About the test function and the convergence rate]\textcolor{red}{Ca c'est ce que l'on souhaiterait par rapport au travail de Zimo. Pour l'instant je crains de ne pas savoir faire. On est en l'état coincé par la régularité en temps forward. Je pense que si on la cherche sur le schéma (pour lequel le drift est régulier) on peut utiliser une cancel$ $}
%We carefully mention that we here focused on the worst possible case we could handle for the test function, taking $\varphi \in B_{p,q}^\beta $. In particular, should we consider a \textit{better} test function, the analysis below would show that we would impose less singularity on the heat kernels involved. For a function  $\varphi\in B_{1,1}^{0} $ we should gain the corresponding intrinsinc regularity index $-\beta-\frac d{p} $   multiplied by $1/\alpha $ (characteristic time scale), yielding a convergence rate of order $h^{\frac {\alpha-1+\beta}{\alpha}} $ without time singularity in $t$.
%\end{rem}
\begin{rem}[About previous results and singular test functions]
One could wonder if the results of Theorem \ref{THM_BESOV} intersect those established in \cite{fito:isso:meno:25} for the densities. In that work, the same convergence was obtained for  $\|\frac{(\Gamma-\Gamma^h)(0,x,t,\cdot)}{\bar p_\alpha(t,\cdot-x)}\|_{B_{\infty,\infty}^\rho},\ \rho>-\beta$.
% was investigated and  (for which the same rate was obtained).
Denoting by $\mathcal P(\R^d)$ the  set of probability measures on $\R^d $, we recall, as e.g. a consequence of Lemma 5 in \cite{chau:jabi:meno:22-1} 
in  that:
\begin{equation}\label{INCL}
\mathcal P(\R^d) \subset \cup_{\ell\ge 1} B_{\ell,\infty}^{-\frac d{\ell'}}.
\end{equation}
To derive the upper bound on the densities from Theorem \ref{THM_BESOV}, one should be able to take the Dirac mass as a suitable test function
and choose indices satisfying as well the condition \eqref{serrin}.

From \eqref{INCL} this suggests to take $p=\ell,\beta=-\frac{d}{\ell'} $ for which condition \eqref{serrin}  would read as $\beta=-\frac{d}{\ell'} >\frac{1-\alpha+\frac d\ell}2\iff -\frac{d}{\ell'}>1-\alpha+d$ which cannot be satisfied, for any $\alpha\in (1,2] $ and any dimension $d\ge 1$. Hence, the control in \cite{fito:isso:meno:25} cannot be derived as a consequence of the current result. It requires a more thorough analysis of the heat kernels at hand.

Conversely, the current result cannot be either seen as a direct consequence of \cite{fito:isso:meno:25}. %where the studied quantity is .
\end{rem}

%%%%%%%%% Attention chaud: il y a la dérivée du flot...
%\begin{rem}[About the test function: an example]
%Let us mention that, from an applicative viewpoint, the previous result could be used for instance in order to estimate the discretization error for a sensitivity analysis. Assume for instance that $X$ corresponds to the dynamics of a financial asset with singular drift in $L^\infty([0,T],B_{\infty,\infty}^{\beta}),\ \beta\in (\frac{1-\alpha}2,0) $ and that one is interested in estimating the $\delta$ of an option associated with a pay-off $\Phi\in C^{\beta+1}=B_{\infty,\infty}^{\beta+1}$.
%\end{rem}

To proceed, we recall that for $k\in \llbracket 0,n-1\rrbracket ,\ t\in (t_k,T] $, the density of $X_t^h$ admits, conditionally to $X_{t_k}^h=x$, a transition density $\Gamma^h(t_k,x,t,\cdot) $, which enjoys a Duhamel type representation {(see Proposition 2 in \cite{fito:isso:meno:25})}: for all $y\in \R^d $,
		\begin{align}
			\Gamma^h(t_k,x,t,y)
			= p_\alpha(t-t_k,y-x)-\int_{t_k}^{ t}\E_{t_k,x}\left[\mathfrak b_h(r,X^h_{\tau_r^h})\cdot\nabla_y  p_\alpha(t-r,y-X^h_r)\right]d r,\label{duhamel-scheme_BESOV}
		\end{align} 
which somehow naturally extends the one in \eqref{duhamel-scheme}. For the analysis, we will thoroughly use the following lemma about the drift $\mathfrak b_h $ proved in \cite{fito:isso:meno:25} (see also Proposition 2 of \cite{chau:jabi:meno:22-1} for \eqref{APPROX_DRIFT_ERR} below) {apart for the statements \eqref{QUI_SERT_TOUT_LE_TEMPS_EN_BESOV_DRIFTED_SCHEME} and \eqref{QUI_SERT_TOUT_LE_TEMPS_EN_BESOV_DRIFTED_SCHEME_TIME_SENSI} which are respectively proved in Sections \ref{prqst} and \ref{prqstbis}.}
\begin{lem}[Useful bounds for $\mathfrak{b}_h$] \label{lemma-regularity-mollified-b} There exists $C\geq 1$ s.t. for all $h>0$ and all $(s,z)\in [0,T] \times \R^d$, $s\neq \tau_s^h $,
	\begin{itemize}
		\item Pointwise control (uniform in $z$)
		\begin{equation}\label{CTR_PONCTUEL_BH}
			|\mathfrak b_h(s,z)|\le C (s-\tau_s^h)^{-\frac d{\alpha p}+\frac\beta\alpha}\|b(s,\cdot)\|_{B_{p,q}^\beta}.
		\end{equation}
		\item Time-integrated pointwise control (uniform in $z$)
		\begin{equation}\label{CTR_PONCTUEL_BH_INT}
			\left|\int_{\tau_s^h}^s\mathfrak b_h(u,z)d u \right|\le C (s-\tau_s^h)^{\frac{\gamma}{\alpha}+\frac{1-\beta}{\alpha}}\|b\|_{L^\vartheta-B_{p,q}^\beta}.
			%h^{1-\frac{1}{\vartheta}-\frac{d}{\alpha p}+\frac{\beta}{\alpha}}=h^{\frac{\gamma-\beta}\alpha+\frac{1}\alpha},
		\end{equation} 
		\item Spatial H\"older modulus of the integrated drift\\
		For all $(z,z')\in (\R^d)^2 $, $\zeta\in [-\beta,\gamma -\beta) $,
		\begin{equation}\label{CTR_PONCTUEL_BH_INT_HOLDER} 
			\left|\int_{\tau_s^h}^s\Big(\mathfrak b_h(u,z)-\mathfrak b_h(u,z')\Big)d u \right|\le  C|z-z'|^\zeta (s-\tau_s^h)^{\frac{\gamma}{\alpha}+\frac{1-\beta-\zeta}{\alpha}}\|b\|_{L^\vartheta-B_{p,q}^\beta}.
			%h^{1-\frac{1}{\vartheta}-\frac{d}{\alpha p}+\frac{\beta}{\alpha}}=h^{\frac{\gamma-\beta}\alpha+\frac{1}\alpha},
		\end{equation}
		\item Besov norm of the mollified drift
		\begin{equation}
			\label{CTR_BESOV_BH}
			\|\mathfrak b_h(s,\cdot)\|_{B_{p,q}^\beta}\le \|b(s,\cdot)\|_{B_{p,q}^\beta}.
		\end{equation}
		\item Approximation in Besov norm. {For $\tilde \gamma\in [0,\gamma] $},
		\begin{equation}\label{APPROX_DRIFT_ERR}
		\|b(s,\cdot)-\mathfrak b_h(s,\cdot)\|_{\B_{p,q}^{\beta-{\tilde \gamma}}}\lesssim h^{\frac{{\tilde \gamma}}{\alpha}}\|b(s,\cdot)\|_{\B_{p,q}^{\beta}}.
		\end{equation}
		\item Besov norm of the conditioned scheme. 
		For all $\varphi\in B_{p,q}^\beta $, $\rho\in (-\beta,-\beta+\gamma) $, {$0\le s<t\le T$},
		\begin{align}
				\label{QUI_SERT_TOUT_LE_TEMPS_EN_BESOV_DRIFTED_SCHEME}
				\|\Gamma^h(0,x,\tau_s^h,\cdot)\nabla P^\alpha_{t-\tau_s^h}\varphi(\cdot+\int_{\tau_s^h}^s \mathfrak b_h(r,\cdot)dr )\|_{B_{p',q'}^{-\beta}}\lesssim& \Big[t^{\frac{\beta+\rho}{\alpha}}\Big(s^{-(\frac \rho\alpha+\frac{d}{p\alpha})}(t-s)^{-\frac 1\alpha+\frac \beta\alpha-\frac d{p\alpha}}\notag\\
				&+s^{-\frac d{\alpha p}}(t-s)^{-\frac 1\alpha+\frac{\beta}{\alpha}-\frac \rho\alpha-\frac d{p\alpha}}\Big)\Big]\|\varphi\|_{B_{p,q}^\beta}.
		\end{align}
		\item Time sensitivity for the scheme.
				For {$\varepsilon \in (0,\gamma)$}, all $\varphi\in B_{p,q}^\beta $, {$\rho\in(-\beta,-\beta+\gamma)$}, $s\in [h,\tau_t^h-h] $,
		\begin{align}
				\label{QUI_SERT_TOUT_LE_TEMPS_EN_BESOV_DRIFTED_SCHEME_TIME_SENSI}
				&\left\|\Gamma^h(0,x,\tau_s^h,\cdot)\Big[\nabla P^\alpha_{t-\tau_s^h}\varphi(\cdot+\int_{\tau_s^h}^s \mathfrak b_h(r,\cdot)dr )-\nabla P^\alpha_{t-s}\varphi(\cdot )\Big]\right\|_{B_{p',q'}^{-\beta}}\notag\\
                  \lesssim& h^\frac{\gamma-\varepsilon}{\alpha}\Big[t^{\frac{\beta+\rho}{\alpha}}\Big(s^{-(\frac \rho\alpha+\frac{d}{p\alpha})}(t-s)^{{-\frac\gamma\alpha+\frac\varepsilon\alpha}-\frac 1\alpha+\frac \beta\alpha-\frac d{p\alpha}}%\notag\\
				%&
				+s^{-\frac d{\alpha p}}(t-s)^{{-\frac\gamma\alpha+\frac\varepsilon\alpha}-\frac 1\alpha+\frac{\beta}{\alpha}-\frac \rho\alpha-\frac d{p\alpha}}\Big)\Big]\|\varphi\|_{B_{p,q}^\beta}.
		\end{align}

	\end{itemize}
\end{lem}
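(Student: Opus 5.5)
The only statements of Lemma \ref{lemma-regularity-mollified-b} not already available in \cite{fito:isso:meno:25,chau:jabi:meno:22-1} are \eqref{QUI_SERT_TOUT_LE_TEMPS_EN_BESOV_DRIFTED_SCHEME} and \eqref{QUI_SERT_TOUT_LE_TEMPS_EN_BESOV_DRIFTED_SCHEME_TIME_SENSI}, so I will only prove these two. The plan is to give up the negative index $\beta$ of the drift and instead work with the positive index $-\beta$ of the dual space $B^{-\beta}_{p',q'}$, estimating both quantities through Hölder-type moduli.

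\textbf{Reductions.} Since $\rho>-\beta$, we have $\|\cdot\|_{B^{-\beta}_{p',q'}}\lesssim\|\cdot\|_{B^{\rho}_{p',\infty}}$. Moreover $t^{\frac{\beta+\rho}\alpha}\le T^{\frac{\beta+\rho}\alpha}$ is bounded, so this prefactor is harmless. For $\rho\in(0,1)$ I will use the difference characterization
$$\|f\|_{B^\rho_{p',\infty}}\asymp\|f\|_{L^{p'}}+\sup_{\delta\neq0}|\delta|^{-\rho}\|f(\cdot+\delta)-f\|_{L^{p'}}.$$

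\textbf{Proof of \eqref{QUI_SERT_TOUT_LE_TEMPS_EN_BESOV_DRIFTED_SCHEME}.} Set $g:=\Gamma^h(0,x,\tau_s^h,\cdot)$, $D(z):=\int_{\tau_s^h}^s\mathfrak b_h(r,z)dr$ and $\psi:=\nabla P^\alpha_{t-\tau_s^h}\varphi\circ(I+D)$. The product rule for differences gives
$$\|g\psi\|_{B^\rho_{p',\infty}}\lesssim\|g\|_{B^\rho_{p',\infty}}\|\psi\|_{L^\infty}+\|g\|_{L^{p'}}[\psi]_{\rho}.$$
\emph{First factor.} By \eqref{aronson-gammah} and \eqref{holder-forward-gammah}, integrated in $L^{p'}$ in the variable $y$, we get $\|g\|_{L^{p'}}\lesssim s^{-\frac{d}{\alpha p}}$ and $\|g\|_{B^\rho_{p',\infty}}\lesssim s^{-\frac{\rho}{\alpha}-\frac{d}{\alpha p}}$, using $\tau_s^h\asymp s$.
\emph{Sup norm of $\psi$.} The embedding $B^\beta_{p,q}\hookrightarrow B^{\beta-d/p}_{\infty,\infty}$ together with \eqref{SING_STABLE_HK} and \eqref{YOUNG} gives $\|\nabla P^\alpha_u\varphi\|_{L^\infty}\lesssim u^{-\frac{1-\beta+d/p}{\alpha}}\|\varphi\|_{B^\beta_{p,q}}$. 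The same arguments give $\|\nabla^2P^\alpha_u\varphi\|_{L^\infty}\lesssim u^{-\frac{2-\beta+d/p}{\alpha}}\|\varphi\|_{B^\beta_{p,q}}$. Since $t-\tau_s^h\ge t-s$, the first of these yields the first term of the claimed bound.
\emph{Hölder seminorm of $\psi$.} I write $|\psi(z)-\psi(z')|\le\|\nabla^2P^\alpha_{t-\tau_s^h}\varphi\|_{L^\infty}\big(|z-z'|+|D(z)-D(z')|\big)$ and interpolate with the bound $2\|\nabla P^\alpha\varphi\|_\infty$ for the $|z-z'|$ part, which costs $(t-s)^{-\frac{\rho}{\alpha}}$. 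For the $D$ part I apply \eqref{CTR_PONCTUEL_BH_INT_HOLDER} with $\zeta=\rho$. This yields $|z-z'|^\rho h^{\frac{\gamma+1-\beta-\rho}{\alpha}}$, and since the remaining exponent of $h$ is positive and $h\lesssim t-s$, it is absorbed into $(t-s)^{-\frac{\rho}\alpha}\|\nabla P\varphi\|_\infty$ up to constants. When $t-s<h$, the sup-norm interpolation alone suffices. Altogether this produces the second term $s^{-\frac{d}{\alpha p}}(t-s)^{-\frac{1-\beta+\rho+d/p}{\alpha}}$.

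\textbf{Proof of \eqref{QUI_SERT_TOUT_LE_TEMPS_EN_BESOV_DRIFTED_SCHEME_TIME_SENSI}.} I reuse the same product decomposition, with $\psi$ replaced by the difference
$$\nabla P^\alpha_{t-\tau_s^h}\varphi\circ(I+D)-\nabla P^\alpha_{t-s}\varphi=\big[\nabla P^\alpha_{t-\tau_s^h}\varphi-\nabla P^\alpha_{t-s}\varphi\big]\circ(I+D)+\big[\nabla P^\alpha_{t-s}\varphi\circ(I+D)-\nabla P^\alpha_{t-s}\varphi\big].$$
\emph{Time part.} I use $\partial_u\nabla p_\alpha$ through \eqref{SING_STABLE_HK}, which gives a factor $h(t-s)^{-1}$ relative to the previous bounds. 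Interpolating this with the trivial bound (twice the previous estimate) yields $\big(h/(t-s)\big)^{\frac{\gamma-\varepsilon}\alpha}$, since $\frac{\gamma-\varepsilon}{\alpha}<1$.
\emph{Spatial part.} The shift satisfies $|D|\lesssim h^{\frac{\gamma+1-\beta}\alpha}$ by \eqref{CTR_PONCTUEL_BH_INT}. Its contribution is therefore controlled by $\|\nabla^2P^\alpha_{t-s}\varphi\|_\infty|D|$, interpolated against the sup bound. This again gives a gain $\big(h^{1/\alpha}/(t-s)^{1/\alpha}\big)^{\gamma-\varepsilon}$, using that $\gamma+1-\beta\ge\gamma-\varepsilon$.
\emph{Seminorm of the differences.} For the $[\cdot]_\rho$ seminorm of these differences, the same interpolation is performed on third derivatives, $\|\nabla^3P^\alpha_u\varphi\|_{L^\infty}\lesssim u^{-\frac{3-\beta+d/p}{\alpha}}$, again together with \eqref{CTR_PONCTUEL_BH_INT_HOLDER}.

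\textbf{Expected obstacle.} The main difficulty is the simultaneous interpolation in $[\cdot]_\rho$: one has to balance the time/shift gain $h^{\frac{\gamma-\varepsilon}\alpha}$ against the spatial regularity $\rho$ while keeping every exponent of $(t-s)$ as stated. This is where the $\varepsilon$-loss enters: the interpolation must stay strictly below the full gain $\gamma$, which is available only up to the endpoint restriction $\zeta<\gamma-\beta$ in \eqref{CTR_PONCTUEL_BH_INT_HOLDER}.
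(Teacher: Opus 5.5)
\textbf{The gap: the prefactor cannot be dropped.} Your reduction step runs in the wrong direction. Since $\rho>-\beta$, the exponent $\frac{\beta+\rho}{\alpha}$ is \emph{positive}. So the inequality $t^{\frac{\beta+\rho}{\alpha}}\le T^{\frac{\beta+\rho}{\alpha}}$ says that the right-hand sides of \eqref{QUI_SERT_TOUT_LE_TEMPS_EN_BESOV_DRIFTED_SCHEME} and \eqref{QUI_SERT_TOUT_LE_TEMPS_EN_BESOV_DRIFTED_SCHEME_TIME_SENSI} are \emph{smaller} than what you prove, by a factor that vanishes as $t\to0$. To deduce the stated bounds from yours you would need $1\lesssim t^{\frac{\beta+\rho}{\alpha}}$, which fails for small $t$. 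Your argument therefore proves a strictly weaker estimate than the lemma.

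This loss is built into the route you chose. The embedding $B^\rho_{p',\infty}\hookrightarrow B^{-\beta}_{p',q'}$ has a constant independent of the time scale. Yet already for $p_\alpha(s,\cdot-x)$, the model for $\Gamma^h(0,x,\tau^h_s,\cdot)$, the two norms scale like $s^{-\frac{\rho}{\alpha}-\frac{d}{\alpha p}}$ and $s^{\frac{\beta}{\alpha}-\frac{d}{\alpha p}}$ respectively. Their ratio is exactly the missing factor. Your weaker version would still suffice where the lemma is used, namely for $\Delta^5$ and $\Delta^3$ in the proof of Theorem \ref{THM_BESOV}, because the final singularity there is only $t^{-\frac{d/p-\beta}{\alpha}}$. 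It is nonetheless not the statement.

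\textbf{How to repair it.} Keep the thermic characterization of $B^{-\beta}_{p',q'}$ and split the $v$-integral at the scale $v=t$; this is exactly the paper's proof. Let $f$ be your product, either $\Gamma^h(0,x,\tau^h_s,\cdot)\psi$ or the analogous product with the difference of semigroups.
\begin{itemize}
\item For $v\in(0,t)$: the cancellation $\int\partial_vp_\alpha(v,w)dw=0$, Minkowski's inequality and \eqref{GD_BOUNDS} give $\|\partial_vp_\alpha(v,\cdot)\star f\|_{L^{p'}}\lesssim v^{-1+\frac{\rho}{\alpha}}\sup_{\delta\neq0}|\delta|^{-\rho}\|f(\cdot+\delta)-f\|_{L^{p'}}$. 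Integrating $v^{\frac{\beta+\rho}{\alpha}q'-1}$ over $(0,t)$ then produces the prefactor $t^{\frac{\beta+\rho}{\alpha}}$ in front of your increment bounds.
\item For $v\in[t,T]$ and for the non-thermic part: you only need $\|f\|_{L^{p'}}\lesssim s^{-\frac{d}{\alpha p}}(t-s)^{-\frac{1-\beta+d/p}{\alpha}}$, multiplied by $1+t^{\frac{\beta}{\alpha}}\lesssim t^{\frac{\beta}{\alpha}}\le t^{\frac{\beta+\rho}{\alpha}}(t-s)^{-\frac{\rho}{\alpha}}$. This is dominated by the second term of the claimed bound.
\end{itemize}

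With this change the rest of your argument goes through, and it coincides with the paper's lower-cut computation. Your handling of the composite $\nabla P^\alpha_{t-\tau^h_s}\varphi\circ(I+D)$ is in fact more direct than the paper's detour through Besov norms of the shifted kernel: sup norms are unchanged by composition, and increments are controlled by the mean value theorem together with \eqref{CTR_PONCTUEL_BH_INT_HOLDER} for $\zeta=\rho$.

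\textbf{Two minor points.}
\begin{itemize}
\item When absorbing the $D$-increment you need neither $h\lesssim t-s$ nor the unexplained separate case $t-s<h$. The inequality $(s-\tau^h_s)^{\frac{1-\rho}{\alpha}}\le(t-\tau^h_s)^{\frac{1-\rho}{\alpha}}$ compensates the extra power in $\|\nabla^2P^\alpha_{t-\tau^h_s}\varphi\|_{L^\infty}$ in all cases.
\item The relation $\tau^h_s\asymp s$ requires $s\ge h$; for $s<h$ one has $\Gamma^h(0,x,0,\cdot)=\delta_x$. The paper also uses this restriction tacitly.
\end{itemize}
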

{
\begin{rem}[About the time range constraints for the controls involving the drift of the scheme]\label{REM_FOR_RANGE}
We point out that the control \eqref{QUI_SERT_TOUT_LE_TEMPS_EN_BESOV_DRIFTED_SCHEME} (which will be needed for the analysis of the contribution associated with the last time steps) is
indeed valid for $0\le s<t\le T $, whereas the additional constraint $s\in [h,\tau_t^h-h] $ appearing for \eqref{QUI_SERT_TOUT_LE_TEMPS_EN_BESOV_DRIFTED_SCHEME_TIME_SENSI} is precisely needed in order that $t-s $ and $t-\tau_s^h $ have \textit{somehow} the same order. They are on the considered time interval be both greater than the time step $h $.
\end{rem}
}

  We will as well need some controls on the smoothing effects of the stable type kernel on singular \textit{test} functions. Namely,
  \begin{PROP}[Schauder type estimates for the stable heat kernel]\label{HK_W_BESOV}
  Let $\varphi\in B_{p,q}^\beta $. Then, under \eqref{serrin}, for all $0\le s<t\le T$, $\rho\in (-\beta,-\beta+\gamma) $,
  \begin{align}
\|\nabla P_{t-s}^\alpha \varphi\|_{L^\infty}&\lesssim (t-s)^{-\frac 1\alpha+\frac \beta \alpha-\frac d{p\alpha}}\|\varphi\|_{B_{p,q}^\beta},\label{BD_INFTY_SC_BESOV}\\
\|{\partial_t^\theta\partial^{\mathbf a}} P_{t-s}^\alpha \varphi\|_{B_{\infty,\infty}^{\rho}}&\lesssim (t-s)^{-{\theta-\frac{|{\mathbf a|}}\alpha}+\frac \beta \alpha-\frac \rho\alpha-\frac d{p\alpha}}\|\varphi\|_{B_{p,q}^\beta},\ {|\mathbf a|\le 2,\ \theta\in \{0,1\}}\label{BD_RHO_SC_BESOV}.
  \end{align}
  Also, for the shift associated with the drift of the scheme:
  \begin{align}
\|{\partial^{\mathbf a}} P_{t-\tau_s^h}^\alpha \varphi(\cdot+\int_{\tau_s^h}^s \mathfrak b_h(r,\cdot)dr )\|_{L^\infty}&\lesssim (t-\tau_s^h)^{-\frac{{|{\mathbf a}|}}\alpha+\frac{\beta}\alpha -\frac d{p\alpha}}\|\varphi\|_{B_{p,q}^\beta},\label{BD_INFTY_SC_BESOV_SCHEME}\\
\|{\partial^{\mathbf a}}  P_{t-\tau_s^h}^\alpha \varphi(\cdot+\int_{\tau_s^h}^s \mathfrak b_h(r,\cdot)dr )\|_{B_{\infty,\infty}^{\rho}}&\lesssim (t-s)^{-\frac{{|{\mathbf a|}}}\alpha+\frac \beta \alpha-\frac \rho\alpha-\frac d{p\alpha}}\|\varphi\|_{B_{p,q}^\beta},\ {|\mathbf a|\le 2}\label{BD_RHO_SC_BESOV_SCHEME}.
  \end{align}
  Eventually, for the time {and spatial} sensitivities, for $s\in [h,\tau_t^h-h]$:
  \begin{align}
\|\nabla P_{t-s}^\alpha \varphi-\nabla P_{t-\tau_s^h}^\alpha \varphi\|_{L^\infty}&\lesssim (t-s)^{-\frac 1\alpha+\frac \beta \alpha-\frac d{p\alpha}}\|\varphi\|_{B_{p,q}^\beta}\Big(\frac{s-\tau_s^h}{t-s}\Big)^\theta,\ {\theta\in [0,1],}\label{BD_INFTY_SC_BESOV_TIME_SENSI}\\
\|\nabla P_{t-{\tau_s^h}}^\alpha \varphi{(\cdot+z)}-\nabla P_{t-\tau_s^h}^\alpha\varphi{(\cdot)} \|_{L^\infty}&\lesssim (t-s)^{-\frac 1\alpha+\frac \beta \alpha-\frac d{p\alpha}}\|\varphi\|_{B_{p,q}^\beta}\frac{|z|}{(t-s)^{\frac 1\alpha}},\label{BD_INFTY_SC_BESOV_SPACE_SENSI}\\
\|\nabla P_{t-s}^\alpha \varphi-\nabla P_{t-\tau_s^h}^\alpha\varphi\|_{B_{\infty,\infty}^{\rho}}&\lesssim (t-s)^{-\frac 1\alpha+\frac \beta \alpha-\frac \rho\alpha-\frac d{p\alpha}}\|\varphi\|_{B_{p,q}^\beta}\Big(\frac{s-\tau_s^h}{t-s}\Big)^\theta,\ \theta\in [0,1]\label{BD_RHO_SC_BESOV_TIME_SENSI}.
  \end{align}
  
%  Eventually, for the terms involving the transition density as well we have (\textcolor{red}{Fait doublon avec \eqref{QUI_SERT_TOUT_LE_TEMPS_EN_BESOV}}, il faut étendre (5.15) qui ne tient pa compte de la régularité supplémentaire, on a écrit \eqref{QUI_SERT_TOUT_LE_TEMPS_EN_BESOV_FOR_DELTA_3} qui devrait faire fonctionner les choses):
%  \begin{align}
%\label{NEEDED_FOR_DELTA_3_BESOV}
%  &\|\Gamma(0,x,\tau_s^h,\cdot)\nabla  P^\alpha_{t-s}\varphi\|_{B_{p',q'}^{-\beta+\gamma-\varepsilon}}\notag\\
%  \lesssim& \frac{1}{(t-s)^{\frac 1\alpha}} \Big(\frac{1}{s^{\frac{d}{\alpha p}}}+\frac{1}{(t-s)^{\frac{d}{\alpha p}}}\Big)\Big(\frac{1}{s^{\frac{-\beta+\gamma-\varepsilon}{\alpha}}}\frac{1}{(t-s)^{\frac{-\beta+\frac dp}{\alpha}}}+\frac{1}{(t-s)^{\frac{-2\beta+\gamma-\varepsilon+\frac dp}{\alpha}}} \Big) ds\|\varphi\|_{B_{p,q}^{\beta}}.
%  \end{align}

  \end{PROP}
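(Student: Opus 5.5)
The plan is to reduce every estimate to the Besov norms of derivatives of the stable heat kernel, through the Young inequality \eqref{YOUNG} and the bound \eqref{SING_STABLE_HK}, writing $\nabla P^\alpha_{t-s}\varphi=\nabla p_\alpha(t-s,\cdot)\star\varphi$.

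\textbf{Bounds \eqref{BD_INFTY_SC_BESOV} and \eqref{BD_RHO_SC_BESOV}.} For \eqref{BD_INFTY_SC_BESOV}, use the embedding \eqref{EMBEDDING}, $B^0_{\infty,1}\hookrightarrow L^\infty$, then \eqref{YOUNG} with $\ell=\infty$, $\ell_1=p'$, $\ell_2=p$:
\[
\|\nabla P^\alpha_{t-s}\varphi\|_{L^\infty}\lesssim \|\nabla p_\alpha(t-s,\cdot)\|_{B^{-\beta}_{p',1}}\|\varphi\|_{B^\beta_{p,q}}.
\]
By \eqref{SING_STABLE_HK} with $|\mathbf a|=1$ and $\ell=p'$, the heat kernel factor is of order $(t-s)^{-\frac1\alpha+\frac\beta\alpha-\frac d{\alpha p}}$. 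No logarithm appears, since the exponent is positive: this follows from \eqref{serrin}, which gives $\beta>-\frac{\alpha-1}2$, hence $1-\beta>0$. For \eqref{BD_RHO_SC_BESOV}, apply \eqref{YOUNG} with target $B^\rho_{\infty,\infty}$ and the kernel in $B^{\rho-\beta}_{p',1}$. Then \eqref{SING_STABLE_HK} gives the exponent
\[
\theta+\frac{|\mathbf a|}\alpha+\frac{\rho-\beta}\alpha+\frac d{\alpha p},
\]
which is positive since $\rho>-\beta>0$. Time derivatives are handled by the $\theta$ index in \eqref{SING_STABLE_HK}. When $\rho-\beta>0$ we invoke \cite[Lemma 12]{chau:meno:22}.

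\textbf{Shifted bounds \eqref{BD_INFTY_SC_BESOV_SCHEME} and \eqref{BD_RHO_SC_BESOV_SCHEME}.} The $L^\infty$ bound is invariant under composition with any map, so \eqref{BD_INFTY_SC_BESOV_SCHEME} follows from the previous one applied with time $t-\tau_s^h$. For the $B^\rho_{\infty,\infty}$ norm, which is equivalent to a Hölder norm since $\rho\in(0,1)$ for small $\gamma$, write $\Phi=\partial^{\mathbf a}P^\alpha_{t-\tau_s^h}\varphi$ and $\Theta(z)=z+\int_{\tau_s^h}^s\mathfrak b_h(r,z)dr$. Then
\[
|\Phi(\Theta(z))-\Phi(\Theta(z'))|\le[\Phi]_\rho|\Theta(z)-\Theta(z')|^\rho .
\]
Next control $|\Theta(z)-\Theta(z')|\lesssim|z-z'|$ for $|z-z'|\le h^{1/\alpha}$. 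This uses \eqref{CTR_PONCTUEL_BH_INT_HOLDER} with $\zeta$ close to $1$; it is allowed if $\gamma-\beta>1$, and otherwise one interpolates. For $|z-z'|\ge h^{1/\alpha}$, use the crude bound \eqref{CTR_PONCTUEL_BH_INT}, which gives a displacement of order $h^{(\gamma+1-\beta)/\alpha}\le h^{1/\alpha}\le |z-z'|$. Hence $|\Theta(z)-\Theta(z')|\lesssim|z-z'|$ in all regimes. Finally, $t-\tau_s^h\asymp t-s$ when $s\le\tau^h_t-h$; in the last time step one uses $t-\tau_s^h\ge t-s$ together with the monotonicity of the negative power. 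I expect this step to be the main obstacle, since the H\"older modulus of $\Theta$ must be matched exactly to the available range of $\zeta$.

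\textbf{Sensitivities \eqref{BD_INFTY_SC_BESOV_TIME_SENSI}--\eqref{BD_RHO_SC_BESOV_TIME_SENSI}.} Interpolate between the two bounds corresponding to $\theta=0$ and $\theta=1$.

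For $\theta=0$, apply the triangle inequality and the previous bounds, using $t-\tau_s^h\ge t-s$ and the fact that the exponents are negative.

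For $\theta=1$, write
\[
\nabla P_{t-s}\varphi-\nabla P_{t-\tau_s^h}\varphi=\int_{\tau_s^h}^s\partial_v\nabla p_\alpha(t-v,\cdot)\star\varphi\,dv .
\]
Apply \eqref{BD_RHO_SC_BESOV} with $\theta=1$, which gives an extra factor $(t-v)^{-1}\asymp(t-s)^{-1}$ for $s\le\tau_t^h-h$. The $L^\infty$ version follows in the same way, with \eqref{EMBEDDING} replacing the $B^\rho_{\infty,\infty}$ target.

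The spatial sensitivity \eqref{BD_INFTY_SC_BESOV_SPACE_SENSI} follows from the mean value theorem and the $|\mathbf a|=2$ case of \eqref{BD_INFTY_SC_BESOV_SCHEME}, with $t-\tau_s^h\asymp t-s$. This is combined with the trivial bound $2\|\nabla P\varphi\|_{L^\infty}$ when $|z|\ge(t-s)^{1/\alpha}$.
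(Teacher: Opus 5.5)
Apart from one step, your proposal is sound. The bounds \eqref{BD_INFTY_SC_BESOV} and \eqref{BD_RHO_SC_BESOV}, the time sensitivities and the spatial sensitivity follow the paper's argument: embedding, \eqref{YOUNG} and \eqref{SING_STABLE_HK}, then the fundamental theorem of calculus in time, then the mean value theorem. Your one-line proof of \eqref{BD_INFTY_SC_BESOV_SCHEME} via $\|\Phi\circ\Theta\|_{L^\infty}\le\|\Phi\|_{L^\infty}$ is a valid and much shorter substitute for the paper's Besov-norm computation on the shifted kernel.

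The gap is in \eqref{BD_RHO_SC_BESOV_SCHEME}, exactly where you anticipated it. The small-scale bound $|\Theta(z)-\Theta(z')|\lesssim|z-z'|$ for $|z-z'|\le h^{1/\alpha}$ is not justified:
\begin{itemize}
\item Estimate \eqref{CTR_PONCTUEL_BH_INT_HOLDER} is only available for $\zeta<\gamma-\beta$. But $\gamma-\beta=\alpha-1+\beta-\frac dp-\frac\alpha\vartheta<1$ always, since $\alpha<2$ and $\beta<0$. So the case $\gamma-\beta>1$ never occurs. This is the same inequality that makes $\rho<1$ automatic, not only for small $\gamma$.
\item For every admissible $\zeta<1$, the bound $C|z-z'|^\zeta h^{(\gamma+1-\beta-\zeta)/\alpha}$ is much larger than $|z-z'|$ as $|z-z'|\to 0$. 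Interpolating between H\"older exponents below $1$ cannot produce exponent $1$.
\item Plugged into $|\Phi(\Theta(z))-\Phi(\Theta(z'))|\le[\Phi]_\rho|\Theta(z)-\Theta(z')|^\rho$, this only gives a modulus of order $|z-z'|^{\zeta\rho}$ at small scales. That is a $B^{\zeta\rho}_{\infty,\infty}$ bound, not the claimed $B^{\rho}_{\infty,\infty}$ one.
\end{itemize}
Your large-scale argument via \eqref{CTR_PONCTUEL_BH_INT} is fine.

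Two repairs work.

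\textbf{(i) Prove the Lipschitz bound directly.} For $r\in(\tau_s^h,s]$ one has $\mathfrak b_h(r,\cdot)=P^\alpha_{r-\tau_s^h}b(r,\cdot)$. Applying \eqref{BD_INFTY_SC_BESOV} to $b(r,\cdot)$ and H\"older's inequality in time gives
\[
\Big\|\nabla\int_{\tau_s^h}^s\mathfrak b_h(r,\cdot)dr\Big\|_{L^\infty}\lesssim\int_{\tau_s^h}^s(r-\tau_s^h)^{-\frac{1-\beta+\frac dp}{\alpha}}\|b(r,\cdot)\|_{B^\beta_{p,q}}dr\lesssim(s-\tau_s^h)^{\frac{\gamma-\beta}{\alpha}}\|b\|_{L^\vartheta-B^\beta_{p,q}}.
\]
The time singularity is integrable exactly because $\gamma-\beta>0$. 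So $\Theta$ is Lipschitz with constant $1+O(h^{(\gamma-\beta)/\alpha})$ at all scales, and your composition argument goes through without any splitting.

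\textbf{(ii) Spend one more derivative of $\Phi$ instead.} This is what the paper does in its diagonal regime: it never needs Lipschitz control of $\Theta$. For $|z-z'|\le(t-\tau_s^h)^{1/\alpha}$,
\[
|\Phi(\Theta(z))-\Phi(\Theta(z'))|\le\|\nabla\Phi\|_{L^\infty}\Big(|z-z'|+C|z-z'|^{\rho}(s-\tau_s^h)^{\frac{\gamma+1-\beta-\rho}{\alpha}}\Big)\lesssim\|\nabla\Phi\|_{L^\infty}(t-\tau_s^h)^{\frac{1-\rho}{\alpha}}|z-z'|^{\rho}.
\]
This uses \eqref{CTR_PONCTUEL_BH_INT_HOLDER} with $\zeta=\rho$, together with $s-\tau_s^h\le t-\tau_s^h$ and $\rho<1$. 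The bound $\|\nabla\Phi\|_{L^\infty}\lesssim(t-\tau_s^h)^{-\frac{|\mathbf a|+1}{\alpha}+\frac\beta\alpha-\frac d{p\alpha}}\|\varphi\|_{B^\beta_{p,q}}$ comes from \eqref{SING_STABLE_HK}, which allows up to three derivatives. For $|z-z'|>(t-\tau_s^h)^{1/\alpha}$, use $2\|\Phi\|_{L^\infty}$.

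Either way you get \eqref{BD_RHO_SC_BESOV_SCHEME} with a power of $t-\tau_s^h$. Your monotonicity remark then converts it to $t-s$.

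Once repaired, your physical-space route is more direct than the paper's. The paper bounds the $B^{-\beta+\rho}_{p',q'}$ norm of $\partial^{\mathbf a}p_\alpha(t-\tau_s^h,\cdot+\int_{\tau_s^h}^s\mathfrak b_h(r,\cdot)dr)$ and then applies \eqref{YOUNG}. However, the map $\varphi\mapsto\Phi\circ\Theta$ is not a genuine convolution, because the shift depends on the evaluation point.
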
{Proposition \ref{HK_W_BESOV} is proved in Section \ref{prHK_W_BESOV}.}
\begin{rem}[About the approximate drift in the norms]
Observe that the controls in \eqref{BD_INFTY_SC_BESOV_SCHEME}, \eqref{BD_RHO_SC_BESOV_SCHEME} emphasize that the approximate drift does not affect the bounds. Indeed, the r.h.s. is similar to what would have appeared for the corresponding norm without the drift.
\end{rem}
}
We give additional controls involving the densities:
\begin{PROP}[Regularity estimates involving the density and the heat semigroup.]\label{PROP_SENSI_WITH_SHIFTED_INDEX}
{\phantom{BoUHH}}\\
{Let $\varphi\in B_{p,q}^\beta $. Then, under \eqref{serrin}, for all $0\le s<t\le T$, $\rho\in (-\beta,-\beta+\gamma) $,}%\footnote{Déplacer ces deux dernières équations ailleurs. Plus adapté dans une section technique.}, 
\begin{align}\label{QUI_SERT_TOUT_LE_TEMPS_EN_BESOV}
\|\Gamma(0,x,s,\cdot) \nabla P_{t-s}^{\alpha}\varphi\|_{B_{p',q'}^{-\beta}}\lesssim&\Big[t^{\frac{\beta+\rho}{\alpha}}\Big(s^{-(\frac \rho\alpha+\frac{d}{p\alpha})}(t-s)^{-\frac 1\alpha+\frac \beta\alpha-\frac d{p\alpha}}+s^{-\frac d{\alpha p}}(t-s)^{-\frac 1\alpha+\frac{\beta}{\alpha}-\frac \rho\alpha-\frac d{p\alpha}}\Big)\Big]\|\varphi\|_{B_{p,q}^\beta},%\\
%\lesssim &\Big(\frac{1}{s^{-\frac{\beta}\alpha+\frac{d}{p\alpha}}}\frac{1}{(t-s)^{\frac 1\alpha-\frac \beta\alpha}}+ \frac{1}{(t-s)^{\frac 1\alpha-\frac{2\beta}\alpha}} \Big).\notag
\end{align}
and {for $\varepsilon \in (0,\gamma)$},
\begin{align}\label{QUI_SERT_TOUT_LE_TEMPS_EN_BESOV_FOR_DELTA_3}
\|\Gamma(0,x,s,\cdot) \nabla P_{t-s}^{\alpha}\varphi\|_{B_{p',q'}^{-\beta+\gamma-\varepsilon}}\lesssim&\Big[t^{\frac{\varepsilon}{2\alpha}}\Big(s^{-(\frac {-\beta+\gamma-\frac \varepsilon 2}\alpha+\frac{d}{p\alpha})}(t-s)^{-\frac 1\alpha+\frac \beta\alpha-\frac d{p\alpha}}+s^{-\frac d{\alpha p}}(t-s)^{-\frac 1\alpha+\frac{2\beta}{\alpha}-\frac{\gamma-\frac \varepsilon2}\alpha-\frac d{p\alpha}}\Big)\Big]\|\varphi\|_{B_{p,q}^\beta}.
\end{align}
\end{PROP}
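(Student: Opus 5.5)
The proof of \eqref{QUI_SERT_TOUT_LE_TEMPS_EN_BESOV} will go through the thermic characterization \eqref{HEAT_CAR} of the $B_{p',q'}^{-\beta}$ norm, with $-\beta>0$ and $n=1$. The key idea is that the product $\Gamma(0,x,s,\cdot)\,\nabla P^\alpha_{t-s}\varphi$ carries positive regularity $-\beta<\rho$ through two mechanisms. The density contributes it through \eqref{holder-forward-gamma}, at a cost $s^{-\rho/\alpha}$. The test function contributes it through \eqref{BD_RHO_SC_BESOV}, at a cost $(t-s)^{-\rho/\alpha}$.

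The non-thermic part $\|\psi\star(\Gamma\nabla P^\alpha_{t-s}\varphi)\|_{L^{p'}}$ is handled by Young and H\"older. We bound it by $\|\Gamma(0,x,s,\cdot)\|_{L^{p'}}\|\nabla P^\alpha_{t-s}\varphi\|_{L^\infty}$. Using \eqref{aronson-gamma} together with \eqref{estigradplu}, and \eqref{BD_INFTY_SC_BESOV}, this is $\lesssim s^{-\frac d{\alpha p}}(t-s)^{-\frac1\alpha+\frac\beta\alpha-\frac d{p\alpha}}\|\varphi\|_{B^\beta_{p,q}}$. Since $t^{\frac{\beta+\rho}\alpha}\ge (t-s)^{\frac{\beta+\rho}\alpha}$ up to constants, this term is absorbed by the second summand on the right-hand side.

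For the thermic part we need to control $\big(\int_0^T \frac{dv}{v}\, v^{(1+\beta/\alpha)q'}\|\partial_v p_\alpha(v,\cdot)\star(\Gamma g)\|^{q'}_{L^{p'}}\big)^{1/q'}$, where $g=\nabla P^\alpha_{t-s}\varphi$. Using the cancellation $\int\partial_v p_\alpha(v,z-y)dy=0$, we write
\[
\Gamma(y)g(y)-\Gamma(z)g(z)=(\Gamma(y)-\Gamma(z))g(y)+\Gamma(z)(g(y)-g(z)).
\]
\begin{itemize}
\item For the first piece we use \eqref{holder-forward-gamma} and $\|g\|_{L^\infty}$. For the second we use \eqref{aronson-gamma} and the $\rho$-H\"older seminorm of $g$ from \eqref{BD_RHO_SC_BESOV}.
\item With \eqref{drift-smoothing-iso-noise}, the inner integral is bounded by $v^{-1+\rho/\alpha}$ times a convolution of $\bar p_\alpha(v,\cdot)$ with $\bar p_\alpha(s,\cdot-x)$. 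Its $L^{p'}$ norm is $\lesssim s^{-\frac{d}{\alpha p}}$, because the convolution is a probability density dominated by $\bar p_\alpha(s+v,\cdot)$, possibly after splitting $v\le s$ and $v>s$.
\item This yields the pointwise-in-$v$ bound $v^{-1+\rho/\alpha}\big(s^{-\frac\rho\alpha-\frac d{p\alpha}}(t-s)^{-\frac1\alpha+\frac\beta\alpha-\frac d{p\alpha}}+s^{-\frac d{p\alpha}}(t-s)^{-\frac1\alpha+\frac\beta\alpha-\frac\rho\alpha-\frac d{p\alpha}}\big)$. Then $\int_0^T v^{(\beta+\rho)q'/\alpha}\frac{dv}{v}$ converges because $\rho>-\beta$.
\item The remaining issue is the factor $t^{\frac{\beta+\rho}\alpha}$ rather than $T^{\frac{\beta+\rho}\alpha}$. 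To get it, we split the $v$-integral at $v=t$. For $v\ge t$ we do not use the H\"older moduli; we bound crudely by $\|\partial_v p_\alpha\|_{L^1}\lesssim v^{-1}$ times the non-thermic bound. That tail gives $\int_t^T v^{\beta q'/\alpha}\frac{dv}{v}\lesssim t^{\beta q'/\alpha}$, which is controlled by $t^{(\beta+\rho)/\alpha}(t-s)^{-\rho/\alpha}$ because $t-s\le t$.
\end{itemize}

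For \eqref{QUI_SERT_TOUT_LE_TEMPS_EN_BESOV_FOR_DELTA_3} the scheme is identical with target regularity $\tilde\rho:=-\beta+\gamma-\varepsilon$. We now use H\"older moduli of order $\rho_0:=-\beta+\gamma-\varepsilon/2$, which lies in $(\tilde\rho,\gamma-\beta)$ and is therefore admissible in \eqref{holder-forward-gamma} and \eqref{BD_RHO_SC_BESOV}. The gap $\rho_0-\tilde\rho=\varepsilon/2$ produces the factor $t^{\varepsilon/(2\alpha)}$. The test-function piece carries $(t-s)^{-\rho_0/\alpha}$, which gives exactly the exponent $-\frac1\alpha+\frac{2\beta}\alpha-\frac{\gamma-\varepsilon/2}\alpha-\frac d{p\alpha}$. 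Since $\rho_0$ may exceed $1$, the thermic characterization should then be taken with $n=2$ (or $n$ larger than $\tilde\rho/\alpha$), using second-order differences in the cancellation step.

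The step I expect to be the main obstacle is the $L^{p'}$ control of the spatial convolutions. The H\"older bound \eqref{holder-forward-gamma} involves $\bar p_\alpha(s,y-x)+\bar p_\alpha(s,z-x)$ with $|y-z|$ possibly much larger than $s^{1/\alpha}$, and there are heavy stable tails to contend with. I would handle this by splitting the regions $|y-z|\le s^{1/\alpha}$ and $|y-z|>s^{1/\alpha}$. In the off-diagonal region I would bound each of $\Gamma(y)$ and $\Gamma(z)$ separately, trading the power $|y-z|^{\rho}$ against $s^{-\rho/\alpha}$.
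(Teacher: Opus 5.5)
Your proposal is correct and follows essentially the paper's proof. It uses the thermic characterization with a cut at $v=t$, and a crude $L^1$--$L^{p'}$--$L^\infty$ bound on the upper cut. On the lower cut it applies cancellation together with the split $(\Gamma(y)-\Gamma(z))g(y)+\Gamma(z)(g(y)-g(z))$, using \eqref{holder-forward-gamma} and a $\rho$-H\"older bound on $\nabla P^\alpha_{t-s}\varphi$. For \eqref{QUI_SERT_TOUT_LE_TEMPS_EN_BESOV_FOR_DELTA_3} it makes the same choice $\rho=-\beta+\gamma-\frac\varepsilon2$.

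Your extra precautions are not needed. First, \eqref{holder-forward-gamma} is already global, so no diagonal/off-diagonal split is required. Second, $-\beta+\gamma=\alpha-1+\beta-\frac dp-\frac\alpha\vartheta<1$, so $n=1$ with first-order differences suffices. Third, the $L^{p'}$ bound on the spatial convolutions is just Young's inequality with $\|\,|\cdot|^\rho\partial_v p_\alpha(v,\cdot)\|_{L^1}\lesssim v^{-1+\frac\rho\alpha}$. Avoid claiming a domination by $v^{-1+\frac\rho\alpha}\bar p_\alpha$: the weight $|\cdot|^\rho$ spoils it in the tails for a pure time derivative.
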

{Proposition \ref{PROP_SENSI_WITH_SHIFTED_INDEX} is proved in Section \ref{prpswsi}.}
\begin{rem}[About the singularity exponents in the previous proposition]
%Observe that, since for \eqref{QUI_SERT_TOUT_LE_TEMPS_EN_BESOV} we want to bound the $B_{p',q'}^{-\beta} $ norm of the product $\Gamma(0,x,s,\cdot) \nabla P_{t-s}^{\alpha}\varphi $ (positive regularity) and we aim at  making the $B_{p,q}^\beta $ norm of $\varphi $ appear, this leads to consider from \eqref{YOUNG} a norm of regularity  $-2\beta $ for  the gradient of the heat kernel $ $
%
In \eqref{QUI_SERT_TOUT_LE_TEMPS_EN_BESOV}, the parameter $\rho $ is somehow technical (needed for product rules in Besov spaces) and does not give any contribution once the singularities in the r.h.s. are integrated in time (provided the exponents give integrable contributions).  Similarly the contribution of the distributional regularity is of order $2\beta/\alpha $ once the r.h.s. is integrated in time. This roughly comes from Young inequalities \eqref{YOUNG}
and the fact that we aim at  making the $B_{p,q}^\beta $ norm of $\varphi $ appear. The other time singularities appearing are similar to those of the {Lebesgue} case (integrability of the stable heat kernel and intrinsic time singularity for the gradient).\\
Similar arguments give the second bound, which is somehow similar to the first one, where the regularity index as been replaced by   $-\beta+\gamma-\varepsilon $ taking $ \rho=-\beta-\gamma-\varepsilon/2$ which is indeed bigger.
%\textcolor{black}{De S.: je trouve la deuxième borne étrange...}
\end{rem}

\subsection{Proof of Theorem \ref{THM_BESOV}}
The  error associated with the scheme \eqref{euler-scheme-besov} and the test function $\varphi $ writes for $t\ge 2h$:
    \begin{align}
\mathcal{E}(\varphi,t,x,h)=&\langle (\Gamma^h-\Gamma)(0,x,\cdot,\cdot),\varphi\rangle_{B_{p',q'}^{-\beta}, B_{p,q}^{\beta}}\notag\\	 
=   &\int_{}\big(\Gamma^h(0,x,t,y)-\Gamma(0,x,t,y)\big) \varphi(y) d y\notag\\
=      &    \int_h^{\tau_t^h-h} \E_{0,x} \bigg[ {\mathfrak b}_h(s,X_{\tau_s^h})\cdot \nabla P_{t-s}^\alpha \varphi(X_{\tau_s^h})
        -{\mathfrak b}_h(s,X_{\tau_s^h}^h)\cdot \nabla  P_{t-s}^\alpha \varphi(X_{\tau_s^h}^h) \bigg] d s\notag\\%Gronwall
&   + \int_h^{\tau_t^h-h} \E_{0,x} \left[ b(s,X_s)\cdot \nabla P_{t-s}^\alpha \varphi(X_s)-b(s,X_{\tau_s^h})\cdot \nabla  P_{t-s}^\alpha \varphi (X_{\tau_s^h})
        \right] d s\notag\\% Sensi temps dens
                &  + \int_h^{\tau_t^h-h} \E_{0,x} \left[ b(s,X_{\tau_s^h})\cdot \nabla P^\alpha_{t-s}\varphi(X_{\tau_s^h})-%P_{s-\tau_s^h}^\alpha{b}
	\mathfrak b_h(s,X_{\tau_s^h})\cdot \nabla  P^\alpha_{t-s}\varphi(X_{\tau_s^h})\right] d s \notag\\ %% Approxi du drift
                %+ \int_h^{\tau_t^h-h} \E_{0,x} \left[b(U_{\lfloor s/h\rfloor},X_{\tau_s^h}^h)\cdot \left( \nabla P_{t-s}^\alpha \varphi(X_{\tau_s^h}^h)-\nabla  P_{t-s}^\alpha \varphi(X_s^h)\right)\right]d s\notag\\%Sensi temps Euler
		&  +\int_h^{\tau_t^h-h} \E_{0,x} \left[\mathfrak b_h(s,X_{\tau_s^h}^h)\cdot \left( \nabla P_{t-s}^\alpha\varphi(X_{\tau_s^h}^h)-\nabla  P_{t-s}^\alpha \varphi(X_s^h)\right)\right]d s\notag\\%Sensi temps HK
		&+\int_0^h \E_{0,x} \left[ b(s,X_s)\cdot \nabla P_{t-s}^\alpha \varphi(X_s)-{\mathfrak b}_h(s,{x})\cdot\nabla P_{t-s}^\alpha \varphi(X_s^h) \right] d s\notag\\% First time step
  		&+\int_{\tau_t^h-h}^t \E_{0,x} \left[ b(s,X_s)\cdot \nabla P_{t-s}^\alpha \varphi(X_s)-{\mathfrak b}_h(s,X_{\tau_s^h}^h)\cdot\nabla P_{ t-s}^\alpha\varphi (X_s^h)  \right] d s\notag\\% Last time step
        =:&\Delta^1 + \Delta^2  + \Delta^6 + \Delta^3+\Delta^4 +\Delta^5.\label{DECOUP_ERR_BESOV}
    \end{align}
    Note that the terms are not numbered in increasing order to match as much as possible the numbering in \eqref{decomperrt} for Lebesgue drift {coefficients}. Still, the contribution $\Delta^3_t(y)$ of the time randomization  in \eqref{decomperrt} and the first time step contribution $\Delta^3$ are of different nature. Also note that  we assume that $t\ge 2h$ to slightly simplify the argument but could deal with the case $t\in [0,2h)$ like we did in the proof of Theorem \ref{thmsinglebdrift} when the drift belongs to Lebesgue spaces.

In the current section, when we refer to a term $(\Delta^i)_{i\in \{1,\cdots,6\}} $, it will be the one defined in \eqref{DECOUP_ERR_BESOV}.

Let us start with the term $\Delta^4 $. Write:
\begin{align}
|\Delta^4|{\lesssim}&\int_0^h  \Big|\int \Gamma(0,x,s,z)b(s,z) \nabla P_{t-s}^\alpha \varphi(z)dz\Big| ds+\int_0^h\|{\mathfrak b}_h{(s,\cdot)}\|_{L^\infty} \|\nabla P_{t-s}^\alpha \varphi(t-s,\cdot)\|_{L^\infty}  d s\notag\\
\lesssim& \int_0^h \|b(s,\cdot)\|_{B_{p,q}^\beta}\|\Gamma(0,x,s,\cdot)\nabla P_{t-s}\varphi\|_{B_{p',q'}^{-\beta}}ds+\int_0^h (s-\tau_s^h)^{-\frac d{\alpha p}+\frac\beta\alpha}\|b(s,\cdot)\|_{B_{p,q}^\beta}\|\nabla p_\alpha(t-s,\cdot)\|_{B_{p',q'}^{-\beta}} \|\varphi\|_{B_{p,q}^{\beta}}ds\notag\\
\lesssim& \Big(\int_0^h \|b(s,\cdot)\|_{B_{p,q}^\beta}\Big[t^{\frac{\beta+\rho}{\alpha}}\Big(s^{-(\frac \rho\alpha+\frac{d}{p\alpha})}(t-s)^{-\frac 1\alpha+\frac \beta\alpha-\frac d{p\alpha}}+s^{-\frac d{\alpha p}}(t-s)^{-\frac 1\alpha+\frac{\beta}{\alpha}-\frac \rho\alpha-\frac d{p\alpha}}\Big)\Big]ds\notag\\
&+\int_0^h (s-\tau_s^h)^{-\frac d{\alpha p}+\frac\beta\alpha}\|b(s,\cdot)\|_{B_{p,q}^\beta}(t-s)^{-\frac 1\alpha+\frac \beta\alpha-\frac{d}{p\alpha}}ds\Big) \|\varphi\|_{B_{p,q}^{\beta}}\notag,
\end{align}
using %\eqref{serrin}
{\eqref{CTR_PONCTUEL_BH}, \eqref{YOUNG}} and \eqref{SING_STABLE_HK} as well as \eqref{QUI_SERT_TOUT_LE_TEMPS_EN_BESOV} {with $\rho\in(-\beta,-\beta+\gamma)$} for the last inequality.
Then,
\begin{align}
|\Delta^4|\lesssim%& \Big(\int_0^h \|b(s,\cdot)\|_{B_{p,q}^\beta}\Big[t^{\frac{\beta+\rho}{\alpha}}\Big(s^{-(\frac \rho\alpha+\frac{d}{p\alpha})}(t-s)^{-\frac 1\alpha+\frac \beta\alpha-\frac d{p\alpha}}+s^{-\frac d{\alpha p}}(t-s)^{-\frac 1\alpha+\frac{\beta}{\alpha}-\frac \rho\alpha-\frac d{p\alpha}}\Big)\Big]ds\notag\\
%&+\int_0^h (s-\tau_s^h)^{-\frac d{\alpha p}+\frac\beta\alpha}\|b(s,\cdot)\|_{B_{p,q}^\beta}(t-s)^{-\frac 1\alpha+\frac \beta\alpha-\frac{d}{p\alpha}}ds\Big) \|\varphi\|_{B_{p,q}^{\beta}}\notag\\
%\lesssim
&\|\varphi\|_{B_{p,q}^{\beta}}\|b\|_{L^\vartheta-B_{p,q}^\beta}\Big(t^{\frac{\beta+\rho}{\alpha}}\Big[\Big(\int_0^h s^{-(\frac \rho\alpha+\frac{d}{p\alpha})\vartheta'} ds\Big)^{\frac{1}{\vartheta'}} t^{-\frac 1\alpha+\frac{\beta}{\alpha}-\frac d{p\alpha}}+\Big(\int_0^h s^{-\frac{d}{p\alpha}\vartheta'} ds\Big)^{\frac 1{\vartheta'}} t^{-\frac 1\alpha+\frac{\beta}{\alpha}-\frac \rho\alpha-\frac d{p\alpha}}\Big]\notag\\
&+\Big(\int_0^h (s-\tau_s^h)^{(-\frac d{\alpha p}+\frac\beta\alpha)\vartheta'}ds\Big)^{\frac{1}{\vartheta'}} t^{-\frac 1\alpha+\frac \beta\alpha-\frac{d}{p\alpha}}\Big)\notag\\
\lesssim&\|\varphi\|_{B_{p,q}^{\beta}}\|b\|_{L^\vartheta-B_{p,q}^\beta}\Big(t^{\frac{2\beta+\rho}{\alpha}-\left(\frac{1+\frac dp}{\alpha}\right)}h^{1-\frac{1}{\alpha}(\rho+\frac dp+\frac{\alpha}{\vartheta})}+t^{\frac{2\beta}{\alpha}-\left(\frac{1+\frac dp}{\alpha}\right)}h^{1-\frac{1}{\alpha}(\frac dp+\frac{\alpha}{\vartheta})}+t^{\frac{{\beta}}{\alpha}-\left(\frac{1+\frac dp}{\alpha}\right)}h^{1-\frac{1}{\alpha}(-\beta+\frac dp+\frac{\alpha}{\vartheta})}\Big)\notag\\
\lesssim&\|\varphi\|_{B_{p,q}^{\beta}}\|b\|_{L^\vartheta-B_{p,q}^\beta}t^{\frac{2\beta+\rho}{\alpha}-\left(\frac{1+\frac dp}{\alpha}\right)}h^{1-\frac{1}{\alpha}(\rho+\frac dp+\frac{\alpha}{\vartheta})}\lesssim \|\varphi\|_{B_{p,q}^{\beta}}\|b\|_{L^\vartheta-B_{p,q}^\beta}t^{\frac{{2\beta+\rho}}{\alpha}-\left(\frac{1+\frac dp}{\alpha}\right)}h^{\frac \gamma \alpha}h^{\frac 1\alpha-\frac{1}{\alpha}(\rho+2\beta)}\notag\\
\lesssim &\|\varphi\|_{B_{p,q}^{\beta}}\|b\|_{L^\vartheta-B_{p,q}^\beta}t^{-\frac d{\alpha p}}h^{\frac \gamma \alpha}\lesssim \|\varphi\|_{B_{p,q}^{\beta}}\|b\|_{L^\vartheta-B_{p,q}^\beta}t^{\frac{\varepsilon-\frac d{ p}}{\alpha}}h^{\frac {\gamma -\varepsilon}\alpha},\label{CTR_DELTA_5_BESOV}
\end{align}
{recalling $\gamma=\alpha-1+2\beta-\frac dp-\frac{\alpha}\vartheta $ and choosing $ \rho\in(-\beta,-\beta+\gamma)$ s.t. $2\beta+\rho<0 $ for the last but one inequality. Note that this is e.g. possible for $\rho=-\beta+\varepsilon $ for $\varepsilon>0 $ small enough.}
For $\Delta^5 $ write:
\begin{align}
|\Delta^5|=&\int_{\tau_t^h-h}^t  \Big|\int \Gamma(0,x,s,z)b(s,z) \nabla P_{t-s}^\alpha \varphi(z)dz\Big| ds\notag\\
&+\int_{\tau_t^h-h}^t\Big|\int \Gamma^h(0,x,\tau_s^h,z)\mathfrak b_h(s,z) \nabla P_{t-{\tau_s^h}}^\alpha \varphi(z+\int_{\tau_s^h}^s\mathfrak b_h(r,z)dr)dz\Big|  d s\notag,
\end{align}
{where for the last inequality we remark that, similarly to \eqref{CONDITIONING_FOR_EULER_SCHEME} in the Hölder setting,
\begin{align}
\E_{0,x} \left[ {\mathfrak b}_h(s,X_{\tau_s^h}^h)\cdot\nabla P_{ t-s}^\alpha\varphi (X_s^h)  \right]
&=\E_{0,x} \left[{\mathfrak b}_h(s,X_{\tau_s^h}^h)\cdot \E[\nabla P_{ t-s}^\alpha\varphi (X_s^h) |{X_{\tau_s^h}^h}] \right]\notag\\
&=\E_{0,x} \left[{\mathfrak b}_h(s,X_{\tau_s^h}^h)\cdot \nabla P_{t-{\tau_s^h}}^\alpha\varphi\Big(X_{\tau_s^h}^h+\int_{\tau_s^h}^s\mathfrak b_h(r,X_{\tau_s^h}^h)dr\Big)\right].\label{condeul}
\end{align}
The last equality can be viewed as a consequence of Itô's formula (harmonicity of the stable heat kernel). 
%Namely, similarly to \eqref{CONDITIONING_FOR_EULER_SCHEME} in the Hölder settting, it holds that
%\begin{align}\label{CONDITIONING_FOR_EULER_SCHEME_BESOV}
%\E[\nabla P_{t-s}^\alpha\varphi(X_s^h)|X_{{\tau_s}^h}^h]=\nabla P_{t-{\tau_s^h}}^\alpha\varphi(X_{\tau_s^h}^h+\int_{\tau_s^h}^sb(r,X_{\tau_s^h}^h)dr),
%\end{align}
}
The difference with $\Delta^4 $ is now that {we do not take} the supremum norm of the approximating drift which would have led to a too coarse control because of the appearance of the Sobolev exponent$-d/p \alpha $ exponent twice, i.e. in both the $s$ and $t-s $ variables (see control \eqref{QUI_SERT_TOUT_LE_TEMPS_EN_BESOV}) {but use duality instead. From \eqref{dual-ineq}, we get:} 
\begin{align}
|\Delta^5|\lesssim& \int_{\tau_t^h-h}^t\|b(s,\cdot)\|_{B_{p,q}^\beta}\|\Gamma(0,x,s,\cdot)\nabla P_{t-s}\varphi\|_{B_{p',q'}^{-\beta}}ds\notag\\
&+\int_{\tau_t^h-h}^t \|\mathfrak b_h(s,\cdot)\|_{B_{p,q}^\beta}\|\Gamma^h(0,x,{\tau^h_s},\cdot)\nabla P^\alpha_{t-\tau_s^h}\varphi(\cdot+\int_{\tau_s^h}^s \mathfrak b_h(r,\cdot)dr )\|_{B_{p',q'}^{-\beta}}ds.\notag
\end{align}
Using now \eqref{QUI_SERT_TOUT_LE_TEMPS_EN_BESOV}, \eqref{QUI_SERT_TOUT_LE_TEMPS_EN_BESOV_DRIFTED_SCHEME} and \eqref{CTR_BESOV_BH}, one derives for $\rho\in (-\beta,-\beta+\gamma) $
\begin{align}
&|\Delta^5|\notag\\
\lesssim& \Big(\int_{\tau_t^h-h}^t (\|b(s,\cdot)\|_{B_{p,q}^\beta}+\|\mathfrak b_h(s,\cdot)\|_{B_{p,q}^\beta})\Big[t^{\frac{\beta+\rho}{\alpha}}\Big(s^{-(\frac \rho\alpha+\frac{d}{p\alpha})}(t-s)^{-\frac 1\alpha+\frac \beta\alpha-\frac d{p\alpha}}+s^{-\frac d{\alpha p}}(t-s)^{-\frac 1\alpha+\frac{\beta}{\alpha}-\frac \rho\alpha-\frac d{p\alpha}}\Big)\Big]ds%\notag\\
%&+\int_{\tau_t^h-h}^t %(s-\tau_s^h)^{-\frac d{\alpha p}+\frac\beta\alpha}
%(t-s)^{-\frac 1\alpha+\frac \beta\alpha-\frac{d}{p\alpha}}ds
\Big)\notag \\
&\times\|\varphi\|_{B_{p,q}^{\beta}}\notag\\
\lesssim& \|\varphi\|_{B_{p,q}^{\beta}}\|b\|_{L^\vartheta-B_{p,q}^\beta}\Big( t^{\frac{\beta-\frac{d}{p}}{\alpha}}\Big(\int_{\tau_t^h-h}^t (t-s)^{\vartheta'(-\frac 1\alpha+\frac \beta\alpha-\frac d{p\alpha})}ds\Big)^{\frac 1{\vartheta'}}+t^{\frac{\beta+\rho-\frac{d}{p}}{\alpha}}\Big(\int_{\tau_t^h-h}^t (t-s)^{\vartheta'(-\frac 1\alpha+\frac \beta\alpha-\frac \rho\alpha-\frac d{p\alpha})}ds\Big)^{\frac 1{\vartheta'}}\notag\\
\lesssim &\|\varphi\|_{B_{p,q}^{\beta}}\|b\|_{L^\vartheta-B_{p,q}^\beta} t^{\frac{\beta+\rho-\frac dp}{\alpha}} h^{\frac{\gamma-\beta-\rho}{\alpha}}\lesssim  \|\varphi\|_{B_{p,q}^{\beta}}\|b\|_{L^\vartheta-B_{p,q}^\beta} t^{\frac{\varepsilon-\frac dp}{\alpha}} h^{\frac{\gamma-\varepsilon}{\alpha}} ,\label{CTR_DELTA_6_BESOV}
\end{align}
taking eventually $\rho=-\beta+\varepsilon,\ \varepsilon>0 $.

For the term $\Delta^2 $, we are led to consider the forward sensitivity in time for the density of the diffusion. Namely, using \eqref{dual-ineq} for the second inequality, then {the product rule} \eqref{PR} for the third, {we obtain}
\begin{align}
|\Delta^2|=&\Big|\int_h^{\tau_t^h-h} \E_{0,x} \left[ b(s,X_s)\cdot \nabla P_{t-s}^\alpha \varphi(X_s)-b(s,X_{\tau_s^h})\cdot \nabla  P_{t-s}^\alpha \varphi (X_{\tau_s^h})\right]\Big|\notag\\
&\lesssim \int_{h}^{\tau_t^h-h}\Big|\int_{\R^d} \big(\Gamma(0,x,s,y)-\Gamma(0,x,\tau_s^h,y)\big) b(s,y)\nabla P_{t-s}^\alpha \varphi(y) dy \Big| ds\notag\\
           &\lesssim \int_{h}^{\tau_t^h-h} \|\Gamma(0,x,s,\cdot)-\Gamma(0,x,\tau_s^h,\cdot)\|_{B_{p',q'}^{-\beta}}\|b(s,\cdot)\nabla P_{t-s}^\alpha \varphi\|_{B_{p,q}^{{\beta}}} ds\notag\\&\lesssim \int_{h}^{\tau_t^h-h} \|\Gamma(0,x,s,\cdot)-\Gamma(0,x,\tau_s^h,\cdot)\|_{B_{p',q'}^{-\beta}}\|b(s,\cdot)\|_{B_{p,q}^\beta}\|\nabla P_{t-s}^\alpha \varphi\|_{B_{\infty,\infty}^{-\beta+\frac{3\varepsilon} 4}} ds\notag.
\end{align}
{Since, from Proposition 2.2 in \cite{Sawano-18},
$
B^{\rho}_{p',q'}\hookrightarrow B^{-\beta}_{p',q'},\ \rho>-\beta$, we can then apply \eqref{holder-time-gamma_PP_QP_NO_NORM}  taking $\rho=-\beta+\varepsilon/4 $, which together with  \eqref{BD_RHO_SC_BESOV}, gives}\begin{align*}
|\Delta^2|&\lesssim \int_h^{\tau_t^h-h} \frac{h^{\frac{\gamma-\varepsilon}\alpha}}{(\tau_s^h)^{\frac{\gamma-\frac{3\varepsilon}4-\beta+\frac dp}{\alpha}}} \|b(s,\cdot)\|_{B_{p,q}^\beta} (t-s)^{-\frac{1}{\alpha}+\frac{2\beta}{\alpha}-\frac{3\varepsilon}{4 \alpha}-\frac{d}{p\alpha}} ds \|\varphi\|_{B_{p,q}^{\beta}}\notag\\
&\lesssim h^{\frac{\gamma-\varepsilon}\alpha}\|\varphi\|_{B_{p,q}^\beta} \|b\|_{L^\vartheta-B_{p,q}^\beta}\Big(\int_0^t s^{-(\frac{\gamma-\frac{3\varepsilon}4-\beta+\frac dp}{\alpha})\vartheta'}
(t-s)^{(-\frac{1}{\alpha}+\frac{2\beta}{\alpha}-\frac{3\varepsilon}{4\alpha}-\frac{d}{p\alpha})\vartheta'}ds\Big)^{\frac{1}{\vartheta'}}.\notag
 \end{align*}

  Recall now that 
\begin{align*}
(\frac{\gamma-\frac{3\varepsilon} 4-\beta+\frac dp}{\alpha})\vartheta'<1&\iff \alpha-\frac{\alpha}{\vartheta}>\gamma-\frac{3\varepsilon} 4-\beta+\frac dp
\iff 0>-1+ \beta-\frac{3\varepsilon} 4, 
\end{align*}
which is indeed always satisfied. {Also, $(-\frac{1}{\alpha}+\frac{2\beta}{\alpha}-\frac{3\varepsilon}{4\alpha}-\frac{d}{p\alpha})\vartheta'>-1\iff -1+2\beta- \frac{3\varepsilon} 4-\frac{d}{p}>-\alpha+\frac{\alpha}{\vartheta} \iff \gamma>\frac{3\varepsilon}4$ which can be satisfied for $\varepsilon $ small enough}. Integrating the time singularities, we then derive:
\begin{align}
|\Delta^2|&\lesssim h^{\frac{\gamma-\varepsilon}\alpha}\|\varphi\|_{B_{p,q}^\beta} \|b\|_{L^\vartheta-B_{p,q}^\beta} t^{-\frac{\gamma-\frac{3\varepsilon} 4-\beta+\frac dp}{\alpha} +\frac{\gamma-\frac{3\varepsilon} 4}{\alpha}}=h^{\frac{\gamma-\varepsilon}\alpha}\|\varphi\|_{B_{p,q}^\beta} \|b\|_{L^\vartheta-B_{p,q}^\beta} t^{\frac{\beta-\frac dp}{\alpha}}. \label{CTR_DELTA_2_BESOV}
\end{align}
Let us turn to $\Delta^6 $ and write:
\begin{align*}
|\Delta^6|=&\Big|\int_h^{\tau_t^h-h} \E_{0,x} \left[ b(s,X_{\tau_s^h})\cdot \nabla P^\alpha_{t-s}\varphi(X_{\tau_s^h})-
	\mathfrak b_h(s,X_{\tau_s^h})\cdot \nabla  P^\alpha_{t-s}\varphi(X_{\tau_s^h})\right] d s\Big|\notag\\
=&\int_h^{\tau_t^h-h} \Big|\int_{\R^d} \big(b(s,y)-\mathfrak b_h(s,y)\big)\Big(\Gamma(0,x,\tau_s^h,y)\nabla  P^\alpha_{t-s}\varphi(y) \Big)	dy\Big|ds\notag\\
\lesssim & \int_h^{\tau_t^h-h} \|(b-\mathfrak b_h)(s,\cdot)\|_{B_{p,q}^{\beta-\gamma+\varepsilon}}\|\Gamma(0,x,\tau_s^h,\cdot)\nabla  P^\alpha_{t-s}\varphi\|_{B_{p',q'}^{-\beta+\gamma-\varepsilon}} ds\notag\\
\lesssim& h^{\frac{\gamma-\varepsilon}\alpha}\|b\|_{L^\vartheta-B_{p,q}^\beta}\Big(\int_h^{\tau_t^h-h}
\Big[t^{\frac{\varepsilon}{2\alpha}}\Big(s^{-(\frac {-\beta+\gamma-\frac \varepsilon 2}\alpha+\frac{d}{p\alpha})}(t-s)^{-\frac 1\alpha+\frac \beta\alpha-\frac d{p\alpha}}+s^{-\frac d{\alpha p}}(t-s)^{-\frac 1\alpha+\frac{2\beta}{\alpha}-\frac{\gamma-\frac \varepsilon2}\alpha-\frac d{p\alpha}}\Big)\Big]^{\vartheta'} ds\Big)^{\frac 1{\vartheta'}}\notag\\
&\times \|\varphi\|_{B_{p,q}^{\beta}},
\end{align*}
using \eqref{APPROX_DRIFT_ERR} {with $\tilde \gamma=\gamma-\varepsilon $}  and \eqref{QUI_SERT_TOUT_LE_TEMPS_EN_BESOV_FOR_DELTA_3} for the last inequality. {The singularities are integrable according to the above discussion for $\Delta^2$.} Eventually, recalling {again} that $\gamma=\alpha-1+2\beta-\frac dp-\frac{\alpha}{\vartheta} $ we get:
\begin{align}\label{CTR_DELTA_3_BESOV}
|\Delta^6|\lesssim h^{\frac{\gamma-\varepsilon}\alpha}\|b\|_{L^\vartheta-B_{p,q}^\beta}\|\varphi\|_{B_{p,q}^{\beta}} {t^{\frac \varepsilon \alpha-\frac d{p\alpha}}}.
\end{align}
It remains to handle $\Delta^3 $. Namely, {by \eqref{condeul},}
\begin{align}
|\Delta^3|=&\Big|\int_h^{\tau_t^h-h} \E_{0,x} \left[\mathfrak b_h(s,X_{\tau_s^h}^h)\cdot \left( \nabla P_{t-s}^\alpha\varphi(X_{\tau_s^h}^h)-\nabla  P_{t-s}^\alpha \varphi(X_s^h)\right)\right]d s\Big|\notag\\
\lesssim& \int_h^{\tau_t^h-h}\Big|\int_{\R^d} \mathfrak b_h(s,y)\Big(\Gamma^h(0,x,\tau_s^h,y)\Big[ \nabla P_{t-s}^\alpha\varphi(y)-\nabla P_{t-\tau_s^h}^\alpha \varphi(y+\int_{\tau_s^h}^s\mathfrak b_h(r,y) dr )\Big]\Big)dy \Big|ds \notag\\
\lesssim& \int_h^{\tau_t^h-h} \|\mathfrak b_h(s,\cdot)\|_{B_{p,q}^\beta} \|\Gamma^h(0,x,\tau_s^h,\cdot)\Big[ \nabla P_{t-s}^\alpha\varphi(\cdot)-\nabla P_{t-\tau_s^h}^\alpha \varphi(\cdot+\int_{\tau_s^h}^s\mathfrak b_h(r,\cdot) dr )\Big]\|_{B_{p',q'}^{-\beta}}ds\notag\\
\lesssim & h^\frac{\gamma-\varepsilon}{\alpha}t^{\frac{\beta+\rho}{\alpha}} \| b\|_{L^\vartheta-B_{p,q}^\beta}\Big(\int_h^{\tau_t^h-h}\Big[\Big(s^{-(\frac \rho\alpha+\frac{d}{p\alpha})}(t-s)^{{-\frac{\gamma}{\alpha}+\frac \varepsilon\alpha}-\frac 1\alpha+\frac \beta\alpha-\frac d{p\alpha}}
				+s^{-\frac d{\alpha p}}(t-s)^{{-\frac{\gamma}{\alpha}+\frac \varepsilon\alpha}-\frac 1\alpha+\frac{\beta}{\alpha}-\frac \rho\alpha-\frac d{p\alpha}}\Big)\Big]^{\vartheta'}ds\Big)^{\frac{1}{\vartheta'}}\notag\\
&\times				\|\varphi\|_{B_{p,q}^\beta}\notag\\
\lesssim & h^\frac{\gamma-\varepsilon}{\alpha}t^{\frac{{\varepsilon-\frac{d}{p}}}{\alpha}} \| b\|_{L^\vartheta-B_{p,q}^\beta} \|\varphi\|_{B_{p,q}^\beta},				\label{CTR_DELTA_4_BESOV}
\end{align}
{using \eqref{QUI_SERT_TOUT_LE_TEMPS_EN_BESOV_DRIFTED_SCHEME_TIME_SENSI} for the last but one inequality} and taking $\rho=-\beta+{\varepsilon/2}$ {to ensure that $(-\frac{\gamma}{\alpha}+\frac \varepsilon\alpha-\frac 1\alpha+\frac{\beta}{\alpha}-\frac \rho\alpha-\frac d{p\alpha})\vartheta'>-1$} for the last inequality.

Let us finally consider the Gronwall type term $\Delta^1 $. Using \eqref{dual-ineq} for the second inequality, then \eqref{PR} for the third, \eqref{CTR_BESOV_BH} and \eqref{BD_RHO_SC_BESOV} for the fourth and H\"older's inequality for the last, we obtain
\begin{align*}
|\Delta^1|=& \left|\int_h^{\tau_t^h-h} \E_{0,x} \bigg[ {\mathfrak b}_h(s,X_{\tau_s^h})\cdot \nabla P_{t-s}^\alpha \varphi(X_{\tau_s^h})
        -{\mathfrak b}_h(s,X_{\tau_s^h}^h)\cdot \nabla  P_{t-s}^\alpha \varphi(X_{\tau_s^h}^h) \bigg] d s\right|\notag\\
        \lesssim&\int_{h}^{\tau_t^h-h}\Big|\int_{\R^d }(\Gamma-\Gamma^h)(0,x,\tau_s^h,y)\mathfrak b_h(s,y)\nabla P_{t-s}^\alpha \varphi(y) dy\Big| ds\notag\\
\lesssim &  \int_h^{\tau_t^h-h} \|\mathfrak b_h(s,\cdot)\nabla P_{t-s}^\alpha \varphi\|_{B_{p,q}^\beta}  \|(\Gamma-\Gamma^h)(0,x,\tau_s^h,\cdot)\|_{B_{p',q'}^{-\beta}}   ds\notag\\
\lesssim &  \int_h^{\tau_t^h-h} \|\mathfrak b_h(s,\cdot)\|_{B_{p,q}^\beta} \|\nabla P_{t-s}^\alpha \varphi\|_{B_{\infty,\infty}^{-\beta+\varepsilon}} \|(\Gamma-\Gamma^h)(0,x,\tau_s^h,\cdot)\|_{B_{p',q'}^{-\beta}}   ds\notag\\
\lesssim &  \int_h^{\tau_t^h-h} \|b(s,\cdot)\|_{B_{p,q}^\beta}  \|\varphi\|_{B_{p,q}^{\beta}}(t-s)^{-\frac{1}{\alpha}+\frac{2\beta}{\alpha}-\frac \varepsilon \alpha-\frac{d}{p\alpha}} \|(\Gamma-\Gamma^h)(0,x,\tau_s^h,\cdot)\|_{B_{p',q'}^{-\beta}}   ds,\notag
\\\lesssim &  \|\varphi\|_{B_{p,q}^\beta}\|b\|_{L^\vartheta-B_{p,q}^\beta}\Big(\int_{0}^{t} \|(\Gamma-\Gamma^h)(0,x,\tau_s^h,\cdot)\|^{\vartheta'}_{B_{p',q'}^{-\beta}}(t-s)^{(-\frac{1}{\alpha}+\frac{2\beta}{\alpha}-\frac \varepsilon \alpha-\frac{d}{p\alpha})\vartheta'}{ds}\Big)^{\frac{1}{\vartheta'}}.\end{align*}
Combining \eqref{CTR_DELTA_5_BESOV}, \eqref{CTR_DELTA_6_BESOV}, \eqref{CTR_DELTA_2_BESOV}, \eqref{CTR_DELTA_3_BESOV} and \eqref{CTR_DELTA_4_BESOV}, we obtain \begin{equation*}
   \sum_{i=2}^6|\Delta^i|\le C_{(2-6)}^{\frac 1{\vartheta'}} h^{\frac{\gamma-\varepsilon}\alpha}\|\varphi\|_{B_{p,q}^\beta} \|b\|_{L^\vartheta-B_{p,q}^\beta} t^{\frac{\beta-\frac dp}{\alpha}}.
 \end{equation*}
 We deduce that the function $f(t)=\left(t^{\frac{\frac dp-\beta}{\alpha}}\|\Gamma(0,x,t,\cdot)-\Gamma^h(0,x,t,\cdot)\|_{B_{p',q'}^{-\beta}}\right)^{\vartheta'}$ satisfies $$\forall t\in[0,T],\;f(t)\le C_{(2-6)}h^{\frac{\gamma-\varepsilon}\alpha\times \vartheta'}+Ct^{\frac{\frac dp-\beta}{\alpha}\times \vartheta'}\int_{h}^{h\vee t}f(\tau^h_s)s^{-\frac{\frac dp-\beta}{\alpha}\times \vartheta'}(t-s)^{(-\frac{1}{\alpha}+\frac{2\beta}{\alpha}-\frac \varepsilon \alpha-\frac{d}{p\alpha})\vartheta'}ds.$$
 We are next going to check that $f$ is bounded on $(0,T]$.
Since $(\frac{1}{\alpha}-\frac{2\beta}{\alpha}-\frac \varepsilon \alpha+\frac{d}{p\alpha})\vartheta'<1$ when $\varepsilon>0$ is small and  $\frac{\frac dp-\beta}{\alpha}\times\vartheta'<1$, we conclude the proof of Theorem \ref{THM_BESOV} with
Lemma \ref{lemgron}.
 
We justify here that the quantity $\left(t^{\frac{\frac dp-\beta}{\alpha}}\|\Gamma(0,x,t,\cdot)-\Gamma^h(0,x,t,\cdot)\|_{B_{p',q'}^{-\beta}}\right)$ is indeed bounded on $(0,T] $. The finiteness readily follows from the bound
\begin{align}\label{PREAL_BD_FOR_GRNW_BESOV}
\|\Gamma(0,x,t,\cdot)\|_{B_{p',q'}^{-\beta}}+\|\Gamma^h(0,x,t,\cdot)\|_{B_{p',q'}^{-\beta}} \lesssim t^{\frac{-\frac dp+\beta}{\alpha}},
\end{align}
that we are going to prove from the definition of the Besov norm in \eqref{HEAT_CAR} and the controls \eqref{aronson-gammah}, {\eqref{holder-forward-gammah}}, \eqref{aronson-gamma} and {\eqref{holder-forward-gamma}}
 for the scheme and the diffusion respectively. Since the invoked controls are exactly the same for the scheme and the diffusion, we will only provide the proof for the diffusion.
 
 Recall $\|\Gamma(0,x,t,\cdot)\|_{B_{p',q'}^{-\beta}}=\|{\mathcal F}^{-1} (\phi) \star \Gamma(0,x,t,\cdot) \|_{L^{p'}}+\mathcal T_{p',q'}^{-\beta}(\Gamma(0,x,t,\cdot))\lesssim C+\mathcal T_{p',q'}^{-\beta}(\Gamma(0,x,t,\cdot))$,
 using the $L^{p'}-L^1 $ convolution inequality to control the non-thermic part. Let us now deal with the case $q\neq 1 $, the case $q=1$ being easier:
\begin{align}
\Big[\mathcal T_{p',q'}^{-\beta}\Big(\Gamma(0,x,t,\cdot)\Big)\Big]^{q'}=
&\int_0^{t} \frac{dv}{v}v^{(1+\frac{\beta}{\alpha}) q'}\|\partial_v p_\alpha(v,\cdot)\star \Gamma(0,x,t,\cdot)\|_{L^{p'}}^{q'}\notag\\
&+\int_{t}^T \frac{dv}{v}v^{(1+\frac{\beta}{\alpha}) q'}\|\partial_v p_\alpha(v,\cdot)\star \Gamma(0,x,t,\cdot)\|_{L^{p'}}^{q'}\notag\\
=:&\big(\mathcal T_{1,p',q'}^{-\beta}+\mathcal T_{2,p',q'}^{-\beta}\big)(\Gamma(0,x,t,\cdot)),\label{CUT_FOR_BESOV_NORM_HK_FOR_GRN}
\end{align}
where we cut the integral associated with the \textit{thermic} variable according to the current time scale of the investigated term. For the upper-cut, we write from \eqref{aronson-gamma} and \eqref{estigradplu}:
\begin{align}\label{CTR_CUT_HIGH_HK_FOR_GRN}
\mathcal T_{2,p',q'}^{-\beta}(\Gamma(0,x,t,\cdot))&\le \int_{t}^{T}\frac{dv}{v} v^{(1+\frac{\beta}{\alpha})q'} \|\partial_{v} p_\alpha(v,\cdot)\|_{L^1}^{q'}\|\Gamma(0,x,t,\cdot) \|_{L^{p'}}^{q'}
\lesssim t^{(\frac{\beta}{\alpha}-\frac d{ p\alpha})q'}.
\end{align}
For the lower cut, we only deal with the case $p\neq 1$ since the case $p=1$ is easier:
\begin{align*}
\mathcal T_{1,p',q'}^{-\beta}(\Gamma(0,x,t,\cdot))&\le \int_0^{t}\frac{dv}{v} v^{(1+\frac{\beta}{\alpha})q'} \Big(\int_{\R^d} dz\Big|\int_{\R^d} \partial_{v}p_\alpha(v,z-y)\Big[\Gamma(0,x,t,y) 
-\Gamma(0,x,t,z)\Big] dy \Big|^{p'}\Big)^{\frac{q'}{p'}}\\
&\lesssim \int_0^{t}\frac{dv}{v} v^{\frac{\beta}{\alpha}q'} \Big(\int_{\R^d} dz\Big|\int_{\R^d} |\bar p_\alpha(v,z-y)|\frac{|z-y|^\rho}{t^{\frac \rho\alpha}}\Big[\bar p_\alpha(t,y-x)+ 
\bar p_\alpha(t,z-x)\Big] dy \Big|^{p'}\Big)^{\frac{q'}{p'}}
\\&{\lesssim t^{-\frac{\rho}{\alpha} q'}\int_0^{t}\frac{dv}{v} v^{\frac{\beta}{\alpha}q'} \Big(\int_{\R^d} dz\Big|\int_{\R^d} |\bar p_\alpha(v,z-y)||z-y|^\rho\bar p_\alpha(t,y-x) dy \Big|^{p'}}\\&\phantom{\lesssim t^{-\frac{\rho}{\alpha} q'}\int_0^{t}\frac{dv}{v} v^{\frac{\beta}{\alpha}q'} \Big(\int_{\R^d} dz}
  {+|\bar p_\alpha(t,z-x)|^{p'}\Big|\int_{\R^d} |\bar p_\alpha(v,z-y)||z-y|^\rho dy \Big|^{p'}\Big)^{\frac{q'}{p'}}}\end{align*}
where we used again a cancellation argument for the inner integral in the first inequality, \tr{\eqref{GD_BOUNDS} and} \eqref{holder-forward-gamma} for the second one with $\rho\in (-\beta,\gamma-\beta )$ and a convexity inequality for the third. {Using \eqref{drift-smoothing-iso-noise} and \eqref{estigradplu} for the second inequality, we derive:}
\begin{align}
\mathcal T_{1,p',q'}^{-\beta}(\Gamma(0,x,t,\cdot))&\lesssim t^{-\frac{\rho}{\alpha} q'}\int_0^{t}\frac{dv}{v} v^{\frac{\beta}{\alpha}q'}\Big(\|(\bar p_\alpha (v,\cdot)|\cdot|^{\rho})\star \bar p_{\alpha}(t,\cdot-x) \|_{L^{p'}} +\|\bar p_\alpha(t,\cdot-x)\|_{L^{p'}}\|\bar p_\alpha (v,\cdot)|\cdot|^{\rho}\|_{L^1}\Big)^{q'}\notag\\
&\lesssim t^{-(\frac{\rho}{\alpha}+\frac d{\alpha p} )q'}\int_0^{t}\frac{dv}{v} v^{\frac{(\beta+\rho)}{\alpha}q'}\lesssim t^{(\frac{\beta}{\alpha}-\frac d{\alpha p} )q'}.\label{esti:T1gam}
 \end{align}
 This last inequality and \eqref{CTR_CUT_HIGH_HK_FOR_GRN}, together with \eqref{CUT_FOR_BESOV_NORM_HK_FOR_GRN} and the former control for the non-thermic part of the norm give \eqref{holder-forward-gammah} for the diffusion. Repeating the previous arguments invoking \eqref{aronson-gammah} and \eqref{holder-forward-gammah} instead of \eqref{aronson-gamma}, \eqref{holder-forward-gamma} provides the result for the scheme.

\subsection{Technical results for a Besov drift} \label{SEC_TEC_BESOV}

\subsubsection{Proof of Proposition \ref{HK_W_BESOV}}\label{prHK_W_BESOV}
Let us start with \eqref{BD_INFTY_SC_BESOV}. From \eqref{EMBEDDING}, \eqref{YOUNG} and {\eqref{SING_STABLE_HK}}, one gets:
\begin{align*}
\|\nabla P_{t-s}^\alpha \varphi\|_{L^\infty}&\lesssim \|\nabla P_{t-s}^\alpha \varphi\|_{B_{\infty,1}^0}\lesssim \|\nabla p_\alpha(t-s,\cdot)\|_{B_{p',q'}^{-\beta}}\|\varphi\|_{B_{p,q}^\beta}\\
&\lesssim (t-s)^{-\frac 1\alpha+\frac{\beta}\alpha -\frac d{p\alpha}}\|\varphi\|_{B_{p,q}^\beta},
\end{align*}
{which} gives \eqref{BD_INFTY_SC_BESOV}. For the  claim \eqref{BD_RHO_SC_BESOV}, fix $\rho\in (-\beta,-\beta+\gamma) $. Write then  from \eqref{YOUNG} and \eqref{SING_STABLE_HK}:
\begin{align*}
\|{\partial_t^\theta\partial^{\mathbf a}} P_{t-s}^\alpha \varphi\|_{B_{\infty,\infty}^\rho}&\lesssim \|{\partial_t^\theta\partial^{\mathbf a}} p_\alpha(t-s,\cdot)\|_{B_{p',q'}^{-\beta+\rho}}\|\varphi\|_{B_{p,q}^\beta}
\lesssim (t-s)^{-{\theta+\frac{|\mathbf a |}\alpha}+\frac{\beta}\alpha-\frac \rho\alpha -\frac d{p\alpha}}\|\varphi\|_{B_{p,q}^\beta}.
\end{align*}
Let us now turn to the corresponding estimates involving the drift starting with \eqref{BD_INFTY_SC_BESOV_SCHEME}. 
Write from \eqref{EMBEDDING} and \eqref{YOUNG}:
\begin{align}
\|{\partial^{\mathbf a}}P_{t-\tau_s^h}^\alpha \varphi\big(\cdot+\int_{\tau_s^h}^s \mathfrak b_h(r,\cdot)dr\big)\|_{L^\infty}&\lesssim \|{\partial^{\mathbf a}} P_{t-\tau_s^h}^\alpha \varphi\big(\cdot+\int_{\tau_s^h}^s \mathfrak b_h(r,\cdot)dr\big)\|_{B_{\infty,1}^0}\notag\\
&\lesssim \|{\partial^{\mathbf a}} p_\alpha(t-\tau_s^h,\cdot+\int_{\tau_s^h}^s \mathfrak b_h(r,\cdot)dr)\|_{B_{p',q'}^{-\beta}}\|\varphi\|_{B_{p,q}^\beta}.\label{527_INIT}
\end{align}
{\color{black}{Let us now control the term $\|g_{\alpha,\mathfrak b_h}^{h,s,t}\|_{B_{p',q'}^{-\beta}}$ where $g_{\alpha,\mathfrak b_h}^{h,s,t}:={\partial^{\mathbf a}} p_\alpha(t-\tau_s^h,\cdot+\int_{\tau_s^h}^s \mathfrak b_h(r,\cdot)dr)$. Write:

\begin{align}
\|g_{\alpha,\mathfrak b_h}^{h,s,t}\|_{B_{p',q'}^{-\beta}}=&\|{\cal F}^{-1}(\phi) \star
g_{\alpha,\mathfrak b_h}^{h,s,t}
\|_{L^{p'}}+
\mathcal T_{p',q'}^{{-\beta}}(g_{\alpha,\mathfrak b_h}^{h,s,t})
\lesssim
\|{\cal F}^{-1}(\phi)\|_{L^{p'}}\|g_{\alpha,\mathfrak b_h}^{h,s,t}
\|_{L^{1}}+\mathcal T_{p',q'}^{{-\beta}}(g_{\alpha,\mathfrak b_h}^{h,s,t}).\label{TO_BE_RECALLED_LATER}
\end{align}
From \eqref{GD_BOUNDS} and \eqref{CTR_PONCTUEL_BH_INT}, for all $z\in \R^d $,
\begin{align}
|{\partial^{\mathbf a}} p_\alpha(t-\tau_s^h,z+\int_{\tau_s^h}^s \mathfrak b_h(r,\cdot)dr)|\le &\frac{C}{(t-\tau_s^h)^{\frac{d+|\mathbf a|}\alpha}}\frac{1}{\Big(2+\frac{|z+\int_{\tau_s^h}^s \mathfrak b_h(r,\cdot)dr|}{(t-\tau_s^h)^{\frac 1\alpha}}\Big)^{d+\alpha+|{\mathbf a}|}}\notag\\\le &\frac{C}{(t-\tau_s^h)^{\frac{d+|\mathbf a|}\alpha}}\frac{1}{\Big(2+\frac{|z+\int_{\tau_s^h}^s \mathfrak b_h(r,\cdot)dr|}{(t-\tau_s^h)^{\frac 1\alpha}}\Big)^{d+\alpha}}\notag\\
\le &\frac{C}{(t-\tau_s^h)^{\frac{d+|\mathbf a|}\alpha}}\frac{1}{\Big(2-\frac{C(s-\tau_s^h)^{\frac{\gamma}{\alpha} +\frac{1-\beta}\alpha}\|b\|_{L^\vartheta-B_{p,q}^\beta}}{(t-\tau_s^h)^{\frac 1\alpha}} +\frac{|z|}{(t-\tau_s^h)^{\frac 1\alpha}}\Big)^{d+\alpha}}\notag\\
\le &\frac{C}{(t-\tau_s^h)^{\frac{d+|\mathbf a|}\alpha}}\frac{1}{\Big(2-C(s-\tau_s^h)^{\frac{\gamma-\beta}{\alpha}}\|b\|_{L^\vartheta-B_{p,q}^\beta} +\frac{|z|}{(t-\tau_s^h)^{\frac 1\alpha}}\Big)^{d+\alpha}}\notag\\
\le &\frac{C}{(t-\tau_s^h)^{\frac{d+|\mathbf a|}\alpha}}\frac{1}{\Big(1+\frac{|z|}{(t-\tau_s^h)^{\frac 1\alpha}}\Big)^{d+\alpha}}\lesssim \frac{\bar p_\alpha(t-\tau_s^h,z)}{(t-\tau_s^h)^{\frac{|\mathbf a|}\alpha}}.
\label{DRIFT_CAN_BE_NEGLECTED_IN_BESOV}
\end{align}
Hence,
\begin{align}
\label{CTR_LPP_527}
\|g_{\alpha,\mathfrak b_h}^{h,s,t}\|_{L^1}\lesssim (t-\tau_s^h)^{-\frac{|\mathbf a|}\alpha}.
\end{align}
On the other hand, only dealing with the case $q\neq 1$ since the case $q=1$ is easier, 
we get
:
\begin{align}
\Big[\mathcal T_{p',q'}^{-\beta}\Big(g_{\alpha,\mathfrak b_h}^{h,s,t}
\Big)\Big]^{q'}=&\int_0^T \frac{dv}{v}v^{(1+\frac{\beta}{\alpha}) q'}\|\partial_v p_\alpha(v,\cdot)\star\partial^{\mathbf a} p_\alpha({t-\tau_s^h},\cdot+\int_{\tau_s^h}^s \mathfrak b_h(r,\cdot)dr)\|_{L^{p'}}^{q'}\notag\\
=&\int_0^{t-\tau_s^h} \frac{dv}{v}v^{(1+\frac{\beta}{\alpha}) q'}\|\partial_v p_\alpha(v,\cdot)\star \partial^{\mathbf a} p_\alpha({t-\tau_s^h},\cdot+\int_{\tau_s^h}^s \mathfrak b_h(r,\cdot)dr)\|_{L^{p'}}^{q'}\notag\\
&+\int_{t-\tau_s^h}^T \frac{dv}{v}v^{(1+\frac{\beta}{\alpha}) q'}\|\partial_v p_\alpha(v,\cdot)\star \partial^{\mathbf a} p_\alpha({t-\tau_s^h},\cdot+\int_{\tau_s^h}^s \mathfrak b_h(r,\cdot)dr)\|_{L^{p'}}^{q'}\notag\\
=:&\big(\mathcal T_{1,p',q'}^{-\beta}+\mathcal T_{2,p',q'}^{-\beta}\big)(g_{\alpha,\mathfrak b_h}^{h,s,t}),\label{CUT_FOR_BESOV_NORM_0}
\end{align}
where we cut the integral associated with the \textit{thermic} variable according to the current time scale of the investigated term. For the upper-cut, we write:
\begin{align}\label{CTR_CUT_HIGH_0}
\mathcal T_{2,p',q'}^{-\beta}(g_{\alpha,\mathfrak b_h}^{h,s,t})&\le \int_{t-\tau_s^h}^{T}\frac{dv}{v} v^{(1+\frac{\beta}{\alpha})q'} \|\partial_{v} p_\alpha(v,\cdot)\|_{L^1}^{q'}\|\partial^{\mathbf a} p_\alpha({t-\tau_s^h},\cdot+\int_{\tau_s^h}^s \mathfrak b_h(r,\cdot)dr) \|_{L^{p'}}^{q'}\notag\\
&\lesssim \Big( (t-\tau_s^h)^{\frac{\beta}{\alpha}-(\frac{|\mathbf a|}\alpha+\frac d{ p\alpha}) } \Big)^{q'},
\end{align}
where we used as well \eqref{DRIFT_CAN_BE_NEGLECTED_IN_BESOV} and \eqref{estigradplu} for the last inequality. For the lower cut, we only deal with the case $p\ne 1$ since the case $p=1$ is easier:
\begin{align*}
\mathcal T_{1,p',q'}^{-\beta}(g_{\alpha,\mathfrak b_h}^{h,s,t})&\le \int_0^{t-\tau_s^h}\frac{dv}{v} v^{(1+\frac{\beta}{\alpha})q'} \Big(\int_{\R^d} dz\Big|\int_{\R^d} \partial_{v}p_\alpha(v,z-y)\Big[\partial^{\mathbf a} p_\alpha({t-\tau_s^h},y+\int_{\tau_s^h}^s \mathfrak b_h(r,y)dr) \notag\\
&-\partial^{\mathbf a} p_\alpha({t-\tau_s^h},z+\int_{\tau_s^h}^s \mathfrak b_h(r,z)dr)\Big] dy \Big|^{p'}\Big)^{\frac{q'}{p'}},
\end{align*}
where we used again a cancellation argument for the inner integral.
Observe now that:
\begin{trivlist}
\item[-] If $|y-z|>(t-\tau_s^h)^{\frac 1\alpha} $ (off diagonal regime for the heat kernel at hand), then from \eqref{DRIFT_CAN_BE_NEGLECTED_IN_BESOV}, for $\rho>-\beta $,
\begin{align*}
&\Big|\partial^{\mathbf a} p_\alpha({t-\tau_s^h},y+\int_{\tau_s^h}^s \mathfrak b_h(r,y)dr) -\partial^{\mathbf a} p_\alpha({t-\tau_s^h},z+\int_{\tau_s^h}^s \mathfrak b_h(r,z)dr)\Big|\\
\le & \Big|\partial^{\mathbf a} p_{t-\tau_s^h}^\alpha(y+\int_{\tau_s^h}^s \mathfrak b_h(r,y)dr)\Big|+\Big|\partial^{\mathbf a} p_{t-\tau_s^hs}^\alpha(z+\int_{\tau_s^h}^s \mathfrak b_h(r,z)dr)\Big|\lesssim \frac{|z-y|^\rho}{(t-\tau_s^h)^{\frac{|\mathbf a |}\alpha+\frac{\rho}{\alpha}}}\Big(\bar p_\alpha(t-\tau_s^h,z)+\bar p_\alpha(t-\tau_s^h,y) \Big).
\end{align*}
\item[-] If now $|y-z|\le (t-\tau_s^h)^{\frac 1\alpha} $ (diagonal regime for the heat kernel at hand), we get from \eqref{GD_BOUNDS} and similarly to the derivation of \eqref{DRIFT_CAN_BE_NEGLECTED_IN_BESOV} {with \eqref{CTR_PONCTUEL_BH_INT} replaced by \eqref{CTR_PONCTUEL_BH_INT_HOLDER} that for $\rho\in (-\beta,-\beta+\gamma)$,}
\begin{align*}
&\Big|\partial^{\mathbf a} p_\alpha({t-\tau_s^h},y+\int_{\tau_s^h}^s \mathfrak b_h(r,y)dr) -\partial^{\mathbf a} p_\alpha({t-\tau_s^h},z+\int_{\tau_s^h}^s \mathfrak b_h(r,z)dr)\Big|\\
\le &\int_0^1 d\lambda \Big|\nabla \partial^{\mathbf a} p_\alpha({t-\tau_s^h},z+\int_{\tau_s^h}^s \mathfrak b_h(r,z)dr+\lambda \Big(y-z+\int_{\tau_s^h}^s (\mathfrak b_h(r,y)-\mathfrak b_h(r,z))dr\Big)\Big)\Big|\\
&\times   \Big(|y-z|+\Big|\int_{\tau_s^h}^s (\mathfrak b_h(r,y)-\mathfrak b_h(r,z))dr\Big|\Big)\\
\lesssim & \frac{\bar p_\alpha(t-\tau_s^h,z)}{(t-\tau_s^h)^{\frac{ |\mathbf a |+1}\alpha}}\Big(|y-z|+C|y-z|^\rho (s-\tau_s^h)^{\frac{\gamma}{\alpha}+\frac{1-\beta-\rho}{\alpha}}\|b\|_{L^\vartheta-B_{p,q}^\beta}\Big)\\
\lesssim & \frac{\bar p_\alpha(t-\tau_s^h,z)}{(t-\tau_s^h)^{\frac{|\mathbf a|}\alpha+\frac \rho\alpha }}|y-z|^\rho
\end{align*}
{recalling for the last inequality  that, since $\rho<\gamma-\beta<1$,}
\begin{align}\label{USEFUL_FOR_DRIFT_EFFECT}
\frac{(s-\tau_s^h)^{\frac{\gamma}{\alpha}+\frac{1-\beta-\rho}{\alpha}}}{(t-\tau_s^h)^{\frac{|\mathbf a|+1}\alpha}}=\frac{(s-\tau_s^h)^{\frac{\gamma-\beta}{\alpha}+\frac{1-\rho}{\alpha}}}{(t-\tau_s^h)^{\frac {|\mathbf a|+\rho}\alpha+\frac{1-\rho}{\alpha}}}
\le \frac{(s-\tau_s^h)^{\frac{\gamma-\beta}{\alpha}}}{(t-\tau_s^h)^{\frac {|\mathbf a |+\rho}\alpha} }.
\end{align}
\end{trivlist}
{Reasoning like in the above derivation of \eqref{esti:T1gam} and using \eqref{GD_BOUNDS}, we deduce that for $\rho\in (-\beta,{-\beta+\gamma}) $:}
\begin{align}\label{CTR_CUT_LOW_0}
\mathcal T_{1,p',q'}^{-\beta}(g_{\alpha,\mathfrak b_h}^{h,s,t})&\le \int_0^{t-\tau_s^h}\frac{dv}{v} v^{\frac{\beta}{\alpha}q'} \Big(\int_{\R^d} dz\Big[\int_{\R^d} \bar p_\alpha(v,z-y)\frac{|z-y|^\rho}{(t-\tau_s^h)^{\frac{|\mathbf a|+\rho}{\alpha}}}\Big[\bar p_\alpha({t-\tau_s^h},y)+\bar p_\alpha({t-\tau_s^h},z)\Big] dy \Big]^{p'}\Big)^{\frac{q'}{p'}}\notag\\
&\le \int_0^{t-\tau_s^h}\frac{dv}{v} v^{\frac{\beta+\rho}{\alpha}q'} (t-\tau_s^h)^{-(\frac{|\mathbf a|+\rho+\frac dp}\alpha)q'}\notag\\
&\le \Big((t-\tau_s^h)^{\frac{\beta+\rho}{\alpha}-\frac{|\mathbf a|+\rho+\frac dp}{\alpha}}\Big)^{q'}\lesssim \Big((t-\tau_s^h)^{\frac{\beta}{\alpha}-(\frac{|\mathbf a |}\alpha+\frac d{p\alpha})}\Big)^{q'}.
\end{align}
From \eqref{TO_BE_RECALLED_LATER}, \eqref{CTR_LPP_527}, \eqref{CUT_FOR_BESOV_NORM_0}, \eqref{CTR_CUT_HIGH_0} and \eqref{CTR_CUT_LOW_0}, we have established that:
\begin{align}
\|g_{\alpha,\mathfrak b_h}^{h,s,t}\|_{B_{p',q'}^{-\beta}}=\|\partial^{\mathbf a } p_\alpha(t-\tau_s^h,\cdot+\int_{\tau_s^h}^s \mathfrak b_h(r,\cdot)dr)\|_{B_{p',q'}^{-\beta}}\lesssim (t-\tau_s^h)^{\frac{\beta}{\alpha}-(\frac{|\mathbf a|}\alpha+\frac d{p\alpha})}. \label{NORME_BESOV_POUR_GERER_DRIFT_SHIFTE}
\end{align}
We thus derive from \eqref{527_INIT}: 
\begin{align*}
\|\partial^{\mathbf a } P^\alpha_{t-\tau_s^h}\varphi\big(\cdot+\int_{\tau_s^h}^s \mathfrak b_h(r,\cdot)dr\big)\|_{L^\infty}&\lesssim (t-\tau_s^h)^{-\frac {|\mathbf a|}\alpha+\frac{\beta}\alpha -\frac d{p\alpha}}\|\varphi\|_{B_{p,q}^\beta},
\end{align*}
which is precisely \eqref{BD_INFTY_SC_BESOV_SCHEME}.

For \eqref{BD_RHO_SC_BESOV_SCHEME}, we remark that the previous arguments yielding to \eqref{NORME_BESOV_POUR_GERER_DRIFT_SHIFTE}  can be reproduced replacing $-\beta $ by $-\beta+\rho $ and $\rho $ by $\tilde \rho\in (-\beta+\rho,1) $: for $\rho\in (-\beta,-\beta+\gamma) $ there is always a feasible $\tilde \rho $ since $-2\beta+\gamma<1$. Hence, \eqref{NORME_BESOV_POUR_GERER_DRIFT_SHIFTE} remains valid replacing $-\beta $ by $-\beta+\rho $.}  
\color{black}
This yields:
\begin{align*}
\|{\partial^{\mathbf a}} P_{t-\tau_s^h}^\alpha \varphi(\cdot+\int_{\tau_s^h}^s \mathfrak b_h(r,\cdot)dr )\|_{B_{\infty,\infty}^{\rho}}&\lesssim \|{\partial^{\mathbf a}} p_\alpha(t-\tau_s^h,\cdot+\int_{\tau_s^h}^s \mathfrak b_h(r,\cdot)dr)\|_{B_{p',q'}^{-\beta+\rho}}\|\varphi\|_{B_{p,q}^\beta}\notag\\
&\lesssim(t-\tau_s^h)^{-\frac{{|\mathbf a |}}\alpha+\frac \beta \alpha-\frac \rho\alpha-\frac d{p\alpha}}\|\varphi\|_{B_{p,q}^\beta}.%\label{}.
\end{align*}

Let us now turn to the proof of the estimates involving the time sensitivities, {for which we assume that $s\in [h,\tau_t^h-h] $}. Let us start with \eqref{BD_INFTY_SC_BESOV_TIME_SENSI}. Write for all $z\in \R^d $:
\begin{align*}
\Big|\nabla P_{t-s}^\alpha \varphi(z)-\nabla P_{t-\tau_s^h}^\alpha\varphi(z)\Big|=\Big|\int_0^1 d\lambda  \partial_{r_\lambda}\nabla P_{r_\lambda}^\alpha\varphi(z) |_{r_\lambda=t-(\tau_s^h+\lambda(s-\tau_s^h)) }\Big| (s-\tau_s^h)
\end{align*}
From \eqref{EMBEDDING}, \eqref{YOUNG} and \eqref{SING_STABLE_HK},  one derives:
\begin{align*}
\Big\|\nabla P_{t-s}^\alpha \varphi-\nabla P_{t-\tau_s^h}^\alpha\varphi\Big\|_{L^\infty}&\lesssim \Big\|\nabla P_{t-s}^\alpha \varphi-\nabla P_{t-\tau_s^h}^\alpha\varphi\Big\|_{B_{\infty,1}^0}\lesssim (s-\tau_s^h)\int_0^1 d\lambda \Big(  \|\partial_{r_\lambda}\nabla P_{r_\lambda}^\alpha\varphi\|_{B_{\infty,1}^0} \Big|_{r_\lambda=t-(\tau_s^h+\lambda(s-\tau_s^h)) }\Big) \\
&\lesssim  (s-\tau_s^h)\int_0^1 d\lambda   \Big(\|\partial_{r_\lambda}\nabla P_{r_\lambda}^\alpha\|_{B_{p',q'}^{-\beta}} \Big|_{r_\lambda=t-(\tau_s^h+\lambda(s-\tau_s^h)) }\Big) \|\varphi\|_{B_{p,q}^{\beta}}\\
&\lesssim  (s-\tau_s^h)\|\varphi\|_{B_{p,q}^{\beta}}\int_0^1 d\lambda r_\lambda^{-(1+\frac 1\alpha-\frac \beta\alpha +\frac d{p\alpha})}\le \frac{(s-\tau_s^h)}{t-s}\|\varphi\|_{B_{p,q}^{\beta}}(t-s)^{-\frac 1\alpha+\frac \beta\alpha -\frac d{p\alpha}}\\
&\lesssim \left(\frac{(s-\tau_s^h)}{t-s}\right)^\theta\|\varphi\|_{B_{p,q}^{\beta}}(t-s)^{-\frac 1\alpha+\frac \beta\alpha -\frac d{p\alpha}},
\end{align*}
for any $\theta\in [0,1] $,   recalling for the last inquality that, since $s\in [h,\tau_t^h-h],\ t-s\ge h\ge s-\tau_s^h $.

Equation \eqref{BD_RHO_SC_BESOV_TIME_SENSI} is derived similarly,
\begin{align*}
\Big\|\nabla P_{t-s}^\alpha \varphi-\nabla P_{t-\tau_s^h}^\alpha\varphi\Big\|_{B_{\infty,\infty}^\rho}&\lesssim (s-\tau_s^h)\int_0^1 d\lambda \Big(  \|\partial_{r_\lambda}\nabla P_{r_\lambda}^\alpha\varphi\|_{B_{\infty,\infty}^\rho} \Big|_{r_\lambda=t-(\tau_s^h+\lambda(s-\tau_s^h)) }\Big) \\
&\lesssim  (s-\tau_s^h)\int_0^1 d\lambda  \Big( \|\partial_{r_\lambda}\nabla P_{r_\lambda}^\alpha\|_{B_{p',q'}^{-\beta+\rho}} \Big|_{r_\lambda=t-(\tau_s^h+\lambda(s-\tau_s^h)) }\Big) \|\varphi\|_{B_{p,q}^{\beta}}\\
&\lesssim  (s-\tau_s^h)\|\varphi\|_{B_{p,q}^{\beta}}\int_0^1 d\lambda r_\lambda^{-(1+\frac 1\alpha-\frac \beta\alpha +\frac \rho\alpha+\frac d{p\alpha})}\le \frac{(s-\tau_s^h)}{t-s}\|\varphi\|_{B_{p,q}^{\beta}}(t-s)^{-\frac 1\alpha+\frac \beta\alpha -\frac \rho\alpha-\frac d{p\alpha}}\\
&\lesssim \left(\frac{(s-\tau_s^h)}{t-s}\right)^\theta\|\varphi\|_{B_{p,q}^{\beta}}(t-s)^{-\frac 1\alpha+\frac \beta\alpha-\frac{\rho}{\alpha} -\frac d{p\alpha}}.
\end{align*}
For \eqref{BD_INFTY_SC_BESOV_SPACE_SENSI}, we first write a spatial expansion. For all $y\in \R^d$:
{$$\nabla P_{t-s}^\alpha \varphi(y+z)-\nabla P_{t-\tau_s^h}^\alpha\varphi(y)=\int_0^1 d\lambda \nabla^2 P_{t-s}^\alpha \varphi(y+\lambda z) z. $$
Thus, from the same previous arguments (\eqref{EMBEDDING}, \eqref{YOUNG}, \eqref{SING_STABLE_HK}),
\begin{align*}
\|\nabla P_{t-\tau_s^h}^\alpha\varphi(\cdot+z) -\nabla P_{t-\tau_s^h}^\alpha \varphi(\cdot)\|_{L^\infty}&\lesssim \int_0^1 d\lambda \|\nabla^2  P_{t-\tau_s^h}^\alpha\varphi(\cdot+\lambda z) \|_{B_{\infty,1}^0}|z|\\
&\lesssim \int_0^1 d\lambda\|\nabla^2  p_\alpha(t-\tau_s^h,\cdot+\lambda z) \|_{B_{p',q'}^{-\beta}}\|\varphi\|_{B_{p,q}^{\beta}}|z|\\
&\lesssim(t-s)^{-\frac 1\alpha+\frac \beta \alpha-\frac d{p\alpha}}\|\varphi\|_{B_{p,q}^\beta}\frac{|z|}{(t-s)^{\frac 1\alpha}}.
 \end{align*}}
  The proof of Proposition \ref{HK_W_BESOV} is complete.

\subsubsection{{Proof of Proposition \ref{PROP_SENSI_WITH_SHIFTED_INDEX}}}\label{prpswsi}
{We are here concerned with the proofs of \eqref{QUI_SERT_TOUT_LE_TEMPS_EN_BESOV} and \eqref{QUI_SERT_TOUT_LE_TEMPS_EN_BESOV_FOR_DELTA_3}.}
We again proceed through the thermic characterization.  For notational simplicity we set for this paragraph:
$$\mathfrak q_x^{s,t} (\cdot):=\Gamma(0,x,s,\cdot) \nabla P_{t-s}^{\alpha}\varphi(\cdot).$$
For the non-thermic part one has from \eqref{HEAT_CAR}:
\begin{align}
\label{NON_THERMIC_PART}
\|{\mathcal F^{-1}} (\phi) \star\mathfrak q_x^{s,t} 
\|_{L^{p'}}\le& \|\ |{\mathcal F^{-1}(\phi)}|\star\Gamma(0,x,s,\cdot) \ \|_{L^{p'}}\|\nabla P_{t-s}^{\alpha}\varphi\|_{L^\infty}\notag\\
\lesssim & (t-s)^{-\frac{1-\beta+\frac dp}{\alpha}}\|\varphi\|_{B_{p,q}^{\beta}},
\end{align}
using an $L^{p'}-L^1$ convolution inequality (recall that ${\mathcal F^{-1} (\phi)} \in \mathcal S $) and \eqref{BD_INFTY_SC_BESOV} for the last inequality.

Let us now focus on the thermic part which gives the most singular contributions. Only dealing with the case $q\neq 1$ since the case $q=1$ is easier,  we have for $\delta\in \{0,1\} $ {using that $-\beta+\gamma<1$}:
\begin{align}
\Big[\mathcal T_{p',q'}^{-\beta+\delta(\gamma-\varepsilon)}\Big(\mathfrak q_x^{s,t}
\Big)\Big]^{q'}=&\int_0^t \frac{dv}{v}v^{(1+\frac{\beta-\delta(\gamma-\varepsilon)}{\alpha}) q'}\|\partial_v p_\alpha(v,\cdot)\star(\Gamma(0,x,s,\cdot) \nabla P_{t-s}^{\alpha}\varphi)\|_{L^{p'}}^{q'}\notag\\
&+\int_t^T \frac{dv}{v}v^{(1+\frac{\beta-\delta(\gamma-\varepsilon)}{\alpha}) q'}\|\partial_v p_\alpha(v,\cdot)\star(\Gamma(0,x,s,\cdot) \nabla P_{t-s}^{\alpha}\varphi)\|_{L^{p'}}^{q'}\notag\\
=:&\big(\mathcal T_{1,p',q'}^{-\beta+\delta(\gamma-\varepsilon)}+\mathcal T_{2,p',q'}^{-\beta+\delta(\gamma-\varepsilon)}\big)(\mathfrak q_x^{s,t}),\label{CUT_FOR_BESOV_NORM}
\end{align}
using as usual the lower and upper cut w.r.t. the corresponding time scale. Let us start with the upper cut for which:
\begin{align}\label{CTR_CUT_HIGH}
\mathcal T_{2,p',q'}^{-\beta+\delta(\gamma-\varepsilon)}(\mathfrak q_x^{s,t})&\le \int_{t}^{T}\frac{dv}{v} v^{(1+\frac{\beta-\delta(\gamma-\varepsilon)}{\alpha})q'} \|\partial_{v} p_\alpha(v,\cdot)\|_{L^1}^{q'}\|\Gamma(0,x,s,\cdot)\|_{L^{p'}}^{q'}\|\nabla P_{t-s}^\alpha \varphi\|_{L^\infty}^{q'}\notag\\
&\lesssim \Big(t^{\frac{\beta-\delta(\gamma-\varepsilon)} \alpha } s^{-\frac d{p \alpha}} (t-s)^{-\frac 1\alpha+\frac{\beta}{\alpha}-\frac d{ p\alpha} }\|\varphi\|_{B_{p,q}^\beta} \Big)^{q'},
\end{align}
{where we used \eqref{aronson-gamma}, \eqref{estigradplu} and \eqref{BD_INFTY_SC_BESOV} for the last inequality}.

Let us now deal with the lower cut. We only deal with the case $p\ne 1$ since the case $p=1$ is easier. {Combining \eqref{BD_INFTY_SC_BESOV_SPACE_SENSI} and \eqref{BD_INFTY_SC_BESOV}, we obtain that for $\rho\in[0,1]$,
$$\|\nabla P_{t-{\tau_s^h}}^\alpha \varphi{(\cdot+w)}-\nabla P_{t-\tau_s^h}^\alpha\varphi{(\cdot)} \|_{L^\infty}\lesssim (t-s)^{-\frac 1\alpha+\frac \beta \alpha-\frac \rho\alpha -\frac d{p\alpha}}|w|^\rho\|\varphi\|_{B_{p,q}^\beta}.$$ From usual cancellation arguments and also using \eqref{GD_BOUNDS}, \eqref{holder-forward-gamma} and \eqref{BD_INFTY_SC_BESOV}, we deduce that:
\begin{align}
&\mathcal T_{1,p',q'}^{-\beta+\delta(\gamma-\varepsilon)}(\mathfrak q_x^{s,t})\notag\\
  \le& \int_0^t \frac{dv}{v}v^{(1+\frac{\beta-\delta(\gamma-\varepsilon)}{\alpha}) q'}\Big(\int_{\R^d} dz \Big|\int_{\R^d}\partial_v p_\alpha(v,z-y)[(\Gamma(0,x,s,y)-\Gamma(0,x,s,z)) \nabla P_{t-s}^\alpha \varphi(y)\\
  &\phantom{int_0^t \frac{dv}{v}v^{(1+\frac{\beta-\delta(\gamma-\varepsilon)}{\alpha}) q'}\Big(\int_{\R^d} dz \Big|\int_{\R^d}\partial_v p_\alpha(v,z-y)[}+\Gamma(0,x,s,z)(\nabla P_{t-s}^\alpha \varphi(y)-\nabla P_{t-s}^\alpha \varphi(z))]dy \Big|^{p'}\Big)^{\frac{q'}{p'}}\notag\\
\le& \int_0^t \frac{dv}{v}v^{\frac{\beta-\delta(\gamma-\varepsilon)}{\alpha}q'}\Big(\int_{\R^d} dz \Big|\int_{\R^d}\bar p_\alpha(v,z-y) |z-y|^{\rho}\Big(s^{-\frac\rho\alpha}(\bar p_\alpha(s,y-x)+\bar p_\alpha(s,z-x))(t-s)^{-\frac 1\alpha+\frac \beta\alpha-\frac d{p\alpha}}\notag\\
&+\bar p_\alpha(s,z-x)(t-s)^{-\frac 1\alpha+\frac{\beta}{\alpha}-\frac \rho\alpha-\frac d{p\alpha}}\Big) dy\Big|^{p'}\Big)^{\frac{q'}{p'}}\|\varphi\|_{B_{p,q}^{\beta}}^{q'}, \rho\in (-\beta,-\beta+\gamma).\notag
\end{align}}
We eventually derive:
\begin{align*}
\mathcal T_{1,p',q'}^{-\beta+\delta(\gamma-\varepsilon)}(\mathfrak q_x^{s,t})\lesssim& \int_0^t \frac{dv}{v}v^{(\frac{\beta-\delta(\gamma-\varepsilon)}{\alpha}+\frac\rho\alpha) q'}\Big( s^{-(\frac \rho\alpha+\frac{d}{p\alpha})}(t-s)^{-\frac 1\alpha+\frac \beta\alpha-\frac d{p\alpha}}+s^{-\frac d{\alpha p}}(t-s)^{-\frac 1\alpha+\frac{\beta}{\alpha}-\frac \rho\alpha-\frac d{p\alpha}}\Big)^{q'}\|\varphi\|_{B_{p,q}^{\beta}}^{q'}\\
\lesssim& \Big[t^{\frac{\beta-\delta(\gamma-\varepsilon)+\rho}{\alpha}}\Big(s^{-(\frac \rho\alpha+\frac{d}{p\alpha})}(t-s)^{-\frac 1\alpha+\frac \beta\alpha-\frac d{p\alpha}}+s^{-\frac d{\alpha p}}(t-s)^{-\frac 1\alpha+\frac{\beta}{\alpha}-\frac \rho\alpha-\frac d{p\alpha}}\Big)\Big]^{q'}\|\varphi\|_{B_{p,q}^{\beta}}^{q'}.
\end{align*}
Plugging the above control and \eqref{CTR_CUT_HIGH} into \eqref{CUT_FOR_BESOV_NORM} and recalling \eqref{NON_THERMIC_PART}, we derive the statement \eqref{QUI_SERT_TOUT_LE_TEMPS_EN_BESOV} taking {$\delta=0 $} and any $\rho\in (-\beta,-\beta+\gamma) $ and \eqref{QUI_SERT_TOUT_LE_TEMPS_EN_BESOV_FOR_DELTA_3} taking {$\delta=1 $}, $\rho=-\beta+\gamma -\frac{\varepsilon}2$.   
\subsubsection{Proof of \eqref{QUI_SERT_TOUT_LE_TEMPS_EN_BESOV_DRIFTED_SCHEME}}\label{prqst}	
For this paragraph, we set for notational simplicity:
$$ \mathfrak q_x^{h,s,t}(\cdot):=\Gamma^h(0,x,\tau_s^h,\cdot)\nabla P^\alpha_{t-\tau_s^h}\varphi(\cdot+\int_{\tau_s^h}^s \mathfrak b_h(r,\cdot)dr ).$$
Let us again first focus on the non-thermic part. Write:
\begin{align}
\|{\mathcal F}^{-1}(\phi)\star\mathfrak q_x^{h,s,t} 
\|_{L^{p'}}\lesssim & \|\ |{\mathcal F^{-1}(\phi)}|\star\Gamma^h(0,x,\tau_s^h,\cdot) \ \|_{L^{p'}}\|\nabla P_{t-\tau_s^h}^{\alpha}\varphi(\cdot+\int_{\tau_s^h}^s \mathfrak b_h(r,\cdot)dr )\|_{L^\infty}\notag\\
\lesssim & (t-s)^{-\frac{1-\beta+\frac dp}{\alpha}}\|\varphi\|_{B_{p,q}^{\beta}},
\end{align}
using \eqref{BD_INFTY_SC_BESOV_SCHEME} for the last inequality. Let us now consider the same 
terms {as} those that appeared in \eqref{CUT_FOR_BESOV_NORM}, associated with the lower and upper cut in time for the thermic variable, but replacing therein $\mathfrak q_x^{s,t} $ by $\mathfrak q_x^{h,s,t}$. Namely, for the upper cut, which is the most direct to handle, we have:
\begin{align}\label{CTR_CUT_HIGH_SCHEME}
\mathcal T_{2,p',q'}^{-\beta}(\mathfrak q_x^{h,s,t})&\le \int_{t}^{T}\frac{dv}{v} v^{(1+\frac{\beta}{\alpha})q'} \|\partial_{v} p_\alpha(v,\cdot)\|_{L^1}^{q'}\|\Gamma^h(0,x,\tau_s^h,\cdot)\|_{L^{p'}}^{q'}\|\nabla P_{t-\tau_s^h}^\alpha \varphi(\cdot+\int_{\tau_s^h}^s \mathfrak b_h(r,\cdot) dr)\|_{L^\infty}^{q'}\notag\\
&\lesssim \Big(t^{\frac \beta \alpha } s^{-\frac d{p \alpha}} (t-s)^{-\frac 1\alpha+\frac{\beta}{\alpha}-\frac d{ p\alpha} }\|\varphi\|_{B_{p,q}^\beta} \Big)^{q'},\end{align}
where we again used {\eqref{aronson-gammah}, \eqref{estigradplu}} and \eqref{BD_INFTY_SC_BESOV_SCHEME} for the last inequality. Write now:
\begin{align*}
\mathcal T_{1,p',q'}^{-\beta}(\mathfrak q_x^{h,s,t})\le& \int_0^t \frac{dv}{v}v^{(1+\frac{\beta}{\alpha}) q'}\Big(\int_{\R^d} dz \left|\int_{\R^d}\partial_v p_\alpha(v,z-y)\Big[\Gamma^h\Big(0,x,\tau_s^h,y) \nabla P_{t-\tau_s^h}^\alpha \varphi(y+\int_{\tau_s^h}^s \mathfrak b_h(r,y)dr\Big)\right.\\
&\left.-\Gamma^h(0,x,\tau_s^h,z) \nabla P_{t-\tau_s^h}^\alpha \varphi\Big(z+\int_{\tau_s^h}^s \mathfrak b_h(r,z)dr\Big)\Big]dy \right|^{p'}\Big)^{\frac{q'}{p'}}\notag\\
\le& \int_0^t \frac{dv}{v}v^{\frac{\beta}{\alpha}q'}\Big(\int_{\R^d} dz \Big|\int_{\R^d}\bar p_\alpha(v,z-y) |z-y|^{\rho}\Big(s^{-\frac\rho\alpha}(\bar p_\alpha(s,y-x)+\bar p_\alpha(s,z-x))(t-s)^{-\frac 1\alpha+\frac \beta\alpha-\frac d{p\alpha}}\notag\\
&+\bar p_\alpha(s,z-x)(t-s)^{-\frac 1\alpha+\frac{\beta}{\alpha}-\frac \rho\alpha-\frac d{p\alpha}}
\Big) dy\Big|^{p'}\Big)^{\frac{q'}{p'}}\|\varphi\|_{B_{p,q}^{\beta}}^{q'}, \rho\in (-\beta,-\beta+\gamma),\notag\\
\lesssim&\Big[t^{\frac{\beta+\rho}{\alpha}}\Big(s^{-(\frac \rho\alpha+\frac{d}{p\alpha})}(t-s)^{-\frac 1\alpha+\frac \beta\alpha-\frac d{p\alpha}}+s^{-\frac d{\alpha p}}(t-s)^{-\frac 1\alpha+\frac{\beta}{\alpha}-\frac \rho\alpha-\frac d{p\alpha}}\Big)\Big]^{q'}\|\varphi\|_{B_{p,q}^{\beta}}^{q'}
\end{align*} 
using {\eqref{holder-forward-gammah}, \eqref{BD_INFTY_SC_BESOV_SCHEME}  and  \eqref{aronson-gammah}, \eqref{BD_RHO_SC_BESOV_SCHEME} (recalling that $B_{\infty,\infty}^\rho $ can be identified with the usual Hölder space $C_b^\rho $)} for the last but one inequality {and proceeding {like} in \eqref{CTR_CUT_LOW_0} for the last one}. We thus have \eqref{QUI_SERT_TOUT_LE_TEMPS_EN_BESOV_DRIFTED_SCHEME}. This means that the approximate drift does not perturb the regularity estimate, which turns out to be the same {as} for the diffusion.

\subsubsection{Proof of \eqref{QUI_SERT_TOUT_LE_TEMPS_EN_BESOV_DRIFTED_SCHEME_TIME_SENSI}}\label{prqstbis}
For this paragraph, we set for notational simplicity:
$$ \mathfrak q_{x,{d}}^{h,s,t}(\cdot):=\Gamma^h(0,x,\tau_s^h,\cdot)\Big[\nabla P^\alpha_{t-\tau_s^h}\varphi(\cdot+\int_{\tau_s^h}^s \mathfrak b_h(r,\cdot)dr )-\nabla P^\alpha_{t-s}\varphi(\cdot )
\Big],$$
{where the subscript $d$ is here to recall we are handling a \textit{difference} of semi-groups}.
Let us again first focus on the non-thermic part. Write:
\begin{align}
\|\mathcal F^{-1}(\phi)\star\mathfrak q_{x,{d}}^{h,s,t} 
\|_{L^{p'}}\lesssim & \|\ |{\mathcal F^{-1}(\phi)}|\star\Gamma^h(0,x,\tau_s^h,\cdot) \ \|_{L^{p'}}\|\nabla P_{t-\tau_s^h}^{\alpha}\varphi(\cdot+\int_{\tau_s^h}^s \mathfrak b_h(r,\cdot)dr )-\nabla P_{t-s}^{\alpha}\varphi(\cdot)\|_{L^\infty}\notag\\
\lesssim&\|\nabla P_{t-\tau_s^h}^{\alpha}\varphi(\cdot+\int_{\tau_s^h}^s \mathfrak b_h(r,\cdot)dr )-\nabla P_{t-\tau_s^h}^{\alpha}\varphi(\cdot)\|_{L^\infty}+\|\nabla P_{t-\tau_s^h}^{\alpha}\varphi(\cdot)-\nabla P_{t-s}^{\alpha}\varphi(\cdot)\|_{L^\infty}\notag\\
\lesssim & (t-s)^{-\frac{1-\beta+\frac dp}{\alpha}}\|\varphi\|_{B_{p,q}^{\beta}}\Big(\frac{\|\int_{\tau_s^h}^s \mathfrak b_h(r,\cdot)dr \|_{L^\infty}}{(t-s)^{\frac1\alpha}}+\frac{h^{\frac{\gamma-\varepsilon}{\alpha}}}{(t-s)^{\frac{\gamma-\varepsilon}\alpha}}\Big)\notag\\
\lesssim&  (t-s)^{-\frac{1-\beta+\frac dp}{\alpha}}\|\varphi\|_{B_{p,q}^{\beta}}\Big[\frac{(s-\tau_s^h)^{\frac{\gamma}{\alpha}+\frac{1-\beta}{\alpha}}\|b\|_{L^\vartheta-B_{p,q}^\beta}}{(t-s)^{\frac 1\alpha}}+ \frac{h^{\frac{\gamma-\varepsilon}{\alpha}}}{(t-s)^{\frac{\gamma-\varepsilon}\alpha}}\Big]\notag\\
 \lesssim &(t-s)^{-\frac{1-\beta+\frac dp}{\alpha}}\|\varphi\|_{B_{p,q}^{\beta}}\frac{h^{\frac{\gamma-\varepsilon}{\alpha}}}{(t-s)^{\frac{\gamma-\varepsilon}\alpha}}\label{NON_THERMIC_SENSI_TIME},
\end{align}
using \eqref{BD_INFTY_SC_BESOV_SPACE_SENSI} and \eqref{BD_INFTY_SC_BESOV_TIME_SENSI} (with $\theta=(\gamma-\varepsilon)/\alpha $) for the third inequality, \eqref{CTR_PONCTUEL_BH_INT} for the last but one {and $s-\tau^h_s\le t-s$ for the last one}. 

Keeping the same previous notations, we now write for the upper cut:
\begin{align}\label{CTR_CUT_HIGH_SCHEME_SENSI_TIME}
\mathcal T_{2,p',q'}^{-\beta}(\mathfrak q_{x,{d}}^{h,s,t})&\le \int_{t}^{T}\frac{dv}{v} v^{(1+\frac{\beta}{\alpha})q'} \|\partial_{v} p_\alpha(v,\cdot)\|_{L^1}^{q'}\|\Gamma^h(0,x,\tau_s^h,\cdot)\|_{L^{p'}}^{q'}\|\nabla P_{t-\tau_s^h}^\alpha \varphi(\cdot+\int_{\tau_s^h}^s \mathfrak b_h(r,\cdot) dr)-\nabla P_{t-s}^\alpha \varphi\|_{L^\infty}^{q'}\notag\\
&\lesssim \Big(t^{\frac \beta \alpha } s^{-\frac d{p \alpha}} (t-s)^{-\frac 1\alpha+\frac{\beta}{\alpha}-\frac d{ p\alpha} }\|\varphi\|_{B_{p,q}^\beta} \frac{h^{\frac{\gamma-\varepsilon}{\alpha}}}{(t-s)^{\frac{\gamma-\varepsilon}\alpha}}\Big)^{q'},
\end{align}
proceeding as for \eqref{NON_THERMIC_SENSI_TIME} for the last inequality. We now turn to the lower cut:
\begin{align}
\mathcal T_{1,p',q'}^{-\beta}(\mathfrak q_{x,{d}}^{h,s,t})
\le& \int_0^t \frac{dv}{v}v^{(1+\frac{\beta}{\alpha}) q'}\Big(\int_{\R^d} dz \left|\int_{\R^d}\partial_v p_\alpha(v,z-y)\Big[(\Gamma^h(0,x,\tau_s^h,y)-\Gamma^h(0,x,\tau_s^h,z)) \right.\notag\\
&\phantom{\int_0^t }\left.\times \big[\nabla P_{t-\tau_s^h}^\alpha \varphi(y+\int_{\tau_s^h}^s \mathfrak b_h(r,y)dr)-\nabla P_{t-s}^\alpha \varphi(y)\big]+\Gamma^h(0,x,\tau_s^h,z) \Delta(s,y,z)\Big]dy \right|^{p'}\Big)^{\frac{q'}{p'}}\label{THE_SENSI_TO_CONCLUDE}
  \end{align} where 
\begin{align*}
\Delta(s,y,z):=&\big[\nabla P_{t-\tau_s^h}^\alpha \varphi\Big(y+\int_{\tau_s^h}^s \mathfrak b_h(r,y)dr\Big)-\nabla P_{t-s}^\alpha \varphi(y)]-\big[\nabla P_{t-\tau_s^h}^\alpha \varphi\Big(z+\int_{\tau_s^h}^s \mathfrak b_h(r,z)dr\Big)-\nabla P_{t-s}^\alpha \varphi(z)]\\
=&\big[\nabla P_{t-\tau_s^h}^\alpha \varphi\Big(y+\int_{\tau_s^h}^s \mathfrak b_h(r,y)dr\Big)-\nabla P_{t-\tau_s^h}^\alpha \varphi(y)+\nabla P_{t-\tau_s^h}^\alpha \varphi(y)-\nabla P_{t-s}^\alpha \varphi(y)]\\
&-\big[\nabla P_{t-\tau_s^h}^\alpha \varphi\Big(z+\int_{\tau_s^h}^s \mathfrak b_h(r,z)dr\Big)-\nabla P_{t-\tau_s^h}^\alpha \varphi(z)+\nabla P_{t-\tau_s^h}^\alpha \varphi(z)-\nabla P_{t-s}^\alpha \varphi(z)]\\
=&\int_0^1 d\lambda \nabla^2 P_{t-\tau_s^h}^\alpha \varphi\Big(y+\lambda \int_{\tau_s^h}^s \mathfrak b_h(r,y)dr\Big)\int_{\tau_s^h}^s \mathfrak b_h(r,y)dr\\
 &-\int_0^1 d\theta [ \partial_r\nabla P_{r}^\alpha \varphi(y)-\partial_r\nabla P_{r}^\alpha \varphi(z)]|_{r=t-\tau_s^h+\theta({\tau_s^h-s})}](s-\tau_s^h)\\
 &-\int_0^1 d\lambda \nabla^2 P_{t-\tau_s^h}^\alpha \varphi\Big(z+\lambda \int_{\tau_s^h}^s \mathfrak b_h(r,z)dr\Big)\int_{\tau_s^h}^s \mathfrak b_h(r,z)dr.
\end{align*}
Using \eqref{BD_RHO_SC_BESOV} for the first inequality, \eqref{BD_RHO_SC_BESOV_SCHEME}, \eqref{CTR_PONCTUEL_BH_INT}, 
 \eqref{BD_INFTY_SC_BESOV_SCHEME}, \eqref{CTR_PONCTUEL_BH_INT_HOLDER} for the second, {as well as $
\frac{(s-\tau_s^h)^{\frac{\gamma}{\alpha}+\frac{1-\beta-\rho}{\alpha}}}{(t-s)^{\frac 2\alpha}}
\le \frac{(s-\tau_s^h)^{\frac{\gamma-\beta}{\alpha}}}{(t-s)^{\frac {1+\rho}\alpha} } $ deduced from $(s-\tau_s^h)\le t-s $} for the third, we get:
\begin{align*}
|\Delta(s,y,z) |\lesssim &\Big|\int_0^1 d\lambda \Big[\nabla^2 P_{t-\tau_s^h}^\alpha \varphi\Big(y+\lambda \int_{\tau_s^h}^s \mathfrak b_h(r,y)dr\Big)-\nabla^2 P_{t-\tau_s^h}^\alpha \varphi\Big(z+\lambda \int_{\tau_s^h}^s \mathfrak b_h(r,z)dr\Big)\Big]\Big| \ \Big|\int_{\tau_s^h}^s \mathfrak b_h(r,y)dr\Big|\\
&+(t-s)^{-(1+\frac 1\alpha+{\frac \rho\alpha}-\frac{\beta}{\alpha}+\frac d{p\alpha}) }|z-y|^\rho \|\varphi\|_{B_{p,q}^\beta}(s-\tau_s^h)\\ 
&+\Big|\int_0^1 d\lambda \nabla^2 P_{t-\tau_s^h}^\alpha \varphi\Big(z+\lambda \int_{\tau_s^h}^s \mathfrak b_h(r,z)dr\Big)\Big| \ \Big|\int_{\tau_s^h}^s [\mathfrak b_h(r,z)- \mathfrak b_h(r,y)]dr\Big|\\
\lesssim &|y-z|^\rho 
{(t-s)^{-\frac 2\alpha+\frac{\beta}{\alpha}-\frac{\rho}{\alpha}-\frac{d}{\alpha p}}\|\varphi\|_{B_{p,q}^\beta}} (s-\tau_s^h)^{\frac{\gamma}{\alpha}+\frac{1-\beta}{\alpha}}\|b\|_{L^\vartheta-B_{p,q}^\beta}.\\
&+(t-s)^{-(1+\frac 1\alpha+{\frac\rho\alpha}-\frac{\beta}{\alpha}+\frac d{p\alpha}) }|z-y|^\rho \|\varphi\|_{B_{p,q}^\beta}(s-\tau_s^h)\\ 
&+(t-s)^{-(\frac{2}{\alpha}-\frac{\beta}{\alpha}+\frac d{p\alpha})}\|\varphi\|_{B_{p,q}^\beta}  |z-y|^\rho (s-\tau_s^h)^{\frac{\gamma}{\alpha}+\frac{1-\beta-\rho}{\alpha}}\|b\|_{L^\vartheta-B_{p,q}^\beta}\notag\\
\lesssim& (t-s)^{-(\frac 1\alpha+{\frac \rho\alpha}-\frac{\beta}{\alpha}+\frac d{p\alpha}) }|z-y|^\rho \|\varphi\|_{B_{p,q}^\beta}\left( \frac{h}{t-s}\right)^{\frac{\gamma-\varepsilon}{\alpha}}.
\end{align*}
Plugging this estimation together with \eqref{holder-forward-gammah}, \eqref{BD_INFTY_SC_BESOV} and \eqref{BD_INFTY_SC_BESOV_SCHEME} in \eqref{THE_SENSI_TO_CONCLUDE}, we conclude that
\begin{align}
\mathcal T_{1,p',q'}^{-\beta}(\mathfrak q_{x,{d}}^{h,s,t})\le& \int_0^t \frac{dv}{v}v^{\frac{\beta}{\alpha}q'}\Big(\int_{\R^d} dz \Big|\int_{\R^d}\bar p_\alpha(v,z-y) |z-y|^{\rho}\Big(s^{-\frac\rho\alpha}(\bar p_\alpha(s,y-x)+\bar p_\alpha(s,z-x))(t-s)^{-\frac 1\alpha+\frac \beta\alpha-\frac d{p\alpha}}\notag\\
&+\bar p_\alpha(s,z-x)(t-s)^{-\frac 1\alpha+\frac{\beta}{\alpha}-\frac \rho\alpha-\frac d{p\alpha}}
\Big) \frac{h^{\frac{\gamma-\varepsilon}{\alpha}}}{(t-s)^{\frac{\gamma-\varepsilon}\alpha}}dy\Big|^{p'}\Big)^{\frac{q'}{p'}}\|\varphi\|_{B_{p,q}^{\beta}}^{q'}, \rho\in (-\beta,-\beta+\gamma),\notag\\
\lesssim&\Big[t^{\frac{\beta+\rho}{\alpha}}\Big(s^{-(\frac \rho\alpha+\frac{d}{p\alpha})}(t-s)^{-\frac 1\alpha+\frac \beta\alpha-\frac d{p\alpha}}+s^{-\frac d{\alpha p}}(t-s)^{-\frac 1\alpha+\frac{\beta}{\alpha}-\frac \rho\alpha-\frac d{p\alpha}}\Big)\frac{h^{\frac{\gamma-\varepsilon}{\alpha}}}{(t-s)^{\frac{\gamma-\varepsilon}\alpha}}\Big]^{q'}\|\varphi\|_{B_{p,q}^{\beta}}^{q'}.
\end{align}

 \subsubsection{Proof of \eqref{holder-time-gamma_PP_QP_NO_NORM}, forward time regularity of the Besov norm of the heat kernel} \label{THE_FINAL_EST}
We recall that we here aim at proving that, \textcolor{black}{for all $\eps>0$ meant to be small}
, {$0<t<t'\le T$} such that  $|t-t'|\le t/2$, 
		\begin{equation*}
			\left\Vert \Gamma (0,x,t,\cdot)-\Gamma (0,x,t',\cdot) \right\Vert_{B_{p',q'}^\rho} \leq C \frac{(t'-t)^\frac{\gamma-\eps}{\alpha}}{t^\frac{\gamma-\eps+\rho+\frac dp}{\alpha}},\ \rho\in (-\beta,-\beta+\gamma).
		\end{equation*}
Let us write from the Duhamel representation \eqref{duhamel-Diff} of the density that:
\begin{align}
\Gamma(0,x,t,y)-\Gamma(0,x,t',y)
			=& p_\alpha(t,y-x)-p_\alpha(t',y-x)\notag\\
			&-\int_{0}^{ t}\E_{{0},x}\left[b(s,X_s)\cdot\Big(\nabla_y  p_\alpha(t-s,y-X_s)-\nabla_y  p_\alpha(t'-s,y-X_s)\Big)\right]\d s\notag\\
			&+\int_{t}^{ t'}\E_{{0},x}\left[b(s,X_s)\cdot\Big(\nabla_y  p_\alpha(t'-s,y-X_s)\Big)\right]\d s:=\sum_{i=1}^3 \Delta^i(0,x,t,t',y).\label{THE_DIFF-BIS}
\end{align}
By the fundamental theorem of analysis, we get {using \eqref{SING_STABLE_HK} and $t'-t\le \frac t2$}:
\begin{align}\label{CT_MT_HT}
\|\Delta^1(0,x,t,t',\cdot)\|_{B_{p',q'}^\rho}=\|p_\alpha(t,\cdot-x)-p_\alpha(t',\cdot-x)\|_{B_{p',q'}^\rho}\lesssim \frac{(t'-t)^\frac{\gamma-\eps}{\alpha}}{t^\frac{\gamma-\eps+\rho+\frac dp}{\alpha}}.
\end{align}
Let us now handle the term $\Delta^3 $. Write first:
\begin{align*}
\|\mathcal F^{-1}(\phi)\star\Delta^3(0,x,t,t',\cdot)\|_{L^{p'}}&\lesssim \|\int_{t}^{ t'}\int \Gamma(0,x,s,z) b(s,z)\cdot \nabla  p_\alpha(t'-s,\cdot-z)\d z  \d s\|_{L^1}\\
&\lesssim\int_{t}^{t'} \int_{\R^d}\Big|\int \Gamma(0,x,s,z) b(s,z)\cdot \nabla  p_\alpha(t'-s,y-z) \Big| dy ds\\
&\lesssim \int_t^{t'} \int_{\R^d}\Vert b (s,\cdot)\Vert_{B_{p,q}^\beta}  \Vert \Gamma(0,s,x,\cdot)\nabla_y p_{\alpha} (t'-s,y-\cdot)\Vert_{B_{p',q'}^{-\beta}}\d s dy
\end{align*} 
where we have used \eqref{dual-ineq} for the last inequality. 

To proceed with the analysis we will need the following two controls established in Lemma 5 from \cite{fito:isso:meno:25}:
\begin{trivlist}
			\item[-] $\forall 0< s  < t$, $\forall (x,y)\in (\R^d)^2$,  
			$\forall \zeta \in (-\beta,\textcolor{black}{-\beta+\gamma})$,
			\begin{align}\label{besov-estimate-gamma-and-stable}
				\Vert& \Gamma(0,x,s,\cdot) \nabla_y p_{\alpha} (t-s,y-\cdot) \Vert_{\B^{-\beta
				}_{p',q'}
				} \lesssim 
				\frac{\bar{p}_{\alpha} (t,x-y)}{(t-s)^{\frac{1}{\alpha}}} t^{\frac{\beta
				}{\alpha}}\left[ \frac{1}{s^{\frac{d }{\alpha  p}}}+\frac{1}{(t-s)^{\frac{d }{\alpha  p}}} \right] \left[
				  \frac{t^{\frac{\zeta}{\alpha}}}{s^{\frac{\zeta }{\alpha}}}+\frac{t^{\frac{\zeta}{\alpha}}}{(t-s)^{\frac{\zeta }{\alpha}}}  \right] .
			\end{align}
			\item[-]
			 $\forall 0< s  < t$, $\forall (x,y,y')\in (\R^d)^3$, s.t. $|y'-y|\le (t-s)^{\frac 1\alpha} $, $\forall \zeta \in (-\beta,\textcolor{black}{-\beta+\gamma}),{\tilde \rho} \in (-\beta,1]$, 
			
			\begin{align}\label{besov-estimate-gamma-and-stable_NO_NORM_Holder}
			&\Vert \Gamma(0,x,s,\cdot) (\nabla_y p_{\alpha} (t-s,y-\cdot)-\nabla_y p_{\alpha} (t-s,y'-\cdot)) \Vert_{\B^{-\beta
			}_{p',q'}} \notag\\
			\lesssim& \bar{p}_{\alpha} (t,x-y)\frac{|y-y'|^{{\tilde\rho}}}{(t-s)^{\frac 1\alpha+\frac{{\tilde \rho}}\alpha}} t^{\frac{\beta
			}{\alpha}}\left[ \frac{1}{s^{\frac{d }{\alpha  p}}}+\frac{1}{(t-s)^{\frac{d }{\alpha  p}}} \right] \left[  
			\frac{t^{\frac{\zeta}{\alpha}}}{s^{\frac{\zeta }{\alpha}}}+\frac{t^{\frac{\zeta}{\alpha}}}{(t-s)^{\frac{\zeta }{\alpha}}}  \right] .\end{align}
\end{trivlist}
We mention that the control \eqref{besov-estimate-gamma-and-stable} will mainly allow to handle the thermic part and to derive as well that \eqref{besov-estimate-gamma-and-stable_NO_NORM_Holder} extends globally to
\begin{align}\label{besov-estimate-gamma-and-stable_NO_NORM_Holder_GLOBAL}
			&\Vert \Gamma(0,x,s,\cdot) (\nabla_y p_{\alpha} (t-s,y-\cdot)-\nabla_y p_{\alpha} (t-s,y'-\cdot)) \Vert_{\B^{-\beta
			}_{p',q'}} \notag\\
			\lesssim& \big(\bar{p}_{\alpha} (t,x-y)+\bar{p}_{\alpha} (t,x-y')\big)\frac{|y-y'|^{{\tilde \rho}}}{(t-s)^{\frac 1\alpha+\frac{{\tilde \rho}}\alpha}} t^{\frac{\beta}{\alpha}}\left[ \frac{1}{s^{\frac{d }{\alpha  p}}}+\frac{1}{(t-s)^{\frac{d }{\alpha  p}}} \right] \left[%1+  
			\frac{t^{\frac{\zeta}{\alpha}}}{s^{\frac{\zeta }{\alpha}}}+\frac{t^{\frac{\zeta}{\alpha}}}{(t-s)^{\frac{\zeta }{\alpha}}}  \right],
			\end{align}
since in the off-diagonal regime $|y-y'|>(t-s)^{\frac 1\alpha} $, the above estimate is indeed a direct consequence of \eqref{besov-estimate-gamma-and-stable} and the triangular inequality. Now, from \eqref{besov-estimate-gamma-and-stable}, we get {also using that for $s\in[t,t']$, $t'-s\le t'-t\le \frac{t}{2}\le \frac s 2$}:
\begin{align}
&\|\mathcal F^{-1}(\phi)\star\Delta^3(0,x,t,t',\cdot)\|_{L^{p'}}\lesssim\|\Delta^3(0,x,t,{t',}\cdot)\|_{L^{1}}\notag\\
&\lesssim \int_t^{t'}  \Vert b (s,\cdot)\Vert_{B_{p,q}^\beta}\int_{\R^d}  \frac{\bar{p}_{\alpha} (t',x-y)}{(t'-s)^{\frac{1}{\alpha}}} (t')^{\frac{\beta}{\alpha}}\left[ \frac{1}{s^{\frac{d }{\alpha  p}}}+\frac{1}{(t'-s)^{\frac{d }{\alpha  p}}} \right] \left[
				  \frac{(t')^{\frac{\zeta}{\alpha}}}{s^{\frac{\zeta }{\alpha}}}+\frac{(t')^{\frac{\zeta}{\alpha}}}{(t'-s)^{\frac{\zeta }{\alpha}}}  \right]dydt\notag\\
				  &\lesssim \|b\|_{L^\vartheta-B_{p,q}^\beta}(t')^{\frac{\beta+\zeta}{\alpha}}\Big(\int_t^{t'}  (t'-s)^{-(\frac 1\alpha+\frac{d}{\alpha p}+\frac{\zeta}{\alpha})\vartheta'}  ds\Big)^{\frac{1}{\vartheta'}}\lesssim (t'-t)^{\frac{\alpha-1-\zeta-(\frac dp+\frac \alpha\vartheta)}\alpha}\notag\\
				  &\lesssim (t'-t)^{\frac{\gamma-\zeta-2\beta}\alpha}\lesssim (t'-t)^{\frac{\gamma-\varepsilon-\beta}\alpha},\label{CTR_DELTA_3_NON_THERM_RT}
\end{align}
since the smallest $\zeta $, e.g. $\zeta=-\beta+\varepsilon$, with $\varepsilon>0 $ meant to be small, will here give the largest exponent for $t'-t $. 
Hence, this is not a \textit{critical} term w.r.t. the statement. Let us now consider the thermic part. We keep the same previous notations for the lower and upper cuts. Only dealing with the case $q\ne 1$ (the case $q=1$ is easier), we write:
\begin{align}
\Big[\mathcal T_{p',q'}^{\rho}\Big(\Delta^3(0,x,t,t',\cdot)\Big)\Big]^{q'}
=&\int_0^{t} \frac{dv}{v}v^{(1-\frac{\rho}{\alpha}) q'}\|\partial_v p_\alpha(v,\cdot)\star \Delta^3(0,x,t,t',\cdot)\|_{L^{p'}}^{q'}\notag\\
&+\int_{t}^T \frac{dv}{v}v^{(1-\frac{\rho}{\alpha}) q'}\|\partial_v p_\alpha(v,\cdot)\star \Delta^3(0,x,t,t',\cdot)\|_{L^{p'}}^{q'}\notag\\
=:&\big(\mathcal T_{1,p',q'}^{\rho}+\mathcal T_{2,p',q'}^{\rho}\big)(\Delta^3(0,x,t,t',\cdot)),\label{CUT_FOR_BESOV_NORM_HK_FOR_TR}
\end{align}
{For the upper-cut  using the $L^1 $-norm, similarly to the non-thermic part, we derive from \eqref{estigradplu}, \eqref{CTR_DELTA_3_NON_THERM_RT} and $t'-t\le\frac t2$:}
\begin{align*}
\mathcal T_{2,p',q'}^{\rho}(\Delta^3(0,x,t,t',\cdot))&\lesssim \int_{t}^T \frac{dv}{v}v^{(1-\frac{\rho}{\alpha}) q'}\|\partial_v \bar p_\alpha(v,\cdot)\|_{L^{p'}}^{q'}\|\Delta^3(0,x,t,t',\cdot)\|_{L^1}^{q'}\\
&\lesssim \int_{t}^T \frac{dv}{v}v^{-(\frac{\rho}{\alpha}+\frac d{\alpha p}) q'}(t'-t)^{(\frac{\gamma-\varepsilon-\beta}\alpha)q'}\lesssim \left(\frac{(t'-t)^{\frac{\gamma-\varepsilon-\beta}\alpha}}{t^{\frac{\rho}{\alpha}+\frac d{p\alpha}}}\right)^{q'}\lesssim \Big( \frac{(t'-t)^{\frac{\gamma-\varepsilon}\alpha}}{t^{\frac{\beta+\rho}{\alpha}+\frac d{p\alpha}}}\Big)^{q'}.
\end{align*}
For the lower cut, write:
\begin{align*}
\mathcal T_{1,p',q'}^{\rho}(\Delta^3(0,x,t,t',\cdot))\lesssim& \int_{0}^t \frac{dv}{v}v^{(1-\frac{\rho}{\alpha}) q'}\Big(
\int_t^{t'} ds \| \partial_v p_\alpha(v,\cdot) \star  \int \Gamma(0,x,s,z) b(s,z)\cdot\nabla  p_\alpha(t'-s,\cdot-z)dz\|_{L^{p'}}\Big)^{q'}\\
\lesssim& \int_{0}^t \frac{dv}{v}v^{(1-\frac{\rho}{\alpha}) q'}\Big(
\int_t^{t'} ds \Big(\int_{\R^d} dy \Big|\int_{\R^d}\partial_v p_\alpha(v,y-w) \times\\
& \Big[\int_{\R^d} \Gamma(0,x,s,z) b(s,z)\cdot[ \nabla  p_\alpha(t'-s,w-z))-\nabla  p_\alpha(t'-s,y-z)] dz\Big]dw\Big|^{p'}\Big)^{\frac{1}{p'}}\Big)^{q'}\\
\lesssim& \int_{0}^t \frac{dv}{v}v^{(1-\frac{\rho}{\alpha}) q'}\Big(
\int_t^{t'} ds \Big(\int_{\R^d} dy \Big(\int_{\R^d} |\partial_v p_\alpha(v,y-w)| \\
&\times \|b(s,\cdot)\|_{B_{p,q}^{\beta}} \|\Gamma(0,x,s,\cdot) [ \nabla  p_\alpha(t'-s,w-\cdot))-\nabla  p_\alpha(t'-s,y-\cdot)\|_{B_{p',q'}^{-\beta}}dw\Big)^{p'}\Big)^{\frac{1}{p'}}\Big)^{q'},
\end{align*}
using the Minkowski inequality and a cancellation argument for the first two inequalities and \eqref{dual-ineq} for the last one. Use now \eqref{besov-estimate-gamma-and-stable_NO_NORM_Holder_GLOBAL} { with $\tilde \rho>\rho $}, to derive:
\begin{align*}
\mathcal T_{1,p',q'}^{\rho}(\Delta^3(0,x,t,t',\cdot))\lesssim& \int_{0}^t \frac{dv}{v}v^{(1-\frac{\rho}{\alpha}) q'}(t')^{\frac{\beta}{\alpha}q'}\\
			&\times \Big(
\int_t^{t'} ds\|b(s,\cdot)\|_{B_{p,q}^{\beta}} \frac 1{(t'-s)^{\frac 1\alpha+\frac{\tilde \rho}\alpha}}\left[ \frac{1}{s^{\frac{d }{\alpha  p}}}+\frac{1}{(t'-s)^{\frac{d }{\alpha  p}}} \right] \left[
			\frac{(t')^{\frac{\zeta}{\alpha}}}{s^{\frac{\zeta }{\alpha}}}+\frac{(t')^{\frac{\zeta}{\alpha}}}{(t'-s)^{\frac{\zeta }{\alpha}}}  \right]\\
			&\times \Big(\int_{\R^{d}} dy \Big(\int_{\R^d} dw v^{-1}\bar p_\alpha(v,y-w) |y-w|^{\tilde \rho}
(\bar p_\alpha(t',w-x)+\bar p_\alpha(t',y-x))  \Big)^{p'} \Big)^{\frac 1{p'}} \Big)^{q'}\\
\lesssim &\int_{0}^t \frac{dv}{v}v^{-\frac{\rho}{\alpha} q'}(t')^{q'\frac{\beta}{\alpha}}\\
			&\times \Big(
\int_t^{t'} ds\|b(s,\cdot)\|_{B_{p,q}^{\beta}} \frac 1{(t'-s)^{\frac 1\alpha+\frac{\tilde \rho}\alpha}}\left[ \frac{1}{s^{\frac{d }{\alpha  p}}}+\frac{1}{(t'-s)^{\frac{d }{\alpha  p}}} \right] \left[
			\frac{(t')^{\frac{\zeta}{\alpha}}}{s^{\frac{\zeta }{\alpha}}}+\frac{(t')^{\frac{\zeta}{\alpha}}}{(t'-s)^{\frac{\zeta }{\alpha}}}  \right]\\
&\times\Big(\|(\bar p_\alpha (v,\cdot)|\cdot|^{\tilde \rho})\star \bar p_{\alpha}(t',\cdot-x) \|_{L^{p'}} +\|\bar p_\alpha(t',\cdot-x)\|_{L^{p'}}\|\bar p_\alpha (v,\cdot)|\cdot|^{\tilde \rho}\|_{L^1}\Big)\Big)^{q'}\\
\lesssim &\int_{0}^t \frac{dv}{v}v^{\frac{\tilde \rho-\rho}{\alpha} q'}(t')^{q'(\frac{\beta+\zeta-\frac dp}{\alpha})}\|b\|_{L^\vartheta-B_{p,q}^\beta}^{q'}\Big(\int_{t}^{t'} \frac{ds}{(t'-s)^{(\frac 1\alpha+\frac{\tilde \rho+\zeta}{\alpha}+\frac{d}{\alpha p})\vartheta'}}\Big)^{\frac{q'}{\vartheta'}}\\\lesssim&\Big(t^{\frac{\tilde \rho-\rho}{\alpha}+\frac{\beta+\zeta-\frac dp}{\alpha}}(t'-t)^{\frac{\alpha-1 -(\tilde \rho+\zeta)-(\frac dp+\frac{\alpha}{\vartheta})}{\alpha}}\Big)^{q'}
\end{align*}
choosing $\tilde \rho=-\beta+\varepsilon/2=\zeta, \tilde \rho>\rho $ with $\varepsilon<\gamma $ so that the above time singularities remain integrable. Recalling that $\tilde \rho>\rho, \zeta>-\beta $, this choice therefore  yields:
\begin{align*}
\mathcal T_{1,p',q'}^{\rho}(\Delta^3(0,x,t,t',\cdot))\lesssim \Big((t'-t)^{\frac{\gamma-\varepsilon}{\alpha}}t^{-\frac{d}{\alpha p}}\Big)^{q'},
\end{align*}
which together with the previous control on the upper cut, \eqref{CUT_FOR_BESOV_NORM_HK_FOR_TR} and \eqref{CTR_DELTA_3_NON_THERM_RT} gives the expected control for the term $\Delta^3(0,x,t,t',\cdot)$. Namely,
\begin{align}\label{CT_LT_HT}
\|\Delta^3(0,x,t,t',\cdot)\|_{B_{p',q'}^{\rho}}\lesssim (t'-t)^{\frac{\gamma-\varepsilon}{\alpha}}t^{-(\frac d{\alpha p}+\frac{\beta+\rho}{\alpha})}.
\end{align}

We now proceed similarly for the term $\Delta_2 $. For technical reasons we are going to split this term into two {contributions}. Namely,
\begin{align*}
\Delta^2(0,x,t,t',\cdot)=&\int_{0}^{ t-(t'-t)}\E_{{0},x}\left[b(s,X_s)\cdot\Big(\nabla_y  p_\alpha(t-s,y-X_s)-\nabla_y  p_\alpha(t'-s,y-X_s)\Big)\right]\d s\\
&+\int_{t-(t'-t)}^{ t}\E_{{0},x}\left[b(s,X_s)\cdot\Big(\nabla_y  p_\alpha(t-s,y-X_s)-\nabla_y  p_\alpha(t'-s,y-X_s)\Big)\right]\d s\\
=:&(\Delta^{21}+\Delta^{22})(0,x,t,t',\cdot).
\end{align*}
The contribution $\|\Delta^{22}(0,x,t,t',\cdot)\|_{B_{p',q'}^{\rho}} $ can be analyzed {like} the former term $\|\Delta^{3}(0,x,t,t',\cdot)\|_{B_{p',q'}^{\rho}} $. On the other hand, for $s\in [0,t-(t'-t)] $, $t-s$ and $t'-s$ are both greater than $t'-t$. This is a somehow natural constraint in order to compare the two heat kernels in time.
\begin{align*}
&\|\mathcal F^{-1}(\phi)\star\Delta^{21}(0,x,t,t',\cdot)\|_{L^{p'}}\lesssim \|\Delta^{21}(0,x,t,t',\cdot)\|_{L^1}\\
\lesssim& \|\int_{0}^{ t-(t'-t)}\int_{{\R^d}} \Gamma(0,x,s,z) b(s,z)\cdot(\nabla  p_\alpha(t'-s,\cdot-z)-\nabla  p_\alpha(t-s,\cdot-z))\d z  \d s\|_{L^1}\\
\lesssim&\int_{0}^{t-(t'-t)} \int_{\R^d}\Big|\int \Gamma(0,x,s,z) b(s,z)\cdot(\nabla  p_\alpha(t'-s,y-z)-\nabla  p_\alpha(t-s,y-z)) dz\Big| dy ds\\
\lesssim &\int_0^{t-(t'-t)} \int_{\R^d}\Vert b (s,\cdot)\Vert_{B_{p,q}^\beta}  \Vert \Gamma(0,s,x,\cdot)(\nabla_y p_{\alpha} (t'-s,y-\cdot)-\nabla_y p_{\alpha} (t-s,y-\cdot))\Vert_{B_{p',q'}^{-\beta}}\d s dy
\end{align*} 

We now need some auxiliary estimates that can be proved similarly to controls of Lemma 5 in \cite{fito:isso:meno:25}. Precisely, it is established there that:
\begin{trivlist}
\item[-]   $ \forall h\leq s \leq \tau_t^h-h$, $r\in \textcolor{black}{[}\tau_s^h,\tau_s^h+h)$, $\forall (x,y)\in (\R^d)^2$, $\forall \zeta \in (-\beta,\textcolor{black}{-\beta+\gamma)}$, $\forall \delta\in [0,1) $, 
			\begin{align}\label{besov-estimate-gammah-stable-sensi-holder-time}
				&\left\Vert \Gamma^h(0,x,\tau_s^h,\cdot)\left[\nabla_y p_{\alpha} (t-s,y-\cdot) -\nabla_y p_{\alpha} (t-r,y-\cdot) \right] \right\Vert_{ \B_{p',q'}^{-\beta}} \nonumber \\& \qquad\lesssim (s-r)^\delta\frac{\bar p_{\alpha}(t,y-x) }{(t-s)^{\frac{1}{\alpha}+\delta}} t^{\frac{\beta}{\alpha}}\left[\frac{1}{s^{\frac{d }{\alpha  p}}}+ \frac{1}{(t-s)^{\frac{d }{\alpha  p}}} \right] \left[
				\frac{t^{\frac{\zeta}{\alpha}}}{s^{\frac{\zeta }{\alpha}}}+\frac{t^{\frac{\zeta}{\alpha}}}{(t-s)^{\frac{\zeta }{\alpha}}}  \right] .
			\end{align}
			
\item[-]  $ \forall h\leq s \leq \tau_t^h-h$, $r\in \textcolor{black}{[}\tau_s^h,\tau_s^h+h)$, $\forall (x,y,y')\in (\R^d)^3$, $\forall \zeta,{\tilde \rho} \in (-\beta,1]^2$, $|y-y'|\le (t-r)^{\frac 1\alpha} $, $\forall \delta\in [0,1),\forall \theta \in\{0,1\} $,
			\begin{align}\label{besov-estimate-stable-derivees_temps_esp_sensi_holder_esp}
				&\left\Vert \bar {p}_{{\alpha}} (\tau_s^h,x-\cdot)\left[\partial_t^\theta\nabla_y p_{\alpha} (t-r,y-\cdot) -\partial_t^\theta\nabla_y p_{\alpha} (t-r,y'-\cdot) \right] \right\Vert_{ \B_{p',q'}^{-\beta}} \nonumber \\& \qquad\lesssim |y-y'|^{{\tilde \rho}} \frac{\bar p_{\alpha}(t,y-x) }{(t-s)^{\frac{1}{\alpha}+\theta+\frac{{\tilde \rho}}{\alpha}}} t^{\frac{\beta}{\alpha}}\left[\frac{1}{s^{\frac{d }{\alpha  p}}}+ \frac{1}{(t-s)^{\frac{d }{\alpha  p}}} \right] \left[%1+
				 \frac{t^{\frac{\zeta}{\alpha}}}{s^{\frac{\zeta }{\alpha}}}+\frac{t^{\frac{\zeta}{\alpha}}}{(t-s)^{\frac{\zeta }{\alpha}}}  \right] .
			\end{align}			
\end{trivlist}
The constraint about the last time step appears in order that $t-r$,  $t-s$ and $t-\tau_s^h $ are comparable. Passing to the limit thanks to the controls on the Euler scheme and the error or reproducing the arguments of \cite{fito:isso:meno:25} to prove \eqref{besov-estimate-gammah-stable-sensi-holder-time} and \eqref{besov-estimate-stable-derivees_temps_esp_sensi_holder_esp}, one gets:
\begin{trivlist}
\item[-]   $ 0\leq s \leq t-(t'-t)$, $\forall (x,y)\in (\R^d)^2$, $\forall \zeta \in (-\beta,\textcolor{black}{-\beta+\gamma)}$, $\forall \delta\in [0,1) $, 
			\begin{align}\label{besov-estimate-gamma-stable-sensi-holder-time}
				&\left\Vert \Gamma(0,x,s,\cdot)\left[\nabla_y p_{\alpha} (t-s,y-\cdot) -\nabla_y p_{\alpha} (t'-s,y-\cdot) \right] \right\Vert_{ \B_{p',q'}^{-\beta}} \nonumber \\& \qquad\lesssim (t'-t)^\delta\frac{\bar p_{\alpha}(t,y-x) }{(t-s)^{\frac{1}{\alpha}+\delta}} t^{\frac{\beta}{\alpha}}\left[\frac{1}{s^{\frac{d }{\alpha  p}}}+ \frac{1}{(t-s)^{\frac{d }{\alpha  p}}} \right] \left[
				\frac{t^{\frac{\zeta}{\alpha}}}{s^{\frac{\zeta }{\alpha}}}+\frac{t^{\frac{\zeta}{\alpha}}}{(t-s)^{\frac{\zeta }{\alpha}}}  \right] .
			\end{align}
			
\item[-]  $ \forall 0\leq s < t-(t'-t)$, $\forall (x,y,y')\in (\R^d)^3$, $\forall \zeta,{\tilde \rho} \in (-\beta,1]^2$, $|y-y'|\le (t-s)^{\frac 1\alpha} $, $\forall \delta\in [0,1),\forall \theta \in\{0,1\} $,
			\begin{align}\label{besov-estimate-stable-derivees_temps_esp_sensi_holder_espbis}
				&\left\Vert \Gamma (0,x,s,\cdot)\left[\partial_t^\theta\nabla_y p_{\alpha} (t-s,y-\cdot) -\partial_t^\theta\nabla_y p_{\alpha} (t-s,y'-\cdot) \right] \right\Vert_{ \B_{p',q'}^{-\beta}} \nonumber \\& \qquad\lesssim |y-y'|^{{\tilde \rho}} \frac{\bar p_{\alpha}(t,y-x) }{(t-s)^{\frac{1}{\alpha}+\theta+\frac{{\tilde \rho}}{\alpha}}} t^{\frac{\beta}{\alpha}}\left[\frac{1}{s^{\frac{d }{\alpha  p}}}+ \frac{1}{(t-s)^{\frac{d }{\alpha  p}}} \right] \left[%1+
				 \frac{t^{\frac{\zeta}{\alpha}}}{s^{\frac{\zeta }{\alpha}}}+\frac{t^{\frac{\zeta}{\alpha}}}{(t-s)^{\frac{\zeta }{\alpha}}}  \right].
			\end{align}			
\end{trivlist}
Observe as well that \eqref{besov-estimate-stable-derivees_temps_esp_sensi_holder_espbis} could be proven similarly to \eqref{besov-estimate-gamma-and-stable_NO_NORM_Holder} considering an additional Taylor expansion in time for the gradient of the stable heat kernel. Since \eqref{besov-estimate-gamma-and-stable} could be extended as well to this setting (i.e. with a potential additional time derivative and the corresponding time singularities in the r.h.s.), we derive the following global estimate, i.e. independently on the condition on $y-y' $,
			\begin{align}\label{besov-estimate-stable-derivees_temps_esp_sensi_holder_esp_GLB}
				&\left\Vert \Gamma (0,x,s,\cdot)\left[\partial_t^\theta\nabla_y p_{\alpha} (t-s,y-\cdot) -\partial_t^\theta\nabla_y p_{\alpha} (t-s,y'-\cdot) \right] \right\Vert_{ \B_{p',q'}^{-\beta}} \nonumber \\& \qquad\lesssim |y-y'|^{{\tilde \rho}} \frac{(\bar p_{\alpha}(t,y-x)+\bar p_{\alpha}(t,y'-x))}{(t-s)^{\frac{1}{\alpha}+\theta+\frac{{\tilde \rho}}{\alpha}}} t^{\frac{\beta}{\alpha}}\left[\frac{1}{s^{\frac{d }{\alpha  p}}}+ \frac{1}{(t-s)^{\frac{d }{\alpha  p}}} \right] \left[
				 \frac{t^{\frac{\zeta}{\alpha}}}{s^{\frac{\zeta }{\alpha}}}+\frac{t^{\frac{\zeta}{\alpha}}}{(t-s)^{\frac{\zeta }{\alpha}}}  \right].
			\end{align}			
We are now in position to continue the analysis. We get from \eqref{besov-estimate-gamma-stable-sensi-holder-time},
\begin{align}
&\|\mathcal F^{-1}(\phi)\star\Delta^{21}(0,x,t,t',\cdot)\|_{L^{p'}}\lesssim\|\Delta^{21}(0,x,t,t',\cdot)\|_{L^1}\notag\\
\lesssim &\int_0^{t-(t'-t)} \int_{\R^d}\Vert b (s,\cdot)\Vert_{B_{p,q}^\beta}  (t'-t)^\delta\frac{\bar p_{\alpha}(t,y-x) }{(t-s)^{\frac{1}{\alpha}+\delta}} t^{\frac{\beta}{\alpha}}\left[\frac{1}{s^{\frac{d }{\alpha  p}}}+ \frac{1}{(t-s)^{\frac{d }{\alpha  p}}} \right] \left[%1+ 
				\frac{t^{\frac{\zeta}{\alpha}}}{s^{\frac{\zeta }{\alpha}}}+\frac{t^{\frac{\zeta}{\alpha}}}{(t-s)^{\frac{\zeta }{\alpha}}}  \right] dy\d s\notag \\
\lesssim & (t'-t)^\delta \Vert b (s,\cdot)\Vert_{L^\vartheta-B_{p,q}^\beta} t^{\frac{\beta+\zeta}{\alpha}}\Big( \int_0^{t-(t'-t)}   \frac{1 }{(t-s)^{\vartheta'(\frac{1}{\alpha}+\delta)}} \left[\frac{1}{s^{\frac{d }{\alpha  p}}}+ \frac{1}{(t-s)^{\frac{d }{\alpha  p}}} \right]^{\vartheta'} \left[%1+ 
				\frac{1}{s^{\frac{\zeta }{\alpha}}}+\frac{1}{(t-s)^{\frac{\zeta }{\alpha}}}  \right]^{\vartheta'}\d s\Big)^{\frac1{\vartheta'}}		\notag\\
				\lesssim& 	(t'-t)^\delta \Vert b (s,\cdot)\Vert_{L^\vartheta-B_{p,q}^\beta} t^{\frac{\beta+\zeta}{\alpha}}\lesssim (t'-t)^{\frac{\gamma-\varepsilon-\beta}{\alpha}},	\label{DELTA_21_L1}
\end{align} 
taking $\delta= (-\beta+\gamma-\varepsilon)/\alpha $ and taking $\zeta =-\beta+\varepsilon/2$, with $\varepsilon\in (0,\gamma) $ to keep integrable singularities in the time integral.

We now keep the same previous notations as in \eqref{CUT_FOR_BESOV_NORM_HK_FOR_TR} for the corresponding upper and lower cuts. 
For the upper-cut we derive, using the $L^1 $-norm, similarly to the non-thermic part that, i.e. from \eqref{CTR_DELTA_3_NON_THERM_RT}:
\begin{align}
\mathcal T_{2,p',q'}^{\rho}(\Delta^{21}(0,x,t,t',\cdot))&\lesssim \int_{t}^T \frac{dv}{v}v^{(1-\frac{\rho}{\alpha}) q'}\|\partial_v \bar p_\alpha(v,\cdot)\|_{L^{p'}}^{q'}\|\Delta^{21}(0,x,t,t',\cdot)\|_{L^1}^{q'}\notag\\
&\lesssim \int_{t}^T \frac{dv}{v}v^{-(\frac{\rho}{\alpha}+\frac d{\alpha p}) q'}(t'-t)^{\frac{\gamma-\varepsilon-\beta}{\alpha} q'}
\lesssim \left(\frac{(t'-t)^{\frac{\gamma-\varepsilon}\alpha}}{t^{\frac{\beta+\rho}{\alpha}+\frac d{p\alpha}}}\right)^{q'}.\label{DELTA_21_UC}
\end{align}
For the lower cut, write using a Taylor expansion in time:\begin{align*}
&\mathcal T_{1,p',q'}^{\rho}(\Delta^{21}(0,x,t,t',\cdot))\\
\lesssim& \int_{0}^t \frac{dv}{v}v^{(1-\frac{\rho}{\alpha}) q'}\Big(
\int_0^{t-(t'-t)} ds \| \partial_v p_\alpha(v,\cdot) \star  \int \Gamma(0,x,s,z) \\
&\times b(s,z)\cdot [\int_{0}^1 d\lambda \partial_{r_\lambda}\nabla  p_\alpha(r_\lambda-s,\cdot-z)|_{r_\lambda=t+\lambda(t'-t)}(t'-t)]dz\|_{L^{p'}}\Big)^{q'}\\
\lesssim& (t'-t)^{q'} \int_{0}^t \frac{dv}{v}v^{(1-\frac{\rho}{\alpha}) q'}\Big(
\int_0^{t-(t'-t)} ds \Big(\int_{\R^d} dy \Big|\int_{\R^d}\partial_v p_\alpha(v,y-w) \\
& \times\Big[\int \Gamma(0,x,s,z) b(s,z)\cdot\Big\{\int_{0}^1 d\lambda [\partial_{r_\lambda}\nabla  p_\alpha(r_\lambda-s,w-z))-\partial_{r_\lambda}\nabla  p_\alpha(r_\lambda-s,y-z)]\Big\}_{r_\lambda=t+\lambda(t'-t)} dz\Big]dw\Big|^{p'}\Big)^{\frac{1}{p'}}\Big)^{q'}\\
\lesssim&(t'-t)^{q'} \int_{0}^t \frac{dv}{v}v^{(1-\frac{\rho}{\alpha}) q'}\Big(
\int_0^{t-(t'-t)} ds\int_0^1 d\lambda \Big(\int_{\R^d} dy \Big(\int_{\R^d} |\partial_v p_\alpha(v,y-w)| \\
&\times \|b(s,\cdot)\|_{B_{p,q}^{\beta}} \|\Gamma(0,x,s,\cdot) [\partial_{r_\lambda} \nabla  p_\alpha(r_\lambda-s,w-\cdot))-\partial_{r_\lambda}\nabla  p_\alpha(r_\lambda-s,y-\cdot)\|_{B_{p',q'}^{-\beta}}dw\Big)^{p'}\Big)_{r_\lambda=t+\lambda(t'-t)}^{\frac{1}{p'}}\Big)^{q'},
\end{align*}
using the Minkowski inequality and a cancellation argument for the first two inequalities and \eqref{dual-ineq} and the Minkowski inequality {to deal with} the integral in  $\lambda $ for the last one. Use now \eqref{besov-estimate-stable-derivees_temps_esp_sensi_holder_esp_GLB} {with $\tilde \rho\in (\rho,1]$}, to derive:
\begin{align*}
&\mathcal T_{1,p',q'}^{\rho}(\Delta^{21}(0,x,t,t',\cdot))\\
\lesssim&(t'-t)^{q'} \int_{0}^t \frac{dv}{v}v^{(1-\frac{\rho}{\alpha}) q'}(t')^{\frac{\beta}{\alpha}q'}\\
			&\times \Big(
\int_0^{t-(t'-t)} ds\|b(s,\cdot)\|_{B_{p,q}^{\beta}} \int_0^1d\lambda \frac 1{(r_\lambda-s)^{\frac 1\alpha+1+\frac{\tilde \rho}\alpha}}\left[ \frac{1}{s^{\frac{d }{\alpha  p}}}+\frac{1}{(r_\lambda-s)^{\frac{d }{\alpha  p}}} \right] \left[  
			\frac{(t')^{\frac{\zeta}{\alpha}}}{s^{\frac{\zeta }{\alpha}}}+\frac{(t')^{\frac{\zeta}{\alpha}}}{(t'-s)^{\frac{\zeta }{\alpha}}}  \right]\\
			&\times \Big(\int_{\R^{d}} dy \Big(\int_{\R^d} dw v^{-1}\bar p_\alpha(v,y-w) |y-w|^{\tilde \rho}
(\bar p_\alpha(r_\lambda,w-x)+\bar p_\alpha(r_\lambda,y-x))  \Big)_{r_\lambda=t+\lambda(t'-t)}^{p'} \Big)^{\frac 1{p'}} \Big)^{q'}\\
\lesssim &(t'-t)^{q'}\int_{0}^t \frac{dv}{v}v^{-\frac{\rho}{\alpha} q'}(t')^{\frac{\beta}{\alpha}q'}\\
			&\times \Big(
\int_0^{t-(t'-t)} ds\|b(s,\cdot)\|_{B_{p,q}^{\beta}} \frac 1{(t'-s)^{\frac 1\alpha+1+\frac{\tilde \rho}\alpha}}\left[ \frac{1}{s^{\frac{d }{\alpha  p}}}+\frac{1}{(t'-s)^{\frac{d }{\alpha  p}}} \right] \left[
			\frac{(t')^{\frac{\zeta}{\alpha}}}{s^{\frac{\zeta }{\alpha}}}+\frac{(t')^{\frac{\zeta}{\alpha}}}{(t'-s)^{\frac{\zeta }{\alpha}}}  \right]\\
&\times\Big(\|(\bar p_\alpha (v,\cdot)|\cdot|^{\tilde \rho})\star \bar p_{\alpha}(t',\cdot-x) \|_{L^{p'}} +\|\bar p_\alpha(t',\cdot-x)\|_{L^{p'}}\|\bar p_\alpha (v,\cdot)|\cdot|^{\tilde \rho}\|_{L^1}\Big)\Big)^{q'}\\
\lesssim &(t'-t)^{q'}\int_{0}^t \frac{dv}{v}v^{\frac{\tilde \rho-\rho}{\alpha} q'}(t')^{(\frac{\beta+\zeta-\frac d{\alpha p}}{\alpha})q'}\|b\|_{L^\vartheta-B_{p,q}^\beta}^{q'}\Big(\int_0^{t-(t'-t)} \frac{ds}{(t'-s)^{(\frac 1\alpha+1+\frac{\tilde \rho+\zeta}{\alpha}+\frac{d}{\alpha p})\vartheta'}}\Big)^{\frac{q'}{\vartheta'}}\\
			\lesssim&\Big(t^{\frac{\tilde \rho-\rho}{\alpha}+\frac{\beta+\zeta-\frac dp}{\alpha}}(t'-t)^{\frac{\alpha-1 -(\tilde \rho+\zeta)-(\frac dp+\frac{\alpha}{\vartheta})}{\alpha}}\Big)^{q'},
\end{align*}
recalling for the last inequality that the exponent in the {last} time integral is smaller than $-1$. Eventually choosing $\tilde \rho $ and $\zeta $ as above, i.e. $\tilde \rho=-\beta+\varepsilon/2=\zeta $, 
\begin{align*}
&\mathcal T_{1,p',q'}^{\rho}(\Delta^{21}(0,x,t,t',\cdot))\lesssim \Big(t^{-\frac d{p\alpha}}(t'-t)^{\frac{\gamma-\varepsilon}{\alpha}}\Big)^{q'}
\end{align*}
Using as well  the previous bounds established in \eqref{DELTA_21_L1}  and \eqref{DELTA_21_UC}, we eventually get :
\begin{align*}
\|\Delta^{21}(0,x,t,t',\cdot)\|_{B_{p',q'}^\rho}\lesssim (t'-t)^{\frac{\gamma-\varepsilon}{\alpha}}t^{-\frac{d}{p\alpha}}.
\end{align*}
Together with \eqref{CT_LT_HT} which is valid for $\|(\Delta^{22}(0,x,t,t',\cdot))\|_{B_{p',q'}^\rho}$ in the l.h.s, we derive: 
\begin{align*}
\|\Delta^{2}(0,x,t,t',\cdot)\|_{B_{p',q'}^\rho}\lesssim (t'-t)^{\frac{\gamma-\varepsilon}{\alpha}}t^{-(\frac{d}{p\alpha}+\frac{\beta+\rho}{\alpha})}.
\end{align*} 
From the above inequality, \eqref{CT_LT_HT}, \eqref{CT_MT_HT} and \eqref{THE_DIFF-BIS} the proof is complete.

\bibliographystyle{alpha}
\bibliography{BIBLI}

\end{document}